\let\Im\relax
\DeclareMathOperator{\Fl}{\mathcal{F}\ell}
\DeclareMathOperator{\pr}{pr}
\DeclareMathOperator{\Im}{Im}
\DeclareMathOperator{\red}{red}
\DeclareMathOperator{\id}{id}
\DeclareMathOperator{\Vect}{Vect}
\DeclareMathOperator{\ad}{ad}
\DeclareMathOperator{\uHom}{\underline{Hom}}
\DeclareMathOperator{\GL}{GL}
\DeclareMathOperator{\SL}{SL}
\DeclareMathOperator{\Spec}{Spec}
\DeclareMathOperator{\Rep}{Rep}
\DeclareMathOperator{\Gr}{Gr}
\DeclareMathOperator{\Ext}{Ext}
\DeclareMathOperator{\IC}{IC}
\DeclareMathOperator{\rank}{rank}
\DeclareMathOperator{\sss}{ss}
\let\phi\varphi
\DeclareMathOperator{\nor}{nor}
\DeclareMathOperator{\CT}{CT}
\def\cB{\mathcal{B}}
\def\cF{\mathcal{F}}
\def\cL{\mathcal{L}}
\def\cO{\mathcal{O}}\def\cP{\mathcal{P}}
\def\cS{\mathcal{S}}
  \def\sP{\mathscr{P}}
\def\bbA{\mathbb{A}}
\def\bbF{\mathbb{F}}\def\bbG{\mathbb{G}}
\def\bbN{\mathbb{N}}\def\bbP{\mathbb{P}}
\def\bbQ{\mathbb{Q}}
\def\bbZ{\mathbb{Z}}
\def\fw{\mathfrak{w}}
\def\ra{\rightarrow}
\def\lra{\longrightarrow}
\def\lmapsto{\longmapsto}
\def\oA{\overline{A}}
\theoremstyle{definition}
\newtheorem{defn}{Definition}[section]
\newtheorem{exmp}[defn]{Example}
\newtheorem{rmrk}[defn]{Remark}
\newtheorem{nota}[defn]{Notation}
\newtheorem{defn*}{Definition}[section]
\newtheorem{exmp*}[defn*]{Example}
\newtheorem{exmps*}[defn*]{Examples}
\newtheorem{rmrk*}[defn*]{Remark}
\newtheorem{nota*}[defn*]{Notation}
\newtheorem{quest*}[defn*]{Question}
\theoremstyle{plain}
\newtheorem{thm}[defn]{Theorem}
\newtheorem{prop}[defn]{Proposition}
\newtheorem{lem}[defn]{Lemma}
\newtheorem{cor}[defn]{Corollary}
\newtheorem{thm*}[defn*]{Theorem}
\newtheorem{prop*}[defn*]{Proposition}
\newtheorem{lem*}[defn*]{Lemma}
\newtheorem{cor*}[defn*]{Corollary}
\newcommand\restr[2]{{
  \left.\kern-\nulldelimiterspace 
  #1 
  \vphantom{\big|} 
  \right|_{#2} 
  }}
\title{Constant term functors with $\bbF_p$-coefficients}
\author{Robert Cass and C\'edric P\'epin}
\address{Department of Mathematics, Harvard University}
\email{rcass@math.harvard.edu}
\address{D\'epartment de Math\'ematiques, Institut Galil\'ee, Universit\'e Paris 13}
\email{cpepin@math.univ-paris13.fr}
\numberwithin{equation}{section}
\begin{document}

\begin{abstract}
We study the constant term functor for $\bbF_p$-sheaves on the affine Grassmannian in characteristic $p$ with respect to a Levi subgroup. Our main result is that the constant term functor induces a tensor functor between categories of equivariant perverse $\bbF_p$-sheaves. We apply this fact to get information about the Tannakian monoids of the corresponding categories of perverse sheaves. As a byproduct we also obtain geometric proofs of several results due to Herzig on the mod $p$ Satake transform and the structure of the space of mod $p$ Satake parameters.
\end{abstract}

\maketitle

\thispagestyle{empty}

\setcounter{tocdepth}{1}
\tableofcontents

\section{Introduction}

\subsection{Constant term functors with $\overline{\mathbb{Q}}_\ell$-coefficients}
In the Langlands program over a global field $F$, the \emph{constant term} and \emph{Eisenstein series} operators relate automorphic functions with respect to a reductive group $G/F$ and its Levi subgroups. When $F$ is the function field of a smooth curve $C$ over a finite field $\bbF_q$ of characteristic $p$, it is possible to upgrade these operators to functors on sheaves, cf. \cite{BG02} and \cite{DG16}. 

For simplicity suppose $G$ arises from a split connected reductive group over $\bbF_q$. For each $x \in C(\bbF_q)$ a local Hecke algebra acts on 
automorphic functions. After choosing an isomorphism $\mathbb{C} \simeq \overline{\mathbb{Q}}_\ell$ and a uniformizing element at $x$, this local Hecke algebra can be identified with the unramified Hecke algebra $\mathcal{H}_{G,\ell}$ of $G(\mathbb{F}_q(\!(t)\!))$ with $\overline{\mathbb{Q}}_\ell$-coefficients. 

In order to geometrize $\mathcal{H}_{G,\ell}$, one considers the following functors on $\bbF_q$-algebras:
$$LG \colon R \mapsto G(R(\!(t)\!)), \quad  \quad L^+G \colon R \mapsto G(R[\![t]\!]).$$ The \emph{affine Grassmannian} is the fpqc-quotient 
$\Gr_G :=LG/L^+G$, which is representable by an ind-scheme. Then in the context of the geometric Langlands program, the algebra $\mathcal{H}_{G,\ell}$ is replaced by the tensor category 
$(P_{L^+G}(\Gr_G, \overline{\mathbb{Q}}_\ell), *)$ of $L^+G$-equivariant perverse $\overline{\mathbb{Q}}_\ell$-sheaves on $\Gr_G$ for $\ell \neq p$.

If $P$ is a parabolic subgroup of $G/ \bbF_q$ with Levi factor $L$ there is a diagram 
\begin{equation} \label{GrP}
\xymatrix{ 
&\Gr_P \ar[dl]_q \ar[dr]^p & \\
\Gr_L & & \Gr_G.
}
\end{equation}
The local analogue of the constant term functor is 
$$\xymatrix{\CT_{L,\ell}^G \colon P_{L^+G}(\Gr_G, \overline{\mathbb{Q}}_\ell) \ar[rr]^(.6){q_! \, \circ \, p^*[\deg_P]} & & D_{c}^b(\Gr_L, \overline{\mathbb{Q}}_\ell)}$$ for a certain locally constant function $\deg_P \colon \Gr_P \rightarrow \mathbb{Z}$, cf. (\ref{degPdef}). The function-sheaf dictionary sends $\CT_{L,\ell}^G$ to the Satake transform $\mathcal{H}_{G,\ell} \rightarrow \mathcal{H}_{L,\ell}$ up to a normalization factor. Remarkably, the functor $\CT_{L,\ell}^G$ takes values in $P_{L^+G}(\Gr_L, \overline{\mathbb{Q}}_\ell)$, and is compatible with the tensor structures. 

\subsection{Constant term functors with $\bbF_p$-coefficients}
Let $k$ be an algebraically closed field of characteristic $p>0$ and let $G$ be a connected reductive group over $k$. Let $\Gr_G$ be the affine Grassmannian of $G$ over $k$, and let $(P_{L^+G}(\Gr_G, \bbF_p), *)$ be the abelian symmetric monoidal category of $L^+G$-equivariant perverse $\bbF_p$-sheaves on $\Gr_G$ as defined in \cite{modpGr}. 

Fix a maximal torus and a Borel subgroup $T \subset B \subset G$. Let $B\subset P\subset G$ be a standard parabolic subgroup and $L$ be its Levi factor containing $T$.

\begin{defn}
The \emph{$L$-constant term functor} is
$$\xymatrix{
\CT_L^G := Rq_! \circ Rp^* [\deg_P] \colon P_{L^+G}(\Gr_G, \mathbb{F}_p) \rightarrow D_c^b(\Gr_L, \mathbb{F}_p).
}$$
\end{defn}

Our main result is the following, cf. \ref{section_reslevi}:

\begin{thm}  \label{thm1}
The functor $\CT_L^G$ induces an exact faithful tensor functor
$$
\xymatrix{
\CT_L^G \colon (P_{L^+G}(\Gr_G,\bbF_p),*) \ar[r] & (P_{L^+L}(\Gr_{L},\bbF_p),*).
}
$$
\end{thm}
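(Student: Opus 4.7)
The plan is to establish three independent claims: that $\CT_L^G$ is $t$-exact, that it is compatible with convolution, and that it is faithful. I would freely use the transitivity relation $\CT_T^G = \CT_T^L \circ \CT_L^G$ arising from the factorization $B = (B \cap L) \cdot U_P$ of the Borel through $P$.

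For $t$-exactness I would invoke hyperbolic localization. Choose a cocharacter $\chi \colon \bbG_m \to Z(L)$ pairing negatively with every root occurring in the unipotent radical $U_P$. Then the $\chi$-fixed points on $\Gr_G$ are $\Gr_L$, the attractor is $\Gr_P$, and the repeller is $\Gr_{P^-}$, so \eqref{GrP} is precisely the attractor diagram of this $\bbG_m$-action. Braden's hyperbolic localization theorem, which is valid with $\bbF_p$-coefficients just as in the $\overline{\bbQ}_\ell$-setting, identifies $Rq_!\,Rp^*$ with $R\bar q_*\,R\bar p^!$ on the monodromic category, and hence on $L^+G$-equivariant sheaves since these are $\bbG_m$-monodromic. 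The shift $[\deg_P]$ compensates the difference in dimensions between attractor and repeller, so that a dimension count for intersections of semi-infinite cells with $L^+G$-orbits---in the $\bbF_p$-framework of \cite{modpGr}---yields genuine $t$-exactness. Equivariance of the output under $L^+L$ follows from $L^+P$-equivariance of $q$ together with the surjection $L^+P \to L^+L$.

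For the tensor structure I would use fusion. The diagram \eqref{GrP} globalizes over a smooth curve $X$ to a relative diagram of Beilinson--Drinfeld Grassmannians $\Gr_{L,X^2} \leftarrow \Gr_{P,X^2} \rightarrow \Gr_{G,X^2}$, and the functor $Rq_!\,Rp^*[\deg_P]$ makes sense in this relative setting. By factorization, over the open locus $X^2 \setminus \Delta$ the construction computes $\CT_L^G(\cF) \boxtimes \CT_L^G(\cG)$, while specialization to the diagonal on both sides produces $\CT_L^G(\cF * \cG)$ and $\CT_L^G(\cF) * \CT_L^G(\cG)$ respectively; the resulting canonical isomorphism is the tensor constraint, and $S_2$-equivariance of the construction over $X^2$ provides the commutativity constraint.

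For faithfulness, by transitivity it suffices to treat $L = T$. A nonzero $\cF \in P_{L^+G}(\Gr_G,\bbF_p)$ has nonzero restriction to some $L^+G$-orbit $\Gr_G^\mu$, and the stalk of $\CT_T^G(\cF)$ at $t^\mu \in \Gr_T$ computes (up to the shift by $\deg_B$) the compactly supported cohomology of $\cF$ along the semi-infinite cell $S^\mu$, which is nonzero by the semi-infinite cell dimension estimates. The main obstacle I foresee is $t$-exactness: in the $\ell$-adic setting Mirkovi\'c--Vilonen exploit parity vanishing of intersection cohomology of orbit closures, which is genuinely unavailable for $\bbF_p$-coefficients, so one must rely entirely on refined dimension estimates and the perverse $t$-structure from \cite{modpGr} combined with Braden's theorem to pin down exactness on the nose.
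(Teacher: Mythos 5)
The central claim on which your whole proposal rests---that ``Braden's hyperbolic localization theorem, which is valid with $\bbF_p$-coefficients just as in the $\overline{\bbQ}_\ell$-setting''---is false, and the paper devotes Appendix~\ref{appA} to showing precisely that it fails. Proposition~\ref{localizationfails} there exhibits $G=\SL_2$, $\lambda=1$, $\nu=0$ with $H^{2\rho(\nu)}_{T_\nu}(\Gr_G,\IC_\lambda)$ infinite-dimensional, whereas the corresponding attractor group $H^{2\rho(\nu)}_c(S_\nu,\IC_\lambda)$ vanishes. So the identification $Rq_!Rp^* \simeq R\bar q_* R\bar p^!$ on monodromic sheaves, which you invoke to prove $t$-exactness, simply does not hold with $\bbF_p$-coefficients, and the Mirkovi\'c--Vilonen-style argument you are reconstructing collapses at this step. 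This is also the reason your appeal to \cite[1.15.2]{BR18} for the fusion/convolution compatibility does not go through: in the $\ell$-adic setting Baumann--Riche use Braden's theorem to compute the restriction of the hyperbolically localized complex on $\Gr_{G,X^2}$ over the diagonal, and you have not supplied a substitute.

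The paper's route is genuinely different. For preservation of perversity, rather than comparing attractor and repeller cohomology, it computes the weight functors on simple objects directly (Theorem~\ref{thm2}, via Theorem~\ref{thmcompute}): the constant sheaf on an MV-cycle $S_c\cap\Gr_G^{\leq\lambda}$ has no compactly supported $\bbF_p$-cohomology unless $c = c(w_0(\lambda))$, in which case $q_{c,\lambda}$ is a universal homeomorphism. This is proved via Corollary~\ref{vanishstrategy}, which combines (a) the $\bbF_p$-acyclicity of positive-dimensional affine spaces, (b) the Bialynicki-Birula decomposition of the smooth, connected, projective Demazure resolution $\widetilde X \to X$ into affine bundles over fixed-point components with a unique closed cell, and (c) the fact that the Demazure resolution is an $\bbF_p$-cohomology equivalence ($Rf_*\bbF_p = \bbF_p$, by $F$-rationality/Artin--Schreier). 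None of this uses Braden. For the tensor constraint, the key step (Theorem~\ref{conviso}) again reduces to simple objects and then shows that the relevant complex on $\Gr_{G,X^2}$ is a shifted constant sheaf supported on an $F$-rational variety, using \cite[1.7]{modpGr}; Braden's theorem is replaced by Lemmas~\ref{convlem2}--\ref{convlem4}, which exploit the explicit computation of $F_c$ on $\IC$-sheaves and a universal homeomorphism (Lemma~\ref{univhom}). Your faithfulness argument via transitivity is fine and matches the paper's Corollary~\ref{transivityres}, but the two preceding pillars of your proposal need to be replaced wholesale.
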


Let us start by explaining why $\CT_L^G$ preserves perversity. Let $X_*(T)$ be the group of cocharacters of $T$ and $X_*(T)^+$ (resp. $X_*(T)_-$) be the monoid of dominant (resp. anti\-do\-mi\-nant) cocharacters. For $\lambda \in X_*(T)^+$ let $\Gr_G^{\leq \lambda}$ be the reduced closure of the $L^+G$-orbit of  $\lambda(t)$ in $\Gr_G$. By \cite[1.5]{modpGr} the simple objects in $P_{L^+G}(\Gr_G, \bbF_p)$ are the shifted constant sheaves: 
$$\IC_\lambda = \mathbb{F}_p [\dim \Gr_G^{\leq \lambda}] \in D_c^b(\Gr_G^{\leq \lambda}, \bbF_p).$$ Let $w_0$ be the longest element of the Weyl group of $(G,T)$. In what follows we will use a letter $L$ as a subscript or superscript to denote the corresponding objects for $L$. 

The connected components of $\Gr_P$ and $\Gr_L$ are in bijection via the map $q$. If $c \in \pi_0(\Gr_L)$ corresponds to $\Gr_L^c$ then we denote the corresponding reduced connected component of $\Gr_P$ by
$S_c$. By restricting $\CT_L^G$ to $S_c$ we get a decomposition by \emph{weight functors}: 
$$\CT_L^G \cong \bigoplus_{c \in \pi_0(\Gr_L)} F_c.$$ 
Then the fact that $\CT_L^G$ preserves perversity is a consequence of the following theorem, which is unique to $\bbF_p$-sheaves, cf. \ref{CTperv}.

\begin{thm} \label{thm2}
For $\lambda \in X_*(T)^+$ we have
$$
F_c(\IC_\lambda)=
\left\{ \begin{array}{ll} 
\IC^L_{w_0^Lw_0(\lambda)} & \quad \textrm{if $w_0(\lambda)(t) \in \Gr_L^c$}, \\
0 & \quad \textrm{otherwise}.
\end{array}
\right.
$$
\end{thm}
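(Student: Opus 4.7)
My plan is to compute the $*$-stalks of $F_c(\IC_\lambda)$ at $T$-fixed points of $\Gr_L^c$ and then use perversity (granted by Theorem~\ref{thm1}) plus $L^+L$-equivariance to identify the global sheaf. Since $\IC_\lambda$ is the shifted constant sheaf on the proper variety $\Gr_G^{\leq \lambda}$, the pullback $Rp^*\IC_\lambda[\deg_P]$ is a shifted constant sheaf on $p^{-1}(\Gr_G^{\leq \lambda})$. Proper base change then identifies the stalk of $F_c(\IC_\lambda)$ at a $T$-fixed point $\mu(t) \in \Gr_L^c$ (for $\mu \in X_*(T)$) with the compactly supported cohomology of the fiber $q^{-1}(\mu(t)) \cap p^{-1}(\Gr_G^{\leq \lambda})$, shifted by $\dim \Gr_G^{\leq \lambda} + \deg_P$. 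Via the map $p$, this fiber realizes the semi-infinite slice $LU_P \cdot \mu(t) \cap \Gr_G^{\leq \lambda}$ inside $\Gr_G$.

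The decisive input, specific to $\bbF_p$-coefficients in characteristic $p$, is the Artin--Schreier vanishing $R\Gamma_c(\bbA^n, \bbF_p) = 0$ for all $n \geq 1$. I would combine this with an affine paving of the semi-infinite slice (obtained via a Bia\l{}ynicki--Birula decomposition with respect to a cocharacter in the center of $L$, for example): the vanishing kills every positive-dimensional cell, so only isolated $T$-fixed points of the slice can contribute. A dimension count using the standard Mirkovi\'c--Vilonen-type formula then pins down exactly when a surviving isolated point exists, giving $\mu(t) \in \Gr_L^{\leq w_0^L w_0(\lambda)}$ as the condition for a nonzero stalk (and yielding $\bbF_p$ in the appropriate degree). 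In particular, when $w_0(\lambda)(t) \notin \Gr_L^c$ every stalk vanishes and $F_c(\IC_\lambda) = 0$.

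To finish the identification, I would use that by Theorem~\ref{thm1} the object $F_c(\IC_\lambda)$ is an $L^+L$-equivariant perverse $\bbF_p$-sheaf on $\Gr_L^c$, whose simple objects are exactly the shifted constant sheaves $\IC^L_\mu$ on Schubert closures. The stalk computation, together with $L^+L$-equivariance and the support constraint, determines this perverse sheaf uniquely as $\IC^L_{w_0^L w_0(\lambda)}$. A direct computation using the explicit formula for $\deg_P$ checks that $\dim \Gr_G^{\leq \lambda} + \deg_P$ matches $\dim \Gr_L^{\leq w_0^L w_0(\lambda)}$ on the relevant component, so the degrees line up.

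The hardest step will be producing the affine paving of the semi-infinite slices with the precise combinatorial and dimension bookkeeping, especially at boundary points of $\Gr_L^{\leq w_0^L w_0(\lambda)}$ where the slice is not manifestly zero-dimensional; a useful simplification may come from the transitivity $\CT_L^G \simeq \CT_L^M \circ \CT_M^G$ for a chain $B \subset P \subset M \subset G$, reducing the essential content to the Borel case $L = T$, which admits a direct Mirkovi\'c--Vilonen-style analysis.
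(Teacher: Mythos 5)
Your high-level plan (exploit $R\Gamma_c(\bbA^n,\bbF_p)=0$ for $n\geq 1$ together with Bia\l{}ynicki--Birula decompositions) is the right idea, but there is a genuine gap at the central step: you propose ``an affine paving of the semi-infinite slice \dots obtained via a Bia\l{}ynicki--Birula decomposition.'' The Bia\l{}ynicki--Birula theorem produces (Zariski-locally trivial) affine-bundle structure on the attractor components only when the ambient $\bbG_m$-variety is \emph{smooth}. The slices $LU_P\cdot\mu(t)\cap\Gr_G^{\leq\lambda}$ (i.e.\ the MV-cycles) are generally singular, and no affine paving of them is available in general --- this is precisely why the paper cannot simply run the Ng\^{o}--Polo argument outside type $A_n$. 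The paper's proof works around this obstruction by introducing a $\bbG_m$-equivariant, connected, smooth projective resolution $f\colon\widetilde{X}\to\overline{\Gr_G^\lambda}$ (the affine Demazure resolution of $\overline{\Fl_G^{\lambda w_0}}$ followed by the $G/B$-fibration to $\Gr_G$) which is \emph{$\bbF_p$-acyclic} in the sense that $Rf_*\bbF_p=\bbF_p[0]$; this rational-singularities input, combined with the Artin--Schreier sequence, is the decisive ingredient that your proposal lacks. On $\widetilde{X}$ one does get the affine-bundle structure from \ref{BB}, and \ref{vanishstrategy} transfers the answer down to $X$.

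Two smaller issues. First, invoking perversity ``granted by Theorem~\ref{thm1}'' risks circularity, since in the paper the perversity part of Theorem~\ref{thm1} is \emph{deduced} from Theorem~\ref{thm2} via \ref{CTperv}; you would need your stalk estimates alone to give the perversity bounds. Second, a pointwise stalk computation at $T$-fixed points does not immediately show that $F_c(\IC_\lambda)$ is a constant sheaf rather than merely pointwise constant. The paper avoids this by computing the entire relative pushforward $R(q_{c,\lambda})_!\bbF_p$ at once: for the unique closed attractor component, \ref{trivred} (using Drinfeld's result that the attractor retraction is affine with connected fibers, hence a universal homeomorphism in the proper closed case) identifies the pushforward on the nose with the constant sheaf on the fixed locus. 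Your $L^+L$-equivariance argument could likely substitute here, but the resolution-of-singularities step is indispensable.
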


Equivalently, Theorem \ref{thm2} computes the relative $\bbF_p$-cohomology with compact support of the so-called \emph{Mirkovi\'c-Vilonen cycles} for the Levi $L$. The proof relies on the dynamics of $\bbG_m$-schemes of Bialynicki-Birula and Drinfeld, together with the existence of $\bbF_p$-acyclic 
$\bbG_m$-equivariant resolutions of singularities of $\Gr_G^{\leq \lambda}$; see \ref{proofstrat} below for more details.

\medskip

Let us now comment on the tensor property of the functor $\CT_L^G$. The general strategy of proof is similar to the one of Baumann-Riche for 
$\overline{\mathbb{Q}}_\ell$-coefficients \cite[\S 15]{BR18}. It involves the Beilinson-Drinfeld global convolution Grassmannian, cf. \ref{recallconv}, and the key step is to show that a certain complex of sheaves is a perverse intermediate extension, cf. \ref{conviso}. We achieve it by appealing to the main results regarding perverse $\bbF_p$-sheaves on $F$-rational varieties \cite[1.6-1.7]{modpGr}. In contrast, the analogue of the ingredient used for 
$\overline{\mathbb{Q}}_\ell$-coefficients fails;  see \ref{obstruc} below.

\subsection{Tannakian interpretation} \label{introtann}
By \cite{modpGr} the functor of tensor endomorphisms of the fiber functor 
$\oplus_i R^i\Gamma:(P_{L^+G}(\Gr_G,\bbF_p),*) \rightarrow (\Vect_{\bbF_p},\otimes) $
is represented by an affine monoid scheme $M_G$ over $\bbF_p$. Via the Tannakian formalism this results in an equivalence
$$(P_{L^+G}(\Gr_G, \bbF_p), *) \cong (\Rep_{\bbF_p}(M_G), \otimes).$$
This construction is analogous to the \emph{geometric Satake equivalence} \cite{GeometricSatake}. The monoid $M_G$ is pro-solvable, but beyond this little is known. We will apply the functor $\CT_L^G$ to deduce more information about $M_G$.

By Theorem \ref{thm2}, the functor $\CT_L^G$ takes values in the symmetric monoidal subcategory $$P_{L^+L}(\Gr_{L, w_0^LX_*(T)_-}, \bbF_p) \subset P_{L^+L}(\Gr_L, \bbF_p)$$ associated to the submonoid $w_0^LX_*(T)_- \subset X_*(T)_{+/L}$ in the sense of \ref{notaGrGM}, and by \ref{transivityres} it intertwines the fiber functors. Thus denoting by $M_{L,w_0^LX_*(T)_-}$ the Tannakian monoid of $P_{L^+L}(\Gr_{L, w_0^LX_*(T)_-}, \bbF_p)$, the Tannaka dual to $\CT_L^G$ is a morphism of $\bbF_p$-monoid schemes $M_L \rightarrow M_G$ which factors as
\begin{equation} \label{MLthm} \xymatrix{
M_L \ar[r] & M_{L,w_0^LX_*(T)_-} \ar[r] & M_G.
}
\end{equation} 

We currently have a limited understanding of the morphisms in (\ref{MLthm}). This is related to our lack of information on the structure of the $\Ext$ groups in the corresponding categories of representations. However, if $L=T$ then we can say more. In this case, the category $P_{L^+T}(\Gr_T, \bbF_p)$ is semi-simple,
$$M_T = \Spec( \bbF_p[X_*(T)]),\quad M_{T,X_*(T)_-} = \Spec(\bbF_p[X_*(T)_-]),$$
and the following holds, cf. \ref{propsoffw}:

\begin{thm}\label{MTthm}
The Tannaka dual of $\CT_T^G$ induces a morphism of monoids $M_T \rightarrow M_G$ which factors as an open immersion followed by a closed immersion:
$$\xymatrix{
M_T \ar[r] & M_{T,X_*(T)_-} \ar[r] & M_G.
}$$
\end{thm}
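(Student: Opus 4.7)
The strategy is to dualize the factorization of $\CT_T^G$ from Theorem~\ref{thm2} (which ensures $\CT_T^G$ lands in $P_{L^+T}(\Gr_{T,X_*(T)_-},\bbF_p) \subset P_{L^+T}(\Gr_T,\bbF_p)$) and to identify each leg on coordinate rings. Under the isomorphisms $M_T = \Spec\bbF_p[X_*(T)]$ and $M_{T,X_*(T)_-} = \Spec\bbF_p[X_*(T)_-]$ recalled in~\ref{introtann}, the arrow $M_T \to M_{T,X_*(T)_-}$ corresponds to the monoid-algebra inclusion $\bbF_p[X_*(T)_-] \hookrightarrow \bbF_p[X_*(T)]$, while $M_{T,X_*(T)_-} \to M_G$ corresponds to a ring homomorphism $\mathcal{O}(M_G) \to \bbF_p[X_*(T)_-]$.

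For the open immersion, I would realize $\bbF_p[X_*(T)_-] \hookrightarrow \bbF_p[X_*(T)]$ as a localization at finitely many elements. Since $X_*(T)_- = -X_*(T)^+$ and any cocharacter $\mu$ can be written as $\lambda - (\lambda-\mu)$ with $\lambda \in X_*(T)^+$ sufficiently regular, $X_*(T)_-$ generates $X_*(T)$ as a group; it is also finitely generated as a submonoid of the finitely generated abelian group $X_*(T)$. Fixing monoid generators $\mu_1,\dots,\mu_r$ of $X_*(T)_-$, one has
\[
\bbF_p[X_*(T)] = \bbF_p[X_*(T)_-]\bigl[(e^{\mu_1}\cdots e^{\mu_r})^{-1}\bigr],
\]
so the induced morphism of affine schemes is an open immersion.

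For the closed immersion, I would check that $\mathcal{O}(M_G) \twoheadrightarrow \bbF_p[X_*(T)_-]$. Applying Theorem~\ref{thm2} with $L=T$ (where $w_0^L = e$) gives $\CT_T^G(\IC_\lambda) = \IC^T_{w_0(\lambda)}$ for each $\lambda \in X_*(T)^+$, with $w_0$ restricting to a bijection $X_*(T)^+ \iso X_*(T)_-$. Since $\Gr_T$ is discrete, each $(\oplus_i R^i\Gamma)(\IC^T_\mu)$ is one-dimensional, and the intertwining of fiber functors from~\ref{transivityres} then forces $(\oplus_i R^i\Gamma)(\IC_\lambda)$ to be one-dimensional as well. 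Consequently $\IC_\lambda$ contributes (up to scalar) a single matrix coefficient to $\mathcal{O}(M_G)$, and its image in $\bbF_p[X_*(T)_-]$ is a nonzero multiple of $e^{w_0(\lambda)}$. As $\{e^\mu : \mu \in X_*(T)_-\}$ is an $\bbF_p$-basis, we obtain surjectivity.

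The main technical point I anticipate is transferring the matrix-coefficient formalism from the group setting, where the Tannakian criterion for closed immersions is standard (Deligne--Milne), to the monoid setting. I plan to argue directly using the coend presentation of $\mathcal{O}(M_G)$ in terms of the fiber functor $\oplus_i R^i\Gamma$ on $P_{L^+G}(\Gr_G,\bbF_p)$, checking by hand that each $e^\mu$ lies in the image; the key input is the one-dimensionality of the fiber functor on the simples $\IC_\lambda$, itself a consequence of the $F$-rationality of Schubert varieties.
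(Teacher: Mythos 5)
Your proof is correct, and the open-immersion half is essentially the paper's argument (both realize $\bbF_p[X_*(T)]$ as a localization of $\bbF_p[X_*(T)_-]$; the paper simply makes the inverted elements explicit as $e^{\lambda_\alpha}$ for fundamental-coweight--like $\lambda_\alpha$ indexed by $\Delta$, which it then reuses to describe the Herzig stratification). Your closed-immersion argument, however, is genuinely different from the paper's. You prove surjectivity of $\mathcal{O}(M_G)\to\bbF_p[X_*(T)_-]$ by hand: since $H(\IC_\lambda)$ is one-dimensional, the coaction on it is through a single grouplike element of $\mathcal{O}(M_G)$, and compatibility of fiber functors forces its image in $\bbF_p[X_*(T)_-]$ to be $e^{w_0(\lambda)}$; running over $\lambda\in X_*(T)^+$ sweeps out the basis $\{e^\mu:\mu\in X_*(T)_-\}$. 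The paper instead recognizes $M_{T,X_*(T)_-}$ as $M_G^{\sss}$, the Tannakian monoid of the semi-simple subcategory, identifies the map $M_{T,X_*(T)_-}\to M_G$ with the weight section $\fw$ dual to the semi-simplification functor $(\cdot)^{\sss}$, and observes that $\fw$ is a section of the eigenvalues homomorphism $\pi_G\colon M_G\to M_G^{\sss}$; since all the schemes involved are affine, a section of an affine morphism is automatically a closed immersion. The paper's route is structurally cleaner, bypasses the coend/matrix-coefficient formalism for monoids that you flag as the delicate point, and moreover produces the retraction $\pi_G\circ\fw=\id$ which the paper wants anyway for the semi-simplification picture. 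Your route is more hands-on and exhibits the surjection on coordinate rings explicitly, at the price of having to justify the matrix-coefficient spanning statement in the monoid (rather than group) Tannakian setting; note also that for a one-dimensional comodule the relevant matrix coefficient is grouplike, so the scalar ambiguity you mention is actually absent.
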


Note that $M_T$ is the torus over $\bbF_p$ with root datum dual to that of $T$. Thus, the morphism $M_T \rightarrow M_G$ in Theorem \ref{MTthm} is analogous to the reconstruction of the dual maximal torus in the dual group of $G$ in \cite{GeometricSatake}.

\medskip

There is another perspective on the morphism $M_{T,X_*(T)_-} \rightarrow M_G$ in Theorem \ref{MTthm} as follows. By \cite[1.2]{modpGr}, the subcategory of semi-simple objects
$$P_{L^+G}(\Gr_G, \bbF_p)^{\sss} \subset P_{L^+L}(\Gr_L, \bbF_p)$$ is a symmetric monoidal subcategory. Then the Tannakian monoid $M_G^{\sss}$ of $P_{L^+G}(\Gr_G, \bbF_p)^{\sss}$ identifies canonically with $M_{T,X_*(T)_-}$ by \ref{sssnota}. 

\begin{defn}
The Tannaka dual of the above inclusion of semi-simple objects is called the \emph{eigenvalues homomorphism}
$$\xymatrix{\pi_G \colon M_G \ar[r] & M_G^{\sss}.}$$ 
The morphism 
$$\xymatrix{ \fw \colon M_G^{\sss} \ar[r] & M_G}$$
equal to $M_{T,X_*(T)_-} \rightarrow M_G$ in Theorem \ref{MTthm} under the canonical identification $M_G^{\sss} = M_{T,X_*(T)_-}$ is called the \emph{weight section}.
\end{defn}

By construction these morphisms satisfy  
$$\pi_G \circ \fw = \id_{M_G^{\sss}}.$$ The Tannaka dual of the weight section can be viewed as a \emph{semi-simplification functor} $(P_{L^+G}(\Gr_G,\bbF_p),*) \rightarrow (P_{L^+G}(\Gr_G,\bbF_p),*)^{\sss}$. We refer to \ref{Semi-simplification} for more discussion on this perspective.

\subsection{Relation to mod $p$ Hecke algebras}
In this subsection alone we view $G$ as a split connected reductive group over $\bbF_q$. We assume that all relevant subgroups are also defined over $\bbF_q$. Let $E = \bbF_q(\!(t)\!)$ and $\mathcal{O} = \bbF_q[\![t]\!]$, and consider the  unramified mod $p$ Hecke algebra $$\mathcal{H}_G := \{f \colon G(E) \rightarrow \mathbb{F}_p \: | \: f \text{ has compact support and is } G(\mathcal{O}) \text{ bi-invariant}\}.$$ A basis for $\mathcal{H}_G$ is $\{\mathds{1}_\lambda\}_{\lambda \in X_*(T)^+}$ 
where $\mathds{1}_{\lambda}$ is the characteristic function of the double coset $G(\mathcal{O}) \lambda(t) G(\mathcal{O})$. 

Let $U_P$ be the unipotent radical of the parabolic subgroup $P$. Herzig \cite[\S2.3]{H11a} defined the mod $p$ Satake transform
$$\mathcal{S}_L^G \colon \mathcal{H}_G \rightarrow \mathcal{H}_L, \quad f \mapsto \left(g \mapsto \sum_{U_P(\mathcal{O}) \backslash U_P(E)} f(ug) \right).$$

As ind-schemes over $\bbF_q$, for $c \in \pi_0(\Gr_L)$ we have
$$S_c = (LU_P \cdot \Gr_L^c)_{\red} \subset \Gr_G.$$ Since $\Gr_G(\bbF_q) = G(E)/G(\mathcal{O})$ and $LU_P(\bbF_q) = U_P(E)$ then the function-sheaf dictionary sends $\CT_L^G$ to $\mathcal{S}_L^G$, cf. \cite[\S 4]{Centralmodp}. In contrast, for $\overline{\mathbb{Q}}_\ell$-coefficients the two transforms differ by the modulus character of $P$. The isomorphisms in Theorem \ref{thm2} hold over $\bbF_q$, so by using that the $\IC$-sheaves are constant we obtain a geometric proof of the following result due to Herzig.

\begin{cor}[{\cite[Prop. 5.1]{H11a}}] \label{Hecke1}
We have
$$\mathcal{S}_L^G \left( \sum_{\mu \, \leq_G \, \lambda} \mathds{1}_\mu \right)= \sum_{\mu \, \leq_L \, w_0^Lw_0(\lambda)} \mathds{1}_\mu.$$
\end{cor}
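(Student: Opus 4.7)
The plan is to deduce the corollary directly by combining three ingredients already assembled in the passage above: Theorem \ref{thm2}, the fact that the $\IC$-sheaves are shifted constant sheaves on their support, and the compatibility between $\CT_L^G$ and $\mathcal{S}_L^G$ under the sheaf-function dictionary.

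First, I would identify the input. Since $\IC_\lambda = \bbF_p[\dim \Gr_G^{\leq \lambda}]$ is a shifted constant sheaf on the $\bbF_q$-rational subscheme $\Gr_G^{\leq \lambda}$, and since $\Gr_G(\bbF_q) = G(E)/G(\mathcal{O})$ induces $\Gr_G^{\leq \lambda}(\bbF_q) = \bigsqcup_{\mu \leq_G \lambda} G(\mathcal{O}) \mu(t) G(\mathcal{O}) / G(\mathcal{O})$, Frobenius acts trivially on the stalks and the sheaf-function dictionary associates to $\IC_\lambda$ the characteristic function $\sum_{\mu \leq_G \lambda} \mathds{1}_\mu$, with the cohomological shift being absorbed by the normalization $[\deg_P]$ built into the definition of $\CT_L^G$.

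Second, I would compute the output. The diagram (\ref{GrP}), the $L^+L$-orbit decomposition of $\Gr_L$, the $\IC$-sheaves, and the functor $\CT_L^G$ are all defined over $\bbF_q$, and the proof of Theorem \ref{thm2} proceeds by purely geometric input (Bialynicki-Birula dynamics and $\bbF_p$-acyclic equivariant resolutions of $\Gr_G^{\leq \lambda}$), so the isomorphism $\CT_L^G(\IC_\lambda) \cong \IC^L_{w_0^Lw_0(\lambda)}$ descends to an isomorphism of shifted constant sheaves over $\bbF_q$. Applying the sheaf-function dictionary on the Levi side therefore yields the function $\sum_{\mu \leq_L w_0^Lw_0(\lambda)} \mathds{1}_\mu$.

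Third, I would invoke the compatibility asserted just above: the sheaf-function dictionary intertwines $\CT_L^G$ with the mod $p$ Satake transform $\mathcal{S}_L^G$, with no modulus character of $P$ appearing (in contrast to the $\overline{\bbQ}_\ell$-setting), because $\CT_L^G$ is built with $Rq_! \circ Rp^*$ rather than $Rq_! \circ Rp^!$. Combining the three ingredients gives the displayed identity. The only genuine potential obstacle is the bookkeeping of the two shifts $[\dim \Gr_G^{\leq \lambda}]$ and $[\dim \Gr_L^{\leq w_0^Lw_0(\lambda)}]$, which is exactly what the shift $[\deg_P]$ in the definition of $\CT_L^G$ is designed to absorb; once this normalization is in place, the signs on both sides match automatically and no further computation is required.
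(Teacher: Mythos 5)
Your proposal is essentially the same argument the paper has in mind: the paragraph preceding Corollary \ref{Hecke1} records exactly that the function–sheaf dictionary sends $\CT_L^G$ to $\mathcal S_L^G$, that the isomorphism of Theorem \ref{thm2} holds over $\bbF_q$, and that the IC-sheaves being constant on their supports translates $\IC_\lambda \mapsto \sum_{\mu \leq_G \lambda}\mathds 1_\mu$ and $\IC^L_{w_0^Lw_0(\lambda)}\mapsto \sum_{\mu \leq_L w_0^Lw_0(\lambda)}\mathds 1_\mu$. One small imprecision in your write-up: the sign arising from the shift in the function–sheaf dictionary on each side is not ``absorbed by $[\deg_P]$'' but is simply $(-1)^{2\rho(\lambda)}=(-1)^{2\rho_L(w_0^Lw_0(\lambda))}=1$ because these shifts are always even; the role of $[\deg_P]$ is to place $\CT_L^G(\IC_\lambda)$ in the perverse heart in the correct cohomological degree, not to cancel a sign. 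With that clarification the proof is correct and matches the paper's.
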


Note that $\mathcal{H}_T = \bbF_p[X_*(T)]$ where the characteristic function of $\nu(t) T(\mathcal{O})$ corresponds to $e^{\nu} \in \bbF_p[X_*(T)]$ for $\nu \in X_*(T)$. By taking $L=T$ we obtain:

\begin{cor} \label{Hecke2}
The mod $p$ Satake transform induces an isomorphism
\begin{eqnarray*}
\mathcal{S}_T^G \colon \mathcal{H}_G & \stackrel{\sim} {\longrightarrow}  & \bbF_p[X_*(T)_-] \\
\sum_{\mu \leq \lambda} \mathds{1}_{\mu}  & \lmapsto & e^{w_0(\lambda)}.
\end{eqnarray*}
\end{cor}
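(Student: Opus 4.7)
The plan is to deduce Corollary \ref{Hecke2} as an essentially formal consequence of Corollary \ref{Hecke1} specialized to $L=T$, together with a triangular basis-change argument; all the substantive geometric content has already been absorbed into Theorem \ref{thm2} and Corollary \ref{Hecke1}.

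First I would specialize Corollary \ref{Hecke1} to the case $L=T$. Since the Weyl group of the Levi $T$ inside itself is trivial, $w_0^T = 1$, so $w_0^L w_0(\lambda) = w_0(\lambda)$. Moreover, $T$ carries no roots, so the partial order $\leq_T$ on $X_*(T)$ collapses to equality and the right-hand side of Corollary \ref{Hecke1} reduces to the single term $\mathds{1}_{w_0(\lambda)}$. Under the identification $\mathcal{H}_T \cong \bbF_p[X_*(T)]$ sending $\mathds{1}_\nu \mapsto e^\nu$ (which is a ring isomorphism by the standard convolution computation for a split torus), Corollary \ref{Hecke1} then reads
$$\mathcal{S}_T^G\Bigl(\sum_{\mu \leq \lambda}\mathds{1}_\mu\Bigr) = e^{w_0(\lambda)},$$
which is the displayed formula.

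Second I would verify that $\mathcal{S}_T^G$ is an isomorphism onto $\bbF_p[X_*(T)_-]$. The map $w_0 \colon X_*(T)^+ \to X_*(T)_-$ is a bijection, so $\{e^{w_0(\lambda)}\}_{\lambda \in X_*(T)^+}$ is exactly the standard $\bbF_p$-basis of $\bbF_p[X_*(T)_-]$. On the source, the family $\{\sum_{\mu \leq \lambda}\mathds{1}_\mu\}_{\lambda \in X_*(T)^+}$ is a basis of $\mathcal{H}_G$: the matrix expressing these elements in terms of the standard basis $\{\mathds{1}_\lambda\}$ is upper unitriangular with respect to any linear refinement of the dominance order $\leq$, and the local finiteness of $\leq$ on dominant cocharacters (each $\lambda$ dominates only finitely many dominant $\mu$) makes it invertible. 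Since $\mathcal{S}_T^G$ therefore sends one $\bbF_p$-basis bijectively onto another, it is an isomorphism. If desired, the explicit preimage of a single $e^\nu$ may be recovered from the formula by M\"obius inversion with respect to $\leq$.

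No genuine obstacle remains: the combinatorial facts invoked (triviality of $\leq_T$, bijectivity of $w_0$ between dominant and antidominant cones, unitriangularity of the basis change) are all standard, and the geometric work has been done upstream in Theorem \ref{thm2}. The only point where one must be slightly careful is that $\bbF_p[X_*(T)_-]$ genuinely lands inside $\bbF_p[X_*(T)] = \mathcal{H}_T$ as a subalgebra, which is immediate from the fact that $X_*(T)_-$ is a submonoid of $X_*(T)$.
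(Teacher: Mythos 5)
Your proof is correct and follows the same route the paper takes: Corollary \ref{Hecke2} is obtained by specializing Corollary \ref{Hecke1} to $L=T$ (so $w_0^T=\mathrm{id}$ and $\leq_T$ degenerates to equality), and the bijectivity onto $\bbF_p[X_*(T)_-]$ follows from the unitriangular, locally finite change of basis between $\{\mathds{1}_\lambda\}$ and $\{\sum_{\mu\leq\lambda}\mathds{1}_\mu\}$ together with $w_0$ being a bijection $X_*(T)^+\to X_*(T)_-$. The paper leaves the basis-change verification implicit; you have simply supplied those routine details.
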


Note that Corollaries \ref{Hecke1} and \ref{Hecke2} are ultimately statements about counting $\bbF_q$-points mod $p$ on the Mirkovi\'c-Vilonen cycles. From this point of view, the resolutions of singularities which go into the proof of Theorem \ref{thm2} allow us to reduce this point counting to one on affine spaces.

\begin{rmrk}
In \cite[4.5]{Centralmodp} a particular isomorphism $\phi \colon \mathcal{H}_G \cong \bbF_p[X_*(T)_-]$ is constructed using the function-sheaf dictionary and the formula \cite[1.2]{modpGr} for the convolution product in $P_{L^+G}(\Gr_G, \bbF_p)$. Herzig's explicit formula \cite[Prop. 5.1]{H11a} is then used to check that $\phi = \mathcal{S}_T^G$. Here Theorem \ref{thm2} gives a purely geometric proof of the fact that $\phi = \mathcal{S}_T^G$.
\end{rmrk}

\subsection{Relation to mod $p$ Satake parameters}
As a consequence of Corollary \ref{Hecke2}, the $\bbF_p$-algebra $\mathcal{H}_G$ is commutative and the corresponding affine $\bbF_p$-scheme is identified with the \emph{space of Satake parameters}
$$
\sP:=\Spec( \bbF_p[X_*(T)_-]).
$$
From the geometric theory \ref{introtann}, this is the underlying scheme of the semi-simple monoid $M_G^{\sss}$. Now for each standard Levi $L$ as above, the functor $\CT^G_L$ preserves the subcategories of semi-simple objects by Theorem \ref{thm2}, hence by
duality the morphism (\ref{MLthm}) admits a semi-simplification $M_L^{\sss}\ra M_G^{\sss}$. Then we have the following, cf. \ref{lemdecomp}, \ref{herzigstratif}.

\begin{thm} \label{MLss}
The morphism 
$$
\xymatrix{ M_L^{\sss}=\sP_L \ar[r] & M_G^{\sss}}=\sP
$$ 
defined by the constant term functor $\CT_L^G$ is an open immersion.  

Moreover, denoting by $\cL$ the finite set of standard Levi subgroups $T\subset L\subset G$ and setting
$$
\forall L\in\cL,\quad S_L:=\sP_L\setminus\bigcup_{\substack{ L'\in \cL\\ L'\subsetneq L}} \sP_{L'}\textrm{ equipped with its reduced structure},
$$
the space of Langlands parameters $\sP$ is stratified as:
$$
\sP=\bigcup_{L\in\cL} S_L.
$$
The stratum $S_L$ is isomorphic to $(\bbA^1\setminus\{0\})^{\rank \pi_0(\Gr_L)}$ and the closure relation among the strata is given by $\overline{S_L}=\cup_{L'\supset L} S_{L'}$.
\end{thm}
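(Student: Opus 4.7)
The plan is to match the $\CT_L^G$-induced morphism with the natural inclusion of monoid algebras $\bbF_p[X_*(T)_-] \hookrightarrow \bbF_p[X_*(T)_{-/L}]$ and then analyze everything through the toric geometry of the antidominant cone.

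First I would identify $M_G^{\sss} = \Spec \bbF_p[X_*(T)_-] = \sP$ and $M_L^{\sss} = \Spec \bbF_p[X_*(T)_{-/L}] = \sP_L$ via the canonical identifications of \ref{sssnota}, and invoke Theorem \ref{thm2}: since $\CT_L^G(\IC_\lambda)$ is either $\IC^L_{w_0^L w_0(\lambda)}$ or zero, and both sides carry degree $w_0(\lambda)$ under the natural gradings of the semisimple subcategories, the semi-simplified functor is grading-preserving, so its Tannaka dual is the monoid morphism induced by the inclusion of monoids $X_*(T)_- \hookrightarrow X_*(T)_{-/L}$. To see this inclusion is a localization, I would pick a cocharacter $\mu$ in $X_*(Z_L^0)\cap X_*(T)$ with $\langle \mu, \alpha \rangle = 0$ for $\alpha \in \Delta_L$ and $\langle \mu, \alpha \rangle < 0$ for $\alpha \in \Delta \setminus \Delta_L$ (for instance an integer multiple of $-\sum_{\alpha \notin \Delta_L} \omega_\alpha^\vee$). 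Then $\mu \in X_*(T)_-$, and for any $\lambda \in X_*(T)_{-/L}$ the cocharacter $\lambda + n\mu$ lies in $X_*(T)_-$ once $n$ is large, yielding $X_*(T)_{-/L} = X_*(T)_- - \bbN \mu$ and $\bbF_p[X_*(T)_{-/L}] = \bbF_p[X_*(T)_-][e^{-\mu}]$. Hence $\sP_L \cong D(e^\mu) \subset \sP$ is an open immersion.

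Next I would treat $\sP$ as a $T^\vee$-scheme for $T^\vee = \Spec \bbF_p[X_*(T)]$. By standard monoid-scheme theory its $T^\vee$-orbits correspond bijectively to the faces of the rational polyhedral cone $X_*(T)_{-,\bbR}$, which are indexed by the subsets $I \subseteq \Delta$ and hence by the standard Levis $L \in \cL$ via $I = \Delta_L$. The face attached to $L$ is $F_L = \{\lambda \in X_*(T)_- : \langle \lambda, \alpha \rangle = 0 \text{ for all } \alpha \in \Delta_L\}$, whose group completion is $X_*(Z_L^0)$, and the corresponding orbit is $O_L = \Spec \bbF_p[X_*(Z_L^0)] \cong (\bbG_m)^{\dim Z_L^0} \cong (\bbG_m)^{\rank \pi_0(\Gr_L)}$. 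Since $e^\mu$ is a $T^\vee$-eigenfunction, $\sP_L = D(e^\mu)$ is $T^\vee$-stable, and a quick check gives $O_{L'} \subseteq \sP_L$ iff $\mu \in F_{L'}$ iff $\Delta_{L'} \subseteq \Delta_L$ iff $L' \subseteq L$. Hence $\sP_L = \bigsqcup_{L' \subseteq L} O_{L'}$, so $S_L = O_L \cong (\bbG_m)^{\rank \pi_0(\Gr_L)}$ and the stratification $\sP = \bigsqcup_{L \in \cL} S_L$ follows. Finally, the closure of a torus orbit in an affine toric monoid scheme is the union of orbits attached to contained faces, giving $\overline{O_L} = \bigsqcup_{F_{L'} \subseteq F_L} O_{L'} = \bigsqcup_{L' \supseteq L} O_{L'}$, whence $\overline{S_L} = \bigcup_{L' \supseteq L} S_{L'}$.

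The main obstacle will be the first identification: confirming that the $\CT_L^G$-induced morphism corresponds to the natural monoid inclusion requires careful bookkeeping with the Weyl-group twists in Theorem \ref{thm2} and the sign conventions of \ref{sssnota}. Once this is in place, the rest is essentially an application of affine toric geometry to the antidominant cone combined with the explicit choice of the localizing cocharacter $\mu$.
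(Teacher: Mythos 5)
Your proof is correct, and it aligns with the paper's Lemma~\ref{lemdecomp} and Corollary~\ref{herzigstratif} on the first point but diverges from the paper on the second. For the open immersion, your choice of a single localizing cocharacter $\mu$ (rather than the paper's family $\lambda_\alpha$, $\alpha\in\Delta$, and the localizing element $\prod_{\alpha\in\Delta\setminus\Delta_L}e^{\lambda_\alpha}$) is a cosmetic variant of the same argument: both amount to showing $\bbF_p[X_*(T)_{-/L}]$ is a principal localization of $\bbF_p[X_*(T)_-]$, and both need the small observation that a suitable $\mu$ (or the $\lambda_\alpha$) exists in $X_*(T)$ after clearing denominators. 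Where you genuinely depart is in the stratification and closure relation: you invoke the orbit--cone correspondence for the affine toric monoid scheme $\sP=\Spec\bbF_p[X_*(T)_-]$ (orbits biject with faces of $X_*(T)_{-,\bbR}$, which form the power set of $\Delta$ since $\Delta$ is linearly independent; orbit closures are unions of orbits attached to sub-faces), whereas the paper proceeds by hand, showing in~\ref{herzigstratif} that $|S_L|=|D(f_L)|\cap|V_L|$ for explicit functions $f_L$ and closed subschemes $V_L$, and then checking that $f_L$ remains a regular element on $V_L$ to establish $|\overline{S_L}|=|V_L|$. Your route is cleaner and more conceptual, but imports the orbit--cone correspondence as a black box (one should just confirm $X_*(T)_-$ is a finitely generated saturated monoid with $X_*(T)_-^{\mathrm{gp}}=X_*(T)$, which holds, and that the statement descends from $\overline{\bbF}_p$ to $\bbF_p$, which is harmless since everything is cut out by monomial equations); the paper's route is more computational but entirely self-contained. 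One further small point you flag but do not carry out -- matching $D(\CT^G_L)$ restricted to semi-simple monoids with the inclusion $X_*(T)_-\hookrightarrow X_*(T)_{-/L}$ -- is indeed where the $w_0$, $w_0^L$ bookkeeping lives, and is handled in the paper via~\ref{sssnota}, the end of~\ref{Semi-simplification}, and the opening of~\ref{openstratum}; your framing of it is accurate.
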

The underlying decomposition of the set $\sP(\overline{\bbF}_p)$ was originally defined by Herzig in \cite[\S 1.5, \S 2.4]{H11b}. The construction above makes the link with the Satake category $P_{L^+G}(\Gr_G,\bbF_p)$. 

\subsection{Obstructions to adapting proofs for $\overline{\mathbb{Q}}_\ell$-coefficients}\label{obstruc}
Let us now explain why the known proofs that $\CT_{L,\ell}^G$ preserves perversity and is a tensor functor fail for $\mathbb{F}_p$-sheaves. So that we can deal with $\bbF_p$ and $\overline{\mathbb{Q}}_\ell$-coefficients simultaneously let us set $\IC_{\lambda,\ell}$ to be the $\ell$-adic intersection cohomology sheaf of $\Gr_G^{\leq \lambda}$. Then $\IC_{\lambda, \blacktriangle}$ is either an $\bbF_p$-sheaf or a $\overline{\mathbb{Q}}_\ell$-sheaf depending on the value of $\blacktriangle \in \{\emptyset, \ell\}$.

For both $\overline{\mathbb{Q}}_\ell$-sheaves and $\bbF_p$-sheaves, there is a homological argument which reduces us to the case $L=T$. Then $\pi_0(\Gr_B) = X_*(T)$ and $(\Gr_T)_{\red}$ is a disjoint union of points indexed by $X_*(T)$, so that the weight functors are $$F_\nu = R\Gamma_c(S_\nu, \cdot)[2 \rho(\nu)], \quad \nu \in X_*(T),$$ where $\rho$ is half the sum of the positive roots. The fact that $F_\nu$ preserves perversity is equivalent to the statement that \begin{equation} \label{NeededVanishing}
H^i_c(S_\nu, \IC_{\lambda, \blacktriangle}) \neq 0 \: \Rightarrow \: i = 2\rho(\nu).\end{equation} 

By dimension estimates we have $H^i_c(S_\nu, \IC_{\lambda, \blacktriangle}) =0 $ if $i > 2\rho(\nu).$ For the other inequality, one observes that there is a $\mathbb{G}_m$-action on $\Gr_G$ such that $S_\nu(k)$ is the set of $k$-points of the $\nu$-component of the \emph{attractor} in the sense of \ref{D}. Then Braden's hyperbolic localization theorem \cite{BradenLoc} provides a comparison with the cohomology supported in the 
$\nu$-component of the \emph{repeller} (i.e. the attractor for the opposite $\bbG_m$-action), which leads to the other half of the desired vanishing (\ref{NeededVanishing}) for $\overline{\mathbb{Q}}_\ell$-coefficients, cf. \cite[Th. 3.5]{GeometricSatake}. However, we show in Appendix \ref{appA} that Braden's hyperbolic localization theorem \emph{fails} for $\bbF_p$-sheaves. Braden's theorem is also the key tool from the proof of the compatibility of $\CT_{L,\ell}^G$ with convolution \cite[1.15.2]{BR18}  that we lack in the case of $\bbF_p$-coefficients.

\medskip

There is another approach to proving (\ref{NeededVanishing}) due to Ng\^{o}-Polo \cite{ngopolo}. Let $\mathcal{M} \subset X_*(T)^+$ be the subset of cocharacters that are either minuscule or quasi-minuscule. If $\lambda$ is quasi-minuscule then Ng\^{o}-Polo construct a resolution of $\Gr_G^{\leq \lambda}$ and explicitly stratify the fiber over $S_\nu \cap \Gr_G^{\leq \lambda}$ by affine spaces. These stratifications allow one to estimate the dimension of $H^i_c(S_\nu, \IC_{\lambda, \blacktriangle})$ for $(\nu, \lambda) \in  X_*(T) \times \mathcal{M}$.

If $\lambda\in X_*(T)^+$ can be decomposed as a sum of elements of $\mathcal{M}$, then by considering the corresponding convolution Grassmannian $m \colon \Gr_{G}^{\leq \lambda_\bullet} \rightarrow \Gr_G^{\leq \lambda}$ the previous estimates allow one to prove (\ref{NeededVanishing}) for any direct summand of $Rm_!(\IC_{\lambda_\bullet, \blacktriangle})$, where $\IC_{\lambda_\bullet, \blacktriangle}$ is the IC-sheaf of $\Gr_{G}^{\leq \lambda_\bullet}$. This is sufficient to complete the argument for $\overline{\mathbb{Q}}_\ell$-sheaves. However, for $\bbF_p$-sheaves we have $Rm_!(\IC_{\lambda_\bullet}) = \IC_\lambda$ by \cite[6.5]{modpGr}. Thus in our situation Ng\^{o}-Polo's approach allows us to conclude for groups of type $A_n$ only, since this is the only case where the fundamental coweights freely generating $X_*(T_{\ad})^+$ belong to the subset $\mathcal{M}_{\ad} \subset X_*(T_{\ad})^+$.

\subsection{Proof strategy for preservation of perversity.} \label{proofstrat}
Our approach to proving Theorem \ref{thm2} combines ideas from both \cite{GeometricSatake} and \cite{ngopolo}, and works directly for $L$ not necessarily equal to $T$. We start with the observation that there is a $\mathbb{G}_m$-action on $\Gr_G$ such that $\Gr_L(k) = \Gr_G(k)^{\mathbb{G}_m(k)}$ and such that the $S_c(k)$ for $c \in \pi_0(\Gr_L)$ are the sets of $k$-points of the components of the attractor:
$$
\forall c\in \pi_0(\Gr_L),\quad S_c(k) = \{x\in \Gr_G(k)\ |\ \lim_{k^{\times}\ni z\ra 0} z\cdot x \in \Gr_L^c(k)\}.
$$ 
Then the (unshifted) weight functor $F_c$ identifies with the hyperbolic localization functor of relative cohomology with compact support flowing in the direction of the fixed points $\Gr_L^c$.

Let $\mathcal{B}$ be the Iwahori group scheme equal to the dilation of $G_{k[\![t]\!]}$ along $B_k$. The affine flag variety $\mathcal{F}\ell: = LG/L^+\mathcal{B}$ is a $\mathbb{G}_m$-equivariant $G/B$-fibration over $\Gr_G$. Unlike the case of $\overline{\mathbb{Q}}_\ell$-coefficients, the flag variety $G/B$ is acyclic for $\bbF_p$-coefficients in the sense that $R\Gamma(G/B, \bbF_p) = \bbF_p[0].$ This allows us to compare $F_c(\IC_\lambda)$ with hyperbolic localizations on the preimage of $S_c \cap \Gr_G^{\leq \lambda}$ in $\mathcal{F}\ell$. 

Next we note that any Schubert variety in $\mathcal{F}\ell$ admits a so-called Demazure resolution, which is both $\mathbb{G}_m$-equivariant and 
$\bbF_p$-acyclic. 

Then we can appeal to a general result of Bialynicki-Birula on the structure of smooth proper $\mathbb{G}_m$-varieties: 
on the resolution, there is a unique closed attractor component, while the other components are positive-dimensional affine bundles over their fixed points. Such bundles have no relative $\bbF_p$-cohomology with compact support, 
so only the closed component contributes. 

The final complete determination of $F_c(\IC_\lambda)$ relies on the affineness of Drinfeld's attractor of a not necessarily smooth $\bbG_m$-scheme.

\subsection{Outline} In Section \ref{sec_Gm} we recall results of Bialynicki-Birula and Drinfeld on the structure of schemes with a $\mathbb{G}_m$-action. The main result is \ref{vanishstrategy} on $\bbF_p$-cohomology with compact support in the attractors on a general class of $\mathbb{G}_m$-schemes. In Section \ref{section_MVcycle} we apply this result on the affine Grassmannian to prove \ref{thmcompute}, which is the main input in the proof of Theorem \ref{thm2}. In Sections \ref{absolutesection} and \ref{totalsection} we prove Theorems \ref{thm1} and \ref{thm2} in the case $L=T$. We treat the case of general $L$ in Section \ref{section_reslevi}. In Section \ref{sectionTannakian} we investigate the Tannakian consequences of Theorems \ref{thm1} and \ref{thm2} for the monoid $M_G$. In Section \ref{section_space} we study the stratification of $\sP$ induced by the morphisms $M_L^{\sss} \rightarrow M_G^{\sss}$. Finally, in Appendix \ref{appA} we show that Braden's hyperbolic localization theorem is false for $\bbF_p$-coefficients. 

\bigskip

\textbf{Notation.}
Let $k$ be an algebraically closed field of characteristic $p > 0$ and let $G$ be a connected reductive group over $k$. Fix a maximal torus and a Borel subgroup $T \subset B \subset G$, and let $U \subset B$ be the unipotent radical of $B$. Let $W$ be the Weyl group of $G$ and let $w_0 \in W$ be the longest element. 

Let $X^*(T)$ and $X_*(T)$ be the lattices of characters and cocharacters of $T$, and $X_*(T)^+$ (resp. $X_*(T)_-$) the monoid of dominant (resp. antidominant) cocharacters determined by $B$. Let $\Phi$ and $\Phi^\vee$ be the sets of roots and coroots,  $\Phi^+$ and $(\Phi^+)^\vee$ the subsets of positive roots and positive coroots, and $\Delta$ and $\Delta^\vee$ the subsets of simple roots and simple coroots. For $\nu$, $\nu' \in X_*(T)$ we write $\nu \leq \nu'$ if $\nu' - \nu$ is a sum of positive coroots with non-negative integer coefficients. Let $\rho$ and $\hat{\rho}$ be respectively half the sum of the positive roots and coroots. For $\nu \in X_*(T)$ let $\rho(\nu) \in \mathbb{Z}$ be the pairing of $\rho$ and $\nu$.

\bigskip

\textbf{Acknowledgments.} During the preparation of this article R.C. was partially supported by the National Science Foundation Graduate Research Fellowship Program under Grant No. DGE-1144152, and C.P. was partially supported by the Agence Nationale pour la Recherche COLOSS project ANR-19-CE40-0015.

\section{Some general computations of $\bbF_p$-cohomology with compact support} \label{sec_Gm}
\numberwithin{defn}{subsection}
\subsection{The affine space}

\begin{lem} \label{affinecoh}
Let $\mathbb{A}^d$ be the affine space over $k$ of dimension $d$. Then
$$
R\Gamma_c(\mathbb{A}^d, \mathbb{F}_p) = 
\left\{ \begin{array}{ll} 
\bbF_p[0] & \quad \textrm{if $d=0$}, \\
0 & \quad \textrm{otherwise}.
\end{array}
\right.
$$
\end{lem}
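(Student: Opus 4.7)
The case $d=0$ is immediate since $\bbA^0=\Spec(k)$ is a point, so my plan focuses on $d\ge 1$.

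The strategy is to compactify. Using the open immersion $j\colon \bbA^d\hookrightarrow \bbP^d$ whose closed complement is a hyperplane $\bbP^{d-1}\hookrightarrow \bbP^d$, the excision triangle reads
$$
R\Gamma_c(\bbA^d,\bbF_p)\lra R\Gamma(\bbP^d,\bbF_p)\lra R\Gamma(\bbP^{d-1},\bbF_p)\stackrel{+1}{\lra},
$$
so the lemma will follow once I show that the restriction map $R\Gamma(\bbP^d,\bbF_p)\to R\Gamma(\bbP^{d-1},\bbF_p)$ is an isomorphism for every $d\ge 1$.

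The key input is the computation $R\Gamma(\bbP^n,\bbF_p)=\bbF_p[0]$ for all $n\ge 0$, which I would derive from the Artin--Schreier short exact sequence of \'etale sheaves on $\bbP^n$,
$$
0\lra \bbF_p\lra \cO_{\bbP^n}\stackrel{F-1}{\lra} \cO_{\bbP^n}\lra 0,
$$
together with the classical coherent vanishing $R\Gamma(\bbP^n,\cO_{\bbP^n})=k[0]$. The associated triangle identifies $R\Gamma(\bbP^n,\bbF_p)$ with the fiber of $F-1\colon k\to k$. Since $k$ is algebraically closed of characteristic $p$ the polynomial $x^p-x-a$ is separable, so $F-1$ is surjective with kernel $\bbF_p$, and therefore $R\Gamma(\bbP^n,\bbF_p)=\bbF_p[0]$ is concentrated in degree $0$.

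Both $\bbP^d$ and $\bbP^{d-1}$ being connected, the restriction map $\bbF_p\to\bbF_p$ on $H^0$ is the identity, so the excision triangle collapses to $R\Gamma_c(\bbA^d,\bbF_p)=0$. Strictly speaking there is no real obstacle here: the only nontrivial input is the Artin--Schreier calculation for $\bbP^n$. The philosophical point worth emphasizing is that in characteristic $p$ with $\bbF_p$-coefficients the projective spaces are \emph{acyclic}, in sharp contrast with the $\ell$-adic situation, and this is exactly what makes hyperplane excision force the compactly supported cohomology of $\bbA^d$ to vanish.
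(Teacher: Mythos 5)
Your proof is correct and takes essentially the same route as the paper: compactify $\bbA^d$ inside $\bbP^d$ with hyperplane complement $\bbP^{d-1}$, and use the Artin--Schreier sequence plus Serre's vanishing of coherent cohomology of projective space to show the restriction map on $\bbF_p$-cohomology is a quasi-isomorphism. The only cosmetic difference is that you first compute both $R\Gamma(\bbP^d,\bbF_p)$ and $R\Gamma(\bbP^{d-1},\bbF_p)$ to be $\bbF_p[0]$ and then invoke connectedness to see the map is the identity, whereas the paper argues directly by functoriality of Artin--Schreier that the comparison map is a quasi-isomorphism; both are valid and essentially identical in content.
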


\begin{proof}
We can assume $d>0$. Consider the open immersion $j \colon \mathbb{A}^d \rightarrow \mathbb{P}^d$ and the complementary closed immersion $i \colon \bbP^{d-1} \rightarrow \bbP^d$. This gives rise to an exact triangle
$$
\xymatrix{
Rj_! \mathbb{F}_p[0] \ar[r] & \mathbb{F}_p[0] \ar[r] & Ri_* \mathbb{F}_p[0] \ar[r]^<<<<<{+1} &.
}
$$
From \cite{S55}, we know that $$\forall i >0, \quad H^i(\mathbb{P}^d, \mathcal{O}_{\mathbb{P}^d}) = H^i(\mathbb{P}^{d-1}, \mathcal{O}_{\mathbb{P}^{d-1}}) = 0.$$ Thus since $H^0(\mathbb{P}^d, \mathcal{O}_{\mathbb{P}^d}) = H^0(\mathbb{P}^{d-1}, \mathcal{O}_{\mathbb{P}^{d-1}}) = k$, then by the Artin-Schreier sequence the map $R\Gamma(\mathbb{F}_p[0]) \rightarrow R\Gamma(Ri_* \mathbb{F}_p[0])$ is a quasi-isomorphism. Hence $R\Gamma(Rj_! \mathbb{F}_p[0]) = 0$, i.e. $R\Gamma_c(\mathbb{A}^d,\mathbb{F}_p)=0$.  
\end{proof} 

\subsection{Schemes admitting a decomposition by affine spaces}

\begin{nota}
Given a scheme $X$, we denote by $|X|$ its underlying topological space.
\end{nota}

\begin{defn} \label{defdecompfil}
Let $X$ be a scheme. 
\begin{itemize}
\item A \emph{decomposition} of $X$ is a family of 
subschemes $X_i\subset X$, $i\in I$, such that 
$$
|X|=\bigcup_{i\in I} |X_i|\quad\textrm{and}\quad\textrm{$|X_i|\cap |X_j|=\emptyset$ for all $i\neq j$}.
$$
\item A \emph{filtration} of $X$ is a finite decreasing sequence of closed subschemes
$$
X=Z_0\supset Z_1\supset\cdots\supset Z_{N-1}\supset Z_N=\emptyset.
$$
The 
subschemes $Z_n\setminus Z_{n+1}$, $n=0,\ldots,N-1$, are the \emph{cells} of the filtration.
\end{itemize}
\end{defn}

\begin{cor} \label{vanishfilaff}
Let $X$ be a $k$-scheme. Assume that $X$ admits a filtration whose cells are positive dimensional affine spaces. Then
$$
R\Gamma_c(X, \mathbb{F}_p) = 0.
$$
\end{cor}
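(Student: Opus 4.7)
The plan is to run an induction on the length $N$ of the filtration, using the excision distinguished triangle together with Lemma \ref{affinecoh} at each step.

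More precisely, for each $n\in\{0,\ldots,N-1\}$, the cell $U_n := Z_n\setminus Z_{n+1}$ sits as an open subscheme of $Z_n$ with closed complement $Z_{n+1}$. The associated excision triangle
$$
\xymatrix{
R\Gamma_c(U_n,\bbF_p) \ar[r] & R\Gamma_c(Z_n,\bbF_p) \ar[r] & R\Gamma_c(Z_{n+1},\bbF_p) \ar[r]^<<<<<{+1} &
}
$$
is the essential input. By hypothesis each $U_n$ is an affine space of positive dimension, so Lemma \ref{affinecoh} gives $R\Gamma_c(U_n,\bbF_p)=0$. Consequently the middle arrow of the triangle is a quasi-isomorphism, i.e. $R\Gamma_c(Z_n,\bbF_p) \simeq R\Gamma_c(Z_{n+1},\bbF_p)$.

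I would then perform descending induction starting from $R\Gamma_c(Z_N,\bbF_p) = R\Gamma_c(\emptyset,\bbF_p) = 0$, concluding inductively that $R\Gamma_c(Z_n,\bbF_p) = 0$ for all $n$, and in particular $R\Gamma_c(X,\bbF_p) = R\Gamma_c(Z_0,\bbF_p) = 0$. There is no real obstacle here; the only point worth checking is that excision is applicable, which holds because each $Z_{n+1}\hookrightarrow Z_n$ is a closed immersion (by definition of a filtration in \ref{defdecompfil}) with open complement $U_n$, so the standard triangle for $Rj_!\,j^*\bfun \to \bfun \to Ri_*\,i^*\bfun$ applied to $R\Gamma_c(Z_n,-)$ yields the triangle above.
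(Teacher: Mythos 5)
Your proof is correct and is exactly the argument the paper has in mind: the paper's one-line proof invokes Lemma \ref{affinecoh} and the long exact sequence of compactly supported cohomology for an open/closed decomposition, which is precisely your excision triangle, and the induction along the filtration is the implicit bookkeeping. No differences worth noting.
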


\begin{proof}
This follows from \ref{affinecoh} and the long exact sequence of $\bbF_p$-cohomology with compact support associated to the decomposition of a scheme into an open and a complementary closed subscheme.
\end{proof}

\subsection{Some $\bbG_m$-schemes} \label{Gmsec}
Let $X$ be a scheme of finite type over $k$, equipped with a  $\bbG_m$-action. Recall from \cite{D13} the following definitions and results.

\begin{defn} \label{D}
\begin{itemize}
\item The \emph{space of fixed points} is the fppf sheaf
$$
X^0:=\uHom^{\bbG_m}_k(\Spec(k),X)
$$
where $\Spec(k)$ is equipped with the trivial $\bbG_m$-action.
\item The \emph{attractor} is the fppf sheaf
$$
X^+:=\uHom^{\bbG_m}_k((\bbA^1)^+,X)
$$
where $(\bbA^1)^+$ is the affine line over $k$ equipped with the $\bbG_m$-action by dilations.
\end{itemize}
\end{defn}

Evaluating at $1$ and $0$ defines maps $p$ and $q$:
$$
\xymatrix{
&X^+ \ar[dl]_q \ar[dr]^p & \\
X^0 & & X.
}
$$
The space of fixed points is representable by a closed subscheme $X^0\subset X$. The attractor is representable by a $k$-scheme. The morphism $q$ is affine, and the section $X^0\subset X^+$ obtained by precomposing with the structural morphism $(\bbA^1)^+\ra\Spec(k)$ induces an identification
$(X^+)^0=X^0$; the morphism $p$ restricts to the identity between $X^0\subset X^+$ and $X^0\subset X$. Moreover, the morphism $q$ has geometrically connected fibers, cf. \cite[Cor. 1.12]{R19}, so that the decomposition of $X^+$  as a disjoint union of its connected components is the preimage by $q$ of the corresponding decomposition of $X^0$:
$$
X^+=\coprod_{i\in\pi_0(X^0)}X_i.
$$
For $i\in \pi_0(X^0)$ we will denote by $q_i:X_i\ra X_i^0$ the induced retraction.

\begin{rmrk}\label{kpoints}
Suppose that $X$ is separated over $k$. Then $p:X^+\ra X$ is a monomorphism, which induces the following identifications of sets:
$$
X^+(k)\simeq\{x\in X(k)\ |\ \textrm{$\lim_{k^{\times}\ni z\ra 0} z\cdot x$ exists} \},
$$
\begin{eqnarray*}
q(k):X^+(k)&\lra& X^0(k)\\
x & \lmapsto & \lim_{k^{\times}\ni z\ra 0} z\cdot x,
\end{eqnarray*}
and for each $i\in \pi_0(X^0)$, 
$$
X_i(k)\simeq\{x\in X(k)\ |\ \lim_{k^{\times}\ni z\ra 0} z\cdot x\in X^0_i(k)\}.
$$
\end{rmrk}

Now consider the following hypothesis:

\medskip

(H) \emph{for each $i\in\pi_0(X^0)$, the restriction $p|_{X_i}:X_i\ra X$ is an \emph{immersion}.}

\begin{lem}\label{decompfil}
\begin{enumerate}
\item
Suppose that (H) is satisfied, and that $X$ is proper over $k$. Then the family of subschemes $(X_i)_{i\in \pi_0(X^0)}$ is a decomposition of $X$. 
\item
Suppose that there exists a $\bbG_m$-equivariant immersion of $X$ into some projective space $\bbP(V)$ where $\bbG_m$ acts linearly on $V$. Then (H) is satisfied, and if moreover $X$ is proper, there exists a filtration $(Z_n)_{0\leq n\leq |\pi_0(X^0)|}$ of $X$ having $(X_i)_{i\in \pi_0(X^0)}$ as its family of cells.
\end{enumerate}
\end{lem}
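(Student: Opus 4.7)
For part (1), my plan is to derive the two defining properties of a decomposition directly from (H) together with properness. Hypothesis (H) gives that each $p|_{X_i}:X_i\to X$ is an immersion, hence realizes $X_i$ as a subscheme of $X$. The separatedness of $X$ coming from properness forces $p:X^+\to X$ to be a monomorphism by Remark \ref{kpoints}, hence injective on underlying topological spaces; since $X^+=\coprod_i X_i$ as schemes, the subsets $|X_i|\subset |X|$ are pairwise disjoint. For the covering property, properness makes $p$ surjective on topological spaces: given any $x\in |X|$, pick a field-valued lift $x:\Spec k'\to X$; the orbit morphism $\bbG_{m,k'}\to X$ extends across $0\in\bbA^1_{k'}$ by the valuative criterion of properness, producing a $k'$-point of $X^+$ over $x$.

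For part (2), my plan is to first handle $\bbP(V)$ explicitly and then pull back to $X$ by functoriality. Decomposing $V=\bigoplus_n V_n$ into $\bbG_m$-weight spaces, inspection of the limit $z\to 0$ in projective coordinates yields
$$\bbP(V)^0=\coprod_n\bbP(V_n),\qquad \bbP(V)^+_n=\bbP\bigl(\bigoplus_{n'\geq n}V_{n'}\bigr)\setminus\bbP\bigl(\bigoplus_{n'>n}V_{n'}\bigr),$$
a locally closed subscheme of $\bbP(V)$, which establishes (H) for $\bbP(V)$. Since $X^+$ and its connected components are compatible with $\bbG_m$-equivariant base change (they are defined by universal mapping properties, and $q:X^+\to X^0$ has geometrically connected fibers), each component $X_i$ for $X$ is realized as $X\times_{\bbP(V)}\bbP(V)^+_n$ for the weight $n$ of the underlying component of $X^0$. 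The immersion $X\hookrightarrow \bbP(V)$ thus pulls back the locally closed immersion $p|_{\bbP(V)^+_n}$ to the locally closed immersion $p|_{X_i}$, proving (H) for $X$.

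When $X$ is moreover proper, to build the filtration I would first order the distinct weights $n_1<\cdots<n_M$ occurring in $V$ and set $Z^{\bbP(V)}_j:=\bbP(\bigoplus_{r\geq j+1}V_{n_r})$; this is a decreasing chain of closed linear subspaces with cells $\bbP(V)^+_{n_{j+1}}$, so that $(X\cap Z^{\bbP(V)}_j)_j$ is a filtration of $X$ whose $j$-th cell is the disjoint union of those $X_i$ with $X_i^0\subset\bbP(V_{n_{j+1}})$. To isolate each $X_i$ as its own cell, the main obstacle is to produce intermediate closed subschemes of $X$ allowing one to remove a single $X_i$ at a time within a fixed weight. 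The key estimate is
$$\overline{X_i}\subset X_i\cup\bigl(X\cap Z^{\bbP(V)}_{j+1}\bigr),$$
which follows from $\overline{\bbP(V)^+_{n_{j+1}}}=\bbP(\bigoplus_{n'\geq n_{j+1}}V_{n'})$ together with the fact that $X_i$ is clopen inside the finite disjoint union of attractor components $X\cap\bbP(V)^+_{n_{j+1}}$. Granting this estimate, removing the $X_i$'s within a fixed weight in any chosen order yields a chain of closed subschemes of $X$ at each step, and concatenating across weights produces the desired filtration of length $|\pi_0(X^0)|$ with cells $(X_i)_{i\in\pi_0(X^0)}$.
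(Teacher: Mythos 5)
Your proof is correct and follows the paper's strategy for both parts; the paper, however, disposes of part (1) by simply citing universal bijectivity of $p$ from [D13, 1.4.11(iii)], and of part (2) by citing [D13, B.0.3(iii)] for (H) and [BB76, Th. 3] for the filtration, whereas you re-derive all of these facts (the monomorphism plus valuative-criterion argument for (1), the weight-space description of $\bbP(V)^0$ and $\bbP(V)^+_n$ and the pullback of attractors along $X\hookrightarrow\bbP(V)$ for (H), and the explicit filtration construction). Your closure estimate $\overline{X_i}\subset X_i\cup\bigl(X\cap Z^{\bbP(V)}_{j+1}\bigr)$, used to peel off one $X_i$ at a time within a fixed weight, is precisely the content of the Bialynicki-Birula argument that the paper invokes as a black box.
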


\begin{proof}
(1) When $X$ is proper over $k$, then $p$ is universally bijective by \cite[1.4.11 (iii)]{D13}. In particular 
$$
|X|=\bigcup_{i\in I} p(|X_i|)\quad\textrm{and}\quad\textrm{$p(|X_i|)\cap p(|X_j|)=\emptyset$ for all $i\neq j$}.
$$
When (H) is satisfied, then for each $i$ there exists a unique subscheme $p(X_i)\subset X$ such that $p|_{X_i}$ decomposes as an isomorphism $X_i\xrightarrow{\sim}p(X_i)$ followed by the canonical immersion $p(X_i)\subset X$. Thus, identifying $X_i$ with $p(X_i)$, we get that the family 
$(X_i)_{i\in \pi_0(X^0)}$ is a decomposition of $X$.

(2) When $X$ admits a $\bbG_m$-equivariant immersion into some projective space $\bbP(V)$ where $\bbG_m$-acts linearly on $V$, then, as noted in \cite[B.0.3 (iii)]{D13}, the fact that (H) is satisfied follows from the case $X=\bbP(V)$. If the immersion is closed, the fact that the decomposition $(X_i)_{i\in \pi_0(X^0)}$ of $X$ can be realized as the cells of a filtration follows again from the case $X=\bbP(V)$, as proved in \cite[Th. 3]{BB76}\footnote{As noted in the remark following the proof of the theorem in loc. cit., the smoothness assumption on the closed $\bbG_m$-subscheme $X\subset \bbP(V)$ is not used in that proof. The existence of such a filtration is also  recorded in \cite[Lem. 4.12]{Car02}.}.
\end{proof}

\begin{thm}\label{BB}
\begin{enumerate}
\item Suppose that $X$ is smooth and separated over $k$. Then (H) is satisfied, $X^0$ and $X^+$
are smooth over $k$, 
and for each $i \in \pi_0(X^0)$,  there exists an integer $d_i\geq 0$ such that
$$
\xymatrix{
X_i \ar[rr]^{\sim} \ar[dr]_{q_i} && \bbA^{d_i}\times X^0_i \ar[dl]^{\pr_2} \\
&X^0_i&
}
$$ 
Zariski-locally on $X^0_i$. If moreover $X$ is proper over $k$, then 
$X_i\subset X$ is closed if and only if $X_i=X_i^0$, and there exists exactly one such $X_i$ lying in each connected component of $X$.
\item Suppose that $X$ is normal and projective over $k$. Then there exists a $\bbG_m$-equivariant closed immersion of $X$ into some projective space $\bbP(V)$ where $\bbG_m$-acts linearly on $V$.
\end{enumerate}
\end{thm}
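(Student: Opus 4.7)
The theorem is classical: (1) is the Bialynicki--Birula decomposition in the scheme-theoretic form developed by Drinfeld \cite{D13}, and (2) is Sumihiro's equivariant linearization theorem for torus actions on normal projective varieties. My plan is to invoke both results and explain the main steps.

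\emph{Part (1).} The strategy is local-to-global near a closed fixed point $x\in X^0$. Decompose the tangent space by $\bbG_m$-weights as $T_xX=T_x^+\oplus T_x^0\oplus T_x^-$. Since $X$ is smooth and $\bbG_m$ is linearly reductive, there exists a $\bbG_m$-equivariant \'etale morphism from an open neighborhood of $x$ in $X$ to $T_xX$ sending $x$ to $0$. From this local model one reads off: $X^0$ and $X^+$ are smooth at $x$ with tangent spaces $T_x^0$ and $T_x^+\oplus T_x^0$; the morphism $p|_{X^+}$ is \'etale-locally the closed immersion $T_x^+\oplus T_x^0\hra T_xX$, so (H) holds; and $q$ is \'etale-locally the linear projection $T_x^+\oplus T_x^0\ra T_x^0$. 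A descent argument (using vanishing of $H^1$ for the unipotent group attached to the positive weight space) upgrades this to Zariski-local triviality $X_i\simeq \bbA^{d_i}\times X_i^0$ with $d_i=\dim T_x^+$ constant along each connected component $X_i$ of $X^+$. This is exactly \cite[1.4.20]{D13}.

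\emph{Proper case of (1).} If $X_i$ is closed in the proper $X$, then $X_i$ is itself proper; but $q_i$ realizes $X_i$ as an $\bbA^{d_i}$-bundle over the proper $X_i^0$, forcing $d_i=0$, i.e.\ $X_i=X_i^0$. Conversely, $X_i^0\subset X^0$ is closed in $X$. For existence and uniqueness in a connected component $Y$ of $X$: one invokes a global BB filtration of $Y$ --- available from Lemma \ref{decompfil}(2) once a $\bbG_m$-equivariant embedding of $Y$ into a linear projective space is in hand (provided by part (2) when $X$ is projective), or more generally from the classical smooth proper BB theory. This produces $Y=Z_0\supset\cdots\supset Z_N=\emptyset$ whose cells are precisely the $X_j\subset Y$, and the innermost non-empty term $Z_{N-1}$ is a single closed cell, giving the unique $X_i\subset Y$ with $X_i=X_i^0$.

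\emph{Part (2), and the main obstacle.} Choose an ample line bundle $\cL$ on $X$. Because $X$ is normal and projective and $\bbG_m$ is connected, a sufficiently high tensor power $\cL^{\otimes n}$ admits a $\bbG_m$-linearization (the obstruction being a torsion class killed by taking a suitable power), and the associated projective embedding $X\hra \bbP(H^0(X,\cL^{\otimes n})^\vee)$ is then $\bbG_m$-equivariant with linear action on global sections. The main obstacle in this plan is the existence--uniqueness assertion in the properness part of (1): extracting a global BB filtration with a unique innermost closed cell on a possibly non-projective smooth proper connected $\bbG_m$-scheme; the remainder is a routine invocation of Drinfeld's and Sumihiro's theorems.
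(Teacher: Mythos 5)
Your proof is correct and, up to choice of reference, follows the same route as the paper, which simply cites Fogarty \cite{F71} for smoothness of $X^0$, Bialynicki--Birula \cite{BB73} for the rest of (1), and Sumihiro \cite{S74} for (2). Your local-model sketch via the \'etale $\bbG_m$-linearization near a fixed point, and the citation of \cite[1.4.20]{D13} for the Zariski-local triviality of $q_i$, is a clean modern account of the same material. The one place where you flag an obstacle --- obtaining existence and uniqueness of the closed cell per connected component when $X$ is smooth, proper, connected but possibly non-projective, without access to an equivariant projective embedding --- is not actually a gap: this is proved directly in \cite{BB73} (in particular the existence and uniqueness of a ``source'' and ``sink'' cell in each connected component of a smooth complete variety is established there without any projectivity hypothesis, working directly from the affine-bundle structure of the cells and completeness of $X$). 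So one should invoke \cite{BB73} for this rather than Lemma \ref{decompfil}(2), which is indeed only stated under a projective embedding hypothesis.
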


\begin{proof}
(1) The scheme $X^0$ is smooth over $k$ by \cite[Prop. 4]{F71}. The other results are contained in \cite{BB73}.

(2) This is a result of \cite{S74}.
\end{proof}

\begin{cor}\label{vanishstrategy}
Let $X$ be a proper $k$-scheme equipped with a $\bbG_m$-action satisfying (H). Suppose that there exists a connected smooth projective $k$-scheme $\widetilde{X}$ equipped with a $\bbG_m$-action, and a surjective 
$\bbG_m$-equivariant morphism of $k$-schemes
$$
f:\widetilde{X}\lra X.
$$
Then there exists at most one $i=:i_0\in \pi_0(X^0)$ such that $X_i\subset X$ is closed.

Suppose moreover that $Rf_*\bbF_p=\bbF_p[0]$. 
Then for $i\in \pi_0(X^0)$, we have:
$$
R(q_i)_!\mathbb{F}_p= 
\left\{ \begin{array}{ll} 
\mathbb{F}_p|_{X_{i_0}^0}[0] & \quad \textrm{if $i=i_0$}, \\
0 & \quad \textrm{otherwise}.
\end{array}
\right.
$$
\end{cor}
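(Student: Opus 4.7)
The plan is to prove the uniqueness claim and the cohomological formula separately; only the latter needs the hypothesis on $Rf_*$. For uniqueness, I will apply Theorem~\ref{BB}(2) and Lemma~\ref{decompfil}(2) to the connected smooth projective $\widetilde{X}$: a $\bbG_m$-equivariant embedding into a projective space realizes the Bialynicki--Birula decomposition $\widetilde{X}=\sqcup_{\tilde j}\widetilde{X}_{\tilde j}$ as a filtration, whose bottom step is the unique closed cell $\widetilde{X}_{\tilde i_0}$ (Theorem~\ref{BB}(1), using connectedness of $\widetilde{X}$). The $\bbG_m$-equivariance of $f$ forces $f(\widetilde{X}_{\tilde j})\subseteq X_{\sigma(\tilde j)}$ for a canonical map $\sigma\colon\pi_0(\widetilde{X}^0)\to\pi_0(X^0)$, so $f^{-1}(X_i)=\sqcup_{\tilde j\in\sigma^{-1}(i)}\widetilde{X}_{\tilde j}$ set-theoretically. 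Whenever $X_i\subset X$ is closed, $f^{-1}(X_i)$ is a non-empty closed union of cells of $\widetilde{X}$; by the filtration structure any such union contains the bottom cell $\widetilde{X}_{\tilde i_0}$, so two distinct closed $X_{i_0}\neq X_{i_1}$ would produce disjoint closed preimages both containing $\widetilde{X}_{\tilde i_0}$, a contradiction.

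For the cohomological formula I will compute the stalks of $R(q_i)_!\bbF_p$. At $y\in X^0_i$ this stalk is, by definition, $R\Gamma_c(F_y,\bbF_p)$ for the attractor fiber $F_y:=q_i^{-1}(y)$. Proper base change along the locally closed inclusion $F_y\hookrightarrow X$, applied to the hypothesis $Rf_*\bbF_p=\bbF_p[0]$, rewrites this as $R\Gamma_c(f^{-1}(F_y),\bbF_p)$. Using the $\bbG_m$-equivariance of $f$ and the fact that $p_{\widetilde{X}}$ is a universal homeomorphism (since $\widetilde{X}$ is proper), the scheme $f^{-1}(F_y)$ decomposes topologically as $\sqcup_{\tilde j}q_{\tilde j}^{-1}\bigl(\widetilde{X}^0_{\tilde j}\cap(f^0)^{-1}(y)\bigr)$. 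Theorem~\ref{BB}(1) makes each $q_{\tilde j}$ a Zariski-local $\bbA^{d_{\tilde j}}$-bundle, so by Lemma~\ref{affinecoh} only the closed cell ($\tilde j=\tilde i_0$, $d_{\tilde i_0}=0$) contributes, yielding
\[
R\Gamma_c(F_y,\bbF_p)\cong R\Gamma\bigl(\widetilde{X}^0_{\tilde i_0}\cap(f^0)^{-1}(y),\bbF_p\bigr).
\]
This vanishes unless $y\in f^0(\widetilde{X}^0_{\tilde i_0})\subseteq X^0_{\sigma(\tilde i_0)}$, which already gives $R(q_i)_!\bbF_p=0$ for every $i\neq\sigma(\tilde i_0)$.

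The hard part will be the remaining case $i=\sigma(\tilde i_0)$, where I need both that the stalk equals $\bbF_p[0]$ at every $y\in X^0_i$ and that $X_{\sigma(\tilde i_0)}\subset X$ is closed, so that this index is the $i_0$ of part 1. I plan to exploit the affineness of the retraction $q_i$ recalled from Drinfeld's work in~\S\ref{Gmsec}, which presents $F_y$ as an affine $\bbG_m$-scheme with unique sink $y$, and combine it with the stalk identification above (which expresses $R\Gamma_c(F_y,\bbF_p)$ as the $\bbF_p$-cohomology of a proper closed subscheme of $\widetilde{X}^0_{\tilde i_0}$) to force $F_y$ to reduce scheme-theoretically to the single point $\{y\}$. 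This identifies the stalk as $\bbF_p[0]$ and shows $X_{\sigma(\tilde i_0)}=X^0_{\sigma(\tilde i_0)}$ is closed, completing the argument.
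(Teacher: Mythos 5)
Your uniqueness argument matches the paper's. For the cohomological formula you work stalkwise, which is a valid alternative to the paper's global argument on $Y_i := f^{-1}(X_i)$; your computation $R\Gamma_c(F_y,\bbF_p)\cong R\Gamma_c(f^{-1}(F_y),\bbF_p)$ via proper base change is fine, and peeling off the positive-dimensional affine cells correctly kills everything with $\tilde j\neq\tilde i_0$. (One cosmetic point: you call the decomposition of $f^{-1}(F_y)$ "topological", but the long-exact-sequence argument needs that it is the family of cells of a \emph{filtration}; as in the paper, intersect the filtration of $\widetilde{X}$ from \ref{decompfil}(2) with $f^{-1}(F_y)$.)

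The genuine gap is in your treatment of $i=i_0$. You want to force $F_y$ to be a single point from the identification $R\Gamma_c(F_y,\bbF_p)\cong R\Gamma\bigl(\widetilde{X}^0_{\tilde i_0}\cap(f^0)^{-1}(y),\bbF_p\bigr)$, presumably by noting that an affine connected scheme with $H^0_c\neq 0$ is proper, hence finite, hence a point. But this argument has no purchase unless you first know that $\widetilde{X}^0_{\tilde i_0}\cap(f^0)^{-1}(y)$ is nonempty; if it were empty, the right-hand side would be $0$, which is also consistent with $F_y\cong\bbA^d$ for $d>0$. Nothing in your sketch establishes that $f^0$ maps $\widetilde{X}^0_{\tilde i_0}$ \emph{onto} $X^0_{\sigma(\tilde i_0)}$, and it is not immediate from surjectivity of $f$ — a priori the preimages of $y$ might all land in other cells $\widetilde{X}_{\tilde j}$ with $\sigma(\tilde j)=\sigma(\tilde i_0)$. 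The hole can be filled (e.g.\ run the same filtration argument on the closed, nonempty, projective $f^{-1}(\{y\})$, whose cohomology cannot vanish), but this is a nontrivial extra step that your proposal omits. The paper avoids all of it: once $X_{i_0}\subset X$ is closed, Lemma \ref{trivred} (which your proposal never invokes) says $q_{i_0}$ is proper, affine with geometrically connected fibers, hence a universal homeomorphism, and $R(q_{i_0})_!\bbF_p=\bbF_p|_{X_{i_0}^0}[0]$ by topological invariance of the \'etale site. That is the tool you are missing, and using it is considerably simpler than trying to deduce $F_y=\{y\}$ from a cohomology computation.
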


\begin{rmrk}\label{vanishstrategy+}
If $X$ can be embedded equivariantly into some $\bbP(V)$ where $\bbG_m$ acts linearly on $V$, then by \ref{decompfil} (2) there exists at least one  $i\in \pi_0(X^0)$ such that $X_i\subset X$ is closed, hence then there is exactly one such $i$.
\end{rmrk}

\begin{proof}[Proof of Corollary \ref{vanishstrategy}]
Let $i\in \pi_0(X^0)$. Define $Y_i$ and $f_i$ by the fiber product diagram
$$
\xymatrix{
Y_i \ar[r]^{f_i} \ar[d] & X_i \ar[d]^{p|_{X_i}} \\
\widetilde{X}\ar[r]^{f} & X.
}
$$
Since $p|_{X_i}$ is an immersion by hypothesis, so is the canonical map $Y_i\ra \widetilde{X}$, and we write $X_i\subset X$ and $Y_i\subset \widetilde{X}$ for the corresponding subschemes.
Also by \ref{BB} (1) the schemes $\widetilde{X}_j$, $j\in \pi_0(\widetilde{X}^0)$, are realized as subschemes of $\widetilde{X}$, and they form a decomposition of the latter, cf. \ref{decompfil} (1). Then we have the following identity of subspaces of $|\widetilde{X}|$:
$$
|Y_i|=\bigcup_{\substack{j\in \pi_0(\widetilde{X}^0)\\ f(j)=i}}|\widetilde{X}_j|\ ;
$$
indeed this can be checked on $k$-points, where it follows from the definitions, cf. \ref{kpoints}. Thus the immersions $\widetilde{X}_j\ra\widetilde{X}$, for $f(j)=i$, factor through $Y_i\subset \widetilde{X}$ (note that the schemes $\widetilde{X}_j$ are reduced, cf. \ref{BB} (1)), and the family $(\widetilde{X}_j)_{f(j)=i}$ is a decomposition of the scheme $Y_i$. Further, by \ref{BB} (2) and \ref{decompfil} (2), one may form a filtration of $\widetilde{X}$,
$$
\widetilde{X}=Z_0\supset Z_1\supset\cdots\supset Z_{N-1}\supset Z_N=\emptyset, \quad N:=|\pi_0(\widetilde{X}^0)|,
$$
whose family of cells is $(\widetilde{X}_j)_{j\in \pi_0(\widetilde{X}^0)}$. Intersecting 
with $Y_i$ we get a filtration of $Y_i$
$$
Y_i=Z_{i,0}\supset Z_{i,1}\supset\cdots\supset Z_{i,N-1}\supset Z_{i,N}=\emptyset
$$
whose family of nonempty cells is $(\widetilde{X}_j)_{f(j)=i}$.

Now suppose that $X_i\subset X$ is closed. Then so is $Y_i\subset \widetilde{X}$. Moreover the assumption that $f$ is surjective ensures that $Y_i$ is nonempty. Hence, if $N_i$ is the greatest integer $n\leq N$ such that $Z_{i,n}$ is nonempty, then $Z_{i,N_i}$ is equal to some $\widetilde{X}_j$ with $f(j)=i$ which is closed in ($Y_i$ hence in) $\widetilde{X}$. But since $\widetilde{X}$ is connected, there is exactly one $\widetilde{X}_j\subset \widetilde{X}$ which is closed, say $\widetilde{X}_{j_0}$, by \ref{BB} (1). Thus $i=f(j_0)=:i_0$ is uniquely determined.

Finally, suppose moreover that $Rf_!\bbF_p=\bbF_p[0]$. If $i=i_0$, then $R(q_{i_0})_!\bbF_p=\bbF_p|_{X_{i_0}^0}[0]$ by \ref{trivred} below. If $i\neq i_0$, consider the commutative diagram
$$
\xymatrix{
Y_i\ar[r]^<<<<<{f_i}\ar[dr]_{q_{Y_i}:=} & X_i \ar[d]^{q_i} \\
& X_i^0. 
}
$$
By proper base change $R(f_i)_!\bbF_p=\bbF_p[0]$ and 
$$
R(q_i)_!\bbF_p=R(q_{Y_i})_!\bbF_p.
$$
Then recall the filtration of $Y_i$ constructed above. For every $0\leq n\leq N-1$ such that $Z_{i,n}\setminus  Z_{i,n+1}$ is nonempty, let $i_n:Z_{i,n+1}\ra Z_{i,n}$ be the corresponding closed immersion, $j_n:\widetilde{X}_n\ra Z_{i,n}$ be the complementary open immersion, and in $D_c^b(Z_{i,n},\bbF_p)$ form  the exact triangle
$$
\xymatrix{
Rj_{n!} \mathbb{F}_p[0] \ar[r] & \mathbb{F}_p[0] \ar[r] & Ri_{n*} \mathbb{F}_p[0] \ar[r]^<<<<<{+1} &.
}
$$ 
Setting $q_{Z_{i,n}}:=q_{Y_i}|_{Z_{i,n}}:Z_{i,n}\ra X_i^0$ and applying $R(q_{Z_{i,n}})_!$ we get the exact triangle
$$
\xymatrix{
R(q_{Z_{i,n}}\circ j_n)_! \mathbb{F}_p[0] \ar[r] & R(q_{Z_{i,n}})_!\mathbb{F}_p \ar[r] & R(q_{Z_{i,n+1}})_! \mathbb{F}_p \ar[r]^<<<<<{+1} &
}
$$ 
in $D_c^b(X_i^0,\bbF_p)$. By construction, the morphism $q_{Z_{i,n}}\circ j_n:\widetilde{X}_n\ra X^0$ is equal to $q_i\circ (f_i|_{\widetilde{X}_n})$, and we have the commutative diagram
$$
\xymatrix{
\widetilde{X}_n \ar[d]_{q_n} \ar[r]^{f_i|_{\widetilde{X}_n}} & X_i \ar[d]^{q_i} \\
\widetilde{X}_n^0 \ar[r] & X_i^0
}
$$
functorially induced by $f$. Here $\widetilde{X}_n\neq \widetilde{X}_{j_0}$ since $i\neq i_0$. Consequently $R(q_n)_!\bbF_p=0$ by proper base change, \ref{BB} (1) and \ref{affinecoh}. Thus 
$$
\xymatrix{
 R(q_{Z_{i,n}})_!\mathbb{F}_p \ar[r]^<<<<<{\sim} & R(q_{Z_{i,n+1}})_! \mathbb{F}_p.
}
$$ 
Descending in this way along the filtration of $Y_i$, we obtain
$$
\xymatrix{
R(q_{Y_i})_!\bbF_p \ar[r]^<<<<<{\sim} & R(q_{\emptyset})_! \mathbb{F}_p=0,
}
$$
which concludes the proof.
\end{proof}

\begin{lem}\label{trivred}
Let $X$ be a proper $k$-scheme equipped with a $\bbG_m$-action satisfying (H). Then for each $i\in\pi_0(X^0)$ such that $X_i\subset X$ is closed, the retraction $q_i: X_i\ra X_i^0$ is a universal homeomorphism and the section $X_i^0\subset X_i$ induces the identity of reduced schemes 
$(X_i^0)_{\red}=(X_i)_{\red}$.
\end{lem}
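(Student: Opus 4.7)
The plan is to first show that $q_i$ is a finite morphism, then upgrade this to a universal homeomorphism using the geometric connectedness of its fibers, and finally deduce the identification of reduced subschemes from the existence of the section $X_i^0 \subset X_i$.

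I would first observe that since $X$ is proper over $k$ and $X_i \subset X$ is closed, $X_i$ is itself proper over $k$. The target $X_i^0$ is separated over $k$, being a subscheme of the separated scheme $X$, so the retraction $q_i : X_i \to X_i^0$ is proper. As $q_i$ is also affine by the general discussion in \ref{Gmsec}, it is therefore finite.

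Next, I would invoke the fact recalled in \ref{Gmsec} that $q$, and hence $q_i$, has geometrically connected fibers (cf.\ \cite[Cor.~1.12]{R19}). Since $q_i$ is finite its fibers are zero-dimensional, and geometric connectedness then forces each geometric fiber to consist of a single point; equivalently, $q_i$ is radicial. The inclusion $X_i^0 \subset X_i$ provides a section of $q_i$, so $q_i$ is in particular surjective. A finite, surjective, and radicial morphism is a universal homeomorphism, which establishes the first assertion.

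For the second assertion, the section $s : X_i^0 \hookrightarrow X_i$ is a closed immersion, being a section of the separated morphism $q_i$. Since $q_i \circ s = \id_{X_i^0}$ and $q_i$ is a universal homeomorphism, $s$ must also induce a homeomorphism on underlying topological spaces. Thus $s$ realizes $X_i^0$ as a closed subscheme of $X_i$ cut out by a nilpotent ideal, which vanishes upon passing to reduced subschemes; this yields the desired isomorphism $(X_i^0)_{\red} \iso (X_i)_{\red}$. I do not anticipate a genuine obstacle here; the only mildly delicate point is confirming that a geometrically connected zero-dimensional fiber really is a single geometric point so that radiciality is available, but this is routine.
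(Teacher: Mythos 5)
Your proposal is correct and follows essentially the same route as the paper's proof: the paper observes that $q_i$ is affine with geometrically connected fibers (quoting Drinfeld and Richarz), that $X_i$ closed in proper $X$ makes $q_i$ proper, and concludes directly that $q_i$ is a universal homeomorphism, then uses the section to identify the reductions. You have simply made explicit the intermediate steps (proper $+$ affine $\Rightarrow$ finite; finite with geometrically connected fibers $\Rightarrow$ radicial; the section gives surjectivity; a surjective closed immersion has ideal contained in the nilradical), all of which are correct and implicit in the paper's terser wording.
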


\begin{proof}
As we have recalled, the retraction $q:X^+\ra X^0$ is always affine, \cite[Th. 1.4.2 (ii)]{D13}, with geometrically connected fibers, cf. \cite[Cor. 1.12]{R19}. In particular its restrictions $q_i:X_i\ra X_i^0$ above each $X_i^0$ have the same properties. 

Now let $i\in\pi_0(X^0)$ such that $X_i\subset X$ is closed. Then $X_i$ is proper over $k$, so that the morphism $q_i$ is proper. Consequently, in this case $q_i$ is a universal homeomorphism. So its canonical section $X_i^0\subset X_i$ identifies $(X_i^0)_{\red}$ and $(X_i)_{\red}$.
\end{proof}

\section{$\bbF_p$-cohomology with compact support of the MV-cycles} \label{section_MVcycle}

\subsection{The affine Grassmannian}

For an affine group scheme $H$ over $k$ (or more generally, over $k[\![t]\!]$) we have the loop group functor
$$LH \colon k\text{-Algebras} \rightarrow \text{Sets}, \quad \quad R \mapsto H(R(\!(t)\!)),$$ and the non-negative loop group functor
$$L^+H \colon k\text{-Algebras} \rightarrow \text{Sets}, \quad \quad R \mapsto H(R[\![t]\!]).$$
The affine Grassmannian of $G$ is the fpqc-quotient $\Gr_G:=LG/L^+G$. It is represented by an ind-scheme over $k$.

\subsection{The Cartan decomposition}
The set $X_*(T)^+$ embeds in $\Gr_G(k)$ via the identification $\lambda\mapsto \lambda(t)$. For $\lambda\in X^*(T)^+$, denote by $\Gr_G^{\lambda}$ the reduced $L^+G$-orbit of $\lambda(t)$ in $\Gr_G$. Then we have the decomposition of the reduced ind-closed subscheme $(\Gr_G)_{\red}\subset \Gr_G$:
$$
(\Gr_G)_{\red}=\bigcup_{\lambda\in X_*(T)^+}\Gr_G^{\lambda},
$$
which on $k$-points is the quotient of the \emph{Cartan decomposition} of $G(k(\!(t)\!))$:
$$
G(k(\!(t)\!))= \bigcup_{\lambda\in X_*(T)^+}G(k[\![t]\!])\lambda(t) G(k[\![t]\!]).
$$
Let $\overline{\Gr_G^{\lambda}}$ be the closure of $\Gr_G^{\lambda}$ in $\Gr_G$. Then $\overline{\Gr_G^{\lambda}}$ is an integral projective $k$-scheme, of dimension $2\rho(\lambda)$, which is the union of the $\Gr_G^{\mu}$ with $\mu\leq\lambda$; it will also be denoted by $\Gr_{G}^{\leq\lambda}$. Moreover $(\Gr_G)_{\red}$ is the limit of the $\overline{\Gr_G^{\lambda}}$:
$$
(\Gr_G)_{\red}=\varinjlim_{\lambda\in X_*(T)^+}\overline{\Gr_G^{\lambda}}.
$$

\subsection{The Iwasawa decomposition} \label{Iwasec}
From our fixed choice $B=U\rtimes T\subset G$, we have the quotient map $B\ra T$ and the closed immersion $B\ra G$:
$$
\xymatrix{
&B \ar[dl] \ar[dr] & \\
T & & G.
}
$$
Then by functoriality we get a diagram
$$
\xymatrix{
&\Gr_B \ar[dl] \ar[dr] & \\
\Gr_T & & \Gr_G.
}
$$
Passing to the reductions, we get the decomposition of $(\Gr_B)_{\red}$ into its connected components
$$
(\Gr_B)_{\red}= \coprod_{\nu\in X_*(T)}S_{\nu}
$$
and a decomposition of $(\Gr_G)_{\red}$ by ind-subschemes
$$
(\Gr_G)_{\red}= \bigcup_{\nu\in X_*(T)}S_{\nu},
$$
where $X_*(T)$ is embedded in $\Gr_G(k)$ via the identification $\nu\mapsto \nu(t)$. On $k$-points, it is the quotient of the \emph{Iwasawa decomposition} of $G(k(\!(t)\!))$:
$$
G(k(\!(t)\!))= \bigcup_{\nu\in X_*(T)}U(k(\!(t)\!))\nu(t) G(k[\![t]\!]).
$$

\subsection{The Mirkovi\'{c}-Vilonen cycles} \label{MVsec}

\begin{defn}
Let $(\nu,\lambda)\in X_*(T)\times X_*(T)^+$. The MV-cycle of index $(\nu,\lambda)$ is the reduced $k$-scheme
$$
S_{\nu}\cap \overline{\Gr_G^{\lambda}}.
$$
\end{defn}

The MV-cycles can be reconstructed from the theory of $\bbG_m$-schemes, as follows. 

\medskip

The adjoint action of the torus $T$ on $LG$ normalizes $L^+G$ and hence induces an action on $\Gr_G$. Fixing a regular dominant cocharacter $\bbG_m\ra T$, we equip $\Gr_G$ with the resulting $\bbG_m$-action. 

Let $\lambda\in X_*(T)^+$. Then $\Gr_G^{\lambda}$ and $\overline{\Gr_G^{\lambda}}$ are stable under the $\bbG_m$-action. Thus $$X:=\overline{\Gr_G^{\lambda}}$$ is a projective $\bbG_m$-scheme over $k$. Moreover, it can be embedded equivariantly in some $\bbP(V)$ where 
$\bbG_m$ acts linearly on $V$: indeed, one can construct on the affine Grassmannian $\Gr_G$ some \emph{$G$-equivariant}
 very ample line bundle, cf. \cite[\S 1.5]{Z17}.  Consequently by \ref{decompfil} (2) the connected components of the attractor $X^+$ are realized as subschemes of $X$. Then, it follows from (\ref{kpoints} and) the Iwasawa decomposition of $G(k(\!(t)\!))$ that 
 $$
 X^0(k)=X_*(T)\cap X
 \quad\textrm{and}\quad
\forall \nu\in X^0(k),\ X_{\nu}(k)=(S_{\nu}\cap \overline{\Gr_G^{\lambda}})(k).
$$ 
Thus the MV-cycles indexed by $(\nu,\lambda)$ for varying $\nu$ are precisely the $(X_{\nu})_{\red}\subset X$, which decompose $X$ as
$$
X=\bigcup_{\nu\in X_*(T)\cap X} (X_{\nu})_{\red}.
$$

\subsection{Generalization to the standard Levi subgroups} \label{Levisec}
Let $P=U_P\rtimes L\subset G$ be a parabolic subgroup of $G$ containing $B$ with unipotent radical $U_P$ and Levi factor $L$. Then
$$
\xymatrix{
&P \ar[dl] \ar[dr] & \\
L & & G
}
$$
induces
$$
\xymatrix{
&\Gr_P \ar[dl] \ar[dr] & \\
\Gr_L & & \Gr_G,
}
$$
the decomposition of $(\Gr_P)_{\red}$ into its connected components
$$
(\Gr_P)_{\red}= \coprod_{c\in\pi_0(\Gr_L)}S_{c}
$$
and a decomposition of $(\Gr_G)_{\red}$ by ind-subschemes
$$
(\Gr_G)_{\red}= \bigcup_{c\in\pi_0(\Gr_L)}S_{c}.
$$

\begin{defn}
Let $(c,\lambda)\in \pi_0(\Gr_L)\times X_*(T)^+$. The MV-cycle of index $(c,\lambda)$ is the reduced $k$-scheme
$$
S_c\cap \overline{\Gr_G^{\lambda}}.
$$
\end{defn}

Fix a dominant cocharacter $\bbG_m\ra T$ whose centralizer in $G$ is equal to $L$, and equip $\Gr_G$ with the restriction to $\bbG_m$ of the adjoint action of $T$ along this cocharacter.

Let $\lambda\in X_*(T)^+$ and $X:=\overline{\Gr_G^{\lambda}}$. The connected components of the attractor $X^+$ are realized as subschemes of $X$, 
and $X^0(k)=(\Gr_L\cap X)(k)$.

\begin{lem}
Let $c\in \pi_0(\Gr_L)$. Then $\Gr_L^c\cap X$ is connected.
\end{lem}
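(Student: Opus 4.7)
The plan is to exhibit $\Gr_L^c\cap X$ as a finite union of irreducible projective subschemes of $\Gr_L$ sharing a common $k$-point, hence connected (the case $\Gr_L^c\cap X=\emptyset$ being trivial).

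First, I decompose $\Gr_L^c\cap X$ into $L^+L$-orbit closures. The reduced $L^+L$-orbits in $\Gr_L$ are the cells $\Gr_L^\mu$ attached to $L$-dominant cocharacters $\mu\in X_*(T)$. Since $X=\overline{\Gr_G^\lambda}$ is $L^+G$-stable by the $G$-Cartan decomposition and $L^+L\subset L^+G$, the scheme $X$ is $L^+L$-stable; so each $\Gr_L^\mu$ is either contained in or disjoint from $X$, and by the Cartan decomposition for $G$ the first alternative is equivalent to $\mu^+\leq\lambda$, where $\mu^+$ is the $G$-dominant Weyl conjugate of $\mu$. Setting
$$
A_c := \{\mu \in X_*(T) : \mu \text{ is $L$-dominant of class $c$, and } \mu^+\leq\lambda\},
$$
and using that $X$ is closed in $\Gr_G$ and $\Gr_L^c$ is open and closed in $\Gr_L$, we obtain the set-theoretic equality
$$
\Gr_L^c\cap X=\bigcup_{\mu\in A_c}\overline{\Gr_L^\mu},
$$
closures being taken inside $\Gr_L$.

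Second, I locate a common $k$-point. Let $\leq_L$ denote the $L$-analogue of the dominance order. The claim is that the finite set $A_c$ admits a unique $\leq_L$-minimum $\mu_c^*$, so that $\mu_c^*(t)\in\overline{\Gr_L^\mu}$ for every $\mu\in A_c$; the union above is then a finite family of irreducible projective subschemes all passing through this common $k$-point, and so is connected. For the claim I would show that $A_c$ is stable under the componentwise minimum taken in the basis of simple coroots of $L$: stability of the $L$-dominance region follows from the sign pattern of the Cartan matrix of $L$ (positive diagonal, nonpositive off-diagonal), and if $\mu^{\min}$ denotes the componentwise minimum of $\mu, \mu'\in A_c$, then $\mu^{\min}\leq \mu \leq \mu^+\leq\lambda$, hence $(\mu^{\min})^+\leq\lambda$ by the $W$-stability of the interval $\{\nu:\nu\leq\lambda\}$ for $\lambda$ dominant.

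The main obstacle is the combinatorial claim on the unique $\leq_L$-minimum of $A_c$. Though essentially classical, it combines the sign structure of the Cartan matrix of $L$ (so that the $L$-dominance region within a class is closed under componentwise minima) with the $W$-stability of $G$-dominance intervals (to preserve the inequality $\mu^+\leq\lambda$).
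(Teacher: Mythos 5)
Your overall plan is correct and is essentially the route the paper takes: decompose $\Gr_L^c\cap X$ as a finite union of $L^+L$-orbit closures $\overline{\Gr_L^\mu}$, $\mu\in A_c$, and exhibit a common $k$-point so that the union of these irreducible sets is connected. The first step (the set-theoretic equality, using that $X$ is closed, $L^+L$-stable, and the Cartan decompositions of $G$ and $L$) is fine, as is the observation that stability of the $L$-dominance region under componentwise minima follows from the sign pattern of the Cartan matrix.

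The gap is in the final implication: from $\mu^{\min}\leq\lambda$ you conclude $(\mu^{\min})^+\leq\lambda$ by invoking ``$W$-stability of the interval $\{\nu:\nu\leq\lambda\}$.'' That interval is \emph{not} $W$-stable in general. Already for $G=\SL_2$ with $\lambda=2$ and $\nu=-3$ one has $\nu\leq\lambda$ but $\nu^+=3\not\leq\lambda$. The $W$-stable set is $\{\nu:\nu^+\leq\lambda\}$, and membership of $\mu^{\min}$ in \emph{that} set is precisely what one needs to show. It is true here, but for a reason you have not given: since $\mu^{\min}\leq_L\mu$ and $\mu^{\min}$ is $L$-dominant, the Cartan decomposition for $L$ gives $\mu^{\min}(t)\in\overline{\Gr_L^{\mu}}\subset\Gr_L^c\cap X$ (using that $X$ is closed and $L^+L$-stable with $\mu(t)\in X$), whence $(\mu^{\min})^+\leq\lambda$ by the Cartan decomposition for $G$. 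Alternatively one can show combinatorially that $(\mu^{\min})^+\leq\mu^+$ whenever $\mu^{\min},\mu$ are $L$-dominant with $\mu^{\min}\leq_L\mu$, by pairing with dominant weights and using that the pairing with $\mu^{\min}$ is maximized at an $L$-dominant Weyl translate.

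The paper sidesteps this entirely: it observes that every Cartan closure inside the connected component $\Gr_L^c$ (not merely those inside $X$) contains the \emph{unique minimal $L^+L$-orbit} of $\Gr_L^c$, so that orbit is automatically a common point of the closures appearing in your decomposition, and its index need not be checked to lie in $A_c$ by a separate combinatorial argument. That is the cleaner version of the idea you are reaching for.
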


\begin{proof}
Indeed $\Gr_L^c\cap X=\Gr_L^c\cap \overline{\Gr_G^{\lambda}}$ is a closed $L^+L$-stable subscheme of $\Gr_L^c$, hence a union of Cartan closures for the affine Grassmannian $\Gr_L$ which are contained in the connected component $\Gr_L^c$. Such Cartan closures are irreducible, and all contain the unique minimal $L^+L$-orbit of $\Gr_L^c$, so any union of them is connected.
\end{proof}
\noindent It follows that 
$$
\pi_0(X^0)=\{|\Gr_L^c\cap X|\ |\ \textrm{$c\in\pi_0(\Gr_L)$ and $\Gr_L^c\cap X\neq\emptyset$}\}.
$$
Next, the bijection $\Gr_P(k)\xrightarrow{\sim}\Gr_G(k)$ corresponds to the decomposition
$$
G(k(\!(t)\!))/G(k[\![t]\!])= \bigcup_{c\in \pi_0(\Gr_L)}S_c(k)=\bigcup_{c\in \pi_0(\Gr_L)}U_P(k(\!(t)\!))\Gr_L^c(k),
$$
and so we compute using \ref{kpoints} that
$$
\forall c\in \pi_0(X^0),\quad (X_c)_{\red}=S_c\cap \overline{\Gr_G^{\lambda}}.
$$
Thus the MV-cycles indexed by $(c,\lambda)$ for varying $c$ are precisely the $(X_c)_{\red}\subset X$,  and they decompose $X$ as
$$
X=\bigcup_{\substack{c\in \pi_0(\Gr_L)\\ \Gr_L^c\cap X\neq\emptyset}} (X_c)_{\red}.
$$

\subsection{Equivariant resolutions of Schubert varieties} 
Let 
$$
W_a:=\bbZ\Phi^{\vee}\rtimes W\subset \widetilde{W}:= X_*(T)\rtimes W
$$ 
be the affine Weyl group and the Iwahori-Weyl group. Consider the \emph{length function} 
\begin{eqnarray*}
\ell:\widetilde{W}&\lra& \bbN\\
\nu w&\lmapsto& \sum_{\substack{\alpha\in\Phi^+\\ w^{-1}(\alpha)>0}}|\langle \nu,\alpha\rangle| + \sum_{\substack{\alpha\in\Phi^+ \\ w^{-1}(\alpha)<0}}|\langle \nu,\alpha\rangle+1|.
\end{eqnarray*}
Let $S_a$ be the set of elements of length $1$ which are contained in $W_a$. Then $(W_a,S_a)$ is a Coxeter system. Let $\Omega\subset \widetilde{W}$ be the set of elements of length $0$. This is a subgroup and $\widetilde{W}=W_a\rtimes\Omega$. Finally, denote by $\cB$ the Iwahori group scheme equal to the dilation of $G_{k[\![t]\!]}$ along $B_k$, and for each $s\in S_a$, by $\cP_s$ the parahoric group scheme increasing $\cB$ determined by $s$.

Now let $\lambda\in X_*(T)^+$. Choose a reduced expression of $\lambda w_0\in\widetilde{W}$, i.e. an $(n+1)$-tuple $(s_1,\ldots,s_n,\omega)\in S_a^n\times\Omega$ such that $s_1\cdots s_n\omega=\lambda w_0$ and $n=\ell(\lambda w_0)$. In the next proposition, we denote by 
$\overline{\Fl_G^{\lambda w_0}}$ the \emph{Schubert variety} of $\lambda w_0$ in the \emph{affine flag variety} $\Fl_G:=LG/L^+\cB$, i.e. the closure of 
$\Fl_G^{\lambda w_0}:=L^+\cB\cdot \lambda w_0\subset\Fl_G$. 

\begin{prop}\label{resolD}
The fpqc quotient $\widetilde{X}:=L^+\cP_{s_1}\times^{L^+\cB}\cdots\times^{L^+\cB}L^+\cP_{s_n}/L^+\cB$ is representable by a connected smooth projective scheme over $k$, and it is equipped with a $T$-action by multiplication on the left on the factor $L^+\cP_{s_1}$. The morphism
\begin{eqnarray*}
L^+\cP_{s_1}\times^{L^+\cB}\cdots\times^{L^+\cB}L^+\cP_{s_n}/L^+\cB& \lra & LG/L^+\cB=:\Fl_G\\
{[}p_1,\cdots,p_n{]} & \lmapsto & p_1\cdots p_n\omega
\end{eqnarray*}
factors through $\overline{\Fl_G^{\lambda w_0}}$. The canonical projection 
$$
\Fl_G:=LG/L^+\cB\lra LG/L^+G=:\Gr_G
$$
induces a morphism $\overline{\Fl_G^{\lambda w_0}}\ra\overline{\Gr_G^{\lambda}}=:X$. The composition
$$
\xymatrix{
f:\widetilde{X}\ar[r] & X
}
$$
is surjective, $T$-equivariant, and satisfies $Rf_*\bbF_p=\bbF_p[0]$.
\end{prop}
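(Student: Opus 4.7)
The plan is to handle the geometric assertions and the cohomological assertion $Rf_*\bbF_p=\bbF_p[0]$ separately. For the geometric half, I would invoke the Bott--Samelson--Demazure construction, whose key local input is that $L^+\cP_s/L^+\cB \simeq \bbP^1_k$ for each simple affine reflection $s \in S_a$, a standard computation from the reductive special-fiber structure of the parahoric and Iwahori group schemes. Iterating, $\widetilde{X}$ acquires the structure of a tower of $\bbP^1$-bundles and is therefore a smooth projective connected $k$-scheme of dimension $n$. Left multiplication by $T$ on the first factor $L^+\cP_{s_1}$ commutes with each of the right $L^+\cB$-equivalence relations defining the twisted product, so it descends to the desired $T$-action on $\widetilde{X}$. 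The image of the multiplication map is $L^+\cP_{s_1}\cdots L^+\cP_{s_n}\omega/L^+\cB \subseteq \Fl_G$; expanding each parahoric via $L^+\cP_s = L^+\cB \sqcup L^+\cB s L^+\cB$ and invoking the Bruhat decomposition, the hypothesis that $(s_1,\ldots,s_n,\omega)$ is a reduced expression for $\lambda w_0$ identifies this image with the Schubert variety $\overline{\Fl_G^{\lambda w_0}}$. Because $w_0 \in L^+G$, the projection $\Fl_G\to\Gr_G$ sends $\lambda w_0$ to $\lambda(t)$, so $f$ factors through $X=\overline{\Gr_G^{\lambda}}$. For surjectivity, the image of $f$ is closed, $L^+\cB$-stable, and contains $\lambda(t)$, hence contains $\overline{L^+\cB\cdot\lambda(t)}$; the length formula specialized to $w=1$ gives $\ell(\lambda)=2\rho(\lambda)=\dim X$, so by irreducibility of $X$ this closure already fills $X$. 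The $T$-equivariance of $f$ is tautological from the constructions.

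For $Rf_*\bbF_p=\bbF_p[0]$, I would factor $f=\pi_2\circ\pi_1$ with $\pi_1:\widetilde{X}\to\overline{\Fl_G^{\lambda w_0}}$ the Demazure resolution and $\pi_2:\overline{\Fl_G^{\lambda w_0}}\to X$ the projection, and treat each factor separately. The length formula yields $\dim\overline{\Fl_G^{\lambda w_0}}=2\rho(\lambda)+\ell(w_0)=\dim X+\dim G/B$; since the global projection $\Fl_G\to\Gr_G$ is étale-locally a $G/B$-fibration with irreducible preimage over $X$, a dimension-and-irreducibility comparison identifies $\overline{\Fl_G^{\lambda w_0}}$ with the entire preimage $\pi^{-1}(X)$. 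Hence $\pi_2$ itself is an étale-locally trivial $G/B$-fibration, and since $R\Gamma(G/B,\bbF_p)=\bbF_p[0]$ (from Kempf vanishing $H^i(G/B,\mathcal{O})=0$ for $i>0$ combined with the Artin--Schreier exact sequence), proper base change gives $R\pi_{2*}\bbF_p=\bbF_p[0]$. For $\pi_1$, I would push the Artin--Schreier exact sequence on $\widetilde{X}$ forward by $\pi_{1*}$ and compare with Artin--Schreier on the base, reducing the $\bbF_p$-acyclicity of $\pi_1$ to the coherent statement $R\pi_{1*}\mathcal{O}_{\widetilde{X}}=\mathcal{O}_{\overline{\Fl_G^{\lambda w_0}}}$, which is the classical cohomological theorem for Demazure resolutions proved in positive characteristic via Frobenius splittings.

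The main obstacle is this last coherent vanishing in the affine flag variety setting. Characteristic $p$ precludes Grauert--Riemenschneider, and one must instead invoke the Frobenius splitting machinery (Mehta--Ramanathan, Ramanan--Ramanathan, and their affine extensions by Faltings and Mathieu). Granting this input, everything else reduces to a straightforward chase of Artin--Schreier exact triangles and standard bundle-theoretic arguments.
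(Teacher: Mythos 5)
Your proposal follows essentially the same strategy as the paper: factor $f$ through the Schubert variety $\overline{\Fl_G^{\lambda w_0}}$, show $\pi_2$ is a $G/B$-fibration and use $R\Gamma(G/B,\bbF_p)=\bbF_p[0]$ (you via Kempf vanishing and Artin--Schreier, the paper via the Bruhat cell filtration; both work), and reduce $\bbF_p$-acyclicity of the Demazure resolution $\pi_1$ to the coherent statement via Artin--Schreier. All the geometric assertions (Bott--Samelson structure, $T$-action, image, surjectivity) are handled correctly and in the standard way.

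There is, however, one gap in the $\pi_1$ step: you claim the coherent identity $R\pi_{1*}\cO_{\widetilde X}=\cO_{\overline{\Fl_G^{\lambda w_0}}}$. This is not true in general, because Schubert varieties in the affine flag variety need \emph{not} be normal; normality can fail when $p$ divides $|\pi_1(G_{\der})|$. Since $\widetilde X$ is normal, $R^0\pi_{1*}\cO_{\widetilde X}$ is automatically the pushforward of the structure sheaf of the normalization, so the identity you want already forces normality of $\overline{\Fl_G^{\lambda w_0}}$ --- which is precisely what cannot be assumed. What the Frobenius-splitting literature (Pappas--Rapoport, Thm.~9.7) actually gives is $R\pi_{1*}'\cO_{\widetilde X}=\cO_{\overline{\Fl_G^{\lambda w_0}}^{\nor}}$ for the resolution $\pi_1'$ mapping to the \emph{normalization}, together with the fact that the normalization map $\overline{\Fl_G^{\lambda w_0}}^{\nor}\to\overline{\Fl_G^{\lambda w_0}}$ is a universal homeomorphism. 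The fix is exactly the paper's: factor $\pi_1 = \nu\circ\pi_1'$ through the normalization, apply Artin--Schreier on $\overline{\Fl_G^{\lambda w_0}}^{\nor}$ to get $R\pi'_{1*}\bbF_p=\bbF_p[0]$, and then use that $\nu$ is a universal homeomorphism, hence an equivalence of étale sites, to conclude $R\nu_*\bbF_p=\bbF_p[0]$. With that correction your argument goes through and coincides with the paper's.
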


\begin{proof}
The morphism $f_1:\widetilde{X}\ra \overline{\Fl_G^{\lambda w_0}}$ spelled out in the proposition is nothing but the well-known affine Demazure resolution of the Schubert variety $\overline{\Fl_G^{\lambda w_0}}$ \cite[8.8]{PR08}. 
It satisfies $R(f_1)_*\bbF_p=\bbF_p[0]$. Indeed, decompose it as
$$
\xymatrix{
\widetilde{X} \ar[r]^<<<<<{f_1'} &\overline{\Fl_G^{\lambda w_0}}^{\nor} \ar[r]^<<<<<{f_1''}  &\overline{\Fl_G^{\lambda w_0}}
}
$$
where $f_1''$ is the normalization. Then $f_1''$ is a universal homeomorphism by \cite[9.7 (a)]{PR08}. Moreover $R(f_1')_*\cO_{\widetilde{X}}=\cO_{\overline{\Fl_G^{\lambda w_0}}^{\nor}}[0]$ by \cite[9.7 (d)]{PR08}, whence $R(f_1')_*\bbF_p=\bbF_p[0]$ by considering the Artin-Schreier short exact sequences 
on $\widetilde{X}$ and on $\overline{\Fl_G^{\lambda w_0}}^{\nor}$. 

On the other hand, the morphism $f_2:\overline{\Fl_G^{\lambda w_0}}\ra\overline{\Gr_G^{\lambda}}=:X$ is the restriction over 
$\overline{\Gr_G^{\lambda}}$ of the canonical projection $\Fl_G\ra\Gr_G$. In particular it is a $G/B$-bundle, whence $R(f_2)_*\bbF_p=\bbF_p[0]$ by proper base change and the Bruhat decomposition of the flag variety $G/B$ (which can be filtered), 
cf. \ref{vanishfilaff}.

Thus $Rf_*\bbF_p=R(f_2)_*R(f_1)_*\bbF_p=\bbF_p[0]$.
\end{proof}

\begin{rmrk}\label{resolgen}
The morphism $\widetilde{X}\ra \overline{\Fl_G^{\lambda w_0}}$ in \ref{resolD} is moreover birational, so that it is a \emph{resolution of singularities} of 
the Schubert variety $\overline{\Fl_G^{\lambda w_0}}$, and $\widetilde{X}\ra X$ in \ref{resolD} is the composition of the latter with the $G/B$-fibration 
$\overline{\Fl_G^{\lambda w_0}}\ra\overline{\Gr_G^{\lambda}}$. Instead, we could also have used a $T$-equivariant resolution of singularities of the variety $\overline{\Gr_G^{\lambda}}$ itself, e.g. the affine Demazure resolution of $\overline{\Fl_G^{\lambda}}$ followed by the birational projection $\overline{\Fl_G^{\lambda}}\ra\overline{\Gr_G^{\lambda}}$. 

In fact, this resolution of $\overline{\Gr_G^{\lambda}}$ is a very particular case of the equivariant resolutions of singularities of Schubert varieties in the twisted affine flag variety associated to any connected reductive group over $k(\!(t)\!)$ constructed in \cite{R13}; precisely it is a particular case of \cite[3.2 (i)]{R13}\footnote{for the normalization of the Kottwitz map as in \cite{PR08}, which is opposite to the one in \cite{R13}.}. If the reductive group over $k(\!(t)\!)$ splits over a tamely ramified extension and the order of the fundamental group of its derived subgroup is prime-to-$p$, then any Schubert variety has rational singularities by \cite[8.4]{PR08}; since `having rational singularities' is an intrinsic notion by \cite[Th. 1]{CR11} (see also \cite{K20}), 
then in this case all the resolutions $f$ from \cite{R13} satisfy $Rf_*\bbF_p=\bbF_p[0]$ (using Artin-Schreier).
\end{rmrk}

\subsection{$\bbF_p$-direct images with compact support of the MV-cycles}

\begin{thm}\label{thmcompute}
Let $(\nu,\lambda)\in X_*(T)\times X_*(T)^+$. Then
$$
R\Gamma_c(S_{\nu}\cap \overline{\Gr_G^{\lambda}},\mathbb{F}_p)= 
\left\{ \begin{array}{ll} 
\mathbb{F}_p|_{\{w_0(\lambda)\}}[0] & \quad \textrm{if $\nu=w_0(\lambda)$}, \\
0 & \quad \textrm{otherwise}.
\end{array}
\right.
$$

More generally, let $(c,\lambda)\in \pi_0(\Gr_L)\times X_*(T)^+$. Let
$$
q_{c,\lambda}:S_c\cap \overline{\Gr_G^{\lambda}}\lra \Gr_L^c\cap \overline{\Gr_G^{\lambda}}
$$
be the morphism of $k$-schemes defined by the diagram
$$
\xymatrix{
&\Gr_P \ar[dl] \ar[dr] & \\
\Gr_L & & \Gr_G.
}
$$
Then
$$
R(q_{c,\lambda})_!\mathbb{F}_p= 
\left\{ \begin{array}{ll} 
\mathbb{F}_p|_{\overline{\Gr_{L,w_0^Lw_0(\lambda)}}}[0] & \quad \textrm{if $c=c(w_0(\lambda))$}, \\
0 & \quad \textrm{otherwise}.
\end{array}
\right.
$$
\end{thm}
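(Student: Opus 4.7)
The proof reduces to the Levi case (the torus statement is the special case $L=T$). The plan is to apply Corollary \ref{vanishstrategy} to $X := \overline{\Gr_G^{\lambda}}$ equipped with the $\bbG_m$-action of Section \ref{Levisec}, arising from a dominant cocharacter $\bbG_m \to T$ with centralizer $L$. Hypothesis (H) is verified using the $G$-equivariant very ample line bundle on $\Gr_G$ recalled in Section \ref{MVsec}: its restriction to $X$ yields a $\bbG_m$-equivariant closed embedding into some $\bbP(V)$ with linear $\bbG_m$-action on $V$, so Lemma \ref{decompfil}(2) delivers (H) and, via Remark \ref{vanishstrategy+}, the existence of at least one closed attractor component.

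For the smooth cover required by \ref{vanishstrategy}, I would invoke Proposition \ref{resolD}: choosing a reduced expression of $\lambda w_0 \in \widetilde{W}$ produces a connected smooth projective $k$-scheme $\widetilde{X}$ together with a $T$-equivariant (hence $\bbG_m$-equivariant) surjective morphism $f \colon \widetilde{X} \to X$ satisfying $Rf_* \bbF_p = \bbF_p[0]$. All hypotheses of \ref{vanishstrategy} are then in place, producing a \emph{unique} index $c_0 \in \pi_0(\Gr_L)$ (with $\Gr_L^{c_0}\cap X\neq\emptyset$) such that $X_{c_0}$ is closed in $X$, together with the desired identity
\[
R(q_{c,\lambda})_! \bbF_p = \begin{cases} \bbF_p|_{X^0_{c_0}}[0] & \text{if } c = c_0, \\ 0 & \text{otherwise,} \end{cases}
\]
where by Section \ref{Levisec} one has $X^0_c = \Gr_L^c \cap X$ and $(X_c)_{\red} = S_c \cap X$.

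The remaining step, which I expect to be the only non-formal part, is to identify $c_0 = c(w_0(\lambda))$ and $X^0_{c_0} = \overline{\Gr_L^{w_0^L w_0(\lambda)}}$. The point $w_0(\lambda)(t)$ lies in $X$ (since $w_0(\lambda)\leq\lambda$) and is $T$-fixed, hence $\bbG_m$-fixed, so it sits in $X^0_{c(w_0(\lambda))}$; moreover $w_0(\lambda)$ is the unique minimum of $X_*(T) \cap X$ in the dominance order, because any such $\nu$ lies in $\mathrm{Conv}(W\cdot\lambda)$. The Iwasawa closure relations $\overline{S_{c'}}\supset S_{c''}$ whenever $c''$ is lower than $c'$ then force $X_{c(w_0(\lambda))}\subset X$ to be closed, and uniqueness of the closed attractor component yields $c_0 = c(w_0(\lambda))$. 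Finally, Lemma \ref{trivred} gives $(X^0_{c_0})_{\red} = (X_{c_0})_{\red} = (S_{c_0}\cap X)_{\red}$, and analyzing $\Gr_L^{c_0} \cap X$ as a closed $L^+L$-stable subscheme of the connected component $\Gr_L^{c_0}$ exhibits it as a union of Schubert varieties $\overline{\Gr_L^{\mu}}$ indexed by $L$-dominant $\mu$ in the coset with $W$-dominant representative $\mu^+ \leq \lambda$. The combinatorial fact---which is where the only real verification sits---that $w_0^L w_0(\lambda)$ is the unique maximum of these $\mu$ in the $L$-dominance order then completes the identification, since \ref{vanishstrategy} only sees the underlying reduced structure on the target.
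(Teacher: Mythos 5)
Your overall strategy matches the paper's: you apply Corollary \ref{vanishstrategy} to $X=\overline{\Gr_G^\lambda}$ equipped with the $\bbG_m$-action through a dominant cocharacter with centralizer $L$, verify the hypotheses via \ref{decompfil}(2) (using the equivariant projective embedding of \cite[\S 1.5]{Z17}) and \ref{resolD}, and identify the fixed locus $X^0_{c_0}$ through what amounts to Lemma \ref{Lintersection}(2). The functorial output of \ref{vanishstrategy} then gives the theorem. So far this is precisely the paper's skeleton.

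The one place where you diverge, and where your argument is not yet complete, is the identification $c_0 = c(w_0(\lambda))$. You propose to deduce it from the minimality of $w_0(\lambda)$ in $X_*(T)\cap X$ together with ``Iwasawa closure relations $\overline{S_{c'}}\supset S_{c''}$ whenever $c''$ is lower than $c'$.'' This route requires two facts that the paper never establishes and that need genuine care: (a) a closure relation for the parabolic semi-infinite cells $S_c\subset\Gr_G$ with respect to a partial order on $\pi_0(\Gr_L)$ compatible with the dominance order on $X_*(T)$ under the projection $X_*(T)\to\pi_0(\Gr_L)$; and (b) the fact that this closure relation, taken inside $\Gr_G$, actually gives $\overline{S_{c(w_0(\lambda))}\cap X}\subset S_{c(w_0(\lambda))}\cap X$ after intersecting with $X$ — in general $\overline{S_{c'}\cap X}^{X}\subsetneq \overline{S_{c'}}\cap X$, so the step from the global closure relation to closedness of $X_{c(w_0(\lambda))}$ inside $X$ is not automatic. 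These facts are plausible (and versions exist in the literature), but using them here imports a nontrivial input.

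The paper avoids this by a two-stage bootstrap that stays within its own toolkit. It first proves the torus case $L=T$ directly: since $S_{w_0(\lambda)}\cap\overline{\Gr_G^\lambda}$ is zero-dimensional (quoting \cite[Th. 3.2(a)]{GeometricSatake}), the corresponding attractor component $(X_{w_0(\lambda)})_{\red}=\{w_0(\lambda)\}$ is closed, so \ref{vanishstrategy} immediately pins down $\nu_0=w_0(\lambda)$. For general $L$, the paper observes that each $(X_c)_{\red}$ is stable under a regular dominant $\bbG_m\subset T$ and its attractor decomposition is $(X_\nu)_{\red}$ for $\nu\in c\cap X$; then a closed $(X_{\nu_0})_{\red}\subset (X_{c_0})_{\red}$ (from \ref{decompfil}(2)) is also closed in $X$, forcing $\nu_0=w_0(\lambda)$ by the torus case and hence $c_0=c(w_0(\lambda))$. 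If you want your proposal to be self-contained at the level of the paper's prerequisites, you should replace the Iwasawa closure relations step with this bootstrap; alternatively, you need to prove or carefully cite the Levi closure relations and their compatibility with intersection with $X$.
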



\begin{proof}
Let $\eta_T:\bbG_m\ra T$ be a regular dominant cocharacter. We start by applying \ref{vanishstrategy} to $X:=\overline{\Gr_G^{\lambda}}$ equipped the $\bbG_m$-action $\eta_T(\bbG_m)$ obtained by restriction of the adjoint $T$-action along $\eta_T$; it does apply thanks to \ref{decompfil} (2) combined with \cite[\S 1.5]{Z17}, and \ref{resolD}. 

Recall from \cite[Th. 3.2 (a)]{GeometricSatake} that the MV-cycle $S_{w_0(\lambda)}\cap \overline{\Gr_G^{\lambda}}$ is $0$-dimensional. Hence
$$
(X_{w_0(\lambda)})_{\red}=S_{w_0(\lambda)}\cap \overline{\Gr_G^{\lambda}}=\{w_0(\lambda)\}\subset X_{\red}
$$
is closed, and the theorem in the case of the torus $T$ follows.

Next let $L$ be a standard Levi. We have the canonical commutative diagram
$$
\xymatrix{
\Gr_B \ar[d] \ar[r] &\Gr_P \ar[d] \ar[r] & \Gr_G \\
\Gr_T \ar[r] & \Gr_L.
}
$$
It shows that for each $c\in\pi_0(\Gr_L)$,
$$
S_c(k)=\bigcup_{\nu\in X_*(T)\cap \Gr_L^c}S_{\nu}(k)\quad \subset \Gr_G(k).
$$
Intersecting with $X=\overline{\Gr_G^{\lambda}}\subset\Gr_G$ we get
$$
X_c(k)=\bigcup_{\nu\in X_*(T)\cap \Gr_L^c\cap X}X_{\nu}(k)\quad \subset X(k).
$$
Consequently, the subscheme $(X_c)_{\red}\subset X$ is $\eta_T(\bbG_m)$-stable, and the reduced connected components of its attractor are realized by the subschemes $(X_{\nu})_{\red}$, $\nu\in X_*(T)\cap (\Gr_L^c\cap X)$. In particular, by  \ref{decompfil} (2), there exists at least one nonempty 
closed $(X_{\nu})_{\red}\subset (X_c)_{\red}$.

Now let $\eta_L:\bbG_m\ra T$ be a dominant cocharacter whose centralizer in $G$ is $L$, and equip $X:=\overline{\Gr_G^{\lambda}}$ with the $\bbG_m$-action 
$\eta_L(\bbG_m)$ obtained by restriction of the adjoint $T$-action along $\eta_L$. Thanks to \ref{decompfil} (2) combined with \cite[\S 1.5]{Z17},  there exists at least one nonempty $(X_{c_0})_{\red}:=(X_c)_{\red}\subset X$ which is closed. Choosing $(X_{\nu_0})_{\red}\subset (X_{c_0})_{\red}$ nonempty and closed, then we get $(X_{\nu_0})_{\red}\subset X_{\red}$ nonempty and closed, so that $\nu_0=w_0(\lambda)$ by the torus case. Hence $c_0=c(w_0(\lambda))$. And by \ref{Lintersection} (2) below, 
$$
|X_{c(w_0(\lambda))}^0|=|\Gr_L^{c(w_0(\lambda))} \cap X|=|\overline{\Gr_{L,w_0^L w_0 (\lambda)}}|.
$$
The theorem in the case of the standard Levi $L$ follows by \ref{vanishstrategy}, which applies thanks to \ref{resolD}.
\end{proof}

\begin{lem} \label{Lintersection}
Let $c \in \pi_0(\Gr_L)$ and $\lambda \in X_*(T)^+$. 
\leavevmode
\begin{enumerate}[{\normalfont (i)}]
\item If $\lambda \in c$ then $\Gr_L^c \cap \Gr_G^{\leq \lambda} = \Gr_L^{\leq \lambda}$.
\item If $w_0 (\lambda) \in c$ then $\Gr_L^c \cap \Gr_G^{\leq \lambda} = \Gr_L^{\leq w_0^L w_0 (\lambda)}$.
\end{enumerate}
\end{lem}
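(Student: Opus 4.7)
The strategy for both parts is to reduce the assertion to identifying, inside
$$A_c := \{\mu \in X_*(T)\ |\ \mu\textrm{ is }L\textrm{-dominant, }\mu \in c\textrm{ and }\mu^+ \leq \lambda\}$$
(with $\mu^+$ the $G$-dominant representative of $W\mu$; recall that $\mu(t) \in \Gr_G^{\leq \lambda}$ iff $\mu^+ \leq \lambda$), a unique maximum under $\leq_L$. By the reasoning of the preceding lemma, $\Gr_L^c \cap \Gr_G^{\leq \lambda}$ is the union $\bigcup_{\mu \in A_c}\Gr_L^{\leq \mu}$ of $L$-Schubert varieties, and it coincides with a single $\Gr_L^{\leq \mu_0}$ precisely when $\mu_0$ is the unique $\leq_L$-maximum of $A_c$.

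For part (i), $\lambda \in A_{[\lambda]}$ trivially. Given $\mu \in A_{[\lambda]}$, the standard inequalities $\mu \leq \mu^+ \leq \lambda$ give $\lambda - \mu = \sum_{\alpha^\vee \in (\Phi^+)^\vee} a_\alpha \alpha^\vee$ with $a_\alpha \geq 0$, and this sum lies in $\bbZ\Phi_L^\vee$ because $\mu$ is in the coset $[\lambda]$. Projecting to $\bbZ\Phi^\vee/\bbZ\Phi_L^\vee$, the images of the simple coroots $\alpha_j^\vee$ with $j \notin \Delta_L$ form a $\bbQ$-basis of the rational quotient, while each $\bar\alpha^\vee$ with $\alpha^\vee \in (\Phi^+)^\vee \setminus \Phi_L^\vee$ expands as a nonnegative integer combination of these with at least one strictly positive coefficient. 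The identity $0 = \sum_\alpha a_\alpha \bar\alpha^\vee$ then forces $a_\alpha = 0$ for every $\alpha^\vee \notin \Phi_L^\vee$, so $\mu \leq_L \lambda$.

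For part (ii), set $\lambda' := w_0^L w_0(\lambda)$. A short check shows $\lambda'$ is $L$-dominant (since $w_0(\lambda)$ is $L$-antidominant and $w_0^L$ reverses $L$-antidominance), $\lambda' \in c$ (because $\lambda' - w_0(\lambda) = (w_0^L - 1)w_0(\lambda) \in \bbZ\Phi_L^\vee$), and $(\lambda')^+ = \lambda$, so $\lambda' \in A_c$. The inclusion $\Gr_L^{\leq \lambda'} \subseteq \Gr_L^c \cap \Gr_G^{\leq \lambda}$ will follow from $\mathrm{conv}(W_L \lambda') \subseteq \mathrm{conv}(W\lambda)$ together with $\lambda' - \lambda \in \bbZ\Phi^\vee$. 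For the reverse inclusion, I plan to exploit the fundamental weights $\varpi_j \in X^*(T)_\bbQ$ for $j \notin \Delta_L$: each such $\varpi_j$ is $W_L$-invariant and vanishes on $\bbQ\Phi_L^\vee$, hence for $\mu \in A_c$,
$$\langle \mu, \varpi_j\rangle = \langle \lambda', \varpi_j\rangle = \langle w_0(\lambda), \varpi_j\rangle = \min_{w \in W}\langle w\lambda, \varpi_j\rangle.$$
Since $\mu \in \mathrm{conv}(W\lambda)$ (the standard reformulation of $\mu^+ \leq \lambda$), this forces $\mu$ onto the face $F$ of $\mathrm{conv}(W\lambda)$ along which every such $\varpi_j$ attains its minimum. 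The vertices of $F$ are $W\lambda \cap (\lambda' + \bbQ\Phi_L^\vee)$; identifying this vertex set with $W_L \lambda'$ yields $F = \mathrm{conv}(W_L \lambda')$, and the $L$-dominance of $\mu$ together with $\mu - \lambda' \in \bbZ\Phi_L^\vee$ will then give $\mu \leq_L \lambda'$ by the $L$-analogue of the standard characterization of the dominance order.

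I expect the main obstacle to be the identification $W\lambda \cap (\lambda' + \bbQ\Phi_L^\vee) = W_L \lambda'$, which is most delicate when $\lambda$ has nontrivial $W$-stabilizer. The key input to justify it is that the quotient map from the $L$-dominant part of $W\lambda$ to $X_*(T)/\bbZ\Phi_L^\vee$ is injective: two $L$-dominant elements of $W\lambda$ in the same $\bbZ\Phi_L^\vee$-coset must coincide. Granting this, the coset $[\lambda']$ meets the $L$-dominant part of $W\lambda$ only at $\lambda'$, so $W\lambda \cap (\lambda' + \bbQ\Phi_L^\vee)$ is the single $W_L$-orbit $W_L \lambda'$.
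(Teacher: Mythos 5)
Your part (i) is correct and close in spirit to the paper's, though slightly more roundabout: you project $\lambda - \mu$ (written as a nonnegative combination of positive coroots) to $\bbQ\Phi^\vee/\bbQ\Phi_L^\vee$ and invoke positivity, whereas the paper notes that $\lambda - \mu$ already lies in $\bbN\Delta^\vee$ (using only \emph{simple} coroots), and then the linear independence of $\Delta^\vee$ gives $\bbZ\Delta_L^\vee \cap \bbN\Delta^\vee = \bbN\Delta_L^\vee$ in one step. Your argument works, but the paper's is the cleaner version of the same idea.

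For part (ii) you take a genuinely different and substantially heavier route, and it is not a finished proof. You set up a face-of-the-weight-polytope argument: use fundamental weights $\varpi_j$ for $j \notin \Delta_L$ to show any $\mu \in A_c$ lies on the face of $\mathrm{conv}(W\lambda)$ minimizing all the $\varpi_j$, then identify that face with $\mathrm{conv}(W_L\lambda')$. As you yourself flag, the key step $W\lambda \cap (\lambda' + \bbQ\Phi_L^\vee) = W_L\lambda'$ is ``granted'' rather than proven, and the delicate case of non-regular $\lambda$ is left open; you would also need the polytope characterization of $\leq_L$ as a further lemma. All of that can probably be made to work, but it is a lot of machinery for a statement the paper disposes of in two lines. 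The trick you are missing is that (ii) is literally (i) after applying the reflections $w_0$ and $w_0^L$: with $w_0(\lambda) \in c$ one has $w_0(\lambda) - w_0^L(\mu) \in \bbZ\Delta_L^\vee$ from the component condition (since $w_0^L(\mu) - \mu \in \bbZ\Phi_L^\vee$), and $w_0^L(\mu) - w_0(\lambda) \in \bbN\Delta^\vee$ from $\mu \in W\lambda'$ with $\lambda' \leq \lambda$ together with $-w_0(\Delta^\vee) = \Delta^\vee$; then the identical intersection $\bbZ\Delta_L^\vee \cap \bbN\Delta^\vee = \bbN\Delta_L^\vee$ yields $\mu \leq_L w_0^L w_0(\lambda)$ after applying $-w_0^L$. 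That short reduction avoids the convex-hull lemma entirely.
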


\begin{proof} Let $\Delta^\vee \subset \Phi^\vee$ be the set of simple coroots of $G$ with respect to the pair $(B,T)$, and let $\Delta_L^\vee \subset \Delta^\vee$ be the subset of simple coroots of the Levi $L$ with respect to $(B\cap L,T)$. By the Cartan decomposition
$$\Gr_L \cap \Gr_G^{\leq \lambda} = \bigcup_{\substack{\lambda' \in X_*(T)^+ \\ \lambda' \leq \lambda}} \: \: \bigcup_{\mu \in X_*(T)_{+/L} \cap W \lambda'} \Gr_L^{\mu}.$$

As $\Gr_L^c \cap \Gr_G^{\leq \lambda}\subset \Gr_L$ is closed and $L^+L$-stable, to prove (i) it suffices to show that, for $\lambda \in c$ and $\mu$ as above,  $\Gr_L^c \cap \Gr_L^{\mu} = \emptyset$ unless $\mu \leq_L \lambda$. To prove this, suppose $\Gr_L^c \cap \Gr_L^{\mu} \neq \emptyset$. Then $\lambda - \mu \in \mathbb{Z}\Delta_L^\vee$, and moreover since $\mu \in W \lambda'$ we have $\lambda - \mu \in \mathbb{N} \Delta^\vee$. Because $\Delta^\vee$ is linearly independent then $\lambda - \mu \in \mathbb{Z} \Delta_L^\vee \cap \mathbb{N} \Delta^\vee = \mathbb{N} \Delta_L^\vee$. Thus $\mu \leq_L \lambda$ and hence the claim follows. Finally, (ii) can be proved  similarly, since then  $\Gr_L^c \cap \Gr_L^{\mu} \neq \emptyset$ implies $w_0(\lambda) - w_0^L(\mu) \in \mathbb{Z}\Delta_L^\vee$ and $\mu \in W \lambda'$ implies $w_0^L(\mu)-w_0(\lambda) \in \mathbb{N} \Delta^\vee$, and hence $w_0^L(\mu)-w_0(\lambda) \in \mathbb{Z}\Delta_L^\vee \cap \mathbb{N} \Delta^\vee = \mathbb{N} \Delta_L^\vee$. 
\end{proof}

Finally, we record from the proof of \ref{thmcompute} (and \ref{trivred}):

\begin{cor} \label{closedSc}
For all $\lambda\in X_*(T)^+$,
$$
S_{c(w_0(\lambda))}\cap\Gr_G^{\leq \lambda}
= \Gr_{L}^{\leq w_0^L w_0 (\lambda)}.
$$
\end{cor}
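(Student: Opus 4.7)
The plan is to extract the corollary directly from the analysis already carried out in the proof of Theorem \ref{thmcompute}, without new work. Equip $X := \overline{\Gr_G^{\lambda}}$ with the $\bbG_m$-action induced by a cocharacter $\eta_L \colon \bbG_m \to T$ whose centralizer in $G$ is $L$.

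The first step is to recall from the proof of Theorem \ref{thmcompute} that under this $\eta_L$-action, the attractor component indexed by $c(w_0(\lambda))$ is the unique closed one in $X$. In that proof this was established by reducing to the regular dominant case $\eta_T$, where the unique closed attractor component is the point $\{w_0(\lambda)\}$ (thanks to the zero-dimensionality of the top MV-cycle $S_{w_0(\lambda)} \cap \overline{\Gr_G^{\lambda}}$ from \cite[Th. 3.2(a)]{GeometricSatake}), and then observing that any closed $\eta_L$-attractor component of $X$ must contain a closed $\eta_T$-attractor component, which forces its index to be $c(w_0(\lambda))$.

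The second step is to apply Lemma \ref{trivred} to this closed attractor $X_{c(w_0(\lambda))} \subset X$: the retraction $q_{c(w_0(\lambda))}$ is then a universal homeomorphism whose canonical section identifies the reduced schemes $(X^0_{c(w_0(\lambda))})_{\red} = (X_{c(w_0(\lambda))})_{\red}$.

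The third step is to translate both sides into the notation of the statement. By the description in Section \ref{Levisec}, one has $(X_{c(w_0(\lambda))})_{\red} = S_{c(w_0(\lambda))} \cap \overline{\Gr_G^{\lambda}}$; and by Lemma \ref{Lintersection}(ii), the corresponding fixed-point component is $X^0_{c(w_0(\lambda))} = \Gr_L^{c(w_0(\lambda))} \cap \overline{\Gr_G^{\lambda}} = \Gr_L^{\leq w_0^L w_0(\lambda)}$ as reduced schemes. Chaining these three identifications yields the desired equality. No genuinely new obstacle appears, since all the substantive ingredients—the equivariant resolution of singularities, the universal homeomorphism property of the attractor retraction onto its closed component, and the Lie-theoretic identification of which attractor component is closed—have already been carried out in Theorem \ref{thmcompute} and Lemmas \ref{trivred} and \ref{Lintersection}.
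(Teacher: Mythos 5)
Your proposal is correct and follows exactly the paper's own route: the paper records this corollary directly from the proof of Theorem \ref{thmcompute} (identifying $c(w_0(\lambda))$ as the index of the closed attractor component under the $\eta_L$-action, via the $\eta_T$-case and Lemma \ref{Lintersection}(ii)) together with Lemma \ref{trivred} to identify the reduced attractor component with its fixed-point locus. Nothing is missing and no new ingredients are introduced.
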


\section{Hyperbolic localization on the affine Grassmannian} \label{absolutesection}

\subsection{Perverse $\bbF_p$-sheaves on the affine Grassmannian}
For a separated scheme $X$ of finite type over $k$ let $P_c^b(X, \bbF_p)$ be the abelian category of perverse $\bbF_p$-sheaves on $X$ as defined in \cite[\S 2]{modpGr}. This is an abelian subcategory of $D_c^b(X, \bbF_p)$ in which all objects have finite length. The definition of perverse sheaves extends to ind-schemes of ind-finite type as in \cite[3.13]{modpGr}. 

Let $P_{L^+G}(\Gr_G, \bbF_p) \subset D_c^b(\Gr_G, \bbF_p)$ be the full abelian subcategory of $L^+G$-equivariant perverse $\bbF_p$-sheaves on $\Gr_G$. By \cite[1.1]{modpGr}, the category $P_{L^+G}(\Gr_G, \bbF_p)$ is symmetric monoidal and the functor 
$$
\xymatrix{
H=\bigoplus_{i\in\bbZ}R^i\Gamma:(P_{L^+G}(\Gr_G,\bbF_p),*)\ar[r] & (\Vect_{\bbF_p},\otimes)
}
$$
is an exact faithful tensor functor. The definition of the convolution product $*$ will be reviewed in Subsection \ref{recallconv}.

By \cite[1.5]{modpGr}, the simple objects in $P_{L^+G}(\Gr_G, \bbF_p)$ are the shifted constant sheaves:
$$\IC_\lambda = \mathbb{F}_p[2 \rho(\lambda)] \in P_c^b(\Gr_G^{\leq \lambda}, \bbF_p), \quad \lambda \in X_*(T)^+.$$ Furthermore, if $\lambda_i \in X_*(T)^+$ then by \cite[1.2]{modpGr} there is a natural isomorphism
$$\IC_{\lambda_1} * \IC_{\lambda_2} \cong \IC_{\lambda_1+\lambda_2}.$$

\subsection{The hyperbolic localization functor}

\begin{defn}
Let $\nu \in X_*(T)$ and $\cF^{\bullet}\in D_c^b(\Gr_G,\bbF_p)$. Denote by $s_\nu \colon S_\nu \rightarrow \Gr_G$ the ind-immersion of the corresponding connected component of $(\Gr_B)_{\red}$ and define
$$
R\Gamma_c(S_{\nu}, \cF^{\bullet}) : = R\Gamma_c(S_\nu ,Rs_\nu^*\cF^{\bullet})\in D_c^b(\Vect_{\bbF_p}),
$$
$$
\textrm{and}\quad \forall i\in\bbZ,\quad H_c^i(S_{\nu}, \cF^{\bullet}) : = H^i\big(R\Gamma_c(S_{\nu}, \cF^{\bullet}) \big)=H_c^i(S_\nu, Rs_\nu^*\cF^{\bullet})\in\Vect_{\bbF_p}.$$ 
\end{defn}

\begin{thm} \label{computetop}
Let $\nu\in X_*(T)$ and $\lambda\in X_*(T)^+$.
\begin{itemize}
\item[(i)] We have
$$
H^{2\rho(\nu)}_c(S_{\nu},\IC_{\lambda})  = 
\left\{ \begin{array}{ll} 
H^0(\{w_0(\lambda)\}, \mathbb{F}_p) = \bbF_p & \quad \textrm{if $\nu=w_0(\lambda)$}, \\
0 & \quad \textrm{otherwise}.
\end{array}
\right.
$$ \item[(ii)] If $i \neq 2 \rho(\nu)$ then 
$$H^{i}_c(S_{\nu},\IC_{\lambda})  = 0.$$
\item[(iii)] If  $\cF^{\bullet}\in P_{L^+G}(\Gr_G,\bbF_p)$, then
$$
R\Gamma_c(S_{\nu}, \cF^{\bullet}) \in D_c^{\leq 2\rho(\nu)}(\Vect_{\bbF_p}) \cap D_c^{\geq 2\rho(\nu)}(\Vect_{\bbF_p})=\Vect_{\bbF_p}[-2\rho(\nu)].
$$
 \end{itemize}
\end{thm}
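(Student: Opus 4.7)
The plan is to deduce all three parts from the already established Theorem \ref{thmcompute}, which does the real work.

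For parts (i) and (ii): since $\IC_\lambda$ is the extension by zero of the shifted constant sheaf $\bbF_p[2\rho(\lambda)]$ on $\overline{\Gr_G^\lambda}$, the support argument gives
$$
R\Gamma_c(S_\nu, \IC_\lambda) \;=\; R\Gamma_c\bigl(S_\nu \cap \overline{\Gr_G^\lambda},\, \bbF_p\bigr)[2\rho(\lambda)].
$$
Theorem \ref{thmcompute} applied to the torus identifies the right-hand side with $\bbF_p[2\rho(\lambda)]$ when $\nu=w_0(\lambda)$ and with $0$ otherwise. Using the identity $\rho(w_0(\lambda))=-\rho(\lambda)$ coming from $w_0(\rho)=-\rho$, the nonzero cohomology is concentrated in degree $-2\rho(\lambda)=2\rho(\nu)$. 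This establishes (i) and (ii) simultaneously.

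For part (iii): every object of $P_{L^+G}(\Gr_G,\bbF_p)$ has finite length, with simple objects exactly the $\IC_\lambda$ for $\lambda \in X_*(T)^+$. I proceed by induction on the length. The length-one case is precisely (ii). For the inductive step, any $\cF^\bullet$ of length $\geq 2$ fits into a short exact sequence $0 \to \cF' \to \cF^\bullet \to \cF'' \to 0$ in $P_{L^+G}(\Gr_G,\bbF_p)$ with $\cF'$ simple. Pulling the associated distinguished triangle back along $s_\nu$ and then pushing forward with compact supports yields a distinguished triangle of $\bbF_p$-vector spaces whose outer terms are concentrated in degree $2\rho(\nu)$ by induction; the long exact sequence then forces the middle term to be concentrated in the same degree.

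The main obstacle here is essentially nonexistent: all of the genuine geometric content --- namely, the $\bbG_m$-equivariant $\bbF_p$-acyclic resolutions of Schubert varieties combined with the Bialynicki-Birula--Drinfeld dynamics --- has already been absorbed into the proof of Theorem \ref{thmcompute}. The only care required is the bookkeeping of cohomological shifts, in particular the sign flip $\rho(w_0(\lambda)) = -\rho(\lambda)$, so that the shift built into the definition $\IC_\lambda = \bbF_p[2\rho(\lambda)]$ matches the degree $2\rho(\nu)$ predicted by the statement.
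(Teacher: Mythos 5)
Your proposal follows exactly the paper's own argument: parts (i) and (ii) are read off from Theorem \ref{thmcompute} (torus case) using the identification $\IC_\lambda = \bbF_p[2\rho(\lambda)]$ on $\Gr_G^{\leq\lambda}$ and the sign flip $\rho(w_0(\lambda)) = -\rho(\lambda)$, and part (iii) is established by d\'evissage on the length of $\cF^\bullet$. The only difference is that the paper compresses the shift bookkeeping into the phrase ``follow immediately,'' whereas you spell it out.
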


\begin{proof}
Since $\IC_{\lambda}$ is the shifted constant sheaf $\mathbb{F}_p[2 \rho(\lambda)]$ supported on $\Gr^{\leq \lambda}_G$ then parts (i) and (ii) follow immediately from \ref{thmcompute}. To prove part (iii), by d\'{e}vissage we can assume that $\cF^{\bullet}=\IC_{\lambda}$ for some $\lambda\in X_*(T)^+$. Then part (iii) follows from (i) and (ii).
\end{proof}

\begin{rmrk} We claim that $$H^{2\rho(\nu)}_c(S_{\nu},\IC_{\lambda})\cong H_c^{2\rho(\nu+\lambda)}(S_{\nu}\cap\Gr_G^{\lambda},\bbF_p),$$ which is also true for characteristic $0$ coefficients, see e.g. \cite[proof of 1.5.13]{BR18}. To prove the claim, note that it suffices to show that the canonical map
$$
\xymatrix{
H_c^{2\rho(\nu+\lambda)}(S_{\nu}\cap\Gr_G^{\lambda},\bbF_p)\ar[r] & H_c^{2\rho(\nu+\lambda)}(S_{\nu}\cap\Gr_G^{\leq\lambda},\bbF_p)
}
$$
is an isomorphism. If $\nu = w_0 (\lambda)$ then $S_{\nu}\cap\Gr_G^{\lambda} = S_{\nu}\cap\Gr_G^{\leq\lambda} = \{\nu\}$, so the claim follows in this case. If $\nu \neq w_0 (\lambda)$ then by \ref{thmcompute} we must show that $H_c^{2\rho(\nu+\lambda)}(S_{\nu}\cap\Gr_G^{\lambda},\bbF_p) = 0$. Note that $\dim S_{\nu}\cap\Gr_G^{\lambda} = \rho(\nu + \lambda) > 0$ by \cite[1.5.2 3.]{BR18}. The desired vanishing then follows from the following general fact (cf. \cite[7.2.11]{FuEtale}): if $X$ is a separated scheme of finite type over $k$, then
$$
\forall i>\dim X,\quad H_c^{i}(X,\bbF_p)=0.
$$
\end{rmrk}

\subsection{An alternative description of the hyperbolic localization functor}
\begin{defn}
Let $\nu \in X_*(T)$ and $\cF^{\bullet}\in D_c^b(\Gr_G,\bbF_p)$. Denote by $i_{\nu} \colon \{\nu\} \rightarrow \Gr_G$ the inclusion of the $k$-point $\nu(t)$ and define
$$R\Gamma(\{\nu\}, \mathcal{F}^\bullet)  := Ri_{\nu}^* \mathcal{F}^\bullet \in D_c^b(\Vect_{\bbF_p}),$$
$$
\textrm{and}\quad \forall i\in\bbZ,\quad H^i(\{\nu\}, \cF^{\bullet}) : = H^i\big(R\Gamma(\{\nu\}, \cF^{\bullet}) \big) = H^i(Ri_{\nu}^* \mathcal{F}^\bullet) \in\Vect_{\bbF_p}.$$
\end{defn}

\begin{lem} \label{halfperverse}
Let $\nu \in X_*(T)$ and $\cF^{\bullet}\in P_{L^+G}(\Gr_G,\bbF_p)$. 
\begin{itemize}
\item[(i)] If $\lambda \in X_*(T)^+$ then $$H^{2 \rho(\nu)}(\{\nu\}, \IC_\lambda)= 
\left\{ \begin{array}{ll} 
H^{0}(\{\nu\}, \mathbb{F}_p) = \bbF_p & \quad \textrm{if $\nu=w_0(\lambda)$}, \\
0 & \quad \textrm{otherwise}.
\end{array}
\right.$$
\item[(ii)] If $H^i(\{\nu\}, \mathcal{F}^\bullet) \neq 0$ then $$i \equiv 2\rho(\nu) \mod 2.$$
\end{itemize}
 \end{lem}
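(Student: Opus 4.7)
My plan is to reduce both parts of the lemma to direct computations with the stalks $Ri_\nu^* \IC_\lambda$, which are especially simple because $\IC_\lambda$ is the shifted constant sheaf $\bbF_p[2\rho(\lambda)]$ on $\Gr_G^{\leq \lambda}$ extended by zero. Concretely, $Ri_\nu^* \IC_\lambda$ equals $\bbF_p[2\rho(\lambda)]$ if $\nu(t) \in \Gr_G^{\leq \lambda}$ and vanishes otherwise, so its only possibly nonzero cohomology group sits in degree $-2\rho(\lambda)$.

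For (i), the condition $H^{2\rho(\nu)}(\{\nu\}, \IC_\lambda) \neq 0$ will force both $\nu \in \Gr_G^{\leq \lambda}$ and $\rho(\nu) = -\rho(\lambda)$. By the Cartan decomposition the first condition means $\nu \in W\mu$ for a unique dominant $\mu \leq \lambda$. Writing $\nu = w\mu$ and using $w_0 \rho = -\rho$, I would compute $\rho(\nu) = -\rho(w_0^{-1} w \mu)$. Since $w' \mu \leq \mu$ for every $w' \in W$ when $\mu$ is dominant, and since $\rho(\alpha_i^\vee) = 1$ for each simple coroot, one obtains $\rho(w_0^{-1}w\mu) \leq \rho(\mu)$ with equality iff $w_0^{-1} w$ stabilizes $\mu$, equivalently iff $\nu = w_0 \mu$. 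A parallel argument applied to $\lambda - \mu \in \bbN \Delta^\vee$ yields $\rho(\lambda) \geq \rho(\mu)$ with equality iff $\mu = \lambda$. Chaining the inequalities $-\rho(\lambda) = \rho(\nu) \geq -\rho(\mu) \geq -\rho(\lambda)$ forces first $\mu = \lambda$, and then $\nu = w_0(\lambda)$; the converse implication is immediate from the stalk formula.

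For (ii), since every object of $P_{L^+G}(\Gr_G, \bbF_p)$ has finite length, I would first reduce to the simple objects $\cF^\bullet = \IC_\lambda$ by dévissage along a Jordan--Hölder filtration, using the long exact sequence associated to the exact triangle obtained by applying $Ri_\nu^*$ to each short exact sequence in the composition series (the parity property is plainly preserved in short exact sequences). In that simple case the only nonvanishing degree is $i = -2\rho(\lambda)$, so the claim reduces to $-2\rho(\lambda) \equiv 2\rho(\nu) \pmod{2}$, equivalently $\rho(\nu + \lambda) \in \bbZ$. Writing again $\nu = w\mu$ with dominant $\mu \leq \lambda$, one checks $\nu - \mu = (w - \id)\mu \in \bbZ \Phi^\vee$ (the action of $W$ on $X_*(T)$ is trivial modulo the coroot lattice) and $\lambda - \mu \in \bbN \Delta^\vee \subset \bbZ \Phi^\vee$, so $\nu + \lambda \equiv 2\lambda \pmod{\bbZ \Phi^\vee}$. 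The conclusion then follows from $\rho(2\lambda) = 2\rho(\lambda) \in \bbZ$ and $\rho(\bbZ\Phi^\vee) \subset \bbZ$, since $\rho(\alpha_i^\vee) = 1$.

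The most delicate step I anticipate is the equality analysis in (i); once the two chained inequalities are in place the rest is mechanical, and the parity statement in (ii) becomes automatic after observing that $\nu - \lambda$ always lies in the coroot lattice whenever $\nu \in \Gr_G^{\leq \lambda}$.
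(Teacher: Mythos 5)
Your proposal is correct, and it follows essentially the same strategy as the paper: exploit the fact that $\IC_\lambda$ is the shifted constant sheaf $\bbF_p[2\rho(\lambda)]$ supported on $\Gr_G^{\leq\lambda}$, so the stalk at $\nu(t)$ is immediate; then handle (ii) by d\'evissage along a Jordan--H\"older filtration. Where you differ is only in the combinatorial bookkeeping for (i): you pass through the Weyl orbit decomposition $\nu = w\mu$ with $\mu$ dominant and chain two inequalities $\rho(\nu)\geq -\rho(\mu)\geq -\rho(\lambda)$, whereas the paper invokes directly that $\nu(t)\in\Gr_G^{\leq\lambda}$ is equivalent to $w_0(\lambda)\leq\nu\leq\lambda$ and then observes $2\rho(\nu - w_0(\lambda))=0$ forces $\nu = w_0(\lambda)$. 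These are two renditions of the same standard convexity fact, so the difference is cosmetic rather than a genuinely distinct route. Likewise for (ii), the paper states tersely that $\rho(\lambda-\nu)\in\bbZ$; you supply the justification ($w\mu-\mu\in\bbZ\Phi^\vee$ and $\lambda-\mu\in\bbN\Delta^\vee$), which is the content left implicit there. Everything checks out.
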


\begin{proof}
For part (i), we have
$$H^{2 \rho(\nu)}(\{\nu\}, \IC_\lambda)= H^{2\rho(\nu + \lambda)}(\{\nu\} \cap \Gr_G^{\leq \lambda}, \mathbb{F}_p).$$ 
This is zero unless $\{\nu\} \in \Gr_G^{\leq \lambda}$ and $2\rho(\nu+\lambda) = 0$, in which case $w_0(\lambda)\leq \nu\leq \lambda$  and $2\rho(\nu-w_0(\lambda)) = 0$, i.e. $\nu = w_0 (\lambda)$.
 
By d\'{e}vissage, to prove part (ii) we can assume that $\cF^{\bullet}=\IC_{\lambda}$ for some $\lambda\in X_*(T)^+$. Then for all $i\in\bbZ$ we have
$$
H^i(\{\nu\}, \IC_\lambda) = H^{i+2\rho(\lambda)}(\{\nu\} \cap \Gr_G^{\leq \lambda}, \mathbb{F}_p).
$$
If this is nonzero then $\{\nu\} \in \Gr_G^{\leq \lambda}$ and $i+2\rho(\lambda) = 0$, so $\rho(\lambda-\nu)$ is an integer and
$$
i + 2\rho(\nu) = i + 2\rho(\lambda) - 2\rho(\lambda-\nu)  \equiv 0 \mod 2.
$$
\end{proof}

\begin{thm} \label{wtequivalence}
For $\nu \in X_*(T)$ there is an isomorphism of functors 
$$
H^{2\rho(\nu)}_c(S_\nu, \cdot) \xrightarrow{\sim} H^{2 \rho(\nu)}(\{\nu\}, \cdot ) \colon P_{L^+G}(\Gr_G, \mathbb{F}_p) \longrightarrow \Vect_{\mathbb{F}_p}.
$$
\end{thm}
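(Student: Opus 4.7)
The plan is to build a canonical natural transformation $\alpha \colon H^{2\rho(\nu)}_c(S_\nu,-) \to H^{2\rho(\nu)}(\{\nu\},-)$ and then verify it is an isomorphism by dévissage to the simple objects. For the construction, write $i_{\{\nu\}} = s_\nu \circ i_\nu$ where $i_\nu \colon \{\nu\} \hookrightarrow S_\nu$ is the closed immersion of the base point. The unit of the adjunction $Ri_\nu^* \dashv R(i_\nu)_*$, evaluated at $Rs_\nu^*\cF$, yields a morphism $Rs_\nu^*\cF \to R(i_\nu)_* Ri_\nu^* Rs_\nu^*\cF$ in $D_c^b(S_\nu,\bbF_p)$. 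Applying $R\Gamma_c(S_\nu,-)$ and using that $R\Gamma_c(S_\nu, R(i_\nu)_*\cG) = R\Gamma(\{\nu\},\cG)$ for any $\cG$ on $\{\nu\}$ produces a natural morphism $R\Gamma_c(S_\nu,\cF) \to R\Gamma(\{\nu\},\cF)$ in $D_c^b(\Vect_{\bbF_p})$. Taking $H^{2\rho(\nu)}$ defines $\alpha$.

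Next I would show that both functors are exact on $P_{L^+G}(\Gr_G,\bbF_p)$. For $H^{2\rho(\nu)}_c(S_\nu,-)$ this is immediate from Theorem \ref{computetop}(iii): on perverse sheaves, $R\Gamma_c(S_\nu,-)$ is concentrated in the single degree $2\rho(\nu)$, and the long exact sequence associated to any short exact sequence in $P_{L^+G}(\Gr_G,\bbF_p)$ therefore collapses. For $H^{2\rho(\nu)}(\{\nu\},-)$ the argument uses the parity vanishing of Lemma \ref{halfperverse}(ii): since $H^i(\{\nu\},\cF)$ vanishes whenever $i \not\equiv 2\rho(\nu) \pmod 2$, the neighbouring groups $H^{2\rho(\nu)\pm 1}(\{\nu\},-)$ are zero on perverse sheaves, so again the long exact sequence degenerates to a short exact sequence in degree $2\rho(\nu)$.

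Since every object of $P_{L^+G}(\Gr_G,\bbF_p)$ has finite length, the exactness of both functors reduces the claim, via the Five Lemma and induction on length, to checking that $\alpha(\IC_\lambda)$ is an isomorphism for every $\lambda\in X_*(T)^+$. If $\nu\neq w_0(\lambda)$, both source and target vanish by Theorem \ref{computetop}(i) and Lemma \ref{halfperverse}(i), so there is nothing to verify. If $\nu=w_0(\lambda)$, then Corollary \ref{closedSc} applied to $L=T$ (where $w_0^T=e$) gives $S_\nu \cap \Gr_G^{\leq\lambda} = \{\nu\}$; hence $Rs_\nu^*\IC_\lambda$ is supported at the single point $\nu$, the unit morphism $Rs_\nu^*\IC_\lambda \to R(i_\nu)_*Ri_\nu^* Rs_\nu^*\IC_\lambda$ is already an isomorphism of complexes on $S_\nu$, and $\alpha(\IC_\lambda)$ is an isomorphism between two copies of $\bbF_p$.

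The main obstacle is proving the exactness of $H^{2\rho(\nu)}(\{\nu\},-)$ on perverse sheaves; in the $\overline{\bbQ}_\ell$-setting the analogous statement is obtained from Braden's hyperbolic localization theorem, which by Appendix \ref{appA} fails for $\bbF_p$-coefficients. Here the parity vanishing of Lemma \ref{halfperverse}(ii) — a feature special to $\bbF_p$-coefficients, stemming from the fact that the restriction of $\IC_\lambda$ to any point is a shifted \emph{constant} sheaf — provides a convenient substitute that makes dévissage possible.
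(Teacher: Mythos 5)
Your proof is correct and follows essentially the same route as the paper's: construct the natural map via the $Ri_\nu^* \dashv R(i_\nu)_*$ adjunction, verify it on the simple objects $\IC_\lambda$ using \ref{computetop}(i) and \ref{halfperverse}(i), and conclude by the five lemma and induction on length, using \ref{computetop}(iii) and \ref{halfperverse}(ii) to control the neighbouring cohomology groups. The only difference is that you spell out the base case ($\nu=w_0(\lambda)$) more carefully, observing via \ref{closedSc} that $Rs_\nu^*\IC_\lambda$ is supported at the point $\{\nu\}$ so the adjunction unit is already an isomorphism — a useful clarification, since citing the two computations alone only shows source and target are abstractly $\bbF_p$, not that the canonical map between them is nonzero.
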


\begin{proof}
By the adjunction between $Ri_\nu^*$ and $Ri_{\nu*}$ there is a natural map $H_c^{2 \rho(\nu)}(S_\nu, \mathcal{F}^\bullet ) \rightarrow H^{2 \rho(\nu)}(\{\nu\}, \mathcal{F}^\bullet)$. If $\mathcal{F} = \IC_{\lambda}$ for $\lambda \in X_*(T)^+$ then it is an isomorphism by \ref{computetop} (i) and \ref{halfperverse} (i). For the general case, note that $H^{2 \rho(\nu) - 1}(\{\nu\}, \mathcal{F}^\bullet ) = H^{2 \rho(\nu)+1}(\{\nu\}, \mathcal{F}^\bullet ) = 0$ for all $\mathcal{F}^\bullet \in P_{L^+G}(\Gr_G, \mathbb{F}_p)$ by \ref{halfperverse} (ii). Since $H_c^{2 \rho(\nu)+1}(S_\nu, \mathcal{F}^\bullet) = 0$ for all $\mathcal{F}^\bullet \in P_{L^+G}(\Gr_G, \mathbb{F}_p)$ by \ref{computetop} (iii), then by induction on the length of $\mathcal{F}^\bullet$ and the five lemma we see that the map $H_c^{2 \rho(\nu)}(S_\nu, \mathcal{F}^\bullet ) \rightarrow H^{2 \rho(\nu)}(\{\nu\}, \mathcal{F}^\bullet)$ is an isomorphism in general. 
\end{proof}

\section{The total weight functor} \label{totalsection}

\subsection{The definition of the total weight functor}

\begin{defn} \label{wtdef1}
For $\nu \in X_*(T)$, the \emph{weight functor associated to $\nu$} is 
$$
\xymatrix{
F_ \nu : = H^{2\rho(\nu)}_c(S_\nu, \cdot) \xrightarrow{\sim} H^{2 \rho(\nu)}(\{\nu\}, \cdot ) \colon P_{L^+G}(\Gr_G, \mathbb{F}_p) \ar[r] & \Vect_{\mathbb{F}_p}.
}
$$
\end{defn}

\begin{prop}\label{nunotin-} 
The functor $F_\nu$ is exact. Furthermore, if $\nu \notin X_*(T)_{-}$ then $F_\nu = 0$. 
\end{prop}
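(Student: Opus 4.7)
The plan is to derive both statements as direct consequences of Theorem \ref{computetop}, which gives precise concentration and vanishing information for $R\Gamma_c(S_\nu, -)$ on objects of $P_{L^+G}(\Gr_G, \bbF_p)$.

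For exactness, I would start from an arbitrary short exact sequence
$$0 \to \cF_1^\bullet \to \cF_2^\bullet \to \cF_3^\bullet \to 0$$
in $P_{L^+G}(\Gr_G, \bbF_p)$. Applying $R\Gamma_c(S_\nu, -)$ (i.e. $R\Gamma_c(S_\nu, Rs_\nu^*(-))$) produces a distinguished triangle in $D_c^b(\Vect_{\bbF_p})$, and hence a long exact sequence relating the groups $H^i_c(S_\nu, \cF_j^\bullet)$. By Theorem \ref{computetop}(iii), each complex $R\Gamma_c(S_\nu, \cF_j^\bullet)$ is concentrated in the single degree $2\rho(\nu)$, so all terms of the long exact sequence in degrees $\neq 2\rho(\nu)$ vanish. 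What remains is precisely the short exact sequence
$$0 \to F_\nu(\cF_1^\bullet) \to F_\nu(\cF_2^\bullet) \to F_\nu(\cF_3^\bullet) \to 0,$$
proving exactness of $F_\nu$.

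For the vanishing statement, I would use exactness together with dévissage. Since every object in $P_{L^+G}(\Gr_G, \bbF_p)$ has finite length, an exact functor vanishes on the whole category as soon as it vanishes on every simple object. The simples are the $\IC_\lambda$ for $\lambda \in X_*(T)^+$, so it suffices to verify $F_\nu(\IC_\lambda) = 0$ for all dominant $\lambda$ whenever $\nu \notin X_*(T)_-$. By Theorem \ref{computetop}(i), $F_\nu(\IC_\lambda) = H^{2\rho(\nu)}_c(S_\nu, \IC_\lambda)$ is nonzero only if $\nu = w_0(\lambda)$. Since $w_0$ sends the dominant Weyl chamber to the antidominant one, $w_0(\lambda) \in X_*(T)_-$ for every $\lambda \in X_*(T)^+$. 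Hence if $\nu \notin X_*(T)_-$, the equality $\nu = w_0(\lambda)$ can never occur, and $F_\nu(\IC_\lambda) = 0$ for all simple objects.

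There is essentially no hard step here: the proposition is a formal corollary of the theorem just proved, once one observes that (iii) forces concentration in a single degree (so the long exact sequence collapses) and that (i) identifies the only non-vanishing index $\nu$ for each $\IC_\lambda$ as lying in $X_*(T)_-$.
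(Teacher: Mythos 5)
Your proof is correct and follows exactly the same route as the paper: exactness from the degree-concentration statement in Theorem \ref{computetop}(iii), and the vanishing by d\'evissage to simple objects using Theorem \ref{computetop}(i) together with the observation that $w_0(\lambda) \in X_*(T)_-$ for dominant $\lambda$. You merely spell out the long exact sequence argument and the $w_0$-observation that the paper leaves implicit.
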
 

\begin{proof} Exactness follows from \ref{computetop} (iii). Since for $\nu \notin X_*(T)_{-}$ we have $F_\nu(\mathcal{F}^\bullet) = 0$ for all simple 
$\mathcal{F}^\bullet\in P_{L^+G}(\Gr_G, \mathbb{F}_p)$ by \ref{computetop} (i), we may conclude by induction on the length that $F_\nu = 0$ in this case. 
\end{proof}

\begin{nota}\label{notagradsp}
Given an abstract abelian monoid $A$, we will denote by $(\Vect_{\bbF_p}(A),\otimes)$ the symmetric monoidal category of finite dimensional $A$-graded $\bbF_p$-vector spaces equipped with the tensor product 
$$
\bbF_p(a)\otimes \bbF_p(b):=\bbF_p(a+b),
$$
where $\bbF_p(a)$ denotes the vector space $\bbF_p$ placed in `degree' $a\in A$. 
\end{nota}

\begin{defn}
The \emph{total weight functor} is
$$
\xymatrix{
F_-:= \bigoplus_{\nu\in X_*(T)_-} F_{\nu}\colon P_{L^+G}(\Gr_G, \mathbb{F}_p)\ar[r] &\Vect_{\mathbb{F}_p}(X_*(T)_-).
}
$$
\end{defn}

\begin{rmrk}
Recall from Subsection \ref{Iwasec} the diagram 
$$
\xymatrix{
&\Gr_B \ar[dl]_{q} \ar[dr]^p & \\
\Gr_T & & \Gr_G.
}
$$ Since
$$(\Gr_T)_{\text{red}} = \coprod_{\nu \in X_*(T)} \{\nu\}$$ then $F_-$ can be obtained from the functor
$$\xymatrix{Rq_! \circ Rp^* \colon P_{L^+G}(\Gr_G, \mathbb{F}_p) \ar[r] & D_c^b(\Gr_T, \mathbb{F}_p)}$$ by taking the direct sum of the stalks over the $\{\nu\}$ in degree $2 \rho(\nu)$. This identifies $F_-$ with the $T$-constant term functor $\CT_T^G$ defined in \ref{CTdef}.
\end{rmrk}

\subsection{Relation to the Satake equivalence} \label{section_relationsat}

Recall the exact faithful symmetric monoidal functor
$$
\xymatrix{
H=\bigoplus_{i\in\bbZ}R^i\Gamma:(P_{L^+G}(\Gr_G,\bbF_p),*)\ar[r] & (\Vect_{\bbF_p},\otimes)
}
$$
from \cite[6.11, 7.11]{modpGr}. Our goal in this subsection is to construct a natural isomorphism between $H$ and $F_-$ composed with the forgetful functor $\Vect_{\mathbb{F}_p}(X_*(T)_-) \ra \Vect_{\mathbb{F}_p}$.

\begin{rmrk}
In the case of characteristic $0$ coefficients, Baumann and Riche construct an isomorphism between $H$ and $\bigoplus_{\nu\in X_*(T)} F_\nu$ in the proof of \cite[1.5.9]{BR18}. 
In our proof of \ref{Zwt} below we use \ref{wtequivalence}, which is unique to $\mathbb{F}_p$-sheaves, to compare the functors $H$ and $F_-$.
\end{rmrk}

By \cite[6.9]{modpGr}, $R^i\Gamma (\mathcal{F}^\bullet)=0$ for all $\mathcal{F}^\bullet \in P_{L^+G}(\Gr_G, \mathbb{F}_p)$ and $i>0$. Set $\bbZ_-:=\bbZ_{\leq 0}$. For all $i\in \bbZ_-$, the adjunction between $Ri_{\nu}^*$ and $Ri_{\nu*}$ induces a natural transformation of functors
$$
\xymatrix{
R^i\Gamma\ra \bigoplus_{\substack{\nu\in X_*(T)_-\\2\rho(\nu)=i}}H^{2\rho(\nu)}(\{\nu\}, \cdot)
}
$$ 
from  $P_{L^+G}(\Gr_G, \mathbb{F}_p)$ to $\Vect_{\mathbb{F}_p}$. Hence there is a natural transformation of functors 
$$
\xymatrix{
H=\bigoplus_{i\in\bbZ_-}R^i\Gamma \ra \bigoplus_{i\in\bbZ_-}\bigoplus_{\substack{\nu\in X_*(T)_-\\2\rho(\nu)=i}}F_{\nu}=\bigoplus_{\nu\in X_*(T)_-} F_{\nu}
}
$$
from $P_{L^+G}(\Gr_G, \mathbb{F}_p)$ to $\Vect_{\mathbb{F}_p}$.


\begin{thm} \label{HFtheorem} \label{Zwt}
The natural transformation of functors 
$$
\xymatrix{
H \ra \bigoplus_{\nu\in X_*(T)_-} F_{\nu}\colon P_{L^+G}(\Gr_G, \mathbb{F}_p)\ar[r] &\Vect_{\mathbb{F}_p}
}
$$
is an isomorphism. In particular, for all $i\in\bbZ$
it restricts to an isomorphism
$$
\xymatrix{
R^i\Gamma \cong \bigoplus_{\substack{\nu\in X_*(T)_-\\2\rho(\nu)=i}} F_{\nu} \colon P_{L^+G}(\Gr_G, \mathbb{F}_p) \ar[r] &\Vect_{\mathbb{F}_p}.
}
$$

\end{thm}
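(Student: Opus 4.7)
The plan is to reduce by dévissage to the simple objects $\IC_\lambda$ for $\lambda \in X_*(T)^+$ and verify the isomorphism there by direct computation of both sides. The functor $H$ is exact, with $R^i\Gamma = 0$ for $i > 0$, by \cite[6.9, 6.11]{modpGr}, and each $F_\nu$ is exact by \ref{nunotin-}. For any $\mathcal{F}^\bullet \in P_{L^+G}(\Gr_G, \bbF_p)$ only finitely many $F_\nu(\mathcal{F}^\bullet)$ are nonzero, since $\mathcal{F}^\bullet$ is supported on some finite union of Schubert varieties and hence meets only finitely many $\{\nu\}$, so $\bigoplus_{\nu \in X_*(T)_-} F_\nu$ defines an exact functor into $\Vect_{\bbF_p}$. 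Since every object of $P_{L^+G}(\Gr_G, \bbF_p)$ has finite length, a standard induction on length using the five lemma reduces the theorem to the assertion that, for each $\lambda \in X_*(T)^+$, the component map
$$
H(\IC_\lambda) \longrightarrow \bigoplus_{\nu\in X_*(T)_-} F_\nu(\IC_\lambda)
$$
is an isomorphism of graded $\bbF_p$-vector spaces, where the $\nu$-summand sits in cohomological degree $2\rho(\nu)$.

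Next I would compute both sides. On the right, \ref{computetop}\,(i) together with \ref{wtequivalence} gives $F_\nu(\IC_\lambda) = \bbF_p$ when $\nu = w_0(\lambda)$ (which is antidominant) and zero otherwise, so the target is one-dimensional and placed in degree $2\rho(w_0(\lambda)) = -2\rho(\lambda)$. On the left, since $\IC_\lambda = \bbF_p[2\rho(\lambda)]$ on the Schubert variety $\Gr_G^{\leq\lambda}$, the computation of $H(\IC_\lambda)$ reduces to the $\bbF_p$-acyclicity
$$
R\Gamma\bigl(\Gr_G^{\leq\lambda},\bbF_p\bigr) = \bbF_p[0].
$$
For this I invoke the Demazure-type resolution $f \colon \widetilde{X} \to \Gr_G^{\leq\lambda}$ of \ref{resolD}, which satisfies $Rf_*\bbF_p = \bbF_p[0]$; combined with the $\bbF_p$-acyclicity of $\widetilde{X}$ itself this yields the desired vanishing after pushing forward. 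The acyclicity of $\widetilde{X}$ follows by viewing it as an iterated $\bbP^1$-fibration and applying proper base change repeatedly, starting from the case $R\Gamma(\bbP^1, \bbF_p) = \bbF_p[0]$, which in turn is an immediate consequence of \ref{affinecoh} and the open-closed decomposition $\bbA^1 \subset \bbP^1 \supset \{\infty\}$. Since $\Gr_G^{\leq\lambda}$ is connected (being the closure of a single orbit), $H(\IC_\lambda)$ is one-dimensional and concentrated in degree $-2\rho(\lambda)$.

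Finally I would check that the natural transformation is the nonzero map in this single degree. By construction, in degree $-2\rho(\lambda)$ the arrow arises from the adjunction unit $\bbF_p \to Ri_{w_0(\lambda)*}Ri_{w_0(\lambda)}^*\bbF_p$ and, after the shift by $2\rho(\lambda)$, becomes the canonical restriction
$$
H^0\bigl(\Gr_G^{\leq\lambda},\bbF_p\bigr) \longrightarrow H^0\bigl(\{w_0(\lambda)\},\bbF_p\bigr),
$$
which is the identity on $\bbF_p$ by connectedness of $\Gr_G^{\leq\lambda}$. In every other cohomological degree both sides vanish by the computations above: the left by $\bbF_p$-acyclicity of the Schubert variety, and the right because the support of $F(\IC_\lambda)$ is concentrated at $\nu = w_0(\lambda)$ in degree $-2\rho(\lambda)$. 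The only substantive input peculiar to the $\bbF_p$-setting is the acyclicity of Schubert varieties, which is what allows us to trade the hyperbolic localization arguments familiar from the $\overline{\bbQ}_\ell$ theory for a direct cohomological comparison; no further obstacle is expected.
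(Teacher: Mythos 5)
Your proof is correct and follows essentially the same route as the paper: reduce by exactness and induction on length to the simple objects $\IC_\lambda$, compute both sides there, and observe that the natural transformation is the nonzero restriction map to the stalk at $w_0(\lambda)$. The only real difference is that where the paper simply cites \cite[6.9]{modpGr} for the fact that $R\Gamma(\Gr_G^{\leq\lambda},\bbF_p)=\bbF_p[0]$, you re-derive this acyclicity from scratch via the Demazure resolution of \ref{resolD}: using $Rf_*\bbF_p=\bbF_p[0]$ to pass to $\widetilde{X}$, then showing $\widetilde{X}$ is $\bbF_p$-acyclic as an iterated $\bbP^1$-fibration starting from \ref{affinecoh}. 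This is self-contained and harmless but duplicates a cited input; both versions hinge on exactly the same point that for $\bbF_p$-coefficients the Schubert variety is as acyclic as a point, which makes the stalk map $H^0(\Gr_G^{\leq\lambda},\bbF_p)\to H^0(\{w_0(\lambda)\},\bbF_p)$ an isomorphism by connectedness.
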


\begin{proof}
Let $\lambda\in X_*(T)^+$. Combining \cite[6.9]{modpGr} and \ref{computetop} (i), taking the stalk at $\{w_0(\lambda)\}$ defines an isomorphism
in $\Vect_{\mathbb{F}_p}$
$$
H(\IC_{\lambda})=R^{-2\rho(\lambda)}\Gamma(\IC_{\lambda})=H^{-2\rho(\lambda)}(\Gr_{\leq\lambda},\bbF_p[2 \rho(\lambda)])
$$
$$
\xrightarrow{\sim}H^{2\rho(w_0 (\lambda))}(Ri_{w_0( \lambda)}^* \mathbb{F}_p[2 \rho(\lambda)]) =F_{w_0(\lambda)}(\IC_{\lambda}).
$$
Thus since $F_\nu(\IC_\lambda) = 0$ if $\nu \neq w_0(\lambda)$ then the natural map 
$$
\xymatrix{
H(\mathcal{F}^\bullet) \rightarrow \bigoplus_{\nu\in X_*(T)_-} F_\nu(\mathcal{F}^\bullet)
}
$$ 
is an isomorphism if $\mathcal{F}^\bullet$ is simple. Now $H$ is exact by \cite[6.11]{modpGr} and each $F_\nu$ is exact by \ref{nunotin-}. Hence it follows by induction on the length of $\mathcal{F}^\bullet$ that the above map is an isomorphism in general.
\end{proof}

By \ref{HFtheorem}, composing $F_-$ with the forgetful functor $\Vect_{\mathbb{F}_p}(X_*(T)_{-}) \rightarrow \Vect_{\mathbb{F}_p}$ gives $H$.

\begin{rmrk}
Using the method in \cite[3.6]{GeometricSatake} one can show that the decomposition $H \cong \oplus_{\nu\in X_*(T)_-} F_{\nu}$ is independent of the choice of the pair $(T,B)$.
\end{rmrk}

\subsection{Recollections on convolution} \label{recallconv}

We first recall the definition of the convolution product in $P_{L^+G}(\Gr_G, \mathbb{F}_p)$ following \cite[\S 6.2]{modpGr}. There is a diagram
\begin{equation} \label{localconvdiagram} \xymatrix{
\Gr_G \times \Gr_G & LG \times \Gr_G \ar[l]_(.45)p \ar[r]^(.45)q & LG \overset{L^+G}{\times} \Gr_G \ar[r]^(.6)m & \Gr_G
}.\end{equation} Here $p$ is the quotient map on the first factor, $q$ is the quotient by the diagonal action of $L^+G$, and $m$ is induced by multiplication in $LG$. We set
$$\Gr_G \overset{\sim}{\times} \Gr_G := LG \overset{L^+G}{\times} \Gr_G.$$ For $\mathcal{F}_1^\bullet$, $\mathcal{F}_2^\bullet \in P_{L^+G}(\Gr_G, \mathbb{F}_p)$ there exists a unique perverse sheaf $$\mathcal{F}_1^\bullet \overset{\sim}{\boxtimes} \mathcal{F}_2^\bullet \in P_c^b(\Gr_G \overset{\sim}{\times} \Gr_G, \mathbb{F}_p)$$ such that
$$Rp^*( \mathcal{F}_1^\bullet \overset{L}{\boxtimes} \mathcal{F}_2^\bullet) \cong Rq^* (\mathcal{F}_1^\bullet \overset{\sim}{\boxtimes} \mathcal{F}_2^\bullet).$$ The convolution product is
$$\mathcal{F}_1^\bullet * \mathcal{F}_2^\bullet := Rm_!(\mathcal{F}_1^\bullet \overset{\sim}{\boxtimes} \mathcal{F}_2^\bullet).$$ Note that because $\Gr_G \overset{\sim}{\times} \Gr_G$ is ind-projective we have $m_! = m_*$.

We now recall the construction of the monoidal structure on $H$ following \cite[\S 7]{modpGr}. Let $X = \mathbb{A}^1$. The construction uses the Beilinson-Drinfeld Grassmannians $\Gr_{G,X^I}$ and the global convolution Grassmannians $\tilde{\Gr}_{G,X^I}$ for $I=\{*\}$ and $I=\{1,2\}$ (see also \cite[\S 3.1]{Z17}). There is a convolution morphism $m_I \colon \tilde{\Gr}_{G,X^I} \rightarrow \Gr_{G,X^I}$ and a projection $f_I \colon \Gr_{G,X^I} \rightarrow X^I$. Since $X=\bbA^1$, for $I=\{*\}$ there are canonical isomorphisms
$$
\Gr_{G,X}\cong\Gr_G\times X,\quad \tilde{\Gr}_{G,X}\cong (\Gr_G \overset{\sim}{\times} \Gr_G)\times X,\quad m_{\{*\}}=m\times\id,\quad f_{\{*\}}=\pr_2.
$$
So in the sequel we keep the notation $I$ for the set $\{1,2\}$ only. Let $U \subset X^2$ be the complement of the image of the diagonal embedding $\Delta \colon X \rightarrow X^2$. Then we have the following commutative diagram with Cartesian squares:

\begin{equation} \label{BDdiagram} \xymatrix{
\Gr_G \times \Gr_G \times U \ar[r] \ar[d]^{\id} & \tilde{\Gr}_{G,X^2} \ar[d]^{m_I} & (\Gr_G \overset{\sim}{\times} \Gr_G)\times X \ar[d]^{m \times \id}  \ar[l] \\
\Gr_G \times \Gr_G \times U \ar[r]^(.65){j_I} \ar[d] & \Gr_{G,X^2} \ar[d]^{f_I} & \Gr_G \times X \ar[d] \ar[l]_(.50){i_I}  \\
U \ar[r] & X^2 & \ar[l]_{\Delta} X.
} \end{equation}

\noindent Let $\tau \colon \Gr_{G,X}=\Gr_G\times X \rightarrow \Gr_G$ be the projection and let
$$\tau^{\circ} := R\tau^{*}[1] \colon D_c^b(\Gr_G,\mathbb{F}_p) \rightarrow D_c^b(\Gr_{G,X}, \mathbb{F}_p).$$

\noindent Fix $\mathcal{F}_1^\bullet$, $\mathcal{F}_2^\bullet \in P_{L^+G}(\Gr_G, \mathbb{F}_p)$. By \cite[7.6, 7.10]{modpGr} there is a perverse sheaf $$\mathcal{F}_{1,2}^\bullet := \tau^{\circ} \mathcal{F}_1^\bullet \overset{\sim}{\boxtimes} \tau^{\circ} \mathcal{F}_2^\bullet \in P_c^b(\tilde{\Gr}_{G,X^2}, \mathbb{F}_p)$$ such that for $x_1$, $x_2 \in X(k)$,
\begin{equation} \label{eq1}
\restr{H^{n-2}(Rf_{I,!}(Rm_{I,!}\mathcal{F}^\bullet_{1,2}))}{(x_1,x_2)} \cong \left\{
        \begin{array}{ll}
            \bigoplus_{i+j=n}R^{i}\Gamma(\mathcal{F}^\bullet_1) \otimes R^{j}\Gamma(\mathcal{F}^\bullet_2) & \quad \textrm{if $x_1 \neq x_2$} \\
            R^n\Gamma(\mathcal{F}_1^\bullet * \mathcal{F}_2^\bullet) & \quad \textrm{if $x_1 = x_2$}.
        \end{array}
    \right.
\end{equation}
The sheaf $H^{n-2}(Rf_{I,!}(Rm_{I,!}\mathcal{F}^\bullet_{1,2}))$ is constant by \cite[7.9]{modpGr}. Therefore, by summing (\ref{eq1}) over $n$ we get an isomorphism
$$H(\mathcal{F}_1^\bullet * \mathcal{F}_2^\bullet) \cong H(\mathcal{F}_1^\bullet) \otimes H(\mathcal{F}_2^\bullet).$$ This gives $H$ the structure of a monoidal functor.

We finally recall that the associativity constraint in $(P_{L^+G}(\Gr_G, \mathbb{F}_p),*)$ is constructed using the one of the bifunctor $\overset{L}{\boxtimes}$ and proper base change \cite[6.8]{modpGr}, and the commutativity constraint as follows. There is a morphism $\Gr_{G,X^2} \rightarrow \Gr_{G,X^2}$ which swaps the factors in $X^2$. Using that this morphism restricts to the identity map over $\Delta(X)$, it is shown in the proof of \cite[7.11]{modpGr} that there is a canonical isomorphism
$$\restr{j_{I, !*}(\tau^{\circ} \mathcal{F}_1^\bullet \overset{L}{\boxtimes} \tau^{\circ} \restr{\mathcal{F}_2^\bullet}{U})}{\Delta(X)} \cong \restr{j_{I, !*}(\tau^{\circ} \mathcal{F}_2^\bullet \overset{L}{\boxtimes} \tau^{\circ} \restr{\mathcal{F}_1^\bullet}{U})}{\Delta(X)}.$$
On the other hand, we have the following:
\begin{prop} \label{conv1}
There is a canonical isomorphism
$$
\tau^{\circ} (\cF_1^\bullet * \cF_2^\bullet) \cong Ri_I^* \circ j_{I,!*} (\tau^{\circ} \cF_1^\bullet \overset{L}{\boxtimes} \restr{\tau^{\circ} \cF_2^\bullet}{U})[-1].
$$
\end{prop}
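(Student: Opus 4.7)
The plan is to use the global convolution Grassmannian $\tilde{\Gr}_{G,X^2}$ and the proper pushforward $Rm_{I,!}\cF_{1,2}^\bullet$ as a bridge between the two sides of the desired isomorphism, identifying this pushforward simultaneously with (a shift of) $\tau^{\circ}(\cF_1^\bullet * \cF_2^\bullet)$ after restriction via $Ri_I^*$, and with the intermediate extension $j_{I,!*}\bigl(\tau^{\circ}\cF_1^\bullet \overset{L}{\boxtimes} \restr{\tau^{\circ}\cF_2^\bullet}{U}\bigr)$.

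First, I would apply proper base change to the right Cartesian square of diagram (\ref{BDdiagram}) to obtain $Ri_I^* Rm_{I,!}\cF_{1,2}^\bullet \cong R(m \times \id)_!\bigl(\restr{\cF_{1,2}^\bullet}{(\Gr_G \overset{\sim}{\times}\Gr_G) \times X}\bigr)$. The restriction $\restr{\cF_{1,2}^\bullet}{(\Gr_G \overset{\sim}{\times}\Gr_G) \times X}$ is the external product $(\cF_1^\bullet \overset{\sim}{\boxtimes} \cF_2^\bullet) \boxtimes \bbF_p[2]$, the $[2]$ coming from the two shifts built into $\tau^{\circ} = R\tau^*[1]$; pushing forward along $m\times\id$ then yields $(\cF_1^\bullet * \cF_2^\bullet) \boxtimes \bbF_p[2] \cong \tau^{\circ}(\cF_1^\bullet * \cF_2^\bullet)[1]$.

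Second, I would establish $Rm_{I,!}\cF_{1,2}^\bullet \cong j_{I,!*}\bigl(\tau^{\circ}\cF_1^\bullet \overset{L}{\boxtimes} \restr{\tau^{\circ}\cF_2^\bullet}{U}\bigr)$ in $P_c^b(\Gr_{G,X^2}, \bbF_p)$. The restrictions of both sides to $U$ coincide, since $m_I$ is the identity over $U$ and $\cF_{1,2}^\bullet$ agrees with the untwisted external product over $m_I^{-1}(U)$ by the factorization property of $\overset{\sim}{\boxtimes}$. It therefore remains to verify that $Rm_{I,!}\cF_{1,2}^\bullet$ is perverse and has no subobject nor quotient supported on the closed stratum $i_I(\Gr_G \times X)$. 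Perversity is the global avatar of the main convolution-preserves-perversity result from \cite{modpGr}. For the minimality, I would reduce by d\'{e}vissage to the case $\cF_i^\bullet = \IC_{\lambda_i}$: the sheaf $Rm_{I,!}\cF_{1,2}^\bullet$ is then, up to shift, the constant sheaf on a global Schubert-type sub-ind-scheme of $\Gr_{G,X^2}$, and the desired minimality amounts to showing that this constant sheaf coincides with the intermediate extension of its restriction to the open stratum over $U$ --- which is exactly what the $F$-rationality results \cite[1.6--1.7]{modpGr} provide.

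Combining the two steps gives $\tau^{\circ}(\cF_1^\bullet * \cF_2^\bullet)[1] \cong Ri_I^* j_{I,!*}\bigl(\tau^{\circ}\cF_1^\bullet \overset{L}{\boxtimes} \restr{\tau^{\circ}\cF_2^\bullet}{U}\bigr)$, and shifting by $[-1]$ yields the claim. The main obstacle is the minimality part of the second step: in the $\overline{\bbQ}_\ell$-setting this is customarily handled via semi-smallness of $m_I$ together with weight/purity arguments (and, at a deeper level, via Braden's hyperbolic localization), all of which are unavailable for $\bbF_p$-sheaves. The purpose of invoking the $F$-rationality framework of \cite{modpGr} is precisely to supply a substitute for those classical tools in our context.
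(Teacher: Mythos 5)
Your proof takes essentially the same route as the paper's: both use $Rm_{I,!}\cF_{1,2}^\bullet$ as the bridge, obtaining $Ri_I^*Rm_{I,!}\cF_{1,2}^\bullet[-1]\cong\tau^\circ(\cF_1^\bullet*\cF_2^\bullet)$ by proper base change over the diagonal (the paper cites the arguments of \cite[7.10(ii)]{modpGr}) and $Rm_{I,!}\cF_{1,2}^\bullet\cong j_{I,!*}(\cdots)$ via $F$-rationality of the global Schubert scheme (the paper cites \cite[7.8]{modpGr} directly). The only difference is that you spell out the content of those two citations rather than invoking them.
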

\begin{proof}
By the arguments in the proof of \cite[7.10 (ii)]{modpGr}, there is a canonical isomorphism
$$Ri_I^* (Rm_{I,!} (\mathcal{F}_{1,2}^\bullet))[-1] \cong \tau^{\circ}(\cF_1^\bullet * \cF_2^\bullet).$$ On the other hand, by \cite[7.8]{modpGr} we have
\begin{equation} \label{C78}
Rm_{I,!} (\mathcal{F}_{1,2}^\bullet) \cong j_{I,!*} (\tau^{\circ} \cF_1^\bullet \overset{L}{\boxtimes} \restr{\tau^{\circ} \cF_2^\bullet}{U}).
\end{equation}
\end{proof}
\noindent Consequently, we get a commutativity isomorphism
$$\mathcal{F}_1^\bullet * \mathcal{F}_2^\bullet \cong \mathcal{F}_2^\bullet * \mathcal{F}_1^\bullet.$$ In order to make this commutativity isomorphism compatible with that of $\otimes$ it must be modified by certain sign changes which depend on the parities of the dimensions of the strata occurring in the support of the $\mathcal{F}_i^\bullet$; see the proof of \cite[7.11]{modpGr} for more details.

\subsection{Compatibility with convolution}

\begin{rmrk}
In this subsection we use \ref{wtequivalence} in order to take $H^{2\rho(\nu)}(\{\nu\}, \cdot)$ as our definition of $F_\nu$. This allows us to give a proof that $F_{-}$ is a tensor functor which is unique to $\mathbb{F}_p$-sheaves and simpler than that in \cite[6.4]{GeometricSatake}. In particular, we need only globalize the points $\{\nu\}$ relative to a curve instead of the $S_\nu$. In Subsection \ref{convsection} we globalize the $S_\nu$ to give a proof of the compatibility between convolution and the constant term functor $\CT_L^G$ with respect to a general Levi subgroup $L \subset G$. By taking $L=T$ this provides an alternative proof of Theorem \ref{tensorfunctor} below which is analogous to that in \cite[6.4]{GeometricSatake}.
\end{rmrk}

For $\nu \in X_*(T)_{-}$ let $\{\nu\}(X^2) \subset \Gr_{G,X^2}$ be the reduced closure of $$\bigcup_{\substack{\nu_1, \nu_2 \in X_*(T)_{-} \\ \nu_1 + \nu_2 = \nu}} \{\nu_1\} \times \{\nu_2\} \times U. $$ The reduced fiber of $\{\nu\}(X^2)$ over $\Delta(X)$ is isomorphic to $\{\nu\} \times X \subset \Gr_G \times X$. Denote by $i_{\nu, X^2} \colon \{\nu\}(X^2) \rightarrow \Gr_{G,X^2}$ the inclusion. For $\nu \in X_*(T)_{-}$ and $\mathcal{F}^\bullet \in D_c^b(\Gr_{G,X^2}, \mathbb{F}_p)$ set $$\tilde{F}_\nu (\mathcal{F}^\bullet) : = Rf_{I,!} (Ri_{\nu, X^2, *} (Ri_{\nu, X^2}^* \mathcal{F}^\bullet)) \in D_c^b(X^2, \mathbb{F}_p).$$ 

\begin{thm} \label{tensorfunctor}
The total weight functor is a tensor functor
$$
\xymatrix{
F_{-} \colon (P_{L^+G}(\Gr_G, \mathbb{F}_p),*) \ar[r] &  (\Vect_{\mathbb{F}_p}(X_*(T)_{-}),\otimes).
}
$$  
\end{thm}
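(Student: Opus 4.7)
The plan is to globalize the weight functors $F_\nu$ over the Beilinson--Drinfeld Grassmannian $\Gr_{G,X^2}$ using the subschemes $\{\nu\}(X^2)$ introduced above, and to compare the stalks of $\tilde{F}_\nu(Rm_{I,!}\mathcal{F}_{1,2}^\bullet)$ over $U$ and over $\Delta(X)$. This mirrors the strategy of \cite[6.4]{GeometricSatake} but replaces the semi-infinite orbits $S_\nu$ by the single points $\{\nu\}$; the replacement is legitimate thanks to the identification $F_\nu = H^{2\rho(\nu)}(\{\nu\},\cdot)$ from \ref{wtequivalence}, and it sidesteps Braden's hyperbolic localization theorem, which fails for $\mathbb{F}_p$-coefficients by \ref{obstruc}.

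First, I compute stalks by proper base change. Over $U$ the convolution map $m_I$ restricts to the identity, and the fiber of $\{\nu\}(X^2)$ over $(x_1,x_2)\in U$ is the finite set $\bigsqcup_{\nu_1+\nu_2=\nu}\{(\nu_1,\nu_2)\}$; the fiberwise K\"unneth formula together with \ref{wtequivalence} then yields a canonical isomorphism
\[
\restr{\tilde{F}_\nu(Rm_{I,!}\mathcal{F}_{1,2}^\bullet)}{U} [2-2\rho(\nu)] \cong \bigoplus_{\substack{\nu_1,\nu_2\in X_*(T)_-\\ \nu_1+\nu_2=\nu}} F_{\nu_1}(\mathcal{F}_1^\bullet)\otimes F_{\nu_2}(\mathcal{F}_2^\bullet)\otimes \restr{\mathbb{F}_p}{U}
\]
placed in degree zero. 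Over a diagonal point $(x,x)\in\Delta(X)$, the fiber of $\{\nu\}(X^2)$ is the single point $\{\nu\}$, and the identity $Rm_{I,!}\mathcal{F}_{1,2}^\bullet \cong j_{I,!*}(\tau^\circ\mathcal{F}_1^\bullet\overset{L}{\boxtimes}\restr{\tau^\circ\mathcal{F}_2^\bullet}{U})$ from \cite[7.8]{modpGr}, combined with \ref{conv1}, gives
\[
\restr{\tilde{F}_\nu(Rm_{I,!}\mathcal{F}_{1,2}^\bullet)}{\Delta(X)}[2-2\rho(\nu)] \cong F_\nu(\mathcal{F}_1^\bullet \ast \mathcal{F}_2^\bullet)\otimes \restr{\mathbb{F}_p}{\Delta(X)}.
\]

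Second and most critical: I claim that $\tilde{F}_\nu(Rm_{I,!}\mathcal{F}_{1,2}^\bullet)[2-2\rho(\nu)]$ is a constant sheaf on $X^2$ concentrated in degree zero. Concentration in a single cohomological degree follows from \ref{halfperverse} (ii) applied fiberwise. Local constancy on $U$ is provided by Step 1. To extend across $\Delta(X)$ I sum over $\nu\in X_*(T)_-$: by \ref{Zwt} and the already-established monoidality of $H$ \cite[7.11]{modpGr},
\[
\sum_{\nu}\dim F_\nu(\mathcal{F}_1^\bullet\ast\mathcal{F}_2^\bullet)=\dim H(\mathcal{F}_1^\bullet\ast\mathcal{F}_2^\bullet)=\dim H(\mathcal{F}_1^\bullet)\cdot\dim H(\mathcal{F}_2^\bullet),
\]
which matches the total dimension of the corresponding sum of stalks over $U$; upper semi-continuity then forces equality at each $\nu$ individually, so the unique constant extension from $U$ agrees with $\tilde{F}_\nu(Rm_{I,!}\mathcal{F}_{1,2}^\bullet)$ on all of $X^2$.

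Third, constancy and the two stalk computations produce the canonical grading-preserving isomorphism
\[
F_\nu(\mathcal{F}_1^\bullet \ast \mathcal{F}_2^\bullet)\cong \bigoplus_{\substack{\nu_1,\nu_2\in X_*(T)_-\\ \nu_1+\nu_2=\nu}} F_{\nu_1}(\mathcal{F}_1^\bullet)\otimes F_{\nu_2}(\mathcal{F}_2^\bullet),
\]
which is the desired tensor compatibility in $(\Vect_{\mathbb{F}_p}(X_*(T)_-),\otimes)$. Unit compatibility follows from $F_0(\IC_0)=\mathbb{F}_p$, associativity from repeating the construction on $\Gr_{G,X^3}$, and the symmetric commutativity constraint from the swap automorphism of $X^2$ as in \cite[7.11]{modpGr}; the parity statement \ref{halfperverse} (ii) ensures that each $F_\nu$ is supported in a single parity (namely $2\rho(\nu)\bmod 2$), so the sign modifications required in \emph{loc.\,cit.} for the ungraded functor $H$ are absorbed into the braiding of the graded category. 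The main obstacle throughout is the constancy in Step 2: in the $\overline{\mathbb{Q}}_\ell$-setting it is supplied by hyperbolic localization, whereas here we must instead combine Section \ref{section_MVcycle}, the parity result \ref{halfperverse} (ii), and the ungraded constancy \cite[7.9]{modpGr}.
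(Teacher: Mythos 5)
Your overall strategy — globalizing the points $\{\nu\}$ over the Beilinson–Drinfeld Grassmannian and comparing stalks of $\tilde F_\nu(Rm_{I,!}\mathcal F_{1,2}^\bullet)$ over $U$ and over $\Delta(X)$ — is exactly the one the paper uses, and your Step 1 stalk computations coincide with the paper's (\ref{eq2}). However, your Step 2 (constancy) has a genuine gap in two places.

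First, you claim that concentration of $\tilde F_\nu(Rm_{I,!}\mathcal F_{1,2}^\bullet)[2-2\rho(\nu)]$ in degree zero ``follows from \ref{halfperverse} (ii) applied fiberwise.'' But \ref{halfperverse} (ii) is only a \emph{parity} statement: it tells you $H^i(\{\nu\},\mathcal F^\bullet)$ can be nonzero only when $i\equiv 2\rho(\nu)\pmod 2$, not that the stalk at $\{\nu\}$ sits in a single degree. Indeed a perverse sheaf can have stalk contributions at $\{\nu\}$ in several degrees of the same parity; the identification of $F_\nu$ with $H^{2\rho(\nu)}(\{\nu\},\cdot)$ in \ref{wtequivalence} only isolates the top one. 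Second, and more seriously, the dimension count plus ``upper semi-continuity'' cannot establish constancy. For constructible $\mathbb F_p$-sheaves the stalk dimension function is constructible but is \emph{not} upper semi-continuous (take $j_!\mathbb F_p$ for an open immersion $j$: the stalk dimension jumps \emph{down} at the boundary). And even granting the dimension identity at each $\nu$, a constructible sheaf that is constant on $U$ and has a stalk of matching dimension over $\Delta(X)$ need not be constant — you need an actual morphism realizing the specialization, which a numerical count does not produce.

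The paper fills exactly this hole by constructing, via the adjunction $(Ri_{\nu,X^2}^*, Ri_{\nu,X^2,*})$, a natural map of sheaves
$$
H^{n-2}(Rf_{I,!}(Rm_{I,!}\mathcal F_{1,2}^\bullet))\longrightarrow\bigoplus_{2\rho(\nu)=n}H^{n-2}(\tilde F_\nu(Rm_{I,!}\mathcal F_{1,2}^\bullet)),
$$
showing it is an isomorphism on stalks at every closed point by combining (\ref{eq1}), (\ref{eq2}), and the pointwise decomposition \ref{Zwt}, and then concluding it is an isomorphism of constructible sheaves; since the source is constant by \cite[7.9]{modpGr}, each summand on the target is constant. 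This is the missing ingredient in your Step 2: you need this comparison morphism, not a dimension count. (Your remarks on unit, associativity, and commutativity are at the same level of detail as the paper's and are fine once the constancy is fixed, though the claim that the sign modifications are ``absorbed into the braiding of the graded category'' is misleading — the monoidal structure on $\Vect_{\mathbb F_p}(X_*(T)_-)$ in \ref{notagradsp} carries no Koszul signs, and one must verify directly, as the paper does, that the modified commutativity constraint on $(P_{L^+G}(\Gr_G,\mathbb F_p),*)$ matches it.)
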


\begin{proof}
By the same considerations as in the proof of (\ref{eq1}) in \cite[7.10]{modpGr}, we have
\begin{equation} \label{eq2}
\restr{H^{2\rho(\nu)-2}(\tilde{F}_\nu(Rm_{I,!}\mathcal{F}^\bullet_{1,2}))}{(x_1,x_2)} \cong \left\{
        \begin{array}{ll}
            \bigoplus_{\nu_1 + \nu_2 = \nu}F_{\nu_1}(\mathcal{F}^\bullet_1) \otimes F_{\nu_2}(\mathcal{F}^\bullet_2) & \quad \textrm{if $x_1 \neq x_2$} \\
            F_\nu(\mathcal{F}_1^\bullet * \mathcal{F}_2^\bullet) & \quad \textrm{if $x_1 = x_2$}.
        \end{array}
    \right.
\end{equation}
From the adjunction between $Ri_{\nu, X^2}^*$ and $Ri_{\nu, X^2, *}$ we get a natural map 
\begin{equation} \label{eq3} H^{n-2}(Rf_{I,!}(Rm_{I,!}\mathcal{F}^\bullet_{1,2})) \rightarrow \bigoplus_{2 \rho(\nu) = n} H^{n-2}(\tilde{F}_\nu(Rm_{I,!}\mathcal{F}^\bullet_{1,2})).\end{equation} 
By \ref{Zwt} and the description of the stalks in (\ref{eq1}), (\ref{eq2}) the above map (\ref{eq3}) is an isomorphism over closed points in $X^2$. Since each of the sheaves in (\ref{eq3}) is constructible then this is an isomorphism of sheaves on $X^2$. As $H^{n-2}(Rf_{I,!}(Rm_{I,!}\mathcal{F}^\bullet_{1,2}))$ is constant by \cite[7.9]{modpGr}, then each of the sheaves $H^{n-2}(\tilde{F}_\nu(Rm_{I,!}\mathcal{F}^\bullet_{1,2}))$ is also constant. Hence by (\ref{eq2}) we get a natural isomorphism $$F_\nu(\mathcal{F}_1^\bullet * \mathcal{F}_2^\bullet) \cong \bigoplus_{\nu_1 + \nu_2 = \nu} F_{\nu_1}(\mathcal{F}^\bullet_1) \otimes F_{\nu_2}(\mathcal{F}^\bullet_2).$$ By summing over $\nu \in X_*(T)_-$ we get an isomorphism
$$F_-(\mathcal{F}_1^\bullet * \mathcal{F}_2^\bullet) \cong F_-(\mathcal{F}_1^\bullet) \otimes F_-(\mathcal{F}_2^\bullet).$$

The associativity isomorphism in $P_{L^+G}(\Gr_G, \mathbb{F}_p)$ is constructed from the associativity of the operation $\boxtimes$ (see the proof of \cite[7.11]{modpGr}), so the above isomorphism is compatible with the usual associativity isomorphism in $\Vect_{\mathbb{F}_p}(X_*(T)_{-})$. Moreover, using (\ref{C78}) and (\ref{eq2}) one can verify directly from the construction in \cite[7.11]{modpGr} that the commuta\-ti\-vi\-ty isomorphism in $P_{L^+G}(\Gr_G, \mathbb{F}_p)$ is compatible with the commutativity isomorphism in $\Vect_{\mathbb{F}_p}(X_*(T)_{-})$. Thus $F_{-}$ is a tensor functor.
\end{proof}

We denote by $P_{L^+G}(\Gr_{G},\bbF_p)^{\sss}$
the full subcategory of $P_{L^+G}(\Gr_{G},\bbF_p)$ consisting of semi-simple objects. By \cite[1.2]{modpGr} it is a Tannakian subcategory with fiber functor given by the restriction of $H$.

\begin{cor}\label{F-ss}
The functor
$$
\xymatrix{
F_-|_{(P_{L^+G}(\Gr_G,\bbF_p)^{\sss},*)}:(P_{L^+G}(\Gr_G,\bbF_p)^{\sss},*)\ar[r] & (\Vect_{\bbF_p}(X_*(T)_-),\otimes)
}
$$
is an equivalence of symmetric monoidal categories. We have
$$
\forall\lambda\in X_*(T)^+,\quad F_-(\IC_{\lambda})= \bbF_p(w_0(\lambda)).
$$
\end{cor}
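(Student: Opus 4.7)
The plan is to first establish the explicit formula $F_-(\IC_\lambda) = \bbF_p(w_0(\lambda))$ on simple objects, then deduce the equivalence by combining this formula with the semi-simplicity of both sides and the tensor property already proved in Theorem \ref{tensorfunctor}. The formula itself is immediate: by definition $F_- = \bigoplus_{\nu \in X_*(T)_-} F_\nu$, and by Theorem \ref{computetop} (i) we have $F_\nu(\IC_\lambda) \simeq \bbF_p$ exactly when $\nu = w_0(\lambda)$ and $F_\nu(\IC_\lambda) = 0$ otherwise; since $w_0(\lambda) \in X_*(T)_-$, we deduce $F_-(\IC_\lambda) = \bbF_p(w_0(\lambda))$.

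Next I would verify that $F_-$ is an equivalence on the underlying abelian categories. Both $P_{L^+G}(\Gr_G, \bbF_p)^{\sss}$ and $\Vect_{\bbF_p}(X_*(T)_-)$ are semi-simple, with complete sets of simple objects $\{\IC_\lambda\}_{\lambda \in X_*(T)^+}$ and $\{\bbF_p(\nu)\}_{\nu \in X_*(T)_-}$ respectively, each with endomorphism ring $\bbF_p$ and with pairwise vanishing $\Hom$ between non-isomorphic simples. The Weyl group longest element provides a bijection $w_0 \colon X_*(T)^+ \xrightarrow{\sim} X_*(T)_-$, so the formula from the first step yields a bijection on isomorphism classes of simples. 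Since by Theorem \ref{Zwt} the composition of $F_-$ with the forgetful functor $\Vect_{\bbF_p}(X_*(T)_-) \to \Vect_{\bbF_p}$ equals the faithful functor $H$, the functor $F_-$ is itself faithful, and consequently sends $\id_{\IC_\lambda}$ to a nonzero --- hence invertible --- endomorphism of $\bbF_p(w_0(\lambda))$. This produces isomorphisms on all $\Hom$-spaces between simples, which together with the exactness of $F_-$ established in Proposition \ref{nunotin-} and the preservation of direct sums yields both full faithfulness and essential surjectivity, hence an equivalence of abelian categories.

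Finally, the symmetric monoidal structure is inherited for free: by Theorem \ref{tensorfunctor}, $F_-$ is already a symmetric monoidal functor, so its restriction to the monoidal subcategory $P_{L^+G}(\Gr_G,\bbF_p)^{\sss}$ remains symmetric monoidal, and being simultaneously an equivalence of abelian categories it is an equivalence of symmetric monoidal categories. There is essentially no obstacle here; all the real work has already been done in Theorems \ref{computetop} and \ref{tensorfunctor}, and the corollary reduces to a formal bookkeeping argument using semi-simplicity together with the bijection $w_0$, with the mild sanity check that the tensor product rule $\IC_{\lambda_1}*\IC_{\lambda_2}=\IC_{\lambda_1+\lambda_2}$ on simples translates correctly under $F_-$ to $\bbF_p(w_0(\lambda_1))\otimes\bbF_p(w_0(\lambda_2))=\bbF_p(w_0(\lambda_1)+w_0(\lambda_2))$.
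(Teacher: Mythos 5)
Your proof is correct. The paper states this corollary without an explicit proof, treating it as an immediate consequence of \ref{computetop}, \ref{Zwt}, \ref{tensorfunctor}, and the semi-simplicity of both categories, and your argument spells out exactly the expected deduction (formula on simples from \ref{computetop}, bijection on isomorphism classes via $w_0$, full faithfulness from semi-simplicity plus the faithfulness of $H$, and inheritance of the symmetric monoidal structure from \ref{tensorfunctor}).
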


\begin{rmrk} \label{summarizeT}
We can summarize this section as follows. Let $2 \rho_{-}:X_*(T)_{-}\ra\mathbb{Z}_{-}$ be the additive map induced by the group homomorphism $2 \rho:X_*(T)\ra\mathbb{Z}$, and let $2 \rho_{-}:\Vect_{\mathbb{F}_p}(X_*(T)_{-})\ra\Vect_{\mathbb{F}_p}(\mathbb{Z}_{-})$ be the induced functor. Then the exact faithful symmetric monoidal functor 
$$
\xymatrix{
H:(P_{L^+G}(\Gr_G, \mathbb{F}_p),*)\ar[r] & (\Vect_{\mathbb{F}_p},\otimes)
}
$$
factors as a composition of exact faithful symmetric monoidal functors
$$
(P_{L^+G}(\Gr_G, \mathbb{F}_p),*) \xrightarrow{F_-} (\Vect_{\mathbb{F}_p}(X_*(T)_{-}),\otimes) \xrightarrow{2 \rho_{-}} (\Vect_{\mathbb{F}_p}(\mathbb{Z}_{-}),\otimes) \xrightarrow{\text{Forget}} (\Vect_{\mathbb{F}_p},\otimes).
$$
\end{rmrk}

\section{The constant term functor} \label{section_reslevi}

\subsection{The definition of $\CT_L^G$} We return to the setup in Subsection \ref{Levisec} following the geometric setting explained in \cite[\S 5.3.27]{BD}; see also \cite[\S 1.15.1]{BR18}. In particular, $P\subset G$ is a parabolic subgroup containing $B$, and $L\subset P$ is the Levi factor containing $T$. We may consider for $L$ all the objects that we consider for $G$; we will denote them using a letter $L$ as a subscript or a superscript. There is a diagram
\begin{equation} \label{LPGdiagram}
\xymatrix{
&\Gr_P \ar[dl]_{q} \ar[dr]^{p} & \\
\Gr_L & & \Gr_G.
}
\end{equation}
The connected components of $\Gr_L$ are parametrized by
$$
\pi_0(\Gr_L)=\pi_1(L)=X_*(T)/\bbZ\Phi_L^{\vee},
$$
where $\Phi_L^{\vee}$ is the set of coroots of $L$ with respect to $T$. For $c\in \pi_0(\Gr_L)$ let $\Gr_L^c$ and $\Gr_P^c$ be the corresponding connected components of $\Gr_L$ and $\Gr_P$. 

Let $\rho_L$ be half the sum of the positive roots of $L$. Then $2(\rho-\rho_L)(c)$ is a well-defined integer for $c\in \pi_0(\Gr_L)$ since
$\rho=\rho_L$ on $\Phi_L^{\vee}$. Define the locally constant function
\begin{equation} \label{degPdef}
\deg_P \colon \Gr_P \rightarrow \pi_0(\Gr_P) \xrightarrow{2(\rho-\rho_L)}  \mathbb{Z}
\end{equation} where $\Gr_P \rightarrow \pi_0(\Gr_P)$ sends $\Gr_P^c$ to $c$.

\begin{defn} \label{CTdef}
The \emph{$L$-constant term functor} is
$$\xymatrix{
\CT_L^G := Rq_! \circ Rp^* [\deg_P] \colon P_{L^+G}(\Gr_G, \mathbb{F}_p) \rightarrow D_c^b(\Gr_L, \mathbb{F}_p).
}$$
\end{defn}

Let $c\in \pi_0(\Gr_L)$. Since
$$(\Gr_P^c)_{\red} = S_c,$$ then by restricting (\ref{LPGdiagram}) to $S_c$ we get a diagram
$$
\xymatrix{
&S_c \ar[dl]_{\sigma_c} \ar[dr]^{s_c} & \\
\Gr_L^c & & \Gr_G.
}
$$

\begin{defn}
The \emph{weight functor associated to $c$} is
$$
\xymatrix{
F_{c}:=R\sigma_{c!} \circ Rs_{c}^*[2(\rho-\rho_L)(c)] \colon P_{L^+G}(\Gr_G,\bbF_p) \ar[r] & D_c^b(\Gr_L^c,\bbF_p).
}
$$
\end{defn}

\begin{lem} \label{CTweight}
There is a natural isomorphism of functors
$$
\CT_L^G  \cong \bigoplus_{c \in \pi_0(\Gr_L)} F_c.$$
\end{lem}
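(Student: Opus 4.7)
The plan is to combine three simple observations: the decomposition of $\Gr_P$ into connected components, the constancy of $\deg_P$ on each component, and the insensitivity of \'etale cohomology to passage to the reduced structure.

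First I would use that the connected components of $\Gr_P$ are parametrized by $\pi_0(\Gr_L)$ via the map $q$, so we have a decomposition (as ind-schemes)
$$
\Gr_P = \coprod_{c \in \pi_0(\Gr_L)} \Gr_P^c,
$$
and $q$ restricts to morphisms $q^c \colon \Gr_P^c \to \Gr_L^c$. Let $p^c \colon \Gr_P^c \to \Gr_G$ be the corresponding restriction of $p$, and let $\iota_c \colon \Gr_P^c \to \Gr_P$ and $\jmath_c \colon \Gr_L^c \to \Gr_L$ denote the open and closed ind-immersions. Because the components are open and closed in $\Gr_P$ and $\Gr_L$, proper base change (applied to each step of an exhaustion by closed subschemes of finite type) yields $R\iota_c^* \circ Rp^* \cong R(p^c)^*$ and $R\jmath_c^* \circ Rq_! \cong R(q^c)_! \circ R\iota_c^*$, and summing over $c$ gives a natural isomorphism
$$
Rq_! \circ Rp^* \;\cong\; \bigoplus_{c \in \pi_0(\Gr_L)} R\jmath_{c,!} \circ R(q^c)_! \circ R(p^c)^*.
$$

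Second, by definition $\deg_P$ is equal to the constant $2(\rho - \rho_L)(c)$ on $\Gr_P^c$, so the shift $[\deg_P]$ commutes with $Rp^*$ and $Rq_!$ and on the $c$-th summand becomes the shift $[2(\rho-\rho_L)(c)]$. Absorbing $R\jmath_{c,!}$ into the description of the target category yields
$$
\CT_L^G \;\cong\; \bigoplus_{c \in \pi_0(\Gr_L)} R(q^c)_! \circ R(p^c)^*[2(\rho-\rho_L)(c)].
$$

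Finally, since $S_c = (\Gr_P^c)_{\red}$, the closed immersion $S_c \hookrightarrow \Gr_P^c$ is a universal homeomorphism and therefore induces an equivalence on derived categories of \'etale sheaves (topological invariance). Under this equivalence, $p^c$ corresponds to $s_c$ and $q^c$ corresponds to $\sigma_c$, so $R(q^c)_! \circ R(p^c)^*[2(\rho-\rho_L)(c)] \cong R\sigma_{c,!} \circ Rs_c^*[2(\rho-\rho_L)(c)] = F_c$, which gives the asserted natural isomorphism. The only mildly technical point is the bookkeeping involved in extending these base change and topological invariance statements from schemes of finite type to the ind-schemes $\Gr_G$, $\Gr_P$, $\Gr_L$, but this is harmless since all of the sheaves involved have support in closed finite type subschemes.
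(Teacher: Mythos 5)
Your proof is correct and takes essentially the same approach as the paper, which simply notes that the statement "follows from the definitions and the topological invariance of the \'etale site." You have merely spelled out the bookkeeping — the decomposition of $\Gr_P$ into components, the constancy of $\deg_P$ on each, and the passage to reduced structures — that the paper leaves implicit.
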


\begin{proof}
This follows from the definitions and the topological invariance of the \'{e}tale site.
\end{proof}

\subsection{Preservation of perversity}

\begin{thm} \label{CTperv}
Let $c \in \pi_0(\Gr_L)$ and $\cF^{\bullet}\in P_{L^+G}(\Gr_G,\bbF_p)$. Then 
$$
F_c(\cF^{\bullet})  \in P_{L^+L}(\Gr_L, \mathbb{F}_p).
$$ Furthermore, for $\lambda \in X_*(T)^+$ we have
$$
F_c(\IC_\lambda)=
\left\{ \begin{array}{ll} 
\IC^L_{w_0^Lw_0(\lambda)} & \quad \textrm{if $c=c(w_0(\lambda))$}, \\
0 & \quad \textrm{otherwise}.
\end{array}
\right.
$$
\end{thm}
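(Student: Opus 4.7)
The plan is two-step: first compute $F_c(\IC_\lambda)$ explicitly on the simple objects using Theorem \ref{thmcompute}, and then extend perversity to all $\cF^\bullet$ by dévissage along the Jordan-Hölder filtration, which is finite by \cite[1.5]{modpGr}.

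For the computation on simples, write $\iota_\lambda \colon \Gr_G^{\leq\lambda} \hookrightarrow \Gr_G$ for the closed immersion so that $\IC_\lambda = R\iota_{\lambda,*}\bbF_p[2\rho(\lambda)]$, and form the cartesian squares
\begin{equation*}
\xymatrix{
S_c \cap \Gr_G^{\leq\lambda} \ar[d]_{q_{c,\lambda}} \ar@{^{(}->}[r]^(0.6){j} & S_c \ar[d]^{\sigma_c} \ar[r]^{s_c} & \Gr_G \\
\Gr_L^c \cap \Gr_G^{\leq\lambda} \ar@{^{(}->}[r]^(0.6){\iota''} & \Gr_L^c. &
}
\end{equation*}
Proper base change for the closed immersion $\iota_\lambda$ gives $Rs_c^*\IC_\lambda \cong Rj_*\bbF_p[2\rho(\lambda)]$; applying $R\sigma_{c,!}$ and using $\sigma_c \circ j = \iota'' \circ q_{c,\lambda}$ together with $Rj_! = Rj_*$ and $R\iota''_! = R\iota''_*$ (both closed immersions) yields
\begin{equation*}
R\sigma_{c,!}Rs_c^*\IC_\lambda \;\cong\; R\iota''_* R(q_{c,\lambda})_!\bbF_p[2\rho(\lambda)].
\end{equation*}
Theorem \ref{thmcompute} then says this vanishes unless $c = c(w_0(\lambda))$, in which case it is the constant sheaf $\bbF_p[2\rho(\lambda)]$ on $\Gr_L^{\leq w_0^L w_0(\lambda)} = \Gr_L^c \cap \Gr_G^{\leq\lambda}$ (using Lemma \ref{Lintersection}(ii)). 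Applying the outer shift $[2(\rho-\rho_L)(c)]$, evaluating on the representative $w_0(\lambda)\in c$, and using $\rho \circ w_0 = -\rho$ together with $\rho_L \circ w_0^L = -\rho_L$, the total shift simplifies to
\begin{equation*}
2\rho(\lambda) + 2(\rho-\rho_L)(w_0(\lambda)) \;=\; -2\rho_L(w_0(\lambda)) \;=\; 2\rho_L(w_0^L w_0(\lambda)) \;=\; \dim \Gr_L^{\leq w_0^L w_0(\lambda)},
\end{equation*}
which is precisely the shift in $\IC^L_{w_0^L w_0(\lambda)}$, establishing the explicit formula.

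For general perversity, induct on the length of $\cF^\bullet$; the base case is the formula above, which produces a simple perverse sheaf or zero. For the inductive step, pick a short exact sequence $0 \to \cA \to \cB \to \cS \to 0$ in $P_{L^+G}(\Gr_G,\bbF_p)$ with $\cS$ simple and $\cA$ of strictly smaller length. The triangulated functor $F_c$ produces a distinguished triangle
\begin{equation*}
F_c(\cA) \to F_c(\cB) \to F_c(\cS) \to F_c(\cA)[1].
\end{equation*}
By induction and the base case, the outer terms are perverse, so the long exact sequence of perverse cohomology sheaves forces ${}^{p}\!H^i(F_c(\cB)) = 0$ for all $i\neq 0$. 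Hence $F_c(\cB) \in P_{L^+L}(\Gr_L,\bbF_p)$, completing the induction.

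The main obstacle is the first step: setting up the base change carefully and arranging the three shifts—from $\IC_\lambda$, from the definition of $F_c$, and from the computation in Theorem \ref{thmcompute}—so that they combine to yield exactly the shift appearing in $\IC^L_{w_0^L w_0(\lambda)}$. Once this is done, the dévissage is essentially formal, resting only on finite length of the Satake category and the standard fact that two perverse terms in a distinguished triangle force the third to be perverse.
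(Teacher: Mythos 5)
Your computation of $F_c(\IC_\lambda)$ via Theorem~\ref{thmcompute} is correct, and the careful bookkeeping of the three shifts — from $\IC_\lambda$, from the definition of $F_c$, and from $\IC^L_{w_0^Lw_0(\lambda)}$ — is exactly what the paper leaves implicit; the base-change diagram and the use of Lemma~\ref{Lintersection}(ii) are both right. The d\'evissage on length is also the paper's route for perversity.

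However, there is one genuine gap. The theorem asserts $F_c(\cF^\bullet)\in P_{L^+L}(\Gr_L,\bbF_p)$, the category of \emph{$L^+L$-equivariant} perverse sheaves, and your argument only establishes plain perversity. In the base case the output $\IC^L_{w_0^Lw_0(\lambda)}$ is of course equivariant, but in the inductive step you pass from ``${}^pH^i(F_c(\cB))=0$ for $i\neq 0$'' directly to ``$F_c(\cB)\in P_{L^+L}(\Gr_L,\bbF_p)$'' without any justification that the equivariance structure survives extension. To make that step work you would need to invoke that the full subcategory of $L^+L$-equivariant perverse sheaves is closed under extensions inside $P_c^b(\Gr_L,\bbF_p)$, which is not automatic and is not cited. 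The paper avoids this entirely by proving equivariance directly for arbitrary $\cF^\bullet$: since $\cF^\bullet$ is $L^+L$-equivariant, $S_c$ is $L^+L$-stable, and $\sigma_c\colon S_c\to\Gr_L^c$ is $L^+L$-equivariant, the equivariance of $F_c(\cF^\bullet)$ follows from $t$-exactness of smooth pullback \cite[2.15]{modpGr} together with proper base change along the action and projection maps for $\sigma_c$ \cite[3.2]{modpGr}. Adding either that direct argument or a closure-under-extensions lemma would complete your proof.
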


\begin{proof}
The description of $F_c(\IC_\lambda)$ follows from \ref{thmcompute} since $\IC_\lambda = \bbF_p[2 \rho(\lambda)]$ supported on $\Gr_G^{\leq \lambda}$ and $\IC^L_{w_0^Lw_0(\lambda)} = \bbF_p[2 \rho_L(w_0^Lw_0(\lambda))]$ supported on $\Gr_L^{\leq w_0^Lw_0(\lambda)}.$ Then the perversity of $F_c(\cF^{\bullet})$ for general $\cF^\bullet$ follows by induction on the length of $\cF^{\bullet}$. For equivariance, we observe that $\cF^{\bullet}$ is $L^+L$-equivariant, and that $S_c$ is $L^+L$-stable and $\sigma_c \colon S_c \rightarrow \Gr_L^c$ is $L^+L$-equivariant. As pullback along a smooth morphism is $t$-exact (up to a shift) for the perverse $t$-structure by \cite[2.15]{modpGr}, then it follows that $F_c(\cF^{\bullet}) \in P_{L^+L}(\Gr_L^c, \mathbb{F}_p)$ by the proper base change theorem (cf. \cite[3.2]{modpGr}).
\end{proof}

\begin{nota}\label{notaGrGM}
Given a subset $A\subset X_*(T)^+$, we set
$$
\Gr_{G,A}:=\varinjlim_{\lambda\in A}\Gr_{\leq \lambda}.
$$
This is an ind-closed subscheme of $\Gr_G$, which is stable under the $L^+G$-action. There is a natural embedding
$$
P_{L^+G}(\Gr_{G,A},\bbF_p)\subset P_{L^+G}(\Gr_G,\bbF_p)
$$
which identifies $P_{L^+G}(\Gr_{G,A},\bbF_p)$ with the full subcategory of $P_{L^+G}(\Gr_G,\bbF_p)$ whose objects are supported on 
$\Gr_{G,A}$. Let 
$$
\overline{A} = \{ \lambda \in X_*(T)^+ \: : \: \lambda \leq \mu \text{ for some } \mu \in A\}.
$$
Then the simple objects in $P_{L^+G}(\Gr_{G,A},\bbF_p)$ are the $\IC_{\lambda}$ for $\lambda \in \overline{A}$. Moreover, if $A\subset X_*(T)^+$ is a \emph{submonoid}, then so is $\oA$ and it follows from \cite[1.2, 6.7]{modpGr} that the full subcategory $P_{L^+G}(\Gr_{G,A},\bbF_p)$ inherits from $P_{L^+G}(\Gr_G,\bbF_p)$ the structure of a symmetric monoidal category.
\end{nota}

\begin{cor} \label{FcImage}
If $\cF^{\bullet}\in P_{L^+G}(\Gr_G,\bbF_p)$ and $c\cap X_*(T)_-=\emptyset$, then $F_c(\cF^{\bullet})=0$. In general,
$$
F_c(\cF^{\bullet})\in P_{L^+L}(\Gr_{L,w_0^LX_*(T)_-},\bbF_p).
$$
\end{cor}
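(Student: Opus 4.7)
The plan is to reduce both assertions to the case of simple objects via a composition-series argument, where the explicit computation provided by Theorem \ref{CTperv} does all the work. The essential preliminary step is to upgrade Theorem \ref{CTperv} from a statement about preservation of the heart to genuine exactness of
$$F_c \colon P_{L^+G}(\Gr_G,\bbF_p) \longrightarrow P_{L^+L}(\Gr_L,\bbF_p).$$
This follows formally: any short exact sequence $0 \to \cA \to \cB \to \cC \to 0$ of perverse sheaves fits into a distinguished triangle in $D_c^b(\Gr_G,\bbF_p)$, which $F_c$ sends to a distinguished triangle in $D_c^b(\Gr_L,\bbF_p)$; by \ref{CTperv} all three vertices are perverse, so the associated long exact sequence of perverse cohomology collapses to a short exact sequence.

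For the first claim, I would observe that for every $\lambda \in X_*(T)^+$ one has $w_0(\lambda) \in X_*(T)_{-}$, so the component $c(w_0(\lambda))\in \pi_0(\Gr_L)$ always meets $X_*(T)_{-}$. Hence the hypothesis $c \cap X_*(T)_{-}=\emptyset$ forces $c \neq c(w_0(\lambda))$ for every $\lambda$, and then the explicit formula of \ref{CTperv} gives $F_c(\IC_\lambda)=0$ on every simple object. Exactness of $F_c$ together with induction on the length of $\cF^\bullet$ then yields $F_c(\cF^\bullet)=0$ in general.

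For the second claim I would verify that the $L$-dominant cocharacter $w_0^L w_0(\lambda)$ produced by \ref{CTperv} always lies in $w_0^L X_*(T)_{-}$: indeed $w_0(\lambda)\in X_*(T)_{-}$ by definition, so $w_0^L w_0(\lambda) \in w_0^L X_*(T)_{-}$ tautologically (and separately it is $L$-dominant because $-w_0(\lambda)$ is $G$-dominant hence $L$-dominant, and the $L$-Weyl element $w_0^L$ interchanges the $L$-dominant and $L$-antidominant chambers). Consequently every nonzero $F_c(\IC_\lambda)$ belongs to the full subcategory $P_{L^+L}(\Gr_{L,w_0^L X_*(T)_-},\bbF_p)$, which by its very definition (cf.\ \ref{notaGrGM}) is a Serre subcategory of $P_{L^+L}(\Gr_L,\bbF_p)$. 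A second induction on length, using exactness of $F_c$ and closure of this subcategory under extensions, then gives the conclusion for arbitrary $\cF^\bullet$.

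I do not expect a serious obstacle: all geometric content has been absorbed into \ref{CTperv}. The only point that requires any care is the exactness of $F_c$ at the level of the abelian categories, which must be asserted via the $t$-structure argument above rather than derived directly from the definition $F_c = R\sigma_{c!} \circ Rs_c^*[\,\cdot\,]$.
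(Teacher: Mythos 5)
Your proposal is correct and follows essentially the same route as the paper: the paper's proof is the two-sentence remark ``If $\cF^{\bullet}$ is simple this follows from \ref{CTperv}; the general case follows by induction on the length of $\cF^{\bullet}$,'' and you have simply spelled out what that induction requires. Your expansion is careful and accurate on all the points the paper leaves implicit — that $w_0(\lambda)\in X_*(T)_-$ forces $c(w_0(\lambda))\cap X_*(T)_-\neq\emptyset$, that $w_0^L w_0(\lambda)\in w_0^L X_*(T)_-$ and is $L$-dominant, that $F_c$ takes short exact sequences of perverse sheaves to short exact sequences once \ref{CTperv} guarantees all three vertices of the image triangle are perverse, and that the support-defined subcategory $P_{L^+L}(\Gr_{L,w_0^LX_*(T)_-},\bbF_p)$ of \ref{notaGrGM} is stable under extensions so that the induction closes.
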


\begin{proof}
If $\cF^{\bullet}$ is simple this follows from \ref{CTperv}. The general case follows by induction on the length of $\cF^{\bullet}$.
\end{proof}

\begin{cor}
The $L$-constant term functor is an exact functor
$$\xymatrix{
\CT_L^G \colon P_{L^+G}(\Gr_G, \bbF_p) \ar[r] & P_{L^+L}(\Gr_{L,w_0^LX_*(T)_-}, \bbF_p).
}$$
\end{cor}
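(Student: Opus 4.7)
The plan is to assemble the corollary directly from the decomposition and weight-by-weight calculations already established. By Lemma \ref{CTweight} there is a canonical isomorphism
$$
\CT_L^G \cong \bigoplus_{c \in \pi_0(\Gr_L)} F_c.
$$
For a fixed $\cF^\bullet \in P_{L^+G}(\Gr_G,\bbF_p)$ I would first verify that only finitely many of the $F_c(\cF^\bullet)$ are nonzero: by d\'evissage (combined with the preservation of perversity in \ref{CTperv}, which in particular forces each $F_c$ to be exact on short exact sequences of perverse sheaves) this reduces to the case of a simple object $\IC_\lambda$, and then \ref{CTperv} singles out the unique index $c = c(w_0(\lambda))$. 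Hence the direct sum is a genuine finite sum of perverse sheaves on $\Gr_L$, and each summand already lies in $P_{L^+L}(\Gr_{L, w_0^L X_*(T)_-},\bbF_p)$ by \ref{FcImage}; since this full subcategory is stable under finite direct sums inside $P_{L^+L}(\Gr_L,\bbF_p)$, so does the sum.

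For exactness I would argue as follows. The functor $\CT_L^G = Rq_! \circ Rp^*[\deg_P]$ is a composition of triangulated functors between the ambient derived categories $D_c^b(\Gr_G,\bbF_p) \to D_c^b(\Gr_L,\bbF_p)$. Applied to a short exact sequence $0 \to A \to B \to C \to 0$ in $P_{L^+G}(\Gr_G,\bbF_p)$, viewed as a distinguished triangle $A \to B \to C \xrightarrow{+1}$, it yields a distinguished triangle
$$\CT_L^G(A) \to \CT_L^G(B) \to \CT_L^G(C) \xrightarrow{+1}$$
whose three vertices are already known to be perverse by the first part. The resulting long exact sequence of perverse cohomology then collapses to a short exact sequence in the perverse heart, which proves exactness of $\CT_L^G$ as a functor to $P_{L^+L}(\Gr_{L,w_0^LX_*(T)_-},\bbF_p)$.

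I do not expect any real obstacle here: the genuine content (perversity of each $F_c(\cF^\bullet)$ and the constraint on its support) has already been proved in \ref{CTperv} and \ref{FcImage}. The only point one must be slightly careful about is the pointwise finiteness of the a priori infinite direct sum indexed by $\pi_0(\Gr_L)$, which is however immediate from the vanishing of $F_c(\IC_\lambda)$ off of the single index $c(w_0(\lambda))$ in \ref{CTperv}.
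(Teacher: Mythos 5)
Your proof is correct and takes essentially the same approach as the paper, which simply cites the decomposition $\CT_L^G \cong \bigoplus_c F_c$ from \ref{CTweight} together with \ref{FcImage}. You have usefully spelled out the two points left implicit in the paper's terse proof, namely the pointwise finiteness of the direct sum (via d\'evissage from the simple case in \ref{CTperv}) and the derivation of exactness from perversity preservation via the long exact sequence of perverse cohomology.
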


\begin{proof}
This follows from \ref{FcImage} and \ref{CTweight}.
\end{proof}

\noindent Note that for $L=T$, we recover the functor $F_-$, i.e.
$$
\xymatrix{
\CT^G_T=F_-:=\bigoplus_{\nu\in X_*(T)_-}F_{\nu} \colon P_{L^+G}(\Gr_G,\bbF_p) \ar[r] & \Vect_{\bbF_p}(X_*(T)_-).
}
$$
In particular, $\CT^L_T=F_-^L$.

\begin{rmrk}
Let us set
$$
\pi_0(\Gr_L)_-:=\{c\in\pi_0(\Gr_L)\ |\ c\cap X_*(T)_-\neq\emptyset \}=\Im\big(X_*(T)_-\ra X_*(T)/\bbZ\Phi_L^{\vee}\big),
$$
which is a submonoid of the abelian group $\pi_0(\Gr_L)$, and 
$$
\Gr_L^-:=\coprod_{c\in\pi_0(\Gr_L)_-}\Gr_L^c.
$$
Then $\pi_0(\Gr_L^-)=\pi_0(\Gr_L)_-$, $\CT^G_L\cong \bigoplus_{c \in \pi_0(\Gr_L)_-} F_c$ and we have the inclusion
$$
\Gr_{L,w_0^LX_*(T)_-}\subset \Gr_L^-.
$$
The latter is an equality for $L=T$, but it is \emph{strict} in general. Indeed, for any $\alpha^{\vee}\in\Phi_L^{\vee}$, we have $\{\alpha^{\vee}\}\in \Gr_L^0\subset \Gr_L^-$, while $\{\alpha^{\vee}\}\notin  \Gr_{L,w_0^LX_*(T)_-}$ in general, e.g. for $L=GL_2\times GL_1\subset G=GL_3$,
$$
\alpha^{\vee}=(1,-1,0)=w_0^L(-1,1,0)\in X_*(T)_{+/L}\setminus w_0^LX_*(T)_-.
$$
\end{rmrk}

\begin{rmrk}
There is a more general version of Theorem \ref{wtequivalence} as follows. Let $c \in \pi_0(\Gr_L)$ and denote by $i_{c} \colon \Gr_L^c \rightarrow \Gr_G$ the inclusion. Then one can show that there is a natural isomorphism of functors $$F_c \cong {}^pH^{2(\rho-\rho_L)(c)} \circ Ri_c^* \colon P_{L^+G}(\Gr_G,\bbF_p) \rightarrow  P_{L^+L}(\Gr_L^c,\bbF_p).$$ We will only use the functor $F_c$ because it does not require a perverse truncation.
\end{rmrk}

\subsection{Relation to the Satake equivalence}

\begin{prop} \label{transitivitynuc}
Let $c\in \pi_0(\Gr_L)$ and $\nu\in X_*(T)$.
If $\nu\notin c$, then
$$
F_{\nu}^L\circ F_c=0,
$$
and if $\nu\in c$ then
$$
F_{\nu}^L\circ F_c\cong F_{\nu}.
$$
\end{prop}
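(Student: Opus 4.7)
The plan is to reduce the proposition to an application of $!$-base change along a Cartesian square arising from the inclusions $B\subset P$ and $B\cap L\subset L$, together with an arithmetic identity on degree shifts. For $\nu\in X_*(T)\cap \Gr_L^c$, the first step is to establish the Cartesian square of reduced ind-$k$-schemes
$$
\xymatrix{
S_\nu \ar[r]^{a} \ar[d]_{b} & S_c \ar[d]^{\sigma_c} \\
S_\nu^L \ar[r]_{\sigma_\nu^L} & \Gr_L^c
}
$$
obtained by restriction from the functorial square $\Gr_B\to\Gr_P$, $\Gr_{B\cap L}\to\Gr_L$. On $k$-points this follows from the decomposition $U(k(\!(t)\!))=(U\cap L)(k(\!(t)\!))\cdot U_P(k(\!(t)\!))$ coming from $U=(U\cap L)\ltimes U_P$: a point of $\sigma_c^{-1}(S_\nu^L)$ has a representative of the form $u\ell$ with $u\in U_P(k(\!(t)\!))$ and $\ell\in (U\cap L)(k(\!(t)\!))\,\nu(t)\,L(k[\![t]\!])$, hence lies in $U(k(\!(t)\!))\,\nu(t)\,G(k[\![t]\!])/G(k[\![t]\!]) = S_\nu(k)$.

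Next, I would treat the two cases separately. If $\nu\notin c$, then $S_\nu^L$ sits inside a connected component $\Gr_L^{c(\nu)}$ disjoint from $\Gr_L^c$, which contains the support of $F_c(\cF^{\bullet})$; hence the restriction of $F_c(\cF^{\bullet})$ to $S_\nu^L$ vanishes and $F_\nu^L\circ F_c=0$. If $\nu\in c$, $!$-base change along the Cartesian square yields $R(\sigma_\nu^L)^*\circ R\sigma_{c!}\cong Rb_!\circ Ra^*$; combined with $s_c\circ a=s_\nu$, this gives
$$
R(\sigma_\nu^L)^* F_c(\cF^{\bullet}) \cong Rb_!(Rs_\nu^*\cF^{\bullet})\bigl[2(\rho-\rho_L)(c)\bigr].
$$
Applying $H^{2\rho_L(\nu)}_c(S_\nu^L,-)$ and collapsing the outer $Rb_!$ via $R\Gamma_c(S_\nu^L,Rb_!(-))\cong R\Gamma_c(S_\nu,-)$, one obtains
$$
F_\nu^L(F_c(\cF^{\bullet})) \cong H_c^{2\rho_L(\nu)+2(\rho-\rho_L)(c)}(S_\nu,Rs_\nu^*\cF^{\bullet}).
$$

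To finish, it remains to collapse the shifts. Since $\rho-\rho_L$ is the half-sum of the positive roots of $G$ not in $L$ and is therefore $W_L$-invariant, $2(\rho-\rho_L)$ vanishes on $\bbZ\Phi_L^{\vee}$. Hence for $\nu\in c$ we have $(\rho-\rho_L)(c)=(\rho-\rho_L)(\nu)$, and the total degree collapses to $2\rho_L(\nu)+2(\rho-\rho_L)(\nu)=2\rho(\nu)$, so the right-hand side is $F_\nu(\cF^{\bullet})$ by definition. The main obstacle I anticipate is the rigorous scheme-theoretic (not just set-theoretic) justification of the Cartesian square; the cleanest route is to realize $S_\nu\subset S_c$ and $S_\nu^L\subset \Gr_L^c$ as the attractors for appropriately refined $\bbG_m$-actions on $\Gr_G$ (a regular dominant cocharacter of $T$, versus its restriction through a central cocharacter $\bbG_m\to Z(L)$), so that the square above becomes a functoriality square for the attractor construction of Section \ref{sec_Gm}. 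Everything else reduces to formal base change and arithmetic.
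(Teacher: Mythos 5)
Your proposal is correct and follows essentially the same route as the paper's proof: establish the Cartesian square (up to non-reducedness) relating $S_\nu$, $S_c$, $S_\nu^L$, $\Gr_L^c$, apply base change for $R(\cdot)_!$, and collapse the shift using $(\rho-\rho_L)(c)=(\rho-\rho_L)(\nu)$ for $\nu\in c$. The extra material you provide — the Levi decomposition $U=(U\cap L)\ltimes U_P$ to verify the square on $k$-points, and the remark that $\rho-\rho_L$ vanishes on $\bbZ\Phi_L^\vee$ — is a correct unpacking of what the paper leaves implicit rather than a different argument.
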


\begin{proof}
If $\nu\notin c$, then $S_{\nu}\cap \Gr_L^c=\emptyset$ so that $F_{\nu}^L\circ F_c=0$. If $\nu\in c$, then up to possible non-reducedness of the fiber product we have a Cartesian diagram
$$
\xymatrix{
S_{\nu} \ar[r] \ar[d]_{\sigma_{\nu,c}:=} & S_c \ar[d]^{\sigma_c} \\
S_{\nu}^L \ar[r] & \Gr_L^c.
}
$$ 
Hence by the proper base change theorem $(R\sigma_{c!} (Rs_c^* \cF^{\bullet}))|_{S_{\nu}^L } \cong R\sigma_{\nu,c!}(\cF^{\bullet}|_{S_{\nu}})$, so that
$$ \label{basechange}
R\Gamma_c(S_{\nu}^L,F_c(\cF^{\bullet}))\cong R\Gamma_c(S_{\nu},\cF^{\bullet})[2(\rho-\rho_L)(\nu)].
$$ Now take the cohomology of both sides in degree $2\rho_L(\nu)$ .
\end{proof}

\begin{cor} \label{transivityres}
For all $\nu\in X_*(T)$, 
$$
F_{\nu}^L\circ \CT^G_L \cong F_{\nu}.
$$
In particular, there is a canonical transitivity isomorphism
$$
H^L\circ \CT^G_L\cong H,
$$
and the functor $\CT^G_L$ is faithful.
\end{cor}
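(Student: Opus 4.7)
The plan is to deduce this corollary from Proposition \ref{transitivitynuc} combined with the direct-sum decomposition of $\CT_L^G$ over the connected components of $\Gr_L$, and then to extract the statements on $H$ and on faithfulness from the torus case treated in Section \ref{totalsection}.

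First I would invoke Lemma \ref{CTweight} to write $\CT_L^G \cong \bigoplus_{c \in \pi_0(\Gr_L)} F_c$. Fix $\nu \in X_*(T)$ and let $c(\nu) \in \pi_0(\Gr_L)$ denote the unique class containing $\nu$. Proposition \ref{transitivitynuc} then gives $F_\nu^L \circ F_c = 0$ for $c \neq c(\nu)$ and $F_\nu^L \circ F_{c(\nu)} \cong F_\nu$; summing over $c \in \pi_0(\Gr_L)$ produces the desired isomorphism $F_\nu^L \circ \CT_L^G \cong F_\nu$.

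For the transitivity of $H$, I would apply Remark \ref{summarizeT} with $L$ in place of $G$ to identify $H^L$ with the direct sum of the functors $F_\nu^L$ as $\nu$ ranges over the cocharacters antidominant for $L$, i.e.\ those $\nu \in X_*(T)$ satisfying $\langle \alpha, \nu\rangle \leq 0$ for every $\alpha \in \Phi_L^+$. Composing with $\CT_L^G$ and substituting term by term via the isomorphism just established yields $H^L \circ \CT_L^G \cong \bigoplus_{\nu} F_\nu$, where $\nu$ now runs over the cocharacters antidominant for $L$. Since $\Phi_L^+ \subset \Phi^+$, every cocharacter antidominant for $G$ is antidominant for $L$, and by Proposition \ref{nunotin-} the functor $F_\nu$ vanishes whenever $\nu \notin X_*(T)_-$. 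Hence the direct sum collapses to $\bigoplus_{\nu \in X_*(T)_-} F_\nu = H$, giving the canonical transitivity isomorphism.

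Faithfulness of $\CT_L^G$ is then formal: $H$ is faithful by the recollection from \cite[1.1]{modpGr} at the beginning of Section \ref{absolutesection}, so the composition $H^L \circ \CT_L^G \cong H$ is faithful, which forces $\CT_L^G$ itself to be faithful. I do not foresee any serious obstacle in this argument, since all of the genuine content has been absorbed into Proposition \ref{transitivitynuc} together with the torus analysis of Section \ref{totalsection}.
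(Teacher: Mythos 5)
Your proposal is correct and follows essentially the same route as the paper's proof: decompose $\CT_L^G$ via Lemma \ref{CTweight}, apply Proposition \ref{transitivitynuc} to each summand, and then sum over $\nu$ to recover $H$. You helpfully spell out the step the paper leaves implicit in "summing over $\nu$ (in $X_*(T)_-$)": namely, that $H^L$ is a priori a sum over the $L$-antidominant cocharacters $X_*(T)_{-/L}$, and one needs Proposition \ref{nunotin-} together with the inclusion $X_*(T)_- \subset X_*(T)_{-/L}$ to collapse that sum down to $X_*(T)_-$.
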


\begin{proof}
The first part follows from \ref{CTweight} and  \ref{transitivitynuc}. Then the transitivity isomorphism is obtained by summing over $\nu$ (in $X_*(T)_-$). Finally the faithfulness of $\CT^G_L$ follows from the transitivity isomorphism and the faithfulness of $H$.
\end{proof}

\subsection{The ind-schemes $S_c(X)$ and $S_c(X^2)$} For $c \in \pi_0(\Gr_L)$ let $S_c(X) \subset \Gr_{G, X}$ and $S_c(X^2) \subset \Gr_{G,X^2}$ be the reduced ind-subschemes realizing relative versions of $S_c$ as in \cite[\S 1.15.1]{BR18}. They can be identified with the corresponding connected components of $(\Gr_{P,X})_{\red}$ and $(\Gr_{P,X^2})_{\red}$. Let $\Gr_{L,X}^{c}$ and $\Gr_{L,X^2}^{c}$ denote the connected components of $\Gr_{L,X}$ and $\Gr_{L,X^2}$ determined by $c$. We denote the relative versions of the ind-immersion $s_c \colon S_c \rightarrow \Gr_G$ and the projection $\sigma_c \colon S_c \rightarrow \Gr_{L}^c$ as follows:
\begin{align*}
\tilde{s}_c & \colon S_c(X) \rightarrow \Gr_{G, X} & \tilde{\sigma}_c  \colon S_c(X) \rightarrow \Gr_{L,X}^c \\
\tilde{s}^2_c & \colon S_c(X^2) \rightarrow \Gr_{G, X^2} & \tilde{\sigma}^2_c  \colon S_c(X^2) \rightarrow \Gr_{L,X^2}^c.
\end{align*}
Since $X= \mathbb{A}^1$ there are canonical isomorphisms 
$$\Gr_{G,X} \cong \Gr_G \times X, \quad \Gr_{L,X} \cong \Gr_L \times X, \quad S_c(X) \cong S_c \times X,$$
in particular we have the projection $\tau \colon \Gr_{G,X}\rightarrow \Gr_G$ and the associated shifted pull-back
$\tau^{\circ} := R\tau^{*}[1] \colon D_c^b(\Gr_G,\mathbb{F}_p) \rightarrow D_c^b(\Gr_{G,X}, \mathbb{F}_p).$

The important facts about the geometry of these ind-schemes are summarized in the following commutative  diagram from \cite[\S 1.15.1]{BR18} whose squares are Cartesian (up to possible non-reducedness of fiber products) and are obtained by restriction to $U \subset X^2$ or its complementary diagonal $\Delta(X) \subset X^2$:

$$\xymatrix{
\restr{(\Gr_{G,X} \times \Gr_{G,X})}{U} \ar[r]^(.6){j_I} & \Gr_{G,X^2} & \Gr_{G,X} \ar[l]_{i_I} \\
\coprod_{c_1+c_2=c} \restr{(S_{c_1}(X) \times S_{c_2}(X))}{U} \ar[u]^{\restr{\tilde{s}^2_c}{U}} \ar[r]^(.70){j_c} \ar[d]_{\restr{\tilde{\sigma}^2_c }{U}} & S_c(X^2) \ar[d]^{\tilde{\sigma}^2_c } \ar[u]^{\tilde{s}^2_c} & \ar[l]_{i_c} S_c(X) \ar[u]^{\tilde{s}_c} \ar[d]_{\tilde{\sigma}_c}  \\
\coprod_{c_1+c_2=c} \restr{(\Gr^{c_1}_{L,X} \times \Gr^{c_2}_{L, X})}{U} \ar[r]^(.70){j_L^c} & \Gr_{L,X^2}^c & \ar[l]_{i_L^c} \Gr^c_{L,X}. 
}$$
We have canonical identifications
$$\tilde{s}_c = s_c \times \id_X \colon S_c \times X \rightarrow \Gr_G \times X, \quad \tilde{\sigma}_c = \sigma_c \times \id_X \colon S_c \times X \rightarrow \Gr_{L}^c \times X,$$ and
$$\restr{\tilde{s}^2_c}{U} = \coprod_{c_1+c_2=c} \restr{(\tilde{s}_{c_1} \times \tilde{s}_{c_2})}{U}, \quad \restr{\tilde{\sigma}^2_c}{U} = \coprod_{c_1+c_2=c}\restr{(\tilde{\sigma}_{c_1} \times \tilde{\sigma}_{c_2})}{U}.
$$

\begin{defn}
Let $c \in \pi_0(\Gr_L)$. Set
$$
\xymatrix{
\tilde{F}_{c}:=R\tilde{\sigma}_{c!} \circ R\tilde{s}_{c}^*[2(\rho-\rho_L)(c)] \colon D_c^b(\Gr_{G,X},\bbF_p) \ar[r] & D_c^b(\Gr_{L,X}^c,\bbF_p),
}
$$
and
$$
\xymatrix{
\tilde{F}^2_{c}:=R\tilde{\sigma}^2_{c!} \circ R\tilde{s}^{2*}_{c}[2(\rho-\rho_L)(c)] \colon D_c^b(\Gr_{G,X^2},\bbF_p) \ar[r] & D_c^b(\Gr_{L,X^2}^c,\bbF_p).
}
$$
\end{defn}

\subsection{The key isomorphism for the compatibility with convolution}

\begin{thm} \label{conviso}
There is a canonical isomorphism
$$\tilde{F}_c^2 \circ j_{I,!*} (\tau^{\circ} \cF_1^\bullet \overset{L}{\boxtimes} \restr{\tau^{\circ} \cF_2^\bullet}{U}) \cong j_{L, !*}^c \left(\bigoplus_{c_1+c_2 =c} \tau_L^\circ F_{c_1}(\cF_1^\bullet) \overset{L}{\boxtimes} \tau_L^\circ \restr{F_{c_2}(\cF_1^\bullet)}{U} \right).$$
\end{thm}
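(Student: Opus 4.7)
The plan is to exhibit both sides of the desired isomorphism as perverse intermediate extensions from $U \subset X^2$ to $\Gr_{L,X^2}^c$ of the same perverse sheaf. This is the $\bbF_p$-analogue of the char-zero strategy of \cite[\S 15]{BR18}, with Braden's hyperbolic localization theorem replaced by the $F$-rationality results of \cite[1.6-1.7]{modpGr} combined with the MV-cycle computation of Theorem \ref{thmcompute}.

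\emph{Step 1 (agreement over $U$).} Over $U$, the squares of the commutative diagram in \S 6.4 are Cartesian (up to non-reducedness) and split into products indexed by $c = c_1 + c_2$. Applying proper base change, the Künneth formula, and the identities $\tilde s_c = s_c \times \id_X$, $\tilde \sigma_c = \sigma_c \times \id_X$, one obtains
\begin{equation*}
\restr{\tilde F_c^2 \circ j_{I,!*}\bigl(\tau^\circ \cF_1 \overset{L}{\boxtimes} \tau^\circ \cF_2 |_U\bigr)}{U} \cong \bigoplus_{c_1 + c_2 = c} \tau_L^\circ F_{c_1}(\cF_1) \overset{L}{\boxtimes} \tau_L^\circ F_{c_2}(\cF_2)|_U,
\end{equation*}
which matches the restriction of the RHS to $U$.

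\emph{Step 2 (intermediate extension property).} Let $i_L^c \colon \Gr_{L,X}^c \hookrightarrow \Gr_{L,X^2}^c$ be the closed inclusion over $\Delta(X)$. It remains to verify that the LHS is perverse and that $Ri_L^{c,*}(\text{LHS}) \in {}^pD^{\leq -1}$ and $Ri_L^{c,!}(\text{LHS}) \in {}^pD^{\geq 1}$. Proper base change in the Cartesian square formed by $i_L^c$ and $\tilde \sigma_c^2$ yields $Ri_L^{c,*} \circ \tilde F_c^2 \cong \tilde F_c \circ Ri_I^*$, and purity for the regular immersion $\Delta(X) \hookrightarrow X^2$ yields the analogous identity for $!$-pullback up to a uniform shift. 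All three required properties thus reduce to the perverse $t$-exactness of $\tilde F_c^2$ and $\tilde F_c$. By d\'evissage it suffices to treat the simple objects $j_{I,!*}(\tau^\circ \IC_{\lambda_1} \overset{L}{\boxtimes} \tau^\circ \IC_{\lambda_2}|_U)$; by \cite[1.6-1.7]{modpGr} these intermediate extensions are shifted constant $\bbF_p$-sheaves on the normal $F$-rational Beilinson-Drinfeld Schubert varieties inside $\Gr_{G,X^2}$, so $\tilde F_c^2$ applied to them reduces, via proper base change, to the relative compactly-supported $\bbF_p$-cohomology of the MV-cycles. A relativized version of Theorem \ref{thmcompute} yields concentration in the expected perverse degree.

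The main obstacle is the relativization of Theorem \ref{thmcompute} over $X^2$, and in particular the verification that the single-degree concentration of the compactly-supported $\bbF_p$-cohomology of MV-cycles degenerates uniformly in families, including over the diagonal $\Delta(X)$ where the two convolution factors collide. Once this relative statement is established, the d\'evissage sketched above, combined with the base change identities of Step 2, closes the argument.
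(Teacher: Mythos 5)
There are two genuine gaps in Step 2, and the first is fatal as stated. The claim that ``purity for the regular immersion $\Delta(X)\hookrightarrow X^2$ yields the analogous identity for $!$-pullback up to a uniform shift'' is false for $\bbF_p$-coefficients in characteristic $p$. Already for $i\colon\{0\}\hookrightarrow\bbA^1$, the Artin--Schreier sequence and $R\Gamma(\bbA^1,\bbF_p)=\bbF_p[0]$ show that $Ri^!\bbF_p\cong H^1(\bbG_m,\bbF_p)[-2]$ is an \emph{infinite-dimensional} vector space in degree $2$, not $\bbF_p[-2]$, and the same failure occurs for $\Delta(X)\hookrightarrow X^2$. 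So there is no identity $Ri_L^{c,!}\circ\tilde F_c^2\cong\tilde F_c\circ Ri_I^{!}[\,\cdot\,]$, and the ${}^pD^{\geq 1}$ half of the characterization of $j^c_{L,!*}$ cannot be reduced to the $*$-restriction computation. Invoking such a comparison between the $!$- and $*$-restrictions to the diagonal through the attractor-type functor $\tilde F_c^2$ is precisely a disguised appeal to Braden's hyperbolic localization, which Appendix~\ref{appA} shows fails for $\bbF_p$-sheaves. The second gap you name yourself: you do not supply the ``relativization of Theorem~\ref{thmcompute} over $X^2$, including over the diagonal,'' and that is where the substance of the argument would have to live.

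The paper's proof avoids both issues by never attempting to verify the two perverse-degree conditions for an intermediate extension. After the dévissage to $\cF^\bullet_i=\IC_{\lambda_i}$ (which you also perform, modulo the small subtlety that $j_{I,!*}$ is not exact so one must pass through $Rm_{I,!}$ via (\ref{C78})), it computes both sides explicitly. By \ref{convlem1}, which rests on the $F$-rationality of Beilinson--Drinfeld Schubert varieties (\cite[1.6--1.7]{modpGr}), the object $j_{I,!*}(\tau^\circ\IC_{\lambda_1}\overset{L}{\boxtimes}\restr{\tau^\circ\IC_{\lambda_2}}{U})$ is a shifted constant sheaf on $\Gr_{G,X^2}^{\leq\lambda_\bullet}$, so the LHS of \ref{conviso} is $R\tilde\sigma^2_{c!}$ of a constant sheaf on $S_c(X^2)\cap\Gr_{G,X^2}^{\leq\lambda_\bullet}$. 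That scheme is split into an open part $V^c_{\lambda_\bullet}$ over $U$ --- whose contribution vanishes by a support argument combining \ref{Lintersection} with the agreement over $U$ --- and a closed part $Z^c_{\lambda_\bullet}$ mapping to $\Gr_{L,X^2}^{\leq w_0^Lw_0(\lambda_\bullet)}$ by a universal homeomorphism (\ref{univhom}), which identifies the LHS with the constant sheaf computed for the RHS in \ref{convlem3}. No $!$-restriction to the diagonal and no relative MV-cycle theorem are needed. You correctly identify the right inputs (constancy of intermediate extensions on $F$-rational varieties, Theorem~\ref{thmcompute}, the dévissage, agreement over $U$), but then substitute a Braden-type base change where the paper instead carries out this explicit geometric computation; that substitute is exactly what is unavailable in the $\bbF_p$-setting.
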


Contrary to the case of characteristic $0$ coefficients, we cannot appeal to Braden's theorem to compute the co-restriction of the left side of  \ref{conviso} over $\Delta(X)$ as in \cite[1.15.2]{BR18}. This complication is the primary obstacle we must overcome in order to prove \ref{conviso}. We begin by reducing to the case where the $\mathcal{F}_i^\bullet$ are simple.

\begin{proof}[Reduction of \ref{conviso} to the case of simple $\mathcal{F}_i^\bullet$]
By a diagram chase involving the proper base change theorem and the K\"unneth formula, the two complexes in \ref{conviso} are canonically identified over $U$. Once we show that the complex on the left is isomorphic to the one on the right, by \cite[2.11]{modpGr} there will be a unique isomorphism which restricts to our canonical identification over $U$.

We claim that it suffices to show the left side is the intermediate extension of its restriction to $U$ in the case where the $\cF_i^\bullet$ are simple. By the properties characterizing $j_{L!*}^c$ in \cite[2.7]{modpGr}, it follows that if the outer two terms in an exact triangle are intermediate extensions, then so is the middle term (cf. the proof of \cite[7.8]{modpGr}). While $j_{I,!*}$ may not be exact in general, (\ref{C78}) allows us to replace $j_{I,!*}$ by the triangulated functor $Rm_{I,!}$. Thus, by induction on the lengths of the $\mathcal{F}_i^\bullet$ we can assume that $\cF_i^\bullet = \IC_{\lambda_i}$ for $\lambda_i \in X_*(T)^+$. \end{proof}

The remainder of the proof will be an explicit computation of both sides of \ref{conviso} in the special case $\cF_i^\bullet = \IC_{\lambda_i}$ for 
$\lambda_i \in X_*(T)^+$. For convenience we denote
$$
\lambda_\bullet:=(\lambda_1,\lambda_2),\quad
|\lambda_\bullet| := \lambda_1 + \lambda_2.
$$

Let $\Gr_{G,X^2}^{\leq\lambda_\bullet}$ be the closure of $\Gr_{G}^{\leq \lambda_1} \times \Gr_G^{\leq \lambda_2} \times {U} \subset \Gr_{G,X^2}$ with its reduced scheme structure. If $p \nmid |\pi_1(G_\text{der})|$ then by \cite[3.1.14]{Z17} we have
\begin{equation} \label{Zhueq} \restr{\Gr_{G,X^2}^{\leq\lambda_\bullet}}{\Delta(X)} \cong \Gr_{G}^{\leq |\lambda_\bullet|} \times X.
\end{equation} If $p \mid |\pi_1(G_\text{der})|$ this isomorphism should be modified by passing to the reduced subscheme on the left side.

\begin{lem} \label{convlem1} There is a canonical isomorphism $$j_{I,!*}(\tau^{\circ} \IC_{\lambda_1} \overset{L}{\boxtimes} \restr{\tau^{\circ} \IC_{\lambda_2}}{U}) \cong \mathbb{F}_p[2\rho(|\lambda_\bullet|)+2] \in P_c^b(\Gr_{G,X^2}^{\leq\lambda_\bullet},\bbF_p).$$
\end{lem}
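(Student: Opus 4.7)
The plan is to identify both sides of the claimed isomorphism with the IC sheaf of the irreducible projective variety $\Gr_{G,X^2}^{\leq\lambda_\bullet}$, which by the main results of \cite{modpGr} on perverse $\bbF_p$-sheaves on $F$-rational varieties will be the shifted constant sheaf $\bbF_p[\dim \Gr_{G,X^2}^{\leq\lambda_\bullet}] = \bbF_p[2\rho(|\lambda_\bullet|)+2]$.

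First I would unfold the input explicitly. Since $\IC_{\lambda_i}=\bbF_p[2\rho(\lambda_i)]$ is the shifted constant sheaf on $\Gr_G^{\leq\lambda_i}$ and $\tau\colon \Gr_{G,X}\cong\Gr_G\times X\to\Gr_G$ is smooth of relative dimension $1$, the shifted pullback $\tau^{\circ}\IC_{\lambda_i}=R\tau^*\IC_{\lambda_i}[1]$ is the constant sheaf $\bbF_p[2\rho(\lambda_i)+1]$ on $\Gr_G^{\leq\lambda_i}\times X\subset\Gr_{G,X}$. Taking the exterior tensor product on $\Gr_{G,X}\times\Gr_{G,X}$ and restricting to $U$ yields the shifted constant sheaf $\bbF_p[2\rho(|\lambda_\bullet|)+2]$ on $(\Gr_G^{\leq\lambda_1}\times\Gr_G^{\leq\lambda_2})\times U=\restr{\Gr_{G,X^2}^{\leq\lambda_\bullet}}{U}$. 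This is precisely the restriction to $U$ of the shifted constant sheaf $\bbF_p[2\rho(|\lambda_\bullet|)+2]$ on the whole of $\Gr_{G,X^2}^{\leq\lambda_\bullet}$ that we wish to obtain.

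It therefore suffices to show that $\bbF_p[2\rho(|\lambda_\bullet|)+2]$ on $\Gr_{G,X^2}^{\leq\lambda_\bullet}$ is a simple perverse sheaf, since $j_{I,!*}$ applied to the restriction to a dense open of a simple perverse sheaf recovers the original by the universal property of intermediate extension.

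The main obstacle is establishing this simplicity and perversity. For the local Schubert varieties $\Gr_G^{\leq\lambda}$ it follows from their $F$-rationality together with \cite[1.5]{modpGr}. For the Beilinson--Drinfeld variant $\Gr_{G,X^2}^{\leq\lambda_\bullet}$ the plan is to exploit the flat projection to $X^2$: the geometric fibers are products of two local Schubert varieties over points of $U$ and, by (\ref{Zhueq}) (after passing to the reduced structure if $p\mid|\pi_1(G_{\der})|$), a single local Schubert variety crossed with $X$ over the diagonal, all of which are $F$-rational. $F$-rationality should then descend to the total space, so that \cite[1.6--1.7]{modpGr} applies to give that $\bbF_p[2\rho(|\lambda_\bullet|)+2]$ is the IC sheaf of $\Gr_{G,X^2}^{\leq\lambda_\bullet}$.
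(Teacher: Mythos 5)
Your overall strategy matches the paper: unfold $\tau^{\circ}\IC_{\lambda_1}\overset{L}{\boxtimes}\restr{\tau^{\circ}\IC_{\lambda_2}}{U}$ as the shifted constant sheaf $\bbF_p[2\rho(|\lambda_\bullet|)+2]$ on $\restr{\Gr_{G,X^2}^{\leq\lambda_\bullet}}{U}$, and then argue that its intermediate extension is again a shifted constant sheaf using the $F$-rationality machinery of \cite{modpGr}. However, there are two real gaps in the second half.

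First, the phrase ``$F$-rationality should then descend to the total space'' is not a routine deduction, and this is precisely where the mathematical content lies. Deducing $F$-rationality of a family from $F$-rationality of its fibers requires a non-trivial deformation theorem (and, before one can even invoke it, one must know that $\Gr_{G,X^2}^{\leq\lambda_\bullet}$ is integral, normal, and that the projection to $X^2$ has the right flatness and Cohen--Macaulay properties). The paper does not reprove this: it cites \cite[7.4]{modpGr} directly, which establishes that $\Gr_{G,X^2}^{\leq\lambda_\bullet}$ is integral and $F$-rational when $p \nmid |\pi_1(G_{\der})|$. Your sketch outsources the same fact but presents it as if it were a formal consequence of fiberwise $F$-rationality, which it is not.

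Second, the case $p \mid |\pi_1(G_{\der})|$ is not adequately handled. In that case $\Gr_{G,X^2}^{\leq\lambda_\bullet}$ need not be normal (or even reduced before passing to $(\,\cdot\,)_{\red}$), so \cite[1.7]{modpGr} cannot be applied directly. Your parenthetical ``after passing to the reduced structure'' only repairs the fiber over $\Delta(X)$, not the total space, and in any case passing to the reduced structure does not produce an $F$-rational scheme. The paper instead chooses a $z$-extension $G'\to G$, lifts $\lambda_1,\lambda_2$ to dominant cocharacters $\lambda_1',\lambda_2'$ of $G'$, notes that $\Gr_{G',X^2}^{\leq\lambda'_\bullet}\to\Gr_{G,X^2}^{\leq\lambda_\bullet}$ is a universal homeomorphism (cf.\ \cite[7.12]{modpGr}), and uses topological invariance of the \'etale site to transport the conclusion from $G'$ to $G$. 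That reduction step is essential and needs to appear in a complete proof.
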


\begin{proof}
We first observe that $\tau^{\circ} \IC_{\lambda_1} \overset{L}{\boxtimes} \restr{\tau^{\circ} \IC_{\lambda_2}}{U}$ is 
canonically identified with a shifted constant sheaf supported on $\Gr_{G}^{\leq \lambda_1} \times \Gr_G^{\leq \lambda_2} \times {U} \subset \Gr_{G,X^2}$. If $p \nmid |\pi_1(G_\text{der})|$ then $\Gr_{G,X^2}^{\leq\lambda_\bullet}$ is integral and $F$-rational by \cite[7.4]{modpGr}, so $j_{I,!*}(\tau^{\circ} \IC_{\lambda_1} \overset{L}{\boxtimes} \restr{\tau^{\circ} \IC_{\lambda_2}}{U})$ is a shifted constant sheaf supported on $\Gr_{G,X^2}^{\leq\lambda_\bullet}$ by \cite[1.7]{modpGr}. If $p \mid |\pi_1(G_\text{der})|$, choose a $z$-extension $G' \rightarrow G$ and choose lifts $\lambda_1'$, $\lambda_2'$ of $\lambda_1$, $\lambda_2$ to dominant cocharacters of $G'$. The induced morphism $\Gr_{G',X^2}^{\leq\lambda'_\bullet} \rightarrow \Gr_{G,X^2}^{\leq\lambda_\bullet}$ is a universal homeomorphism (see \cite[7.12]{modpGr} for more details), so by topological invariance of the \'{e}tale site it follows that $j_{I,!*}(\tau^{\circ} \IC_{\lambda_1} \overset{L}{\boxtimes} \restr{\tau^{\circ} \IC_{\lambda_2}}{U})$ is still a shifted constant sheaf supported on $\Gr_{G,X^2}^{\leq\lambda_\bullet}$. Hence in any case there is a canonical isomorphism as stated.
\end{proof}

\begin{lem} \label{convlem2} If $\mathcal{F}_i^\bullet = \IC_{\lambda_i}$ for $\lambda_i \in X_*(T)^+$ and $w_0(|\lambda_\bullet|) \notin c$ then both sides of \ref{conviso} are zero.
\end{lem}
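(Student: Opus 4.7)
The plan is to treat the two sides of \ref{conviso} separately. For the right-hand side, \ref{CTperv} shows that each summand $\tau_L^\circ F_{c_1}(\IC_{\lambda_1}) \overset{L}{\boxtimes} \restr{\tau_L^\circ F_{c_2}(\IC_{\lambda_2})}{U}$ can only be nonzero when $c_i = c(w_0(\lambda_i))$ for both $i = 1,2$, and this forces $c = c(w_0(\lambda_1)) + c(w_0(\lambda_2)) = c(w_0(|\lambda_\bullet|))$, contradicting the hypothesis. Hence the whole direct sum vanishes and so does its intermediate extension under $j_{L,!*}^c$.

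For the left-hand side, I would first invoke \ref{convlem1} to replace the input $j_{I,!*}(\tau^\circ \IC_{\lambda_1} \overset{L}{\boxtimes} \restr{\tau^\circ \IC_{\lambda_2}}{U})$ by the shifted constant sheaf $\mathcal{G} := \mathbb{F}_p[2\rho(|\lambda_\bullet|)+2]$ supported on $\Gr_{G,X^2}^{\leq\lambda_\bullet}$. Using the open-closed decomposition $X^2 = U \sqcup \Delta(X)$, the task reduces via the associated excision triangle on $\Gr_{L,X^2}^c$ to checking that $\tilde F_c^2(\mathcal{G})$ vanishes on $U$ and on $\Delta(X)$ separately. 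On the open piece, the Cartesian description of $\restr{S_c(X^2)}{U}$ together with the K\"unneth formula identifies $\restr{\tilde F_c^2(\mathcal{G})}{U}$ with a direct sum of terms of the form $\restr{\big(\tau_L^\circ F_{c_1}(\IC_{\lambda_1}) \overset{L}{\boxtimes} \tau_L^\circ F_{c_2}(\IC_{\lambda_2})\big)}{U}$, which vanishes as in the previous paragraph. On the closed diagonal, the isomorphism (\ref{Zhueq}) identifies $i_I^*\mathcal{G}$ with a shift of $\tau^\circ\IC_{|\lambda_\bullet|}$, and proper base change along the Cartesian square relating $(i_c, i_L^c)$ to $(\tilde s_c^2, \tilde \sigma_c^2)$ reduces $\restr{\tilde F_c^2(\mathcal{G})}{\Delta(X)}$ to a shift of $\tau_L^\circ F_c(\IC_{|\lambda_\bullet|})$, which is zero by \ref{CTperv} since $w_0(|\lambda_\bullet|) \notin c$.

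The main technical nuisance I expect is the potential non-reducedness in (\ref{Zhueq}) when $p \mid |\pi_1(G_\mathrm{der})|$, which I would handle exactly as in \ref{convlem1} by passing to a $z$-extension and applying topological invariance of the \'etale site; beyond that, bookkeeping the various cohomological shifts coming from $\tau^\circ$, $[2(\rho-\rho_L)(c)]$, and the K\"unneth isomorphism is the only thing requiring care.
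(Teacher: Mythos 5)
Your proposal is correct and follows essentially the same route as the paper: show both sides vanish over $U$ using \ref{CTperv} (since $c_1 + c_2 = c$ forces $w_0(\lambda_i) \notin c_i$ for some $i$), conclude that the right side vanishes because the intermediate extension of the zero sheaf is zero, and conclude that the left side vanishes by additionally computing its restriction to $\Delta(X)$ via \ref{convlem1}, proper base change, and (\ref{Zhueq}), getting $\tau_L^\circ F_c(\IC_{|\lambda_\bullet|}) = 0$ from \ref{CTperv}. The only differences are cosmetic: you make the excision triangle and the K\"unneth identification over $U$ explicit, and you re-mention the $z$-extension device, all of which the paper leaves implicit or delegates to \ref{convlem1}.
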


\begin{proof} By the assumption of the lemma, if $c_1+c_2=c$ then $w_0(\lambda_i) \notin c_i$ for $i =1$ or $2$. For such $i$ we have $F_{c_i}(\IC_{\lambda_i}) = 0$ by \ref{CTperv}, so both sides of \ref{conviso} vanish over $U$. 
Therefore the right side of \ref{conviso} vanishes. On the other hand, by \ref{convlem1} and the proper base change theorem,
$$\restr{\tilde{F}_c^2(j_{I,!*}(\tau^{\circ} \IC_{\lambda_1} \overset{L}{\boxtimes} \restr{\tau^{\circ} \IC_{\lambda_2}}{U}))}{\Delta(X)} \cong \tau^{\circ} F_c(\IC_{|\lambda_\bullet|}).$$ This complex is also zero by \ref{CTperv}, so the left side of \ref{conviso} is zero.
\end{proof}

\begin{lem} \label{convlem3} If $\mathcal{F}_i^\bullet = \IC_{\lambda_i}$ for $\lambda_i \in X_*(T)^+$ and $w_0(|\lambda_\bullet|) \in c$, then the right side of \ref{conviso} is canonically isomorphic to the shifted constant sheaf $$\mathbb{F}_p[2 \rho_L(w_0^Lw_0(|\lambda_\bullet|)) +2] \in P_c^b(\Gr_{L,X^2}^{\leq w_0^Lw_0 (\lambda_{\bullet})}, \mathbb{F}_p).$$
\end{lem}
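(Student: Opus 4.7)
The plan is to collapse the direct sum on the right-hand side to a single term using \ref{CTperv}, identify that term as a shifted constant sheaf on an open dense subscheme of $\Gr_{L,X^2}^{\leq w_0^L w_0(\lambda_\bullet)}$, and then mirror the $F$-rationality argument of \ref{convlem1} with $L$ in place of $G$.

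First I would apply \ref{CTperv}: we have $F_{c_i}(\IC_{\lambda_i}) = \IC^L_{w_0^L w_0(\lambda_i)}$ when $c_i = c(w_0(\lambda_i))$, and $0$ otherwise. Since $w_0(|\lambda_\bullet|) = w_0(\lambda_1) + w_0(\lambda_2)$, the hypothesis $w_0(|\lambda_\bullet|) \in c$ forces the only decomposition $c_1 + c_2 = c$ surviving in the sum to be $(c_1, c_2) = (c(w_0(\lambda_1)), c(w_0(\lambda_2)))$. Hence the inner direct sum collapses to the single term
$$\tau^\circ \IC^L_{w_0^L w_0(\lambda_1)} \overset{L}{\boxtimes} \tau^\circ \restr{\IC^L_{w_0^L w_0(\lambda_2)}}{U}.$$

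Second, since each $\IC^L_{w_0^L w_0(\lambda_i)} = \bbF_p[2\rho_L(w_0^L w_0(\lambda_i))]$ is a shifted constant sheaf and $\tau^\circ$ shifts by $[1]$, this box product is canonically identified, by linearity of $\rho_L$, with the shifted constant sheaf
$$\bbF_p[2\rho_L(w_0^L w_0(|\lambda_\bullet|)) + 2]$$
supported on the locally closed subscheme $\Gr_L^{\leq w_0^L w_0(\lambda_1)} \times \Gr_L^{\leq w_0^L w_0(\lambda_2)} \times U$, which sits as an open dense subscheme of $\Gr_{L,X^2}^{\leq w_0^L w_0(\lambda_\bullet)}$.

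Third, to identify $j_{L,!*}^c$ of this shifted constant sheaf I would repeat the argument of \ref{convlem1} with $L$ in place of $G$. When $p \nmid |\pi_1(L_\text{der})|$, the ind-scheme $\Gr_{L,X^2}^{\leq w_0^L w_0(\lambda_\bullet)}$ is integral and $F$-rational by \cite[7.4]{modpGr}, and \cite[1.7]{modpGr} then identifies the intermediate extension with the desired shifted constant sheaf on all of $\Gr_{L,X^2}^{\leq w_0^L w_0(\lambda_\bullet)}$. The main obstacle I expect is the case $p \mid |\pi_1(L_\text{der})|$, which should be handled exactly as in the proof of \ref{convlem1}: choose a $z$-extension $L' \to L$ together with lifts of the $w_0^L w_0(\lambda_i)$ to $L'$-dominant cocharacters so that the induced morphism of BD Grassmannians for $L'$ and $L$ is a universal homeomorphism, and then conclude via topological invariance of the \'etale site.
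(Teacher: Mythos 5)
Your proposal is correct and takes essentially the same approach as the paper: apply \ref{CTperv} to collapse the direct sum to the single term $\tau_L^\circ \IC^L_{w_0^L w_0(\lambda_1)} \overset{L}{\boxtimes} \tau_L^\circ \restr{\IC^L_{w_0^L w_0(\lambda_2)}}{U}$, then invoke \ref{convlem1} with $L$ in place of $G$ (including the $z$-extension reduction when $p \mid |\pi_1(L_{\mathrm{der}})|$), which is exactly what the paper does.
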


\begin{proof} By \ref{CTperv}, the right side of \ref{conviso} is canonically isomorphic to
$$j_{L, !*}^c \left( \tau_L^\circ \IC^L_{w_0^Lw_0(\lambda_1)} \overset{L}{\boxtimes} \tau_L^\circ \restr{\IC^L_{w_0^Lw_0(\lambda_2)}}{U}\right).$$ Now apply \ref{convlem1} to $L$ instead of $G$.
\end{proof}

From now on we assume $w_0(|\lambda_\bullet|) \in c$. Let $$V^c_{\lambda_\bullet} := \coprod_{\substack{c_1+c_2=c \\ w_0(\lambda_i) \notin c_i \text{ for some } i}} (S_{c_1} \cap \Gr_{G}^{\leq \lambda_1}) \times (S_{c_2} \cap \Gr_{G}^{\leq \lambda_2}) \times U.$$ Then $V^c_{\lambda_\bullet}$ is an open subscheme of $(S_c(X^2) \cap \Gr_{G,X^2}^{\leq\lambda_\bullet})_{\red}$. Let $Z_{\lambda_\bullet}^c \subset S_c(X^2) \cap \Gr_{G,X^2}^{\leq\lambda_\bullet}$ be its complement with the reduced scheme structure. Then $Z_{\lambda_\bullet}^c$ is a locally closed subscheme of $\Gr_{G,X^2}^{\leq\lambda_\bullet}$ such that
$$\restr{Z_{\lambda_\bullet}^c}{U} \cong (S_{c(w_0(\lambda_1))} \cap \Gr_{G}^{\leq \lambda_1}) \times (S_{c(w_0(\lambda_2))} \cap \Gr_{G}^{\leq \lambda_2}) \times U, \quad \left(\restr{Z_{\lambda_\bullet}^c}{\Delta(X)}\right)_{\text{red}} \cong (S_{c} \cap \Gr_{G}^{\leq |\lambda_\bullet|}) \times X.$$ 
By \ref{Lintersection} (ii), $\tilde{\sigma}_c^2$ restricts to a morphism
$$ \xymatrix{
\tilde{\sigma}_{c, \lambda_\bullet}^2 := \restr{\tilde{\sigma}_{c}^2}{Z_{\lambda_\bullet}^c} \colon Z_{\lambda_\bullet}^c \ar[r] & \Gr_{L,X^2}^{\leq w_0^Lw_0 (\lambda_{\bullet})}.}$$

\begin{lem} \label{univhom}
The morphism $\tilde{\sigma}_{c, \lambda_\bullet}^2 \colon Z_{\lambda_\bullet}^c \rightarrow  \Gr_{L,X^2}^{\leq w_0^Lw_0 (\lambda_{\bullet})}$ is a universal homeomorphism.
\end{lem}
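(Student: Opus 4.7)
The plan is to show that $\tilde{\sigma}_{c,\lambda_\bullet}^2$ is proper, surjective, and radicial --- in other words, a universal homeomorphism --- by stratifying $X^2 = U \sqcup \Delta(X)$ and analyzing each piece. Properness will be immediate, since both $Z_{\lambda_\bullet}^c$ and $\Gr_{L,X^2}^{\leq w_0^L w_0(\lambda_\bullet)}$ are closed subschemes of bounded Beilinson--Drinfeld Grassmannians, hence proper over $X^2$. I would first reduce to the case $c = c(w_0(|\lambda_\bullet|))$: otherwise both sides are empty, by \ref{Lintersection}(ii) for the $\Delta(X)$-fiber, and for the $U$-fiber because the only index appearing in the description of $Z_{\lambda_\bullet}^c|_U$ is $(c_1,c_2) = (c(w_0(\lambda_1)), c(w_0(\lambda_2)))$, whose sum in $\pi_0(\Gr_L)$ is $c(w_0(|\lambda_\bullet|))$.

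Over $U$, I would combine the identity $\restr{\tilde{\sigma}_c^2}{U} = \coprod_{c_1+c_2=c}\restr{(\tilde{\sigma}_{c_1} \times \tilde{\sigma}_{c_2})}{U}$ with the description of $Z_{\lambda_\bullet}^c|_U$ to identify $\restr{\tilde{\sigma}_{c,\lambda_\bullet}^2}{U}$ with the product
$$\sigma_{c(w_0(\lambda_1))}|_{S_{c(w_0(\lambda_1))} \cap \Gr_G^{\leq \lambda_1}} \times \sigma_{c(w_0(\lambda_2))}|_{S_{c(w_0(\lambda_2))} \cap \Gr_G^{\leq \lambda_2}} \times \id_U.$$
By Corollary \ref{closedSc}, each MV-cycle $S_{c(w_0(\lambda_i))} \cap \Gr_G^{\leq \lambda_i}$ is the closed attractor component in $\Gr_G^{\leq \lambda_i}$ for the regular $\bbG_m$-action used in the proof of Theorem \ref{thmcompute}, and Lemma \ref{trivred} then shows that each factor is a universal homeomorphism; hence so is $\restr{\tilde{\sigma}_{c,\lambda_\bullet}^2}{U}$.

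Over $\Delta(X)$ I would pass to reduced subschemes on both sides. The source reduction is $(S_c \cap \Gr_G^{\leq |\lambda_\bullet|}) \times X$ by the description given just before the lemma, while the fusion isomorphism (\ref{Zhueq}) applied to $L$ identifies the target reduction with $\Gr_L^{\leq w_0^L w_0(|\lambda_\bullet|)} \times X$. Under these identifications the restricted morphism is $\sigma_c|_{S_c \cap \Gr_G^{\leq |\lambda_\bullet|}} \times \id_X$, and Corollary \ref{closedSc} together with Lemma \ref{trivred} again shows this is a universal homeomorphism. Since the canonical reduction of a scheme is itself a universal homeomorphism and the class of universal homeomorphisms enjoys the 2-out-of-3 property, $\restr{\tilde{\sigma}_{c,\lambda_\bullet}^2}{\Delta(X)}$ will be a universal homeomorphism too. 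Because universal homeomorphism is equivalent to the conjunction of surjectivity, universal injectivity, and universal closedness (the last automatic from properness), each verifiable on the closed-open stratification $X^2 = \Delta(X) \sqcup U$, the result follows. The main subtlety is the possible non-reducedness of the $\Delta(X)$-fibers (more acute when $p$ divides $|\pi_1(L_{\mathrm{der}})|$), which I handle uniformly by reducing and invoking invariance of universal homeomorphism under reduction.
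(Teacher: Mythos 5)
Your stratum-by-stratum analysis over $U$ and over $\Delta(X)$ is sound and agrees with the paper's first step; in both cases Corollary \ref{closedSc} and Lemma \ref{trivred} give universal homeomorphisms fiberwise, and this does yield universal bijectivity of $\tilde{\sigma}_{c,\lambda_\bullet}^2$.

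The gap is in the assertion that ``properness will be immediate, since both $Z_{\lambda_\bullet}^c$ and $\Gr_{L,X^2}^{\leq w_0^L w_0(\lambda_\bullet)}$ are closed subschemes of bounded Beilinson--Drinfeld Grassmannians.'' The target is indeed proper over $X^2$, but $Z_{\lambda_\bullet}^c$ is constructed as a closed subscheme of $(S_c(X^2)\cap\Gr_{G,X^2}^{\leq\lambda_\bullet})_{\red}$, which is only a \emph{locally} closed subscheme of $\Gr_{G,X^2}^{\leq\lambda_\bullet}$ (the ind-scheme $S_c(X^2)$ is not closed, nor ind-proper, being a component of the reduced $\Gr_{P,X^2}$ for a parabolic $P$). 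The paper itself records only that $Z_{\lambda_\bullet}^c$ is locally closed in $\Gr_{G,X^2}^{\leq\lambda_\bullet}$. In general, having closed fibers over each piece of a stratification of the base does not imply closedness over the base, and universal closedness (unlike surjectivity and universal injectivity) cannot be checked stratum-wise. In fact, closedness of $Z_{\lambda_\bullet}^c$ in $\Gr_{G,X^2}^{\leq\lambda_\bullet}$ is essentially equivalent to what the lemma is proving, so taking it for granted is circular.

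The paper closes this gap with a short argument you are missing: the morphism $L\to P$ induces a section $(\Gr_{L,X^2}^c)_{\red}\to S_c(X^2)$ of $\tilde{\sigma}_c^2$, and after intersecting with $\Gr_{G,X^2}^{\leq\lambda_\bullet}$ this gives a section of $\tilde{\sigma}_{c,\lambda_\bullet}^2$. A section of a separated morphism is a closed immersion; combined with the universal bijectivity you have already established, this section is a universally bijective closed immersion, hence a universal homeomorphism, and then $\tilde{\sigma}_{c,\lambda_\bullet}^2$ (being its topological inverse after any base change) is one too. Replacing your properness claim by this section argument repairs the proof and recovers the paper's reasoning.
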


\begin{proof}
By \ref{closedSc}, $\tilde{\sigma}_{c, \lambda_\bullet}^2$ restricts to a universal homeomorphism over $U$ and $\Delta(X)$, so it is universally bijective. The natural morphism $(\Gr_{L,X^2}^c)_{\red} \rightarrow S_c(X^2)$ coming from the morphism $L \rightarrow P$ induces a section to $\tilde{\sigma}_{c, \lambda_\bullet}^2$, so it is a universal homeomorphism.
\end{proof}

\begin{lem} \label{convlem4} If $\mathcal{F}_i^\bullet = \IC_{\lambda_i}$ for $\lambda_i \in X_*(T)^+$ and $w_0(|\lambda_\bullet|) \in c$, then the left side of \ref{conviso} is canonically isomorphic to the shifted constant sheaf $$\mathbb{F}_p[2 \rho_L(w_0^Lw_0(|\lambda_\bullet|)) +2] \in P_c^b(\Gr_{L,X^2}^{\leq w_0^Lw_0 (\lambda_{\bullet})}, \mathbb{F}_p).$$
\end{lem}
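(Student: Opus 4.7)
The plan is to compute the left side of \ref{conviso} explicitly, starting from \ref{convlem1}, which identifies $j_{I,!*}(\tau^{\circ}\IC_{\lambda_1}\overset{L}{\boxtimes}\restr{\tau^{\circ}\IC_{\lambda_2}}{U})$ with the shifted constant sheaf $\bbF_p[2\rho(|\lambda_\bullet|)+2]$ supported on $\Gr_{G,X^2}^{\leq\lambda_\bullet}$. Pulling back along the ind-immersion $\tilde{s}_c^2$ gives the shifted constant sheaf with the same shift, now supported on $(S_c(X^2)\cap \Gr_{G,X^2}^{\leq\lambda_\bullet})_{\red}$. It remains to compute the pushforward $R(\tilde{\sigma}_c^2)_!$ and to incorporate the extra shift $[2(\rho-\rho_L)(c)]$ coming from the definition of $\tilde{F}_c^2$.

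The key geometric input is the open-closed decomposition $(S_c(X^2)\cap \Gr_{G,X^2}^{\leq\lambda_\bullet})_{\red}=V_{\lambda_\bullet}^c\sqcup Z_{\lambda_\bullet}^c$ introduced just before \ref{univhom}. Applying $R(\tilde{\sigma}_c^2)_!$ to the associated distinguished triangle reduces the problem to two independent computations. On the open piece, $\tilde{\sigma}_c^2|_{V_{\lambda_\bullet}^c}$ restricts on each component indexed by a pair $(c_1,c_2)$ with $c_1+c_2=c$ and $w_0(\lambda_i)\notin c_i$ for some $i$ to the map $q_{c_1,\lambda_1}\times q_{c_2,\lambda_2}\times\id_U$ followed by the locally closed immersion $j_L^c$. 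By proper base change and the K\"unneth formula, the $R_!$-pushforward of the constant sheaf is a tensor product $R(q_{c_1,\lambda_1})_!\bbF_p\otimes R(q_{c_2,\lambda_2})_!\bbF_p\otimes\bbF_p$ (extended by zero via $j_L^c$), and at least one factor vanishes by \ref{thmcompute} since $c_i\neq c(w_0(\lambda_i))$. Thus $V_{\lambda_\bullet}^c$ contributes nothing.

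For the closed piece, \ref{univhom} asserts that $\tilde{\sigma}_{c,\lambda_\bullet}^2\colon Z_{\lambda_\bullet}^c\to \Gr_{L,X^2}^{\leq w_0^Lw_0(\lambda_\bullet)}$ is a universal homeomorphism, so by topological invariance of the \'etale site the pushforward of a shifted constant sheaf is the corresponding shifted constant sheaf on $\Gr_{L,X^2}^{\leq w_0^Lw_0(\lambda_\bullet)}$. Combining these two steps, the left side of \ref{conviso} is the shifted constant sheaf on $\Gr_{L,X^2}^{\leq w_0^Lw_0(\lambda_\bullet)}$ with total shift
$$2\rho(|\lambda_\bullet|)+2+2(\rho-\rho_L)(c).$$
Using $c=c(w_0(|\lambda_\bullet|))$ so that $2(\rho-\rho_L)(c)=2(\rho-\rho_L)(w_0(|\lambda_\bullet|))$, together with $\rho(w_0(\mu))=-\rho(\mu)$ and $\rho_L(w_0^Lw_0(\mu))=-\rho_L(w_0(\mu))$, this simplifies to $2+2\rho_L(w_0^Lw_0(|\lambda_\bullet|))$, matching the claim.

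The main potential obstacle is the K\"unneth/proper-base-change identification on $V_{\lambda_\bullet}^c$: the restriction of $\tilde{\sigma}_c^2$ to $V_{\lambda_\bullet}^c$ lands in $\Gr_{L,X^2}^c|_U$, and one must carefully factor it through the product $\Gr_L^{c_1}\times\Gr_L^{c_2}\times U$ (then use $j_L^c$) in order to reduce the pushforward to an external tensor product where \ref{thmcompute} can be applied componentwise. Modulo this bookkeeping and the shift arithmetic, the two computations developed in \ref{thmcompute} and \ref{univhom} provide everything needed.
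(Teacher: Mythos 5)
Your proof is correct and reaches the same conclusion via the same geometric decomposition $(S_c(X^2)\cap \Gr_{G,X^2}^{\leq\lambda_\bullet})_{\red}=V_{\lambda_\bullet}^c\sqcup Z_{\lambda_\bullet}^c$ and the same appeal to \ref{univhom} on the closed piece; the shift arithmetic also checks out. The one place you diverge from the paper is in showing that $V_{\lambda_\bullet}^c$ contributes nothing: you factor $\tilde{\sigma}_c^2|_{V_{\lambda_\bullet}^c}$ component-by-component as $q_{c_1,\lambda_1}\times q_{c_2,\lambda_2}\times\id_U$ followed by $j_L^c$ and invoke K\"unneth plus \ref{thmcompute} (one of $R(q_{c_i,\lambda_i})_!\bbF_p$ vanishes since $c_i\neq c(w_0(\lambda_i))$). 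The paper instead observes that $R\tilde{\sigma}_{c!}^2\circ R(j_{V_{\lambda_\bullet}^c})_!\bbF_p[\cdots]$ is a direct summand of the restriction of the left side over $U$, which by the already-established agreement of left and right sides over $U$ and \ref{convlem3} must be supported in $\Gr_{L,X^2}^{\leq w_0^Lw_0(\lambda_\bullet)}$, while by \ref{Lintersection}(ii) the image of $V_{\lambda_\bullet}^c$ is disjoint from that support. Your argument is more self-contained and arguably more direct, as it does not route through \ref{convlem3} and the support comparison; the paper's argument is slightly shorter to write because the K\"unneth bookkeeping over $U$ has already been absorbed into the statement that both sides of \ref{conviso} agree there. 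Both are valid.
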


\begin{proof} By abuse of notation, let us view $\tilde{\sigma}_{c}^2$ as a morphism
$$\xymatrix{S_c(X^2) \cap \Gr_{G,X^2}^{\leq\lambda_\bullet} \ar[r]^(.6){\tilde{\sigma}_{c}^2} & \Gr_{L,X^2}^c.}$$ Then by the definition of $\tilde{F}_c^2$ and \ref{convlem1}, the left side of \ref{conviso} is $$R\tilde{\sigma}_{c !}^2 (\mathbb{F}_p)[2\rho_L(w_0^Lw_0(|\lambda_\bullet|))+2].$$ Let $j_{V^c_{\lambda_\bullet}} \colon V^c_{\lambda_\bullet} \rightarrow S_c(X^2) \cap \Gr_{G,X^2}^{\leq\lambda_\bullet}$ be the inclusion. By \ref{Lintersection} (ii), we have
$$\tilde{\sigma}_{c}^2(V^c_{\lambda_\bullet}) \cap  \Gr_{L,X^2}^{\leq w_0^Lw_0(\lambda_{\bullet})} = \emptyset.$$ 
Now $R\tilde{\sigma}_{c !}^2 \circ R(j_{V^c_{\lambda_\bullet}})_! (\mathbb{F}_p[2\rho_L(w_0^Lw_0(|\lambda_\bullet|))+2])$ is a direct summand of the restriction over $U$ of $R\tilde{\sigma}_{c !}^2 (\mathbb{F}_p)[2\rho_L(w_0^Lw_0(|\lambda_\bullet|))+2]$, hence is supported in $\Gr_{L,X^2}^{\leq w_0^Lw_0(\lambda_{\bullet})}$ by \ref{convlem3} since the left and right sides of \ref{conviso} agree over $U$. It follows that
$$
R(\tilde{\sigma}_{c}^2 \circ j_{V^c_{\lambda_\bullet}})_! (\mathbb{F}_p[2\rho_L(w_0^Lw_0(|\lambda_\bullet|))+2]) = R\tilde{\sigma}_{c !}^2 \circ R(j_{V^c_{\lambda_\bullet}})_! (\mathbb{F}_p[2\rho_L(w_0^Lw_0(|\lambda_\bullet|))+2]) = 0.
$$ 
Consequently, by applying $R\tilde{\sigma}_{c !}^2$ to the exact triangle associated to the decomposition of $S_c(X^2) \cap \Gr_{G,X^2}^{\leq\lambda_\bullet}$ into $V^c_{\lambda_\bullet}$ and $Z_{\lambda_\bullet}^c$, the left side of \ref{conviso} is
$$R(\tilde{\sigma}_{c, \lambda_\bullet}^2)_! (\mathbb{F}_p)[2 \rho_L(w_0^Lw_0(|\lambda_\bullet|)) +2].$$ Now we conclude by using \ref{univhom}. 
\end{proof}

\begin{proof}[Proof of \ref{conviso}]
We have reduced to the case where $\cF_i^\bullet = \IC_{\lambda_i}$ for $\lambda_i \in X_*(T)^+$. Then if $w_0 (|\lambda_\bullet|) \notin c$ both sides of \ref{conviso} vanish by \ref{convlem2}, and if $w_0 (|\lambda_\bullet|) \in c$ both sides are canonically identified with the same complex
$$\mathbb{F}_p[2 \rho_L(w_0^Lw_0(|\lambda_\bullet|)) +2] \in P_c^b(\Gr_{L,X^2}^{\leq w_0^Lw_0 (\lambda_{\bullet})}, \mathbb{F}_p)$$
by \ref{convlem3} and \ref{convlem4}.
\end{proof}

\subsection{Compatibility with convolution} \label{convsection}

\begin{thm}\label{RGLtensor}
The $L$-constant term functor is a tensor functor
$$
\xymatrix{
\CT^G_L\colon (P_{L^+G}(\Gr_G,\bbF_p),*) \ar[r] & (P_{L^+L}(\Gr_{L,w_0^LX_*(T)_-},\bbF_p),*).
}
$$
\end{thm}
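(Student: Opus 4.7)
The strategy is to mimic the proof of \ref{tensorfunctor} for $L=T$, but working on the global convolution Grassmannian $\Gr_{G,X^2}$ with $\tilde{F}_c^2$ in place of $\tilde{F}_\nu$, and using \ref{conviso} as the substitute for Braden's hyperbolic localization (which was the tool deployed for this step in \cite[1.15.2]{BR18} with $\overline{\bbQ}_\ell$-coefficients). Fix $\cF_1^\bullet,\cF_2^\bullet\in P_{L^+G}(\Gr_G,\bbF_p)$ and $c\in\pi_0(\Gr_L)$. Starting from the isomorphism
$$
Rm_{I,!}(\cF_{1,2}^\bullet)\cong j_{I,!*}(\tau^{\circ}\cF_1^\bullet\overset{L}{\boxtimes}\restr{\tau^{\circ}\cF_2^\bullet}{U})
$$
recalled in (\ref{C78}), I would apply $\tilde{F}_c^2$ and invoke \ref{conviso} to rewrite the result as
$$
\tilde{F}_c^2\circ Rm_{I,!}(\cF_{1,2}^\bullet)\cong j_{L,!*}^c\Bigl(\bigoplus_{c_1+c_2=c}\tau_L^\circ F_{c_1}(\cF_1^\bullet)\overset{L}{\boxtimes}\restr{\tau_L^\circ F_{c_2}(\cF_2^\bullet)}{U}\Bigr).
$$

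Next, I would restrict both sides to the diagonal $\Delta(X)\subset X^2$. By proper base change along the Cartesian square defining $S_c(X)\to S_c(X^2)$ (up to non-reducedness, which does not affect \'etale cohomology) together with the analogue of \ref{conv1} applied on $\Gr_L$ instead of $\Gr_G$, the two sides produce respectively $\tau_L^\circ F_c(\cF_1^\bullet*\cF_2^\bullet)[-1]$ and $\tau_L^\circ\bigl(\bigoplus_{c_1+c_2=c}F_{c_1}(\cF_1^\bullet)*_L F_{c_2}(\cF_2^\bullet)\bigr)[-1]$, where $*_L$ denotes the convolution product in $P_{L^+L}(\Gr_L,\bbF_p)$. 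Since $\tau_L^\circ[-1]$ is fully faithful (being pullback along a smooth surjection with connected fibers, up to shift), this yields a canonical isomorphism
$$
F_c(\cF_1^\bullet*\cF_2^\bullet)\cong \bigoplus_{c_1+c_2=c}F_{c_1}(\cF_1^\bullet)*_L F_{c_2}(\cF_2^\bullet),
$$
and summing over $c\in\pi_0(\Gr_L)$ gives the desired isomorphism $\CT_L^G(\cF_1^\bullet*\cF_2^\bullet)\cong\CT_L^G(\cF_1^\bullet)*_L\CT_L^G(\cF_2^\bullet)$. The fact that the target lives in $P_{L^+L}(\Gr_{L,w_0^LX_*(T)_-},\bbF_p)$ is already recorded in \ref{FcImage}.

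It remains to check the compatibility of this natural isomorphism with the associativity and commutativity constraints. For associativity, I would argue exactly as in the proof of \ref{tensorfunctor}: the associativity constraint on $*$ is inherited from that of $\overset{L}{\boxtimes}$, and the identification above is itself built from $\overset{L}{\boxtimes}$ and intermediate extensions, so the standard hexagon closes. For commutativity, I would globalize over $X^3$ (or equivalently use the $X^2$-version with the swap $X^2\to X^2$ exchanging the factors): the swap restricts to the identity on $\Delta(X)$ and permutes the two open strata of $U$, and the argument of \cite[7.11]{modpGr} adapted with $\tilde{F}_c^2$ in place of $Rf_{I,!}$ produces the commutativity isomorphism for $\CT_L^G$, including the same sign corrections determined by the parities of the dimensions of the supports (which match on both sides because $F_c$ shifts by $2(\rho-\rho_L)(c)$, an even integer).

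The only genuinely nontrivial input was \ref{conviso}; the main obstacle in this deduction will be the bookkeeping of shifts and the verification that the sign adjustments needed to promote the monoidal isomorphism into a tensor (i.e., symmetric monoidal) isomorphism for $G$ and for $L$ are compatible under $\CT_L^G$, which ultimately reduces to the parity statement $2\rho(\lambda)\equiv 2\rho_L(w_0^Lw_0(\lambda))\pmod 2$ for $\lambda\in X_*(T)^+$ with $w_0(\lambda)\in c\cap X_*(T)_-$.
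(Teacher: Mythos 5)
Your proof is correct and takes essentially the same route as the paper's: both arguments combine \ref{conv1} (on $G$ and on $L$) with the key isomorphism \ref{conviso} via a diagram chase through the global convolution picture, differing only in the order in which $\tilde{F}_c$/$\tilde{F}_c^2$ and restriction to $\Delta(X)$ are applied. The one point you make slightly more explicit than the paper is that $\tau_L^\circ$ is conservative, which is indeed needed to descend from the $\Gr_{L,X}$-level isomorphism to $F_c(\cF_1^\bullet*\cF_2^\bullet)\cong\bigoplus_{c_1+c_2=c}F_{c_1}(\cF_1^\bullet)*F_{c_2}(\cF_2^\bullet)$.
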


\begin{proof}
Let $\mathcal{F}_1^\bullet$, $\mathcal{F}_2^\bullet \in P_{L^+G}(\Gr_G, \mathbb{F}_p)$. Recall from \ref{conv1} the canonical isomorphism
$$
\tau^{\circ} (\cF_1^\bullet * \cF_2^\bullet) \cong Ri_I^* \circ j_{I,!*} (\tau^{\circ} \cF_1^\bullet \overset{L}{\boxtimes} \restr{\tau^{\circ} \cF_2^\bullet}{U})[-1].
$$

Let $c\in\pi_0(\Gr_L)$. First, apply $\tilde{F}_c$. After unwinding the definitions and using the proper base change theorem, there is a canonical isomorphism
$$ 
\tilde{F}_c(\tau^{\circ} (\cF_1^\bullet * \cF_2^\bullet)) \cong \tau_L^{\circ}(F_c(\cF_1^\bullet * \cF_2^\bullet)).
$$ 
A similar diagram chase yields a canonical isomorphism
$$ 
\tilde{F}_c(Ri_I^* \circ j_{I,!*} (\tau^{\circ} \cF_1^\bullet \overset{L}{\boxtimes} \restr{\tau^{\circ} \cF_2^\bullet}{U})[-1]) \cong Ri_L^{c*}(\tilde{F}_c^2 \circ j_{I,!*} (\tau^{\circ} \cF_1^\bullet \overset{L}{\boxtimes} \restr{\tau^{\circ} \cF_2^\bullet}{U}))[-1].
$$ 
Whence
$$
\tau_L^{\circ}(F_c(\cF_1^\bullet * \cF_2^\bullet))\cong Ri_L^{c*}(\tilde{F}_c^2 \circ j_{I,!*} (\tau^{\circ} \cF_1^\bullet \overset{L}{\boxtimes} \restr{\tau^{\circ} \cF_2^\bullet}{U}))[-1].
$$

Second, use the key isomorphism \ref{conviso} to get
$$
\tau_L^{\circ}(F_c(\cF_1^\bullet * \cF_2^\bullet))\cong Ri_L^{c*}\circ j_{L, !*}^c \left(\bigoplus_{c_1+c_2 =c} \tau_L^\circ F_{c_1}(\cF_1^\bullet) \overset{L}{\boxtimes} \tau_L^\circ \restr{F_{c_2}(\cF_1^\bullet)}{U} \right)[-1].
$$

Third, use \ref{conv1} for $L$ instead of $G$ to get
$$
\tau_L^{\circ}(F_c(\cF_1^\bullet * \cF_2^\bullet))\cong \bigoplus_{c_1+c_2=c} \tau_L^{\circ}(F_{c_1}(\mathcal{F}_1^\bullet) * F_{c_2}(\mathcal{F}_2^\bullet)).
$$

By taking the sum over the $c\in \pi_0(\Gr_L)$ we obtain finally (cf. \ref{CTweight})
$$
\CT^G_L( \mathcal{F}_1^\bullet * \mathcal{F}_2^\bullet) \cong \CT_L^G(\mathcal{F}_1^\bullet) * \CT_L^G(\mathcal{F}_2^\bullet).
$$ 

By appealing to the constructions in Subsection \ref{recallconv} one can verify that this isomorphism is compatible with the associativity and commutativity constraints. The arguments are analogous to the case of characteristic $0$ coefficients as in \cite[1.15.2]{BR18}; we leave the details to the reader.
\end{proof}

\begin{cor}\label{Rss}
The functor $\CT^G_L$ induces an equivalence of symmetric monoidal categories
$$
\xymatrix{
\CT^G_L|_{(P_{L^+G}(\Gr_G,\bbF_p)^{\sss},*)}:(P_{L^+G}(\Gr_G,\bbF_p)^{\sss},*)\ar[r]^<<<<<{\sim} & (P_{L^+L}(\Gr_{L,w_0^LX_*(T)_-},\bbF_p)^{\sss},*).
}
$$
We have
$$
\forall\lambda\in X_*(T)^+,\quad \CT^G_L(\IC_{\lambda}) = \IC^L_{w_0^Lw_0(\lambda)}.
$$
\end{cor}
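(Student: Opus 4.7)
\medskip

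\noindent\textbf{Proof proposal.} The plan is to deduce the corollary directly from Theorems \ref{CTperv} and \ref{RGLtensor} together with Corollary \ref{FcImage}, without any new geometric input. I would first establish the formula on simple objects: for $\lambda \in X_*(T)^+$, Lemma \ref{CTweight} gives $\CT_L^G \cong \bigoplus_{c \in \pi_0(\Gr_L)} F_c$, and Theorem \ref{CTperv} then implies that only the index $c = c(w_0(\lambda))$ contributes, with $F_c(\IC_\lambda) = \IC^L_{w_0^L w_0(\lambda)}$. Summing yields the second assertion $\CT_L^G(\IC_\lambda) = \IC^L_{w_0^L w_0(\lambda)}$. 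Since $\CT_L^G$ is exact and preserves simples, it sends semi-simple objects to semi-simple objects, and by Corollary \ref{FcImage} the image lies in $P_{L^+L}(\Gr_{L,w_0^LX_*(T)_-},\bbF_p)$. So the restricted functor between semi-simple subcategories is well-defined.

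To obtain an equivalence of abelian categories, it remains to show essential surjectivity onto simples and the bijection on $\Hom$-spaces. On simples, the induced map is $\lambda \mapsto w_0^L w_0(\lambda)$, which is visibly a bijection from $X_*(T)^+$ onto $w_0^L X_*(T)_-$ (via $w_0\colon X_*(T)^+\xrightarrow{\sim} X_*(T)_-$). By \ref{notaGrGM}, the simples of $P_{L^+L}(\Gr_{L,w_0^LX_*(T)_-},\bbF_p)$ are the $\IC^L_\mu$ for $\mu$ in the $\leq_L$-closure of $w_0^L X_*(T)_-$ inside $X_*(T)_{+/L}$. I expect the main technical step to be the combinatorial lemma that this closure equals $w_0^L X_*(T)_-$ itself. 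Concretely, if $\mu' \in X_*(T)_{+/L}$ satisfies $\mu' \leq_L w_0^L\nu$ for some $\nu \in X_*(T)_-$, then applying $w_0^L$ (which sends $\Delta_L^\vee$ to $-\Delta_L^\vee$ up to a permutation) converts this relation to $w_0^L\mu' - \nu \in \mathbb{N}\Delta_L^\vee$. To conclude $w_0^L\mu' \in X_*(T)_-$, pair against each $\alpha \in \Delta$: when $\alpha \in \Delta_L$ one uses $L$-dominance of $\mu'$ after rewriting $\langle w_0^L\mu',\alpha\rangle = -\langle \mu', w_0^L(\alpha)\rangle$; when $\alpha \in \Delta \setminus \Delta_L$ one combines $\langle \nu, \alpha\rangle \leq 0$ with the non-positivity of the off-block Cartan entries $\langle \alpha_i^\vee, \alpha\rangle$ for $\alpha_i \in \Delta_L$. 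This yields $w_0^L\mu' \in X_*(T)_-$, hence $\mu' \in w_0^L X_*(T)_-$.

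Fully faithfulness on the semi-simple subcategory then follows formally: both source and target are semi-simple $\bbF_p$-linear categories in which each simple has endomorphism ring $\bbF_p$ and in which $\Hom$'s between non-isomorphic simples vanish, and the functor induces an isomorphism on the endomorphism algebra of each simple under the above bijection on isomorphism classes. Finally, the compatibility of the resulting equivalence of abelian categories with the symmetric monoidal structures is inherited from Theorem \ref{RGLtensor}: the tensor structure there restricts unchanged to the semi-simple subcategories, which are themselves tensor subcategories by \cite[1.2]{modpGr}. Combining everything gives the desired equivalence of symmetric monoidal categories.
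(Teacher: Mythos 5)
Your proof is correct and follows essentially the same route as the paper: deduce the formula on simples from \ref{CTweight} and \ref{CTperv}, then reduce the equivalence to the combinatorial statement that $w_0^L X_*(T)_-$ is already $\leq_L$-saturated in $X_*(T)_{+/L}$, and prove that by pairing against simple roots in $\Delta_L$ and $\Delta\setminus\Delta_L$ separately. One small slip: the identity $\langle w_0^L\mu',\alpha\rangle = -\langle \mu', w_0^L(\alpha)\rangle$ should read $\langle w_0^L\mu',\alpha\rangle = \langle \mu', w_0^L(\alpha)\rangle$ (the pairing is $W$-invariant and $w_0^L$ is an involution); the desired sign $\leq 0$ still follows because $w_0^L(\alpha)$ is a negative root of $L$ and $\mu'$ is $L$-dominant, so your conclusion is unaffected. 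You also spell out the Schur-lemma argument for full faithfulness which the paper leaves implicit; that is a harmless elaboration.
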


\begin{proof} 
The last assertion follows from \ref{CTweight} and \ref{CTperv}. In particular, it implies that the restriction
$\CT^G_L|_{P_{L^+G}(\Gr_G,\bbF_p)^{\sss}}$ factors through 
$$
P_{L^+L}(\Gr_{L,w_0^LX_*(T)_-},\bbF_p)^{\sss}\subset P_{L^+L}(\Gr_{L,w_0^LX_*(T)_-},\bbF_p).
$$
 Combined with \cite[1.2]{modpGr}, it also implies that $\CT^G_L|_{(P_{L^+G}(\Gr_G,\bbF_p)^{\sss},*)}$ is a tensor functor, which is also a consequence of \ref{RGLtensor}. 

To conclude the proof, it remains to see that $\CT^G_L$ induces a bijection between the sets of (isomorphism classes of) simple objects, in other words, that the inclusion  
$$
w_0^LX_*(T)_-\subset \overline{w_0^LX_*(T)_-} = \{ \lambda \in X_*(T)_{+/L} \: : \: \lambda \leq_L \mu \text{ for some } \mu \in w_0^LX_*(T)_-\}
$$
is an equality. So let $\lambda\in\overline{w_0^LX_*(T)_-}$, and pick $\mu \in w_0^LX_*(T)_-$ such that $\lambda \leq_L \mu$. Set 
$$
\lambda':=w_0^L(\lambda)\in X_*(T)_{-/L}\quad\textrm{and}\quad\mu':=w_0^L(\mu)\in X_*(T)_-.
$$
We need to check that $\lambda'\in X_*(T)_-$, which means that $\langle \alpha,\lambda' \rangle\leq 0$ for all the simple roots 
$\alpha\in\Delta\subset\Phi$. The inequality holds if $\alpha\in \Delta_L\subset\Delta$ since $\lambda'\in X_*(T)_{-/L}$. Now assume that 
$\alpha\in\Delta\setminus \Delta_L$. As $\lambda \leq_L \mu$, we have $\mu' \leq_L \lambda'$ i.e.
$$
\lambda'\in \mu' + \bbN\Delta_L^{\vee}.
$$
Moreover, as $\mu'\in X_*(T)_-$, we have $\langle \alpha,\mu' \rangle\leq 0$. Lastly, if $\beta\in \Delta_L$, then $\beta\in \Delta$ and 
$\beta\neq \alpha$, so that $\alpha$ and $\beta$ are two distinct elements of a root basis, which implies $\langle\alpha,\beta^{\vee}\rangle\leq 0$.
\end{proof}

\begin{rmrk} \label{summarizeL}
We can summarize this section as follows. The exact faithful symmetric monoidal functor 
$$
\xymatrix{
F_-:(P_{L^+G}(\Gr_G, \mathbb{F}_p),*)\ar[r] & (\Vect_{\mathbb{F}_p}(X_*(T)_{-}),\otimes)
}
$$
can be rewritten as 
$$
\xymatrix{
\CT^G_T:(P_{L^+G}(\Gr_G, \mathbb{F}_p),*)\ar[r] & (P_{L^+T}(\Gr_{T,X_*(T)_{-}}, \mathbb{F}_p),*)
}
$$
and factors as a composition of exact faithful symmetric monoidal functors
$$
\xymatrix{
(P_{L^+G}(\Gr_G, \mathbb{F}_p),*) \ar[r]^<<<<<{\CT^G_L} & (P_{L^+L}(\Gr_{L,w_0^LX_*(T)_-},\bbF_p),*) \ar@{}[d]^{\bigcap}\\
 & (P_{L^+L}(\Gr_L,\bbF_p),*) \ar[r]^<<<<<<<<{\CT^L_T} & (P_{L^+T}(\Gr_{T,X_*(T)_{-/L}}, \mathbb{F}_p),*) \ar@{}[d]^{\bigcap} \\
 & & (P_{L^+T}(\Gr_{T}, \mathbb{F}_p),*)
}
$$(with values in $P_{L^+T}(\Gr_{T,X_*(T)_{-}}, \mathbb{F}_p)\subset P_{L^+T}(\Gr_{T}, \mathbb{F}_p)$).
\end{rmrk}

\section{Tannakian interpretation}\label{sectionTannakian}

\subsection{The Satake equivalence}
Recall from \cite[1.1]{modpGr} the Tannaka equivalence given by the geometric Satake equivalence with $\bbF_p$-coefficients: 
$$
\xymatrix{
(P_{L^+G}(\Gr_G,\bbF_p),*)\ar[rr]_<<<<<<<<<<<<<<<<<{\sim}^{\cS_G} \ar[dr]_H && (\Rep_{\bbF_p}(M_G),\otimes) \ar[dl]^{\rm forget} \\
&(\Vect_{\bbF_p},\otimes).&
}
$$
In particular $M_G$ is an affine monoid scheme over $\bbF_p$ which represents the functor of tensor endomorphisms of the fiber functor $H$.

\begin{nota}\label{SGM}

\begin{itemize}
\item Let $A\subset X_*(T)^+$ be a submonoid. Then the full subcategory $P_{L^+G}(\Gr_{G,A},\bbF_p)\subset P_{L^+G}(\Gr_G,\bbF_p)$ introduced in \ref{notaGrGM} is a Tannakian subcategory with fiber functor given by the restriction of $H$. We denote by $M_{G,A}$ the corresponding $\bbF_p$-monoid scheme and by $\cS_{G,A}$ the resulting Tannaka equivalence. It fits into a commutative diagram
$$
\xymatrix{
(P_{L^+G}(\Gr_{G,A},\bbF_p),*)\ar[r]_{\sim}^{\cS_{G,A}} \ar@{}[d]^{\bigcap} & (\Rep_{\bbF_p}(M_{G,A}),\otimes) \ar@{}[d]^{\bigcap} \\
(P_{L^+G}(\Gr_G,\bbF_p),*)\ar[r]_{\sim}^{\cS_G} & (\Rep_{\bbF_p}(M_G),\otimes).
}
$$
We have a canonical homomorphism $M_G\ra M_{G,A}$, which for $A=X_*(T)^+$ is the identity.

\medskip

\item Given an arbitrary abstract abelian monoid $A$, the category $(\Vect_{\bbF_p}(A),\otimes)$ introduced in \ref{notagradsp} is Tannakian with fiber functor given by forgetting the grading. Its Tannaka monoid is the diagonalizable $\bbF_p$-monoid scheme 
$$
D(A):=\Spec(\bbF_p[A]).
$$
\end{itemize}
\end{nota}

\begin{rmrk}
In the case $G=T$, we have 
$$
M_{T,A}=D(A)
$$
for all submonoids $A\subset X_*(T)$. In particular $M_T=M_{T,X_*(T)}=D(X_*(T))=T^{\vee}$, the torus dual to $T$.
\end{rmrk}

\subsection{The dual of the torus embedding}\label{dualtorusemb}
As noticed in \ref{summarizeT}, we have obtained a factori\-zation of $H$ as
$$
(P_{L^+G}(\Gr_G, \mathbb{F}_p),*) \xrightarrow{F_-} (\Vect_{\mathbb{F}_p}(X_*(T)_{-}),\otimes) \xrightarrow{2 \rho_{-}} (\Vect_{\mathbb{F}_p}(\mathbb{Z}_{-}),\otimes) \xrightarrow{\text{Forget}} (\Vect_{\mathbb{F}_p},\otimes).
$$

Under the equivalences $\cS_G$ and $\cS_T$ it corresponds to a sequence of tensor functors
$$
(\Rep_{\bbF_p}(M_G),\otimes)  \lra (\Rep_{\bbF_p}(D(X_*(T)_{-})),\otimes)\lra (\Rep_{\bbF_p}(\bbA^1_- ),\otimes) \lra(\Rep_{\bbF_p}(1_{\bbF_p}),\otimes),
$$
i.e. by Tannaka duality to a sequence of morphisms of  $\bbF_p$-monoid schemes
$$
\xymatrix{
1_{\bbF_p} \ar[r] & \bbA^1_-  \ar[r]^<<<<<{2\rho_-} & D(X_*(T)_{-}) \ar[r]^<<<<<{D(F_-)} &  M_G \\
 & \bbG_m \ar[r]^{2\rho} \ar@{^{(}->}[u] & T^{\vee}. \ar@{^{(}->}[u] 
}
$$

\begin{rmrk} \label{openclosed}
We show in \ref{propsoffw} that $D(F_-)$, denoted there by $\fw$, is a closed immersion, and that $T^\vee \rightarrow D(X_*(T)_-)$ is an open immersion. 
\end{rmrk}

\subsection{The dual of the Levi embedding}
As noticed in \ref{summarizeL}, we have obtained a factori\-zation of $F_-$ as
$$
(P_{L^+G}(\Gr_G, \mathbb{F}_p),*) \xrightarrow{\CT^G_L} (P_{L^+L}(\Gr_{L,w_0^LX_*(T)_-},\bbF_p),*) 
$$
$$
\subset
(P_{L^+L}(\Gr_L,\bbF_p),*) \xrightarrow{\CT^L_T=F^L_-} (P_{L^+T}(\Gr_{T,X_*(T)_{-/L}}, \mathbb{F}_p),*) 
\subset
(P_{L^+T}(\Gr_{T}, \mathbb{F}_p),*).
$$

Under the equivalences $\cS_G$, $\cS_L$, and $\cS_T$ it corresponds to a diagram
$$
(\Rep_{\bbF_p}(M_G),\otimes) 
\lra (\Rep_{\bbF_p}(M_{L, w_0^L X_*(T)_-}),\otimes) 
$$
$$
\subset
(\Rep_{\bbF_p}(M_L),\otimes) 
\lra (\Rep_{\bbF_p}(M_{T,X_*(T)_{-/L}}),\otimes) 
\subset
(\Rep_{\bbF_p}(T^{\vee}),\otimes),
$$
i.e. by Tannaka duality to a sequence of morphisms of  $\bbF_p$-monoid schemes
$$
\xymatrix{
T^{\vee} \ar[r] \ar@{^{(}->}[d] & M_{T,X_*(T)_{-/L}} \ar[r]^<<<{D(F_-^L)} & M_L \ar[r] & M_{L,w_0^LX_*(T)_-} \ar[r]^<<<<{D(\CT^G_L)} & M_G. \\
M_{T,X_*(T)_{-}} \ar@/_1pc/[urrrr]_{D(F_-)}
}
$$

\begin{rmrk} \label{futureext}
The morphisms $T^{\vee} \rightarrow M_{T,X_*(T)_{-/L}}$ and $T^{\vee} \rightarrow M_{T,X_*(T)_{-}}$ are open immersions, and the morphisms $D(F_-^L)$ and $D(F_-)$ are closed immersions (cf. \ref{openclosed}). Deciding whether $M_L \rightarrow  M_{L,w_0^LX_*(T)_-}$ and $D(\CT^G_L)$ are open or closed immersions in general seems to require a greater understanding of the  extensions between representations of $M_G$ (and $M_L$), and how these extensions interact with the constant term functors. 
\end{rmrk}

\subsection{Semi-simplification.}\label{Semi-simplification}

\begin{nota} \label{sssnota}
Let $A\subset X_*(T)^+$ be a submonoid. Similarly as in \cite[proof of 7.14]{modpGr}, we denote by $P_{L^+G}(\Gr_{G,A},\bbF_p)^{\sss}$
the full subcategory of $P_{L^+G}(\Gr_{G,A},\bbF_p)$ consisting of semi-simple objects. As noticed in \ref{notaGrGM}, the simple objects are the $\IC_{\lambda}$ for $\lambda\in\overline{A}$, and hence it follows from \cite[1.2]{modpGr} that $P_{L^+G}(\Gr_{G,A},\bbF_p)^{\sss}$ is a Tannakian subcategory with fiber functor given by the restriction of $H$. Then by \ref{HFtheorem} the corresponding $\bbF_p$-monoid scheme is
$$
M_{G,A}^{\sss}:=D(w_0\overline{A}).
$$
The resulting Tannaka equivalence $\cS_{G,A}^{\sss}$ fits into the commutative diagram
$$
\xymatrix{
(P_{L^+G}(\Gr_G,\bbF_p)^{\sss},*)\ar[r]_{\sim}^{\cS_{G,A}^{\sss}} \ar@{}[d]^{\bigcap} & (\Rep_{\bbF_p}(M_{G,A}^{\sss}),\otimes) \ar@{}[d]^{\bigcap} \\
(P_{L^+G}(\Gr_G,\bbF_p),*)\ar[r]_{\sim}^{\cS_{G,A}} & (\Rep_{\bbF_p}(M_{G,A}),\otimes).
}
$$
We have a canonical homomorphism $\pi_{G,M}:M_{G,A}\ra M_{G,A}^{\sss}$. As $M_{G,X_*(T)^+}=M_G$, we write simply $M_{G}^{\sss}$ for $M_{G,X_*(T)^+}^{\sss}=D(X_*(T)_-)$ and $\pi_G:M_G\ra M_G^{\sss}$ for the corresponding canonical homomorphism.
\end{nota}

\begin{rmrk}
Since every simple object of $\Rep_{\bbF_p}(M_G)\cong P_{L^+G}(\Gr_G,\bbF_p)$ is $1$-dimensional, the monoid $M_G$ is pro-solvable, cf. \cite[7.15]{modpGr}. Let $\{V_{\lambda},\lambda\in X_*(T)_-\}$ be (a set of representatives of) the set of irreducible finite dimensional representations of $M_G$. For any $V\in \Rep_{\bbF_p}(M_G)$, denote by $d_{\lambda}(V)$ the multiplicity of $V_{\lambda}$ as a subquotient in any Jordan-H\"older filtration of $V$. Then the canonical homomorphism $\pi_G:M_G\ra M_{G}^{\sss}$ admits the following explicit description: it maps $m\in M_G$ to the unique
$\pi_G(m)\in M_{G}^{\sss}=D(X_*(T)_-)$ acting  
$$
\textrm{on}\quad\bigoplus_{\lambda\in X_*(T)_-}V_{\lambda}^{d_{\lambda}(V)}
\quad\textrm{by}\quad \sum_{\lambda\in X_*(T)_-}\lambda(\pi_G(m))
$$
for all $V\in \Rep_{\bbF_p}(M_G)$.
\end{rmrk}

\begin{defn}\label{defpiG}
We call the canonical homomorphism 
$$
\xymatrix{
\pi_G:M_G\ar[r] & M_G^{\sss}
}
$$
the \emph{eigenvalues homomorphism}.
\end{defn}

From \ref{F-ss} we have the equivalence
$$
\xymatrix{
F_-|_{(P_{L^+G}(\Gr_G,\bbF_p)^{\sss},*)}:(P_{L^+G}(\Gr_G,\bbF_p)^{\sss},*)\ar[r]^<<<<{\sim} & (\Vect_{\bbF_p}(X_*(T)_-),\otimes),
}
$$
such that $F_-(\IC_{\lambda})=\bbF_p(w_0(\lambda))$. By Tannaka duality, it corresponds to the identity
$$
\xymatrix{
M_{T,X_*(T)_-}=D(X_*(T)_-) \ar@{=}[r] & D(X_*(T)_-)=M_G^{\sss}.
}
$$

\begin{defn} \label{defss}

\begin{itemize}
\item We call the composition
$$
\xymatrix{
(\cdot)^{\sss}:=F_-|_{(P_{L^+G}(\Gr_G,\bbF_p)^{\sss},*)}^{-1}\circ F_-\colon (P_{L^+G}(\Gr_G,\bbF_p),*) \ar[r] & (P_{L^+G}(\Gr_G,\bbF_p)^{\sss},*)
}
$$
the \emph{semi-simplification functor}. 

\medskip

\item We call its Tannaka dual 
$$
\xymatrix{
\fw:=D((\cdot)^{\sss}):M_G^{\sss} \ar[r] & M_G
}
$$
the \emph{weight section}. 
\end{itemize}
\end{defn}

\noindent Thus the functor $(\cdot)^{\sss}$ is a retraction to $P_{L^+G}(\Gr_G,\bbF_p)^{\sss}\subset P_{L^+G}(\Gr_G,\bbF_p)$ and the morphism $\fw$ is a section to $\pi_G:M_G\ra M_G^{\sss}$. Moreover $\fw$ identifies with the morphism $D(F_-)$ from Subsection \ref{dualtorusemb}. 

\begin{thm} \label{propsoffw} The morphisms $\pi_G$ and $\fw$ satisfy the following properties.
\begin{itemize}
\item The morphism $\pi_G \colon M_G \rightarrow M_G^{\sss}$ is surjective.
\item The weight section $\fw:M_G^{\sss}\ra  M_G$ is a closed immersion. The dual torus embedding $T^{\vee}\ra M_G$ factors through $\fw$ by an open immersion. 
\end{itemize}
\end{thm}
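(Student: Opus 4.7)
Both assertions will be deduced formally from the relation $\pi_G \circ \fw = \id_{M_G^{\sss}}$ recorded just before the theorem. Dualizing to coordinate rings, $\fw^* \circ \pi_G^* = \id$, so $\fw^* \colon \cO(M_G) \to \cO(M_G^{\sss})$ is a split epimorphism of $\bbF_p$-algebras and $\pi_G^* \colon \cO(M_G^{\sss}) \to \cO(M_G)$ is a split injection. Surjectivity of $\fw^*$ is exactly the statement that $\fw$ is a closed immersion, taking care of the first half of the second bullet. For the first bullet, any scheme morphism admitting a section is surjective on underlying points: every $m \in M_G^{\sss}$ has the preimage $\fw(m) \in M_G$.

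It then remains to unwind the factorization of the dual torus embedding. From \ref{dualtorusemb}, the morphism $T^\vee \to M_G$ is by definition the composition
$$
T^\vee = D(X_*(T)) \longrightarrow D(X_*(T)_-) \xrightarrow{D(F_-)} M_G,
$$
where the first arrow is Tannaka dual to the inclusion of monoids $X_*(T)_- \hookrightarrow X_*(T)$. Under the identification $M_G^{\sss} = D(X_*(T)_-)$ of \ref{sssnota}, the remark after \ref{defss} identifies the second arrow with $\fw$. Hence the required factorization $T^\vee \to M_G^{\sss} \xrightarrow{\fw} M_G$ holds tautologically.

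The remaining point is that $T^\vee \to M_G^{\sss}$ is an open immersion, a quick algebra check: fix any regular antidominant cocharacter $\nu_0 \in X_*(T)_-$. For any $\mu \in X_*(T)$ one can find an integer $n \geq 0$ with $\mu + n\nu_0 \in X_*(T)_-$, since $\langle \alpha, n\nu_0 \rangle \to -\infty$ for every simple root $\alpha$. This yields the identity $e^{\mu} = e^{\mu + n\nu_0}(e^{\nu_0})^{-n}$ in $\bbF_p[X_*(T)]$, so $\bbF_p[X_*(T)] = \bbF_p[X_*(T)_-][(e^{\nu_0})^{-1}]$. Consequently $T^\vee \to M_G^{\sss}$ identifies $T^\vee$ with the basic open subset $\{e^{\nu_0} \neq 0\} \subset M_G^{\sss}$.

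None of these steps is genuinely delicate: once the equality $\pi_G \circ \fw = \id$ is in hand the first two assertions are formal, and the openness of $T^\vee \subset M_G^{\sss}$ reduces to the fact that $X_*(T)$ is the group completion of $X_*(T)_-$, exhibited concretely by localizing at a single regular antidominant coweight.
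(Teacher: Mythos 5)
Your proof is correct and follows essentially the same route as the paper. The closed-immersion and surjectivity claims are deduced formally from $\pi_G \circ \fw = \id_{M_G^{\sss}}$: the paper phrases the closed-immersion part as ``a section of an affine morphism is a closed immersion,'' while you spell this out at the level of coordinate rings (the split epimorphism $\fw^*$ is surjective); these are the same argument. The surjectivity of $\pi_G$ and the factorization of $T^\vee \to M_G$ through $\fw$ via the diagram of Subsection \ref{dualtorusemb} match the paper as well. The one place you genuinely depart is the openness of $T^\vee \hookrightarrow M_G^{\sss}$: the paper defers this to Lemma \ref{lemdecomp}, which treats general standard Levis $L$ by inverting a family $\{e^{\lambda_\alpha}\}_{\alpha \in \Delta\setminus\Delta_L}$, whereas you handle only the $L=T$ case directly by localizing at a single regular antidominant coweight $\nu_0$. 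The two are equivalent for $L = T$ (take $\nu_0 = \sum_{\alpha\in\Delta}\lambda_\alpha$, and observe that inverting a product of monomials inverts each factor), so your version is a clean self-contained shortcut; the paper's more elaborate lemma is needed later for the stratification of $\sP$ by the $\sP_L$.
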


\begin{proof}
The weight section $\fw$ of $\pi_G$ is a closed immersion since the morphism $\pi_G$ is affine. Conversely, the fact that $\pi_G$ admits a section implies that $\pi_G$ is surjective. 

By construction, we have the commutative diagram
$$
\xymatrix{
 M_G^{\sss}  \ar@{=}[r] \ar@/^1pc/[rr]^{\fw} & M_{T,X_*(T)_{-}}\ar[r]_<<<<<{D(F_-)} &  M_G \\
 & T^{\vee}. \ar@{^{(}->}[u] 
}
$$
The fact that $T^{\vee}\ra M_{T,X_*(T)_{-}}= M_G^{\sss}$ is an open immersion will be shown in \ref{lemdecomp}.
\end{proof}

From \ref{Rss} we have the equivalence
$$
\xymatrix{
\CT^G_L|_{(P_{L^+G}(\Gr_G,\bbF_p)^{\sss},*)}:(P_{L^+G}(\Gr_G,\bbF_p)^{\sss},*)\ar[r]^<<<<<{\sim} & (P_{L^+L}(\Gr_{L,w_0^LX_*(T)_-},\bbF_p)^{\sss},*),
}
$$
such that $\CT^G_L(\IC_{\lambda})=\IC^L_{w_0^Lw_0(\lambda)}$. By Tannaka duality, it corresponds to the identity 
$$
\xymatrix{
M_{L,w_0^LX_*(T)_-}^{\sss}=D(X_*(T)_-) \ar@{=}[r] & D(X_*(T)_-)=M_G^{\sss}.
}
$$

\section{The space of Satake parameters} \label{section_space}

\subsection{The definition of Satake parameters}
The \emph{space of Satake parameters} is the $\bbF_p$-scheme
$$
\sP:=\Spec(\bbF_p[X_*(T)_-])
$$
underlying the $\bbF_p$-monoid scheme $D(X_*(T)_-)=M_G^{\sss}$. 

A \emph{Satake parameter} is an $\bbF_p$-point of $\sP$.

\begin{defn}
Let $X$ be a scheme. A \emph{stratification} of $X$ is a decomposition $X=\cup_{i\in I} X_i$ \ref{defdecompfil} such that for all $i\in I$, the closure of $X_i$ in $X$ is a union of some $X_j$'s, i.e. there exists $J_i\subset I$ such that 
$$
|\overline{X_i}|=\bigcup_{j\in J_i} |X_j|.
$$
\end{defn}

We are going to define a stratification of the space of Satake parameters by first defining the relevant categories of equivariant perverse sheaves on the affine Grassmannian and then applying Tannaka duality.

\subsection{The closed stratum} Let us set
$$
\Delta^{\perp}:=\{\lambda\in X_*(T)\ |\ \langle \alpha,\lambda \rangle=0\ \forall \alpha\in\Delta\}.
$$
Then for all $\lambda\in\Delta^{\perp}$, we have $\dim\Gr_G^{\leq\lambda}=2\rho(\lambda)=0$, so that $\Gr_G^{\leq\lambda}=\{\lambda\}$ and hence
$$
\Gr_{G,\Delta^{\perp}}=\coprod_{\lambda\in \Delta^{\perp}} \{\lambda\}.
$$ 
Consequently, the embedding $P_{L^+G}(\Gr_{G,\Delta^{\perp}},\bbF_p)\subset P_{L^+G}(\Gr_{G},\bbF_p)$ factors as 
$$
P_{L^+G}(\Gr_{G,\Delta^{\perp}},\bbF_p)\subset P_{L^+G}(\Gr_{G},\bbF_p)^{\sss}\subset P_{L^+G}(\Gr_{G},\bbF_p),
$$
and the equivalence of tensor categories
\begin{eqnarray*}
\Vect(X_*(T)^+)& \xrightarrow{\sim} & P_{L^+G}(\Gr_{G},\bbF_p)^{\sss} \\
\bbF_p(\lambda) & \mapsto & \IC_{\lambda}
\end{eqnarray*}
restricts to an equivalence of tensor categories
\begin{eqnarray*}
\Vect(\Delta^{\perp}) & \xrightarrow{\sim} & P_{L^+G}(\Gr_{G,\Delta^{\perp}},\bbF_p) \\
\bbF_p(\lambda) & \mapsto & \IC_{\lambda}.
\end{eqnarray*}

We define a retraction 
$$
\xymatrix{
P_{L^+G}(\Gr_{G,\Delta^{\perp}},\bbF_p) \ar@{^{(}->}[r] & P_{L^+G}(\Gr_{G},\bbF_p)^{\sss} \ar@/_1pc/[l]_r
}
$$
by the rule
\begin{eqnarray*}
r:P_{L^+G}(\Gr_{G},\bbF_p)^{\sss}& \lra & P_{L^+G}(\Gr_{G,\Delta^{\perp}},\bbF_p)\\
\IC_{\lambda}  & \lmapsto & \left\{ \begin{array}{ll}
\IC_{\lambda} & \textrm{if $\lambda\in\Delta^{\perp}$}\\ 
0 & \textrm{otherwise}.
\end{array} \right.
\end{eqnarray*}

\begin{lem}\label{retraction}
The $\bbF_p$-linear functor $r$ is a tensor functor.
\end{lem}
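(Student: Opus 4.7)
The plan is to check the tensor property on simple objects and then extend by additivity. Concretely, any object of $P_{L^+G}(\Gr_G,\bbF_p)^{\sss}$ is, by definition, a finite direct sum of objects $\IC_\mu$ for $\mu \in X_*(T)^+$, the convolution $*$ is $\bbF_p$-bilinear and distributes over direct sums (being computed via the exact functors $Rm_*$ and $\overset{L}{\boxtimes}$, cf.\ \ref{recallconv}), and $r$ is $\bbF_p$-linear. So it suffices to exhibit, for each pair $\lambda_1,\lambda_2\in X_*(T)^+$, a natural isomorphism
$$r(\IC_{\lambda_1}*\IC_{\lambda_2})\cong r(\IC_{\lambda_1})*r(\IC_{\lambda_2})$$
which is compatible with associators and commutativity constraints, and to check $r(\IC_0)\cong \IC_0$ for the unit.

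The key combinatorial input is that $X_*(T)^+\setminus \Delta^{\perp}$ is an \emph{ideal} of the monoid $X_*(T)^+$ in the following sense: if $\lambda_1\in X_*(T)^+\setminus\Delta^{\perp}$, then there exists $\alpha\in\Delta$ with $\langle\alpha,\lambda_1\rangle>0$, and for any $\lambda_2\in X_*(T)^+$ one has $\langle\alpha,\lambda_2\rangle\geq 0$, so $\langle\alpha,\lambda_1+\lambda_2\rangle>0$ and hence $\lambda_1+\lambda_2\notin \Delta^{\perp}$. I will use this, together with the formula $\IC_{\lambda_1}*\IC_{\lambda_2}\cong \IC_{\lambda_1+\lambda_2}$ recalled after the statement of the Satake equivalence, to conclude in two cases. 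First, if both $\lambda_1,\lambda_2\in\Delta^{\perp}$, then $\lambda_1+\lambda_2\in\Delta^{\perp}$ and both sides are canonically identified with $\IC_{\lambda_1+\lambda_2}$. Second, if one of the $\lambda_i$ lies outside $\Delta^{\perp}$, then $\lambda_1+\lambda_2\notin\Delta^{\perp}$ by the observation above, so the left-hand side is zero, and the right-hand side is also zero since one of its tensor factors $r(\IC_{\lambda_i})$ vanishes.

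The natural isomorphism one obtains in this way is, in the first case, the identity on $\IC_{\lambda_1+\lambda_2}$ induced by the associativity/commutativity constraints of $*$ on the subcategory $P_{L^+G}(\Gr_{G,\Delta^{\perp}},\bbF_p)$; in the second case both sides are zero and there is a unique isomorphism. Compatibility with the associativity constraint then follows from the fact that all three iterated products $r(\IC_{\lambda_1}*\IC_{\lambda_2}*\IC_{\lambda_3})$, $r((\IC_{\lambda_1}*\IC_{\lambda_2})*\IC_{\lambda_3})$, $r(\IC_{\lambda_1}*(\IC_{\lambda_2}*\IC_{\lambda_3}))$ are determined by whether $\lambda_1+\lambda_2+\lambda_3\in \Delta^{\perp}$, and if so all three $\lambda_i$ lie in $\Delta^{\perp}$ (again because $\Delta^{\perp}$ is a face of the monoid $X_*(T)^+$). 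Similarly for the commutativity constraint, both sides are either zero or canonically identified with $\IC_{\lambda_1+\lambda_2}$ and the commutativity constraint of $*$ on $P_{L^+G}(\Gr_{G,\Delta^{\perp}},\bbF_p)$ is inherited from that of $P_{L^+G}(\Gr_G,\bbF_p)^{\sss}$.

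I expect no real obstacle: the statement is essentially formal once one observes that $\Delta^{\perp}\subset X_*(T)^+$ is a \emph{face} of the monoid (i.e.\ $\lambda_1+\lambda_2\in\Delta^{\perp}$ iff both $\lambda_i\in\Delta^{\perp}$), which reduces the tensor property of $r$ to a coordinatewise matter on simple objects. The mildly subtle point is the bookkeeping that the chosen isomorphisms assemble into a natural transformation between tensor functors rather than merely an isomorphism of their values; this is handled by checking naturality against the standard decomposition of a semi-simple object into its $\IC_\lambda$-isotypic components, which is respected by both functors.
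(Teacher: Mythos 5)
Your proposal is correct and takes essentially the same route as the paper: both hinge on the observation that $\Delta^{\perp}$ is a face of $X_*(T)^+$ (i.e.\ $\lambda+\mu\in\Delta^{\perp}$ iff $\lambda,\mu\in\Delta^{\perp}$) combined with the formula $\IC_{\lambda}*\IC_{\mu}=\IC_{\lambda+\mu}$ and $r(\IC_0)=\IC_0$. The paper's proof is a terser two-line version of exactly this argument; your additional bookkeeping on the associator and commutativity constraints is harmless elaboration.
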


\begin{proof}
Indeed, for $\lambda,\mu\in X_*(T)^+$, we have $\IC_{\lambda}*\IC_{\mu}=\IC_{\lambda+\mu}$ and
$$
\xymatrix{
(\lambda\in \Delta^{\perp}\ \textrm{and}\ \mu\in \Delta^{\perp})\Longleftrightarrow \lambda+\mu\in \Delta^{\perp}.
}
$$
Moreover $r(\IC_0)=\IC_0$.
\end{proof}

Applying the Satake equivalence $\cS_{G,\Delta^{\perp}}$ from \ref{SGM}, we get a tensor retraction
$$
\xymatrix{
\Rep_{\bbF_p}(M_{G,\Delta^{\perp}})\ar@{^{(}->}[r] & \Rep_{\bbF_p}(M_{G}^{\sss}) \ar@/_1pc/[l]_r,
}
$$
which by Tannaka duality corresponds to a multiplicative section 
$$
\xymatrix{
M_{G}^{\sss}=D(X_*(T)_-) \ar@{->>}[r] & M_{G,\Delta^{\perp}}=D(\Delta^{\perp})\  \ar@/_1pc/[l]_s.
}
$$
In particular 
$$
S_G:=s(D(\Delta^{\perp}))
$$ 
is a closed subsemigroup of $D(X_*(T)_-)$.

\begin{lem} \label{flatmonoid}
Let $A$ be an abstract, right cancellative monoid. Let $B \subset A$ be a subgroup and let $R$ be a ring. Then $R[A]$ is a free $R[B]$-module. In particular, the inclusion of rings $R[B] \subset R[A]$ is flat. 
\end{lem}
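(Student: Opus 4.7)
The plan is to exhibit an explicit free basis of $R[A]$ over $R[B]$ arising from a set of coset representatives, and then invoke the standard fact that free modules are flat.

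First I would choose the correct $B$-action. Since $A$ is right cancellative, the identity $b_1a = b_2 a$ for $b_1, b_2\in B$ and $a\in A$ yields $b_1=b_2$ immediately; so the \emph{left} multiplication action $B\times A\to A$, $(b,a)\mapsto ba$, is free. (This is the only place right cancellativity of $A$ is used; note that left cancellativity is not assumed, so one must pick this side.) Using that $B$ is a group, a routine verification shows that the relation $a\sim a'$ iff $a'\in Ba$ is an equivalence relation on $A$, with equivalence classes the left cosets $Ba$. Hence $A$ decomposes as a disjoint union
\[
A=\bigsqcup_{i\in I} Bx_i
\]
for some set $\{x_i\}_{i\in I}\subset A$ of representatives, and the freeness of the action gives a bijection $B\xrightarrow{\sim} Bx_i$, $b\mapsto bx_i$, for each $i$.

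Next I would pass to the monoid algebras. The above decomposition yields an identification of left $R[B]$-modules
\[
R[A]=\bigoplus_{a\in A} R\cdot a=\bigoplus_{i\in I}\bigoplus_{b\in B} R\cdot (bx_i)=\bigoplus_{i\in I} R[B]\cdot x_i,
\]
and the map $b\mapsto bx_i$ identifies each summand $R[B]\cdot x_i$ with $R[B]$ as a left $R[B]$-module. Therefore $R[A]$ is a free left $R[B]$-module with basis $\{x_i\}_{i\in I}$. Since any free module is flat, this proves flatness of the inclusion $R[B]\subset R[A]$.

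I do not expect any genuine obstacle in carrying this out; the only subtle point is choosing the action on the correct side so that right cancellativity of $A$ guarantees freeness, and then ensuring that the partition into cosets $Bx_i$ is well-defined (which uses only that $B$ is a subgroup, not any further cancellation hypothesis on $A$).
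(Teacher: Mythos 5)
Your proof is correct and takes essentially the same approach as the paper: partition $A$ into the cosets $Ba_i$ (which you call left cosets, the paper calls right cosets — same sets), use right cancellativity to identify each $R[Ba_i]$ with $R[B]$ as a left $R[B]$-module, and conclude $R[A]$ is free over $R[B]$.
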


\begin{proof}
Because $B$ is a group then the right cosets of $B$ in $A$ give a partition of $A$. Thus, if $\{a_i\}_i$ is a collection of representatives for these cosets then
$$
R[A] = \bigoplus_i R[Ba_i].
$$ 
Since $A$ is right cancellative then each morphism $R[B] \ra R[Ba_i]$, $b \mapsto ba_i$, is an isomorphism. 
\end{proof}

\begin{prop}
The morphism $M_{G}^{\sss} \rightarrow M_{G,\Delta^{\perp}}$ is faithfully flat. 
\end{prop}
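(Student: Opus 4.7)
The plan is to identify the morphism $M_G^{\sss}\to M_{G,\Delta^{\perp}}$ with a standard projection of diagonalizable monoid schemes and apply the preceding Lemma \ref{flatmonoid}.

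First, I would unwind the Tannakian setup. The subcategory $P_{L^+G}(\Gr_{G,\Delta^{\perp}},\bbF_p) \subset P_{L^+G}(\Gr_G,\bbF_p)^{\sss}$ corresponds, under the two equivalences of tensor categories recalled just before \ref{retraction}, to the inclusion $\Vect_{\bbF_p}(\Delta^{\perp}) \subset \Vect_{\bbF_p}(X_*(T)_-)$ induced by $\Delta^{\perp} \subset X_*(T)_-$. Its Tannaka dual is the canonical quotient morphism of diagonalizable monoid schemes
$$
M_G^{\sss} = D(X_*(T)_-) \twoheadrightarrow D(\Delta^{\perp}) = M_{G,\Delta^{\perp}},
$$
which at the level of coordinate rings is nothing but the natural inclusion $\bbF_p[\Delta^{\perp}] \hookrightarrow \bbF_p[X_*(T)_-]$.

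Next, I would verify that $\Delta^{\perp}$ is actually a subgroup of the monoid $X_*(T)_-$. For $\lambda \in \Delta^{\perp}$, we have $\langle \alpha, \lambda\rangle = 0$ for every $\alpha\in\Delta$, and since $\Phi^+ \subset \mathbb{N}\Delta$, also for every $\alpha \in \Phi^+$. In particular both $\lambda$ and $-\lambda$ lie in $X_*(T)_-$, so $\Delta^{\perp}$ is a subgroup of $X_*(T)_-$ in the sense required by Lemma \ref{flatmonoid}. The monoid $X_*(T)_-$ is cancellative as a submonoid of the abelian group $X_*(T)$.

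Finally, applying Lemma \ref{flatmonoid} with $R=\bbF_p$, $A=X_*(T)_-$, $B=\Delta^{\perp}$ yields that $\bbF_p[X_*(T)_-]$ is a free $\bbF_p[\Delta^{\perp}]$-module. The identity coset $0 + \Delta^{\perp}$ contributes the basis element $1$, so this free module is nonzero, and hence faithfully flat. Equivalently, the morphism $M_G^{\sss}\to M_{G,\Delta^{\perp}}$ is faithfully flat.

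There is no genuine obstacle: the entire content sits in Lemma \ref{flatmonoid}. The only point to be careful about is that one is working with monoid algebras on submonoids of a lattice rather than group algebras, which is precisely what that lemma is designed to handle.
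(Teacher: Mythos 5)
Your proof is correct and takes essentially the same approach as the paper: both apply Lemma \ref{flatmonoid} to $A=X_*(T)_-$ and $B=\Delta^{\perp}$ to get flatness. The only (minor, and arguably cleaner) difference is in how faithfulness is concluded: you observe that a nonzero free module is automatically faithfully flat, while the paper deduces surjectivity separately from the existence of the multiplicative section $s$ constructed just above; both are immediate.
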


\begin{proof}
It is flat by \ref{flatmonoid} applied to the monoid $X_*(T)_-$ and the subgroup $\Delta^{\perp}$. It is surjective since it admits a section, namely $s$.
\end{proof}

\begin{rmrk}
If $G$ is not a torus then the functor $r$ does \emph{not} intertwine the fiber functors $H|_{P_{L^+G}(\Gr_{G},\bbF_p)^{\sss}}$ and $H|_{P_{L^+G}(\Gr_{G,\Delta^{\perp}},\bbF_p)}$. Cor\-respondingly, the section $s$ does not send the unit of the group scheme $D(\Delta^{\perp})$ to the unit of the monoid scheme $D(X_*(T)_-)$.
\end{rmrk}

\subsection{The open complement to the closed stratum} \label{openstratum}
Recall that a standard Levi subgroup of $G$ is the Levi factor containing $T$ of a parabolic subgroup of $G$ containing $B$. We denote by $\cL$  the set of standard Levi subgroups of $G$. It is in 1-1 correspondence with the power set of the set $\Delta$ of simple roots corresponding to the pair $(B,T)$:
\begin{eqnarray*}
\cL & \xrightarrow{\sim} & \cP(\Delta)  \\
L & \mapsto & \Delta_L
\end{eqnarray*}
where $\Delta_L$ is the set of simple roots of $L$ with respect to the pair $(B\cap L,T)$. In particular $\Delta_T=\emptyset$ and $\Delta_G=\Delta$.

For each $L\in\cL$, we have constructed the functor
$$
\xymatrix{
P_{L^+G}(\Gr_G,\bbF_p)^{\sss}\ar[rr]_<<<<<<<<<{\sim}^>>>>>>>>>>{\CT^G_L|_{P_{L^+G}(\Gr_G,\bbF_p)^{\sss}}} && P_{L^+L}(\Gr_{L,w_0^LX_*(T)_-},\bbF_p)^{\sss}\subset P_{L^+L}(\Gr_{L},\bbF_p)^{\sss},
}
$$
which corresponds to
$$
\xymatrix{
j_L:M_{L}^{\sss}=D(X_*(T)_{-/L}) \ar[r] & M_{L,w_0^LX_*(T)_-}^{\sss}=D(X_*(T)_-) \ar@{=}[r] & D(X_*(T)_-)=M_G^{\sss},
}
$$
cf. end of \ref{Semi-simplification}.

\begin{lem}\label{lemdecomp}
\begin{itemize}
\item The morphism of $\bbF_p$-monoid schemes $j_L$ is an open immersion.
\item For all $L,L'\in \cL$, we have
$$
j_L(D(X_*(T)_{-/L}))\cap j_{L'}(D(X_*(T)_{-/L'}))=j_{L''}(D(X_*(T)_{-/L''}))
$$
with $\Delta_{L''}:=\Delta_L\cap\Delta_{L'}$. 
\item We have
$$
\sP\setminus S_G=\bigcup_{L\in\cL\setminus\{G\}}j_L(D(X_*(T)_{-/L})).
$$
\end{itemize}
\end{lem}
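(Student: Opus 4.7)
The approach is to realize $j_L$ as the inclusion of a principal open in $\sP$. Unwinding the construction of $j_L$ at the end of Subsection~\ref{Semi-simplification}, via the equivalences from \ref{Rss} and \ref{F-ss}, the morphism $j_L$ is the Tannaka dual of the inclusion of symmetric monoidal categories $\Vect_{\bbF_p}(X_*(T)_-) \hookrightarrow \Vect_{\bbF_p}(X_*(T)_{-/L})$, hence is induced by the inclusion of monoids $X_*(T)_- \hookrightarrow X_*(T)_{-/L}$.

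The heart of (1) is the claim that $\bbF_p[X_*(T)_{-/L}]$ is a localization of $\bbF_p[X_*(T)_-]$ at a single element. Using the linear independence of $\Delta$ in $X^*(T)_{\bbQ}$, one may choose $\lambda_0 \in X_*(T)$ with
$$\langle \alpha, \lambda_0 \rangle = 0 \ \text{for } \alpha \in \Delta_L, \quad \langle \alpha, \lambda_0 \rangle < 0 \ \text{for } \alpha \in \Delta \setminus \Delta_L,$$
so that $\lambda_0 \in X_*(T)_-$ and $-\lambda_0 \in X_*(T)_{-/L}$. I will then establish the submonoid identity
$$X_*(T)_{-/L} = X_*(T)_- + \bbN(-\lambda_0)$$
inside $X_*(T)$: the inclusion $\supset$ is immediate, and for $\subset$ one shows directly that for every $\lambda \in X_*(T)_{-/L}$ the element $\lambda + N\lambda_0$ lies in $X_*(T)_-$ once $N$ is large enough (the pairing with any $\alpha \in \Delta_L$ is unchanged and remains $\leq 0$, while the pairing with any $\alpha \in \Delta \setminus \Delta_L$ decreases linearly in $N$). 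At the level of $\bbF_p$-algebras this reads $\bbF_p[X_*(T)_{-/L}] = \bbF_p[X_*(T)_-][e^{-\lambda_0}]$, so $j_L$ identifies with the open immersion $D(e^{\lambda_0}) \hookrightarrow \sP$.

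For (2), with $\lambda_{0,L}$ and $\lambda_{0,L'}$ chosen as in (1), I claim that $\lambda_{0,L} + \lambda_{0,L'}$ is a valid $\lambda_{0,L''}$: both summands vanish against each $\alpha \in \Delta_{L''} = \Delta_L \cap \Delta_{L'}$, while for each $\alpha \in \Delta \setminus \Delta_{L''} = (\Delta \setminus \Delta_L) \cup (\Delta \setminus \Delta_{L'})$ at least one summand pairs strictly negatively with $\alpha$ and the other pairs nonpositively. Hence the intersection of the corresponding principal opens satisfies $D(e^{\lambda_{0,L}}) \cap D(e^{\lambda_{0,L'}}) = D(e^{\lambda_{0,L}+\lambda_{0,L'}}) = j_{L''}(D(X_*(T)_{-/L''}))$.

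For (3), I first describe the closed immersion $s:D(\Delta^{\perp}) \hookrightarrow \sP$ by its ideal. Under $F_-$ the retraction $r$ corresponds to the tensor functor $\Vect_{\bbF_p}(X_*(T)_-) \to \Vect_{\bbF_p}(\Delta^{\perp})$ sending $\bbF_p(\mu)$ to itself when $\mu \in \Delta^{\perp}$ and to $0$ otherwise (using that $W$ acts trivially on $\Delta^{\perp}$, so $w_0(\mu)=\mu$ there). Dually, $s$ is cut out by the ideal $(e^\mu : \mu \in X_*(T)_- \setminus \Delta^{\perp})$, whence $\sP \setminus S_G = \bigcup_{\mu \in X_*(T)_- \setminus \Delta^{\perp}} D(e^\mu)$. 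For $L \in \cL \setminus \{G\}$ the element $\lambda_{0,L}$ lies in $X_*(T)_- \setminus \Delta^{\perp}$ (since $\Delta \setminus \Delta_L \neq \emptyset$), so $j_L(D(X_*(T)_{-/L})) \subset \sP \setminus S_G$. Conversely, for each $\mu \in X_*(T)_- \setminus \Delta^{\perp}$ I set $\Delta_{L_\mu} := \{\alpha \in \Delta : \langle \alpha, \mu \rangle = 0\}$ and let $L_\mu \in \cL$ be the corresponding standard Levi; since $\mu \notin \Delta^{\perp}$, $L_\mu \neq G$, and $\mu$ satisfies the defining conditions of a $\lambda_{0,L_\mu}$, so $D(e^\mu) = j_{L_\mu}(D(X_*(T)_{-/L_\mu}))$.

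The main obstacle is the localization identity in (1); it requires simultaneously imposing $\lambda_0 \in \Delta_L^{\perp}$ (so that $-\lambda_0$ still belongs to $X_*(T)_{-/L}$) and strict negativity on $\Delta \setminus \Delta_L$ (so that sufficiently large multiples of $\lambda_0$ push any element of $X_*(T)_{-/L}$ back into $X_*(T)_-$), and the existence of such $\lambda_0$ relies on the linear independence of simple roots. Granted (1), parts (2) and (3) reduce to formal manipulations with principal opens in $\sP$.
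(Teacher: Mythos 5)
Your proof is correct and follows the same basic strategy as the paper's: translate $j_L^*$ to the monoid-algebra inclusion $\bbF_p[X_*(T)_-]\subset\bbF_p[X_*(T)_{-/L}]$, then exhibit the larger ring as a localization by using elements of $X_*(T)_-$ whose pairings with simple roots have prescribed signs. The differences are organizational. Where you choose a single element $\lambda_0$ lying in $\Delta_L^\perp$ and pairing strictly negatively against $\Delta\setminus\Delta_L$, the paper fixes a family $(\lambda_\alpha)_{\alpha\in\Delta}$ with $\langle\alpha,\lambda_\beta\rangle<0$ iff $\alpha=\beta$ and inverts $\prod_{\alpha\in\Delta\setminus\Delta_L}e^{\lambda_\alpha}$; these are the same thing, since your $\lambda_0$ can be taken to be that sum and any other valid choice gives the same principal open. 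For the third bullet, the paper computes the complement of $\bigcup_{L\neq G}j_L$ as $V(e^{\lambda_\alpha},\alpha\in\Delta)$ and then proves a two-sided containment of ideals up to radical against $(e^\mu : \mu\in X_*(T)_-\setminus\Delta^\perp)$. Your route is more direct and arguably cleaner: you show each principal open $D(e^\mu)$, $\mu\in X_*(T)_-\setminus\Delta^\perp$, is \emph{exactly} the image of $j_{L_\mu}$ for the Levi $L_\mu$ determined by the vanishing locus $\{\alpha\in\Delta:\langle\alpha,\mu\rangle=0\}$, and conversely each $\lambda_{0,L}$ ($L\neq G$) belongs to $X_*(T)_-\setminus\Delta^\perp$; this establishes the set-theoretic equality without passing through radicals of ideals. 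Both arguments are sound; your version of (3) buys a direct matching of generators, while the paper's version additionally records the explicit ideal-theoretic description that is reused in \ref{herzigstratif}.
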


\begin{proof}
By construction $j_L^*:\bbF_p[X_*(T)_-]\ra\bbF_p[X_*(T)_{-/L}]$ is the morphism of $\bbF_p$-algebras induced by the canonical inclusion $X_*(T)_-\subset X_*(T)_{-/L}$. Let $\lambda_{\alpha}$, $\alpha\in\Delta$, be elements of $X_*(T)_-$ such that 
$$
\forall \alpha,\beta \in\Delta,\quad \langle\alpha,\lambda_{\beta}\rangle 
\ \left\{ \begin{array}{ll}
\in\bbZ_{\leq- 1} & \textrm{if $\alpha=\beta$}\\ 
=0 & \textrm{otherwise}
\end{array} \right.
$$
(complete $\Delta$ into a basis of $X^*(T)\otimes\bbQ$ and consider the dual basis of $X_*(T)\otimes\bbQ$ under the perfect pairing 
$\langle\ ,\ \rangle$). Then, for all $\lambda\in X_*(T)_{-/L}$, we can find some $n_{\alpha}\in\bbZ_{\geq 0}$, 
$\alpha\in\Delta\setminus\Delta_L$, such that
$$
(\lambda + \sum_{\alpha\in\Delta\setminus\Delta_L}n_{\alpha}\lambda_{\alpha})\in X_*(T)_-,
$$
i.e.
$$
\bbF_p[X_*(T)_{-/L}]=\bbF_p[X_*(T)_-][(e^{\lambda_{\alpha}})^{-1},\alpha\in\Delta\setminus\Delta_L].
$$
Hence $j_L$ is an open immersion, and the complement of $j_L(D(X_*(T)_{-/L}))$ in $\sP=D(X_*(T)_-)$ is the closed subset defined by the equation $\prod_{\alpha\in\Delta\setminus\Delta_L}e^{\lambda_{\alpha}}=0$. 

Consequently,
$$
\sP\setminus  j_L(D(X_*(T)_{-/L}))\cap j_{L'}(D(X_*(T)_{-/L'}))
$$
is the closed subset defined by the equation $\prod_{\alpha\in\Delta\setminus(\Delta_L\cap \Delta_{L'})}e^{\lambda_{\alpha}}=0$, and hence
$$
j_L(D(X_*(T)_{-/L}))\cap j_{L'}(D(X_*(T)_{-/L'}))=j_{L''}(D(X_*(T)_{-/L''}))
$$
with $\Delta_{L''}:=\Delta_L\cap \Delta_{L'}$. 

Finally,
$$
\sP\setminus \bigcup_{L\in\cL\setminus\{G\}}j_L(D(X_*(T)_{-/L}))
$$
is the closed subset defined by the equations 
$$
\forall \alpha\in\Delta,\quad e^{\lambda_{\alpha}}=0.
$$
On the other hand, 
$$
s(S_G)=V(e^{\lambda},\lambda\in X_*(T)_-\setminus\Delta^{\perp})\subset D(X_*(T)_-)=\sP
$$
by construction. We claim that
$$
(e^{\lambda_{\alpha}},\alpha\in\Delta) \subset (e^{\lambda},\lambda\in X_*(T)_-\setminus\Delta^{\perp}) \subset \sqrt{(e^{\lambda_{\alpha}},\alpha\in\Delta)}.
$$
The first inclusion follows from the definition of the elements $\lambda_\alpha$. For the second one, note that for $\lambda \in X_*(T)_-\setminus\Delta^{\perp}$ we can find integers $m>0$, $m_\alpha \geq 0$, such that $m \lambda - \sum_\alpha m_\alpha \lambda_\alpha \in \Delta^{\perp}$. Since the elements $e^{\mu}$ for $\mu \in \Delta^{\perp}$ are units, the second inclusion follows. Hence $\sP\setminus \bigcup_{L\in\cL\setminus\{G\}}j_L(D(X_*(T)_{-/L}))$ is equal to the subset underlying the closed subscheme $s(S_G)$. 
\end{proof}

From now on we will write simply $D(X_*(T)_{-/L})$ for $j_L(D(X_*(T)_{-/L}))\subset D(X_*(T)_-)$. 

\begin{rmrk}\label{sPintegral}
We have seen in the proof of \ref{lemdecomp} that $T^{\vee}=\Spec(\bbF_p[X_*(T)])$ is the open complement in $\sP=D(X_*(T)_-)$ of the Cartier divisor defined by the regular element
$$
\prod_{\alpha\in\Delta}e^{\lambda_{\alpha}}=e^{\sum_{\alpha\in\Delta}\lambda_{\alpha}}\in \bbF_p[X_*(T)_-].
$$
In particular, the scheme $\sP$ is integral.
\end{rmrk}

\begin{exmp}
If $G = \GL_n$ then $X_*(T)_-=\oplus_{i=1}^{n-1}\mathbb{Z}_{\geq 0}\omega_{i-}\oplus\bbZ\omega_{n-}$ where $\omega_{i-}\in\bbZ^n$ has its first $n-i$ entries equal to $0$ and last $i$ entries equal to $1$, so $\sP = \Spec(\mathbb{F}_p[T_1, \ldots, T_{n-1}, T_n^{\pm 1}])$. 
If $G = \SL_2$ then $X_*(T)_-=\mathbb{Z}_{\geq 0}(-\alpha^{\vee})$ where $-\alpha^{\vee}=(-1,1)$, so $\sP = \Spec(\mathbb{F}_p[T])$. In particular, $\sP$ is smooth in both of these examples.
\end{exmp}

\begin{exmp}
In general $\sP$ is not smooth. For example, let $G = \SL_3$. Then $X_*(T) = \{(a,b,c) \in \mathbb{Z}^3 \: : \: a+b+c=0\}$ and the simple roots are $\alpha = (1,-1,0), \beta = (0,1,-1) \in X^*(T) = \mathbb{Z}^3/\mathbb{Z}$. Then $X_*(T) = \mathbb{Z} \alpha^\vee \oplus \mathbb{Z} \beta^\vee$ and $$X_*(T)^+ = \{ a \alpha^\vee + b \beta^\vee \colon 2a \geq b, 2b \geq a \}.$$ The monoid $X_*(T)^+$ is generated by the elements
$$ \alpha^\vee + \beta^\vee, \quad  \alpha^\vee + 2 \beta^\vee, \quad 2 \alpha^\vee +  \beta^\vee.$$ 
By sending the indeterminates $x,y,z$ to the corresponding generators in $\mathbb{F}_p[X_*(T)^+]$, we get a surjection
$$\mathbb{F}_p[x,y,z]/I \twoheadrightarrow \mathbb{F}_p[X_*(T)^+], \quad I = (x^3-yz).$$ Since $I$ is a prime ideal and 
$\mathbb{F}_p[X_*(T)^+]$ is an integral domain of dimension $2$ then this map is an isomorphism. In particular the ring 
$\mathbb{F}_p[X_*(T)^+]$, equivalently the ring $\mathbb{F}_p[X_*(T)_-]$, is not regular.
\end{exmp}

\subsection{The Herzig stratification}  \label{Herzig_sec}

For all $L\in \cL$, set
$$
S_L:=s_L \big(D(\Delta_L^{\perp})\big).
$$

\begin{cor} \label{herzigstratif}
The space of Satake parameters admits the following stratification by subsemigroups:
$$
\sP=\bigcup_{L\in\cL} S_L.
$$
The stratum $S_L$ is isomorphic to a torus of rank equal to 
$$
\textnormal{rank } T - |\Delta_L|=\textnormal{rank } \pi_1(L)=\textnormal{rank } \pi_0(\Gr_L).
$$
The closure of $S_L$ in $\sP$ is
$$
\overline{S_L}=\bigcup_{L'\supset L}S_{L'}.
$$
\end{cor}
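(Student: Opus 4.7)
The plan is to treat the three assertions—rank of each stratum, the stratification itself, and the closure relations—separately. Conceptually, $\sP = \Spec \bbF_p[X_*(T)_-]$ is an affine toric variety for the open subtorus $T^\vee = D(X_*(T))$ of \ref{sPintegral}, the strata $S_L$ are the $T^\vee$-orbits indexed by the faces of the antidominant cone, and the whole statement is the orbit–cone correspondence. Rather than appeal to general toric machinery I will argue directly from \ref{lemdecomp} and the explicit algebraic description of the $S_L$.

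For the rank assertion, $S_L = s_L(D(\Delta_L^\perp)) \cong \Spec \bbF_p[\Delta_L^\perp]$ is by construction a torus of rank $\rank_\bbZ \Delta_L^\perp$. Since $\Delta_L \subset X^*(T)$ is linearly independent, $\Delta_L^\perp$ is a free abelian group of rank $\rank T - |\Delta_L|$. The equality with $\rank \pi_1(L) = \rank \pi_0(\Gr_L)$ follows from $\pi_1(L) = X_*(T)/\bbZ\Phi_L^\vee$ together with the linear independence of $\Delta_L^\vee$.

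For the stratification $\sP = \bigcup_{L \in \cL} S_L$, I will induct on $|\Delta_L|$, i.e.\ on the semisimple rank of $G$. By \ref{lemdecomp} we have $\sP = S_G \cup \bigcup_{L \subsetneq G} D(X_*(T)_{-/L})$, and each $D(X_*(T)_{-/L})$ is the space of Satake parameters $\sP_L = \Spec \bbF_p[X_*(T)_{-/L}]$. Applying the inductive hypothesis to $L$ in place of $G$ gives $\sP_L = \bigcup_{L' \in \cL,\, L' \subset L} S_{L'}$, whence the desired decomposition of $\sP$. The only bookkeeping point is that the open subschemes of $\sP_L$ produced by iterating \ref{lemdecomp} (for Levis $L' \subset L$) really agree with $\sP_{L'} \subset \sP$, which is immediate from the explicit localizations appearing in the proof of \ref{lemdecomp}.

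Finally, for the closure relations, the composite
\[
\bbF_p[X_*(T)_-] \hookrightarrow \bbF_p[X_*(T)_{-/L}] \twoheadrightarrow \bbF_p[\Delta_L^\perp]
\]
defining $S_L \hookrightarrow \sP$ sends $e^\lambda \mapsto e^\lambda$ if $\lambda \in \Delta_L^\perp$ and to $0$ otherwise; its kernel is the (prime) ideal $K_L := (e^\lambda : \lambda \in X_*(T)_- \setminus \Delta_L^\perp)$, so scheme-theoretically $\overline{S_L} = V(K_L) \subset \sP$. Since $\sP$ is the disjoint union of the strata, it suffices to test for which $L''$ the containment $S_{L''} \subset \overline{S_L}$ holds. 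On $S_{L''} \cong \Spec \bbF_p[\Delta_{L''}^\perp]$ the restriction of $e^\lambda$ is a unit if $\lambda \in \Delta_{L''}^\perp$ and zero otherwise, so $S_{L''} \subset \overline{S_L}$ is equivalent to $\Delta_{L''}^\perp \cap X_*(T)_- \subset \Delta_L^\perp$. The main step, and the only one where real work is required, is to promote this containment of monoids to the lattice inclusion $\Delta_{L''}^\perp \subset \Delta_L^\perp$, which by the linear independence of $\Delta$ translates into $\Delta_L \subset \Delta_{L''}$, i.e., $L \subset L''$. For this I will produce an integral cocharacter $\mu_0 \in \Delta_{L''}^\perp$ with $\langle \alpha, \mu_0 \rangle < 0$ for all $\alpha \in \Delta \setminus \Delta_{L''}$—which exists by solving a rational linear system (using linear independence of $\Delta$) and clearing denominators—so that for each $\mu \in \Delta_{L''}^\perp$ and $N$ sufficiently large both $\mu + N\mu_0$ and $N\mu_0$ lie in $\Delta_{L''}^\perp \cap X_*(T)_-$. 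This exhibits $\Delta_{L''}^\perp$ as the group generated by $\Delta_{L''}^\perp \cap X_*(T)_-$, and the closure relation $\overline{S_L} = \bigcup_{L' \supset L} S_{L'}$ follows at once.
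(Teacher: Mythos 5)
Your proof is correct, and for the closure relation it takes a route that differs from the paper's in a meaningful way. The paper works with the (a priori non-reduced) closed subscheme $V_L := V(e^{\lambda_\beta},\beta\in\Delta_L)$, observes that $S_L = D(f_L) \cap V_L$, and then proves $|\overline{S_L}|=|V_L|$ by directly verifying that $f_L = \prod_{\alpha\in\Delta\setminus\Delta_L} e^{\lambda_\alpha}$ is a non-zero-divisor on $\cO(V_L)$ via an explicit element computation; the identification $|V_L|=\bigcup_{L'\supset L}|S_{L'}|$ then falls out of the way $V_L$ and the $|S_{L'}|$ are cut out by the $e^{\lambda_\alpha}$. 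You instead observe that the defining kernel $K_L = (e^\lambda : \lambda\in X_*(T)_-\setminus\Delta_L^\perp)$ is prime (its quotient embeds in the Laurent ring $\bbF_p[\Delta_L^\perp]$), so $\overline{S_L}=V(K_L)$ on the nose, and you determine which strata lie inside $V(K_L)$ by the lattice-theoretic criterion $\Delta_{L''}^\perp\cap X_*(T)_-\subset\Delta_L^\perp \Leftrightarrow \Delta_L\subset\Delta_{L''}$. Your "clearing the cone" argument (producing $\mu_0\in\Delta_{L''}^\perp\cap X_*(T)_-$ strictly antidominant off $\Delta_{L''}$, so that $\Delta_{L''}^\perp$ is generated as a group by $\Delta_{L''}^\perp\cap X_*(T)_-$) does replace the paper's non-zero-divisor computation, and the elements $\lambda_\alpha$ from the proof of \ref{lemdecomp} would have served as $\mu_0$ off the shelf. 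Your approach buys a cleaner scheme-theoretic identification of $\overline{S_L}$, at the cost of a step you stated but did not fully justify: to conclude $\overline{S_L}$ is a union of strata you implicitly use that each $e^\lambda$ restricts to either a unit or zero on each $S_{L''}$ (so that $V(K_L)$ either contains or misses each stratum entirely). This is true and almost immediate—worth spelling out in one line—since $S_{L''}\cong\Spec\bbF_p[\Delta_{L''}^\perp]$ and the monomial $e^\lambda$ maps to a unit if $\lambda\in\Delta_{L''}^\perp$ and to $0$ otherwise. Your inductive unwinding of the first assertion from \ref{lemdecomp} is more explicit than the paper's terse "consequence of \ref{lemdecomp}" but is the same idea.
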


\begin{proof}
The decomposition is a consequence of \ref{lemdecomp}. 

Let $L\in \cL$. Since $\Delta_L^{\perp}$ is a subgroup of the finitely generated free abelian group $X_*(T)$ then $\Delta_L^{\perp}$ is also finitely generated and free. Hence $D(\Delta_L^{\perp})$ is a torus, of rank equal to
$$
\text{rank } \Delta_L^{\perp} = \dim_{\mathbb{Q}} (\mathbb{Z} \Delta_L \otimes \mathbb{Q})^{\perp} = \text{rank } T - |\Delta_L|= \text{rank } X_*(T)/\bbZ\Phi_L^{\vee}.
$$

Finally, with the notation of the proof of \ref{lemdecomp}, we have
\begin{eqnarray*}
S_L&:=&\Spec\bigg(\bbF_p[X_*(T)_{-/L}]/(e^{\lambda},\lambda\in X_*(T)_{-/L}\setminus\Delta_L^{\perp})\bigg)\\
&=&\Spec\bigg(\bbF_p[X_*(T)_-][(e^{\lambda_{\alpha}})^{-1},\alpha\in\Delta\setminus\Delta_L]/(e^{\lambda_{\beta}},\beta\in\Delta_L)\bigg)_{\red}.
\end{eqnarray*}
Thus, setting 
$$
f_L:=\prod_{\alpha\in\Delta\setminus\Delta_L}e^{\lambda_{\alpha}}=e^{\sum_{\alpha\in\Delta\setminus\Delta_L}\lambda_{\alpha}}\in \bbF_p[X_*(T)_-]
$$
and
$$
V_L:=\Spec\bigg(\bbF_p[X_*(T)_-]/(e^{\lambda_{\beta}},\beta\in\Delta_L)\bigg)\subset \Spec(\bbF_p[X_*(T)_-])=\sP,
$$
we have
$$
|S_L|=|D(f_L)|\cap |V_L|\subset \sP
$$
and
$$
|V_L|=\bigcup_{L'\supset L} |D(f_{L'})|\cap |V_{L'}|=\bigcup_{L'\supset L} |S_{L'}|.
$$
Now let us show that $|\overline{S_L}|=|V_L|$. Since $|S_L|=|D(f_L)|\cap |V_L|$, it suffices to show that $f_L$ defines a Cartier divisor \emph{after restriction to $V_L$}, i.e. that its image in the ring of functions on $V_L$ is a regular element. So let $a=\sum_{\lambda}a_{\lambda}e^{\lambda}\in\bbF_p[X_*(T)_-]$ such that 
$$
f_La=\sum_{\beta\in\Delta_L}g_{\beta}e^{\lambda_{\beta}}\in(e^{\lambda_{\beta}},\beta\in\Delta_L).
$$
If $a_{\lambda}\neq 0$ then
$$
\sum_{\alpha\in\Delta\setminus\Delta_L}\lambda_{\alpha}+\lambda=\mu+\lambda_{\beta}
$$
for some $\mu\in X_*(T)_-$ and $\beta\in\Delta_L$. The cocharacter
$$
\nu:=\lambda-\lambda_{\beta}=\mu-\sum_{\alpha\in\Delta\setminus\Delta_L}\lambda_{\alpha}
$$
satisfies $\langle \gamma,\nu \rangle\leq 0$ for all $\gamma\in\Delta$, i.e. $\nu\in X_*(T)_-$. Hence $a\in (e^{\lambda_{\beta}},\beta\in\Delta_L)$, as desired. 
\end{proof}

We call the stratification \ref{herzigstratif} the \emph{Herzig stratification}, since it corresponds to the stra\-ti\-fi\-ca\-tion of the set 
$\sP(\overline{\bbF}_p)$ defined in \cite[\S 1.5, \S 2.4]{H11b}.

\begin{defn}\label{ordinaryss}
We call the open stratum
$$
S_{T}=T^{\vee}=D(X_*(T)) \subset \sP
$$
the \emph{ordinary locus}, and the closed stratum
$$
S_{G}=s_G(D(\Delta^{\perp})) \subset \sP
$$
the \emph{supersingular locus}.
\end{defn}

\begin{exmp}\label{HstratifGL2}
For $G=GL_2$ we have $X_*(T)_-=\bbN(0,1)\oplus\bbZ(1,1)$, the space of Satake parameters is
$$
M_{GL_2}^{\sss}=D(X_*(T)_-)=\Spec(\bbF_p[e^{(0,1)},e^{\pm(1,1)}])=\bbA^1\times\bbG_m,
$$
and the Herzig stratification consists only in the ordinary and the supersingular loci
$$
S_{T}\bigcup S_{G}=(\bbG_m\times\bbG_m)\bigcup (\{0\}\times\bbG_m).
$$
\end{exmp}

\begin{exmp}
The supersingular locus $S_G$ is $0$-dimensional if and only if $G$ is semi-simple, in which case it is just one $\bbF_p$-point.
\end{exmp}

\begin{lem}\label{unitsss}
The ordinary locus $T^{\vee}$ is the group of invertible elements of the monoid $M_G^{\sss}$.
\end{lem}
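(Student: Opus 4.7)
The plan is to argue functorially. For any $\bbF_p$-algebra $R$, an $R$-point of $M_G^{\sss}=\Spec(\bbF_p[X_*(T)_-])$ is a monoid homomorphism $\phi\colon X_*(T)_-\to (R,\cdot)$, and the monoid operation on $M_G^{\sss}(R)$ is pointwise multiplication with identity the constant map $\lambda\mapsto 1$. Consequently $\phi$ is invertible in $M_G^{\sss}(R)$ if and only if $\phi(\lambda)\in R^\times$ for every $\lambda\in X_*(T)_-$. It therefore suffices to identify this subfunctor of units with the open immersion $T^\vee\hookrightarrow M_G^{\sss}$ produced in the proof of \ref{lemdecomp}.

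Set $\sigma := \sum_{\alpha\in\Delta}\lambda_\alpha \in X_*(T)_-$, where the $\lambda_\alpha$ are the cocharacters introduced in that proof. Applied with $L=T$ (so $\Delta_T=\emptyset$), the proof of \ref{lemdecomp} exhibits $T^\vee\subset M_G^{\sss}$ as the principal open locus $\{e^\sigma\neq 0\}$, i.e.\ as the subfunctor of those $\phi$ with $\phi(\sigma)\in R^\times$. This condition is clearly necessary for $\phi$ to be a unit, since $\sigma\in X_*(T)_-$.

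For the reverse inclusion I would exploit the key inequality $\langle\alpha,\sigma\rangle=\langle\alpha,\lambda_\alpha\rangle\leq -1$ for every $\alpha\in\Delta$, which is built into the defining property of the $\lambda_\alpha$. Given any $\mu\in X_*(T)_-$, choose $k\in\bbN$ with $k\geq\max_{\alpha\in\Delta}\langle\alpha,-\mu\rangle$; then for each simple root $\alpha$ one has $\langle\alpha,k\sigma-\mu\rangle\leq -k+\langle\alpha,-\mu\rangle\leq 0$, so $k\sigma-\mu\in X_*(T)_-$. The identity $\phi(k\sigma-\mu)\cdot\phi(\mu)=\phi(\sigma)^k$ in $R$ then forces $\phi(\mu)\in R^\times$ whenever $\phi(\sigma)\in R^\times$. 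Hence the functor of units coincides with $T^\vee$ as a subfunctor, and therefore as a subscheme of $M_G^{\sss}$.

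The only mildly non-formal step is this final divisibility argument, which explains why inverting the single regular element $e^\sigma$ suffices to make every $e^\mu$ a unit; all other steps are bookkeeping about monoid schemes.
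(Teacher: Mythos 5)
Your proof is correct, and it takes a genuinely different route from the paper's. The paper works purely on $\overline{\bbF}_p$-points and invokes the full Herzig stratification: given a unit $s$, it locates the stratum $S_L$ containing $s$, observes that $\lambda(s)=0$ for $\lambda\in X_*(T)_{-/L}\setminus\Delta_L^{\perp}$, and deduces $X_*(T)_-\subset\Delta_L^{\perp}$, forcing $L=T$. Your argument bypasses the stratification entirely and is functorial: after describing $R$-points of $D(X_*(T)_-)$ as monoid homomorphisms $X_*(T)_-\to(R,\cdot)$, you show the subfunctor of units coincides with the principal open $\{e^\sigma\neq 0\}$ by using the ``absorbing'' inequality $\langle\alpha,\sigma\rangle\leq -1$ to write every $\mu\in X_*(T)_-$ as $k\sigma$ minus another antidominant cocharacter, and then you identify $\{e^\sigma\neq 0\}$ with $T^\vee$ via the localization computed in \ref{lemdecomp} (see also \ref{sPintegral}). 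What this buys you: your argument establishes the statement at the level of the functor of points over arbitrary $\bbF_p$-algebras, not merely on geometric points, and it uses only the defining property of the $\lambda_\alpha$ rather than the machinery of the stratification. What the paper's approach buys: it sits naturally inside the stratification framework being developed and makes the interplay between the strata and the monoid structure explicit, which is the theme of Section \ref{section_space}. Both are sound; yours is arguably the cleaner standalone proof.
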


\begin{proof}
Let $s\in M_G^{\sss}(\overline{\bbF}_p)$. Let $L$ be the element of $\cL$ such that $s\in S_L(\overline{\bbF}_p)$. Then, for all $\lambda\in X_*(T)_{-/L}\setminus\Delta_L^{\perp}$,
$$
\lambda(s)=s^*(e^{\lambda})=0\in \overline{\bbF}_p,
$$
i.e. the character $\lambda:D(X_*(T)_{-/L})\ra\bbA^1$ vanishes on $s$. Hence, if $s$ is invertible in $M_G^{\sss}(\overline{\bbF}_p)=D(X_*(T)_-)(\overline{\bbF}_p)$, then $(X_*(T)_{-/L}\setminus\Delta_L^{\perp})\cap X_*(T)_-=\emptyset$, i.e. $X_*(T)_-\subset \Delta_L^{\perp}$, which occurs only if $\Delta_L=\emptyset$, in which case $L=T$.
\end{proof}

\begin{lem}
The supersingular locus $S_G$ is absorbing in the monoid $M_G^{\sss}$.
\end{lem}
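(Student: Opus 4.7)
The approach is direct: use the explicit coordinate description of $M_G^{\sss}$ and of $S_G$ established in the proof of Corollary~\ref{herzigstratif}. Recall that $M_G^{\sss}=\Spec(\bbF_p[X_*(T)_-])$, whose monoid law corresponds on functions to the comultiplication $e^\lambda \mapsto e^\lambda \otimes e^\lambda$. Specialising the computation of $S_L$ in the proof of \ref{herzigstratif} to the case $L=G$, the closed subscheme $S_G\subset M_G^{\sss}$ is cut out (up to taking the reduced structure) by the ideal
$$I_G=(e^\lambda : \lambda\in X_*(T)_-\setminus \Delta^{\perp}) \subset \bbF_p[X_*(T)_-].$$

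Saying that $S_G$ is absorbing in $M_G^{\sss}$ means that the multiplication map $m:M_G^{\sss}\times M_G^{\sss}\to M_G^{\sss}$ restricts to morphisms $S_G\times M_G^{\sss}\to S_G$ and $M_G^{\sss}\times S_G\to S_G$ (the two cases are equivalent by commutativity of $M_G^{\sss}$). Dually, the plan is to check that $m^*$ sends $I_G$ into $I_G\otimes \bbF_p[X_*(T)_-]$ and into $\bbF_p[X_*(T)_-]\otimes I_G$. This is immediate from the formula for $m^*$: for $\lambda\in X_*(T)_-\setminus\Delta^{\perp}$, the element $m^*(e^\lambda)=e^\lambda\otimes e^\lambda$ has both tensor factors in $I_G$.

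Equivalently, one can argue on $\overline{\bbF}_p$-points: for $s\in S_G(\overline{\bbF}_p)$, $t\in M_G^{\sss}(\overline{\bbF}_p)$, and $\lambda\in X_*(T)_-\setminus\Delta^{\perp}$, the character $\lambda$ satisfies $\lambda(st)=\lambda(s)\lambda(t)=0$ because $\lambda(s)=0$; hence $st\in S_G(\overline{\bbF}_p)$, and symmetrically on the other side.

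Since the verification is a one-line computation in the coordinate ring, I do not foresee any serious obstacle; conceptually, the statement expresses the general principle that on a diagonalizable commutative monoid, the common vanishing locus of all characters outside the subgroup of units is a two-sided ideal.
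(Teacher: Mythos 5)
Your proposal is correct and, in its second paragraph, reproduces essentially the paper's own proof verbatim: the paper also checks on $\overline{\bbF}_p$-points that for $s\in S_G(\overline{\bbF}_p)$, $s'\in M_G^{\sss}(\overline{\bbF}_p)$ and $\lambda\in X_*(T)_-\setminus\Delta^{\perp}$ one has $\lambda(ss')=\lambda(s)\lambda(s')=0$, so $ss'$ lies in $S_G(\overline{\bbF}_p)$. Your primary argument via the comultiplication $m^*(e^\lambda)=e^\lambda\otimes e^\lambda$ landing in $I_G\otimes\bbF_p[X_*(T)_-]$ is a scheme-theoretic refinement of the same computation; it buys you the statement at the level of closed subschemes rather than merely on geometric points, but for this lemma the two are interchangeable and the paper opts for the points-based phrasing.
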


\begin{proof}
Let $s\in S_G(\overline{\bbF}_p)$ and $s'\in M_G^{\sss}(\overline{\bbF}_p)$. Then, for all $\lambda\in X_*(T)_-\setminus\Delta^{\perp}$,
$$
(ss')^*(e^{\lambda})=\lambda(ss')=\lambda(s)\lambda(s')=s^*(e^{\lambda})\lambda(s')=0\in \overline{\bbF}_p.
$$
Thus the $\overline{\bbF}_p$-algebra morphism $(ss')^*:\overline{\bbF}_p[X_*(T)_-]\ra \overline{\bbF}_p$ vanishes on the ideal $(e^{\lambda},\lambda\in X_*(T)_-\setminus\Delta^{\perp})$ of $S_G$ in $M_G^{\sss}$, which means precisely that $ss'\in S_G(\overline{\bbF}_p)$.
\end{proof}

\begin{cor}
Let $\pi_G:M_G\ra M_{G}^{\sss}$ be the canonical eigenvalues homomorphism. Then $\pi_G^{-1}(T^{\vee})\subset M_G$ is open and is the group of invertible elements, and $\pi_G^{-1}(S_G)\subset M_G$ is closed and is an absorbing subsemigroup.
\end{cor}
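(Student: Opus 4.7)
The plan is to deduce the topological and semigroup-theoretic assertions directly from the analogous statements for $M_G^{\sss}$ established in \ref{unitsss} and the two lemmas preceding it, and to reduce the identification $\pi_G^{-1}(T^{\vee}) = M_G^{\times}$ to the $1$-dimensionality of the simple representations of $M_G$ via the Satake equivalence $\cS_G$.

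First I would observe that $T^{\vee} \subset M_G^{\sss}$ is open (being the open stratum of the Herzig stratification, cf.\ \ref{herzigstratif}) while $S_G \subset M_G^{\sss}$ is closed, so by continuity of $\pi_G$ the preimage $\pi_G^{-1}(T^{\vee})$ is open and $\pi_G^{-1}(S_G)$ is closed in $M_G$. To transfer the absorbing property, for an $\bbF_p$-algebra $R$ and elements $g \in \pi_G^{-1}(S_G)(R)$, $h \in M_G(R)$, one computes $\pi_G(gh) = \pi_G(g)\pi_G(h) \in S_G(R)$ by the preceding lemma, so $gh \in \pi_G^{-1}(S_G)(R)$; the same computation, together with the commutativity of $M_G^{\sss}$, handles $hg$.

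For the identification of $\pi_G^{-1}(T^{\vee})$ with the unit subfunctor $M_G^{\times}$, the inclusion $M_G^{\times} \subset \pi_G^{-1}(T^{\vee})$ is formal since $\pi_G$ is a monoid homomorphism and the units of $M_G^{\sss}$ are $T^{\vee}$ by \ref{unitsss}. For the reverse inclusion, given $g \in M_G(R)$ with $\pi_G(g) \in T^{\vee}(R)$, I would use $\cS_G$ to view $g$ as a tensor-compatible system of $R$-linear endomorphisms $g_V$ of $V \otimes_{\bbF_p} R$ for $V \in \Rep_{\bbF_p}(M_G)$, and check that each $g_V$ is invertible; defining $(g^{-1})_V := (g_V)^{-1}$ then assembles into an inverse of $g$ in $M_G(R)$, with tensor and functoriality compatibilities inherited from those of $g$. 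To prove $g_V$ is invertible, choose any composition series of $V$: its simple subquotients are $1$-dimensional by the remark preceding \ref{defpiG}, and on the $i$-th associated graded piece $\bbF_p(w_0(\lambda_i)) \otimes R$ the action of $g$ factors through $\pi_G$ as multiplication by $e^{w_0(\lambda_i)}(\pi_G(g)) \in R$, which is a unit precisely because $\pi_G(g) \in T^{\vee}(R)$. Thus $g_V$ is represented by an upper-triangular matrix with invertible diagonal entries in any adapted basis, and is invertible.

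The only step that could appear delicate is assembling the $(g_V)^{-1}$ into an $M_G(R)$-inverse of $g$, but this is a standard Tannakian verification. I do not anticipate a genuine obstacle: the argument rests entirely on the $1$-dimensionality of simple $M_G$-representations, which is the structural feature specific to $\bbF_p$-coefficients recorded in the remark preceding \ref{defpiG}.
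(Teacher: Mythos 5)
Your proof is correct and follows essentially the same route as the paper's. The openness/closedness and the absorbing property are transferred along $\pi_G$ exactly as you describe; for the unit claim, the paper invokes in one line the fact that an endomorphism of the forgetful tensor functor $\Rep_{\bbF_p}(M_G)\to\Vect_{\bbF_p}$ is an automorphism if and only if it is so on simple objects, and your composition-series argument (filtration preserved by functoriality, $1$-dimensional graded pieces acted on through $\pi_G$ by unit scalars since $\pi_G(g)\in T^{\vee}(R)$, then invertibility of a filtered endomorphism that is invertible on the associated graded) is precisely a spelled-out proof of that fact in the case at hand.
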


\begin{proof}
The only part left to check is that $\pi_G^{-1}(T^{\vee})$ consists of units. This follows from the fact that an endomorphism of the forgetful tensor functor 
$\Rep_{\mathbb{F}_p}(M_G) \rightarrow \Vect_{\bbF_p}$ is an automorphism if and only if it is an automorphism on simple objects.
\end{proof}

\appendix
\section{Cohomology with support in $T_{\nu}$} \label{appA}
Let $U^-$ be the unipotent radical of the opposite Borel $B^-$. For $\nu \in X_*(T)$, let $$T_\nu := (LU^- \cdot \nu(t))_{\text{red}} \subset \Gr_G$$ be the reduced ind-subscheme of the corresponding connected component of the \emph{repeller} (\cite{D13}) with respect to the $\mathbb{G}_m$-action on $\Gr_G$ from Subsection \ref{MVsec}.  For $\lambda\in X_*(T)^+$, we denote by $i_{T_{\nu, \lambda}} \colon T_\nu \cap \Gr_G^{\leq \lambda} \rightarrow \Gr_G^{\leq \lambda}$
the canonical immersion (where $T_\nu \cap \Gr_G^{\leq \lambda}$ is equipped with its reduced structure) and define 
$$
\forall i\in\bbZ,\quad H_{T_\nu}^i(\Gr_G, \IC_\lambda) : = R^i \Gamma (T_\nu \cap \Gr_G^{\leq \lambda},Ri_{T_{\nu, \lambda}}^! \IC_{\lambda}).
$$
\begin{prop*} \label{A1}
Let $\lambda\in X_*(T)^+$. If $\nu = w_0( \lambda)$ then 
$$
H^{2 \rho(\nu)}_{T_\nu}(\Gr_G, \IC_\lambda) = R^{-2 \rho(\lambda)}\Gamma(\IC_\lambda) \cong \mathbb{F}_p.
$$
\end{prop*}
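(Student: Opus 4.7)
Since $w_0\rho=-\rho$, the hypothesis $\nu=w_0(\lambda)$ gives $2\rho(\nu)=-2\rho(\lambda)$, so both cohomology groups lie in matching degrees. As $\IC_\lambda=\bbF_p[2\rho(\lambda)]$ is the shifted constant sheaf on the connected projective scheme $\Gr_G^{\leq\lambda}$, we get immediately
$$
R^{-2\rho(\lambda)}\Gamma(\IC_\lambda)=H^0(\Gr_G^{\leq\lambda},\bbF_p)=\bbF_p.
$$
The plan is to identify the left-hand side with this same $\bbF_p$ via the natural restriction map coming from the adjunction $\id\to Ri_{*}i^{*}$.

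The crucial point is that $T_{w_0(\lambda)}\cap\Gr_G^{\leq\lambda}$ is an \emph{open} subscheme of $\Gr_G^{\leq\lambda}$. To prove this I would apply the $\bbG_m$-scheme formalism of Section \ref{sec_Gm} to $X:=\Gr_G^{\leq\lambda}$ equipped with the action obtained from the regular dominant cocharacter by composition with inversion on $\bbG_m$: this swaps attractors and repellers, so $T_\nu\cap\Gr_G^{\leq\lambda}$ becomes the attractor component at $\nu(t)$ for the new action. The $\bbG_m$-equivariant very ample line bundle on $\Gr_G$ used in Subsection \ref{MVsec} (cf. \cite[\S 1.5]{Z17}) provides a $\bbG_m$-equivariant embedding of $X$ into a projective space with linear $\bbG_m$-action, so hypothesis (H) holds and \ref{decompfil} (2) realizes each attractor component as a locally closed subscheme. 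The intersection $T_{w_0(\lambda)}\cap\Gr_G^{\leq\lambda}$ is nonempty (it contains $w_0(\lambda)(t)$); moreover, by the analogue of \ref{thmcompute} for the opposite Borel (equivalently, by the dimension formula for MV cycles),
$$
\dim\bigl(T_{w_0(\lambda)}\cap\Gr_G^{\lambda}\bigr)=\rho(\lambda-w_0(\lambda))=2\rho(\lambda)=\dim\Gr_G^{\leq\lambda}.
$$
Since $\Gr_G^{\leq\lambda}$ is irreducible, any nonempty locally closed subscheme of full dimension must contain the generic point, hence is open (and dense).

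Once openness is established, $i_{T_{w_0(\lambda),\lambda}}$ is an open immersion, whence $Ri^!_{T_{w_0(\lambda),\lambda}}=i^*_{T_{w_0(\lambda),\lambda}}$, and therefore
$$
H^{-2\rho(\lambda)}_{T_{w_0(\lambda)}}(\Gr_G,\IC_\lambda)=H^0(T_{w_0(\lambda)}\cap\Gr_G^{\leq\lambda},\bbF_p)=\bbF_p,
$$
the last equality using that an open dense subscheme of an irreducible scheme is itself irreducible and hence connected. The canonical restriction map between the two copies of $\bbF_p$ sends the constant function $1$ to $1$ and is therefore an isomorphism, yielding the asserted identification. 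The main obstacle I anticipate is the openness step; once it is secured via the equivariant projective embedding (for hypothesis (H)) and the MV dimension estimate, the remainder is formal, with $Ri^!$ collapsing to $i^*$ for an open immersion.
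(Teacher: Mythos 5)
Your proof is correct and follows essentially the same route as the paper's: both establish that $T_{w_0(\lambda)}\cap\Gr_G^{\leq\lambda}$ is open in $\Gr_G^{\leq\lambda}$ via the Mirkovi\'{c}--Vilonen dimension formula (\cite[Thm.~3.2]{GeometricSatake}) and then use $Ri^!=Ri^*$ for an open immersion of an irreducible (hence connected) scheme. The detour through hypothesis (H) and a $\bbG_m$-equivariant projective embedding to justify that the intersection is locally closed is harmless but unnecessary, since $i_{T_{\nu,\lambda}}$ is already defined as the canonical immersion of the (reduced) intersection.
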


\begin{proof}
By \cite[3.2]{GeometricSatake}, $T_\nu \cap \Gr_G^{\leq \lambda}$ is of pure dimension  $-\rho(\nu+w_0(\lambda))=2\rho(\lambda)=\dim \Gr_G^{\leq \lambda}$. Thus $T_\nu \cap \Gr_G^{\leq \lambda}$ is open in $\Gr_G^{\leq \lambda}$, so $Ri_{T_{\nu, \lambda}}^! = Ri_{T_{\nu, \lambda}}^*$ and the proposition follows. 
\end{proof}

\begin{prop*} \label{A2}
Let $\lambda\in X_*(T)^+$ be such that $\rho(\lambda) \neq 0$. If $\nu = \lambda$ then 
$$
H^{2 \rho(\nu)}_{T_\nu}(\Gr_G, \IC_\lambda) = 0.
$$ 
\end{prop*}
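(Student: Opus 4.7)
The plan is to pin down the support $T_\lambda \cap \Gr_G^{\leq \lambda}$ exactly, translate the target into local cohomology at a smooth point of $\Gr_G^{\leq \lambda}$, and then kill that local cohomology via Artin--Schreier combined with Grothendieck's local cohomology computation. The hypothesis $\rho(\lambda) \neq 0$ is what provides the needed numerical slack $d \geq 2$ (where $d := 2\rho(\lambda)$) at the final step.

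For the first step, the dimension formula recalled in the proof of \ref{A1} gives $\dim T_\lambda \cap \Gr_G^{\leq \lambda} = -\rho(\lambda + w_0(\lambda)) = -\rho(\lambda) + \rho(\lambda) = 0$. As a $0$-dimensional $\bbG_m$-stable reduced closed subscheme, this intersection consists only of $T$-fixed points; since $T_\lambda = \overline{LU^- \cdot \lambda(t)}$ is irreducible with unique attracting fixed point $\lambda(t)$ (under the reverse $\bbG_m$-flow) inside $\Gr_G^{\leq\lambda}$, one concludes $(T_\lambda \cap \Gr_G^{\leq \lambda})_{\red} = \{\lambda(t)\}$, a smooth point of $\Gr_G^{\leq\lambda}$ as it lies in the open orbit $\Gr_G^{\lambda}$. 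For the second step, $\IC_\lambda = \bbF_p[d]$ makes the target equal to $H^{2d}_{\{\lambda(t)\}}(\Gr_G^{\leq \lambda}, \bbF_p)$; \'etale excision combined with Cohen structure identifies this with local cohomology on $\Spec k[[t_1,\dots,t_d]]$. Using that a henselian local scheme with algebraically closed residue field has $\bbF_p$-\'etale cohomology concentrated in degree $0$, the localization sequence collapses to
$$ H^{2d}_{\{\lambda(t)\}}(\Gr_G^{\leq \lambda}, \bbF_p) \;\cong\; H^{2d-1}(V, \bbF_p), $$
where $V := \Spec k[[t_1,\dots,t_d]] \setminus \{\mathfrak{m}\}$.

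For the third step, Grothendieck's computation of local cohomology on the regular local ring $R := k[[t_1,\dots,t_d]]$ of dimension $d$ gives $H^j_{\{\mathfrak{m}\}}(R,\cO) = 0$ for $j \neq d$, whence the structure-sheaf localization sequence yields $H^j(V, \cO_V) = 0$ for all $j \geq 1$ with $j \neq d-1$. When $d \geq 2$ both $H^{2d-2}(V, \cO_V)$ and $H^{2d-1}(V, \cO_V)$ vanish, and the Artin--Schreier exact sequence
$$ H^{2d-2}(V, \cO_V) \xrightarrow{F-1} H^{2d-2}(V, \cO_V) \to H^{2d-1}(V, \bbF_p) \to H^{2d-1}(V, \cO_V) $$
forces $H^{2d-1}(V, \bbF_p) = 0$, finishing the proof.

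The main obstacle is the interplay between $\bbF_p$-\'etale cohomology and coherent cohomology, mediated by Artin--Schreier. The hypothesis $d \geq 2$ is essential: on a $1$-dimensional punctured formal disk the coherent cohomology $H^0(\Spec k((t)), \cO) = k((t))$ is enormous and its $F-1$-cokernel supplies infinitely many Artin--Schreier classes, so the argument genuinely breaks down in dimension $1$, mirroring the failure of Braden's theorem for $\bbF_p$-coefficients that motivates the whole appendix. In contrast to \ref{A1}, where the support $T_{w_0(\lambda)}$ is open in $\Gr_G^{\leq\lambda}$ and $Ri^!_{T_{\nu,\lambda}} = Ri^*_{T_{\nu,\lambda}}$ trivializes the $!$-pullback, here the support is a smooth codimension-$d$ point, so $Ri^!$ must be unwound via excision before the Artin--Schreier computation can take over.
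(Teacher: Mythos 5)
Your proof is correct in substance and runs along the same lines as the paper's: both arguments reduce to the local cohomology with support at the single point $T_\lambda\cap\Gr_G^{\leq\lambda}$, both pass through the open-closed localization triangle, and both use the Artin--Schreier sequence to convert a coherent local-cohomology vanishing bound into an $\bbF_p$-vanishing. The difference is in which coherent bound is invoked, and it is worth recording.

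The paper does \emph{not} identify the point or use that it lies in the smooth locus. It applies the generic Grothendieck bound $H^i_{\mathfrak m}(A)=0$ for $i>\dim A$ to the local ring $A$ of $\Gr_G^{\leq\lambda}$ at the point, concludes that $Rj_*\cO_U$ lives in degrees $\leq 2\rho(\lambda)-1$, then Artin--Schreier puts $Rj_*\bbF_p[2\rho(\lambda)]$ in degrees $\leq 0$, and the localization triangle forces $Ri^!_{T_{\nu,\lambda}}\IC_\lambda$ into degrees $\leq 1$. You instead locate the point as $\lambda(t)$ in the open orbit $\Gr_G^{\lambda}$, observe that it is a smooth point, and then invoke the sharper statement $H^i_{\mathfrak m}(R,\cO)=0$ for $i\neq d$ for a regular local ring of dimension $d$, together with the same localization-plus-Artin--Schreier dance, to kill the top local cohomology group. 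Your route thus uses a stronger hypothesis (smoothness) and a sharper vanishing; the paper's route is more economical and would survive even if the point were singular. Both routes need $d=2\rho(\lambda)\geq 2$; you make this explicit, the paper states it as $2\rho(\lambda)>1$.

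One technical point to tighten: passing from the strict henselization to $\Spec k[[t_1,\dots,t_d]]$ and back for the $\bbF_p$-\'etale cohomology of the \emph{punctured} spectrum is not excision; it is a (true but heavy) comparison theorem of Fujiwara--Gabber type. You do not actually need it: the strict henselization of a regular local ring is again regular local of the same dimension, so Grothendieck's vanishing applies to it directly and your Artin--Schreier argument can be run on the punctured strict henselization itself, with no appeal to Cohen structure or formal completion. This is also implicitly what the paper does, since it never leaves the category of local rings of the variety.
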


\begin{proof}
By \cite[3.2]{GeometricSatake}, $T_\nu \cap \Gr_G^{\leq \lambda}$ is a point.  Let $U := \Gr_G^{\leq \lambda}\setminus(T_\nu \cap \Gr_G^{\leq \lambda})$ and $j \colon U \rightarrow \Gr_{\leq \lambda}$ be the canonical open immersion. We claim that, as a complex of sheaves, $Rj_* \mathcal{O}_U$ is concentrated in degrees $\leq 2\rho(\lambda) - 1$. To prove the claim, note that we may replace $\Gr_G^{\leq \lambda}$ by the local ring $(A, \mathfrak{m})$ at $T_\nu \cap \Gr_G^{\leq \lambda}$ in $\Gr_G^{\leq \lambda}$. For $n \geq 1$ we have $R^n j_* \mathcal{O}_U = H^{n+1}_{\mathfrak{m}}(A)$. Since $H^{i}_{\mathfrak{m}}(A) = 0$ for $i> \dim A$ then $R^n j_* \mathcal{O}_U = 0$ unless $n \leq  2\rho(\lambda) - 1$.

 Now by the Artin-Schreier sequence $Rj_* (\mathbb{F}_p[2 \rho(\lambda)])$ is concentrated in degrees $\leq 0$. Hence by the exact triangle 
$$
\xymatrix{
Ri_{T_{\nu, \lambda}, *} Ri_{T_{\nu, \lambda}}^! (\mathbb{F}_p[2 \rho(\lambda)]) \ar[r] & \mathbb{F}_p[2 \rho(\lambda)] \ar[r] & Rj_* Rj^* (\mathbb{F}_p[2 \rho(\lambda)])  \ar[r]^<<<<<{+1} &.
}
$$ 
it follows that $Ri_{T_{\nu, \lambda}}^! (\mathbb{F}_p[2 \rho(\lambda)])$ is concentrated in degrees $\leq 1$. Now we are done because $2 \rho (\lambda) > 1$ and $T_\nu \cap \Gr_G^{\leq \lambda}$ is a point.
\end{proof}

\begin{prop*} \label{localizationfails}
Suppose $G = \SL_2$, and that $T$ and $B$ are the diagonal maximal torus and the upper triangular Borel subgroup; in particular $X_*(T)^+ \cong \mathbb{Z}_{\geq 0}$. If $\lambda = 1$ and $\nu = 0$, then $H_{T_\nu}^{2 \rho(\nu)}(\Gr_G, \IC_\lambda)$ is infinite-dimensional. 
\end{prop*}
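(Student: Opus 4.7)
The plan is to identify $H^{2\rho(0)}_{T_0}(\Gr_G, \IC_1)$ with the $\bbF_p$-cohomology $H^2_Y(X, \bbF_p)$ of $X := \Gr_G^{\leq \alpha^\vee}$ with support in $Y := T_0 \cap X$, and to show the latter is infinite-dimensional. This will be a concrete manifestation of the failure of cohomological purity for $\bbF_p$-coefficients in characteristic $p$.

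First I would recall the geometry of $X$: it is a normal projective surface with a unique singular point at $0 \in X_*(T) \subset X$ (the apex of the projective quadric cone), and has rational singularities, so $H^i(X,\cO_X) = 0$ for $i \geq 1$ and hence $H^i(X, \bbF_p) = 0$ for $i \geq 2$ by Artin--Schreier. The Bialynicki--Birula decomposition for the $\bbG_m$-action of \ref{MVsec} produces repelling cells $(T_{-\alpha^\vee} \cap X) \sqcup Y \sqcup \{\alpha^\vee\}$ of dimensions $2$, $1$, $0$. A direct parameterization via $LU^-$ identifies $Y \cong \bbA^1$ with closure $\overline{Y} = Y \cup \{\alpha^\vee\}$, and Bialynicki--Birula at the smooth fixed point $-\alpha^\vee$ gives $W := X \setminus \overline{Y} = T_{-\alpha^\vee} \cap X \cong \bbA^2$.

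Setting $V := X \setminus \{\alpha^\vee\}$ so that $Y$ is closed in $V$ with open complement $W$, excision yields $H^\ast_Y(X, \bbF_p) = H^\ast_Y(V, \bbF_p)$. Since $H^2(W, \bbF_p) = H^2(\bbA^2, \bbF_p) = 0$, the long exact sequence for $Y \subset V$ provides a surjection $H^2_Y(X, \bbF_p) \twoheadrightarrow H^2(V, \bbF_p)$. In turn, the long exact sequence for $\{\alpha^\vee\} \subset X$ combined with $H^i(X, \bbF_p) = 0$ for $i \geq 2$ yields $H^2(V, \bbF_p) \cong H^3_{\{\alpha^\vee\}}(X, \bbF_p)$.

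The decisive step will then be to show $H^3_{\{\alpha^\vee\}}(X, \bbF_p)$ is infinite-dimensional. Since $\alpha^\vee$ is a smooth point of $X$, \'etale excision reduces this to showing $H^3_{\{0\}}(\bbA^2, \bbF_p)$ is infinite-dimensional. From the triangle $R\Gamma_{\{0\}}(\bbA^2, \bbF_p) \to R\Gamma(\bbA^2, \bbF_p) \to R\Gamma(\bbA^2 \setminus 0, \bbF_p)$ and $H^{\geq 2}(\bbA^2, \bbF_p) = 0$ one obtains $H^3_{\{0\}}(\bbA^2, \bbF_p) \cong H^2(\bbA^2 \setminus 0, \bbF_p)$. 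A \v{C}ech calculation gives $H^1(\bbA^2 \setminus 0, \cO) = \bigoplus_{i,j \geq 1} k \cdot x^{-i} y^{-j}$ and $H^{\geq 2}(\bbA^2 \setminus 0, \cO) = 0$, so by Artin--Schreier $H^2(\bbA^2 \setminus 0, \bbF_p) = \coker(F - 1)$ on this module. Since Frobenius sends $x^{-i} y^{-j} \mapsto x^{-pi} y^{-pj}$, the classes $[x^{-1} y^{-j}]$ for $j \geq 1$ are linearly independent in the cokernel: expressing any nontrivial linear combination as $F\eta - \eta$ would force $\eta$ to have nonzero coefficients along each entire Frobenius orbit $\{(p^n, p^n j)\}_{n \geq 0}$, contradicting the finiteness of the support of $\eta$. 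The main obstacle is precisely this final bookkeeping of Frobenius cascading along infinite orbits in $H^2_\fm$ of a two-dimensional regular local ring; this is the concrete avatar of the purity failure.
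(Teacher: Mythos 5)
Your proof is correct and takes a genuinely different route from the paper's. Both arguments rely on the same two geometric inputs: $R\Gamma(X,\bbF_p)\cong\bbF_p$ for $X=\Gr_G^{\leq\alpha^\vee}$, and $W:=T_{-\alpha^\vee}\cap X\cong\bbA^2$. Where they diverge is how the contribution of the closed fixed point $\alpha^\vee$ is removed. The paper works with the closed set $Z=\overline{Y}=(T_\nu\cup T_\lambda)\cap X$, deduces that $R\Gamma(Ri_Z^!\IC_\lambda)$ sits in degree $0$ with infinite-dimensional $H^0$ (using that $H^1(\bbA^2,\bbF_p)$ is infinite-dimensional), and then invokes the intermediate-extension property of $\IC_\lambda$ (\cite[2.7, 2.10]{modpGr}) to show $Ri_{T_\lambda}^!\IC_\lambda$ lives in degrees $\geq 1$, giving an injection $R^0\Gamma(Ri_Z^!\IC_\lambda)\hookrightarrow H^{2\rho(\nu)}_{T_\nu}$. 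You instead excise the smooth fixed point $\alpha^\vee$ at the outset, passing to $V=X\setminus\{\alpha^\vee\}$ in which $Y$ is closed, and chain two long exact sequences with $H^2(\bbA^2,\bbF_p)=0$ and $H^{\geq 2}(X,\bbF_p)=0$ to produce a surjection $H^2_Y(X,\bbF_p)\twoheadrightarrow H^3_{\{\alpha^\vee\}}(X,\bbF_p)\cong H^3_{\{0\}}(\bbA^2,\bbF_p)\cong H^2(\bbA^2\setminus 0,\bbF_p)$; the last group is then shown to be infinite-dimensional by the explicit Frobenius-cascade argument in $H^1(\bbA^2\setminus 0,\cO)=\bigoplus_{i,j\geq 1}k\,x^{-i}y^{-j}$. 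Your route avoids the $\IC$-sheaf and intermediate-extension machinery entirely and isolates the phenomenon as a concrete failure of cohomological purity for $\bbF_p$-coefficients at a smooth point of a surface, which is conceptually illuminating; the price is the slightly more involved terminal computation (the paper simply observes $H^1(\bbA^2,\bbF_p)$ is infinite-dimensional, which is immediate from Artin--Schreier). One small stylistic caveat: the inference ``rational singularities $\Rightarrow H^{\geq 1}(X,\cO_X)=0$'' needs the extra observation that the Demazure resolution $\widetilde X$ has $H^{\geq 1}(\widetilde X,\cO_{\widetilde X})=0$ (it is a tower of $\bbP^1$-bundles), or one can simply cite \cite[6.9]{modpGr} directly for $R\Gamma(X,\bbF_p)=\bbF_p$ as the paper does.
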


\begin{proof}
The scheme $\Gr_G^{\leq \lambda}$ is stratified by $T_{-\lambda} \cap \Gr_G^{\leq \lambda}$, $T_\nu \cap \Gr_G^{\leq \lambda}$, and $T_{\lambda} \cap \Gr_G^{\leq \lambda}$. These strata have dimensions $2$, $1$, and $0$, respectively. Let $Z = \overline{T}_{\nu} \cap \Gr_G^{\leq \lambda} =  (T_\nu \cap \Gr_G^{\leq \lambda}) \cup (T_{\lambda} \cap \Gr_G^{\leq \lambda})$ and let $i \colon Z \rightarrow \Gr_G^{\leq \lambda}$ be the corresponding closed immersion. Let $j \colon T_{-\lambda} \cap \Gr_G^{\leq \lambda} \rightarrow \Gr_G^{\leq \lambda}$ be the complementary open immersion. Then there is an exact triangle 
$$
\xymatrix{
Ri_*Ri^!(\IC_\lambda) \ar[r] & \IC_\lambda \ar[r] & Rj_*Rj^*(\IC_\lambda)  \ar[r]^<<<<<{+1} &.
}
$$
By \cite[6.9]{modpGr}, $R\Gamma(\IC_\lambda) \cong \mathbb{F}_p[2]$, and the map $R^{-2}\Gamma(\IC_\lambda) \rightarrow R^{-2}\Gamma(Rj_*Rj^*(\IC_\lambda))$ is an isomorphism. By \cite[5.2]{ngopolo}, $T_{-\lambda} \cap \Gr_G^{\leq \lambda}$ is isomorphic to $\mathbb{A}^2$. Thus by a computation with the Artin-Schreier sequence we find that $R\Gamma(Rj_*Rj^*(\IC_\lambda))$ is concentrated in degrees $-2$ and $-1$, and it is infinite-dimensional in degree $-1$. Hence $R\Gamma(Ri_*Ri^!(\IC_\lambda))$ is concentrated in degree $0$, and $R^0\Gamma(Ri_*Ri^!(\IC_\lambda))$ is infinite-dimensional.

Now let $p \colon T_{\nu} \cap \Gr_{\leq \lambda} \rightarrow Z$ be the open immersion. There is an exact triangle
$$
\xymatrix{
R\Gamma(Ri_{T_{\lambda}}^! (\IC_{\lambda})) \ar[r] & R\Gamma(Ri^!(\IC_\lambda)) \ar[r] & R\Gamma(Rp^*(Ri^!(\IC_\lambda)))  \ar[r]^<<<<<{+1} &.
}
$$ 
By \cite[2.10]{modpGr}, $\IC_\lambda$ is the intermediate extension of its restriction to $\Gr_G^{\leq \lambda}\setminus(T_{\lambda} \cap \Gr_G^{\leq \lambda})$, so by \cite[2.7]{modpGr} $Ri_{T_{\lambda}}^! (\IC_{\lambda})$ is concentrated in degrees $\geq 1$. 
Thus the map $R^0\Gamma(Ri^!(\IC_\lambda)) \rightarrow R^0\Gamma(Rp^*(Ri^!(\IC_\lambda)))$ is injective. Now we are done because there is a natural isomorphism $R^0\Gamma(Rp^*(Ri^!(\IC_\lambda))) \cong H_{T_\nu}^{2 \rho(\nu)}(\Gr_G, \IC_\lambda)$.
\end{proof}

By comparing \ref{A1} and \ref{A2} with \ref{computetop}, we see that the groups
$$
H_c^{2 \rho(\nu)}(S_{\nu},\IC_{\lambda})\quad\textrm{and}\quad H_{T_{\nu}}^{2 \rho(\nu)}(\Gr_G,\IC_{\lambda})
$$ agree in some cases. However, by \ref{localizationfails}, these groups are not isomorphic in general. In other words, \emph{Braden's hyperbolic localization theorem fails for $\bbF_p$-coefficients}.


\begin{thebibliography}{{Cas}19}

\bibitem[BD]{BD}
A. Beilinson and V. Drinfeld.
\newblock Quantization of Hitchin's integrable system and Hecke eigensheaves.
\newblock \url{http://math.uchicago.edu/~drinfeld/langlands/QuantizationHitchin.pdf}

\bibitem[BG02]{BG02}
A. Braverman and D. Gaitsgory. 
\newblock Geometric Eisenstein series. 
\newblock {\em Invent. Math.}, 150(2): 287--384, 2002.

\bibitem[BR18]{BR18}
P. Baumann and S. Riche.
\newblock Notes on the Geometric Satake Equivalence.
\newblock In {\em Relative Aspects in Representation Theory, Langlands Functoriality and Automorphic Forms}. Lect. Notes in Math., vol. 2221, 1--134, Springer, 2018.

\bibitem[Bra03]{BradenLoc}
Tom Braden.
\newblock Hyperbolic localization of intersection cohomology.
\newblock {\em Transform. Groups}, 8(3):209--216, 2003.  

\bibitem[ByB73]{BB73}
A.~Bialynicki-Birula.
\newblock Some theorems on actions of algebraic groups.
\newblock {\em Ann. of Math. (2)}, 98:480--497, 1973.

\bibitem[ByB76]{BB76}
A.~Bialynicki-Birula.
\newblock Some properties of the decompositions of algebraic varieties determined by actions of a torus. 
\newblock {\em Bull. Acad. Polon. Sci. S\'er. Sci. Math. Astronom. Phys.}, 24(9):667--674, 1976.

\bibitem[Car02]{Car02}
James B. Carrell.
\newblock Torus actions and cohomology. 
\newblock In {\em Algebraic Quotients. Torus Actions and Cohomology. The Adjoint Representation and the Adjoint Action.}
\newblock Encyclopaedia of Mathematical Sciences book series 131:83--158, 2002.

\bibitem[Cas19]{modpGr}
Robert Cass.
\newblock Perverse $\mathbb{F}_p$-sheaves on the affine {G}rassmannian.
\newblock Preprint, arXiv:1910.03377v3, 2019.

\bibitem[Cas20]{Centralmodp}
Robert Cass.
\newblock Central elements in affine mod $p$ Hecke algebras via perverse $\mathbb{F}_p$-sheaves.
\newblock Preprint, arXiv:2004.00189v2, 2020.

\bibitem[CR11]{CR11}
A. Chatzistamatiou and K. R\"ulling.
\newblock Higher direct images of the structure sheaf in positive characteristic.
\newblock {\em Algebra Number Theory}, 5(6):693--775, 2011.

\bibitem[DG16]{DG16}
V. Drinfeld and D. Gaitsgory.
\newblock Geometric constant term functor(s).
\newblock {\em Sel. Math. New Ser.,} 22:1881--1951, 2016.    
  
\bibitem[Dri13]{D13}  
V. G. Drinfeld. 
\newblock On algebraic spaces with an action of $\bbG_m$.
\newblock Preprint, arXiv:1308.2604, 2013.

\bibitem[Fog71]{F71}
J. Fogarty.
\newblock Fixed point schemes.
\newblock {\em Bull. Am. Math. Soc.}, 77:203--204, 1971.

\bibitem[Fu11]{FuEtale}
Lei Fu.
\newblock Etale cohomology theory. Volume~13 of {\em Nankai Tracts in
  Mathematics}.
\newblock World Scientific Publishing Co. Pte. Ltd., Hackensack, NJ, 2011.

\bibitem[H11a]{H11a}
F. Herzig.
\newblock The classification of irreducible mod $p$ representations of a $p$-adic $\GL_n$.
\newblock {\em Invent. Math. (2)}, 186:373--434, 2011.

\bibitem[H11b]{H11b}
F. Herzig.
\newblock A Satake isomorphism in characteristic $p$.
\newblock {\em Compos. Math. (1)}, 147:263--283, 2011.

\bibitem[Kov20]{K20}
S. J. Kov\'acs. 
\newblock Rational singularities. 
\newblock Preprint, arXiv:1703.02269v8, 2020.


\bibitem[MV07]{GeometricSatake}
I.~Mirkovi\'c and K.~Vilonen.
\newblock Geometric {L}anglands duality and representations of algebraic groups over commutative rings.
\newblock {\em Ann. of Math. (2)}, 166(1):95--143, 2007.

\bibitem[NP01]{ngopolo}
B.~C. Ng\^{o} and P.~Polo.
\newblock R\'{e}solutions de {D}emazure affines et formule de
  {C}asselman-{S}halika g\'{e}om\'{e}trique.
\newblock {\em J. Algebraic Geom.}, 10(3):515--547, 2001.

\bibitem[PR08]{PR08}
G. Pappas and M. Rapoport. 
\newblock Twisted loop groups and their affine flag varieties. 
\newblock {\em Adv. Math.}, 219(1):118-198, 2008. 
\newblock With an appendix by T. Haines and Rapoport.

\bibitem[Ric13]{R13} 
T. Richarz.
\newblock Schubert varieties in twisted affine flag varieties and local models.
\newblock {\em Journal of Algebra}, 375:121--147, 2013.

\bibitem[Ric19]{R19} 
T. Richarz.
\newblock Spaces with $\bbG_m$-action, hyperbolic localization and nearby cycles.
\newblock {\em J. Algebraic Geom.}, 28(2):251--289, 2019.

\bibitem[Ser55]{S55}
J.-P. Serre.
\newblock Faisceaux alg\'ebriques coh\'erents.
\newblock {\em Ann. of Math.}, 61(2):197--278, 1955.

\bibitem[Sum74]{S74}
H. Sumihiro.
\newblock Equivariant completion.
\newblock {\em J. Math. Kyoto Univ.}, 14:1--28, 1974.

\bibitem[Zhu17]{Z17}
X. Zhu.
\newblock An introduction to affine Grassmannians and the geometric Satake equivalence.
\newblock In {\em Geometry of Moduli Spaces and Representation Theory.} IAS/Park City Mathematics Series, No. 24. American Mathematical Society, Providence, RI, 59--154, 2017.

\end{thebibliography}
\end{document}